
\documentclass[aos,preprint]{imsart}

\usepackage{amsmath, amsthm, amsfonts, graphicx, extarrows, bbm, amssymb}
\usepackage{tikz}
\usepackage[hmargin=1.2in, bmargin=1.5in]{geometry}
\usepackage{enumerate}
\usepackage[colorlinks,citecolor=blue,urlcolor=blue]{hyperref}

\graphicspath{
{./fig/finite_large_noise/}
{./fig/finite_large_samp/}
{./fig/theo_ROC_cpr/}
{./fig/theo_ROC_2S_sis_cpr/}
{./fig/corr_design/}
{./fig/revision_fig/}
}
\numberwithin{figure}{section}
\numberwithin{table}{section}


\startlocaldefs

\newcommand{\argmin}[1]{\underset{#1}{\text{argmin}}\;}
\newcommand{\follow}{\overset{i.i.d.}{\sim}}
\newcommand{\esssup}{\text{esssup}}

\newcommand{\sgn}{\text{sgn}}
\newcommand{\1}[1]{\mathbbm{1}_{\{#1\}}}
\newcommand{\ave}{\text{Ave}} 

\newcommand{\fdp}{\text{FDP}}
\newcommand{\tpp}{\text{TPP}}
\newcommand{\afdp}{\text{AFDP}}
\newcommand{\atpp}{\text{ATPP}}

\newcommand{\amse}{\mathrm{AMSE}}

\newcommand{\lasso}{\text{LASSO}}

\newcommand{\aeps}{b_\epsilon}

\newcommand{\aGoChStar}{\frac{b_\epsilon \tilde{G}}{\alpha_*^{\frac{1}{2-q}}}}

\newcommand{\ZoChiStarTaus}{\frac{\tau_* Z}{\alpha_*^{\frac{1}{2-q}}}}

\newcommand{\rightarrowas}{\xrightarrow{a.s.}}

\numberwithin{equation}{section}

\newtheorem{theorem}{Theorem}[section]
\newtheorem{corollary}{Corollary}[section]
\newtheorem{lemma}{Lemma}[section]
\newtheorem{remark}{Remark}[section]

\theoremstyle{definition}
\newtheorem{definition}{Definition}[section]
\theoremstyle{plain}

\endlocaldefs

\begin{document}

\begin{frontmatter}

\title{Which bridge estimator is The Best For Variable Selection?}
\runtitle{Which Regularizer is Optimal For Variable Selection?}


\begin{aug}
\author{\fnms{Shuaiwen} \snm{Wang}\thanksref{m1} \ead[label=e1]{sw2853@columbia.edu}}
\author{\fnms{Haolei} \snm{Weng}\thanksref{m2} \ead[label=e2]{hw2375@columbia.edu}}
\and
\author{\fnms{Arian} \snm{Maleki}\thanksref{m3} \ead[label=e3]{arian@stat.columbia.edu}}

\runauthor{S. Wang, H. Weng, A. Maleki}

\affiliation{Columbia University \thanksmark{m1}\thanksmark{m2}\thanksmark{m3} }

\address{S. Wang\\
Department of Statistics \\
Columbia University \\
1255 Amsterdam Avenue \\
New York, NY, 10027 \\
USA \\
\printead{e1}\\
}

\address{H. Weng\\
Department of Statistics \\
Columbia University \\
1255 Amsterdam Avenue \\
New York, NY, 10027 \\
USA \\
\printead{e2}\\
}

\address{A. Maleki \\
Department of Statistics \\
Columbia University \\
1255 Amsterdam Avenue \\
New York, NY, 10027 \\
USA \\
\printead{e3}\\
}

\end{aug}

\begin{abstract}

We study the problem of variable selection for linear models under the
high-dimensional asymptotic setting, where the number of observations $n$ grows
at the same rate as the number of predictors $p$. We consider two-stage
variable selection techniques (TVS) in which the first stage uses bridge
estimators to obtain an estimate of the regression coefficients, and the second
stage simply thresholds this estimate to select the
``important" predictors. The asymptotic false discovery proportion ($\afdp$)
and true positive proportion (ATPP) of these TVS are evaluated. We prove that
for a fixed ATPP, in order to obtain a smaller $\afdp$, one should pick a
bridge estimator with smaller asymptotic mean square error in the first stage
of TVS. Based on such principled discovery, we present a sharp comparison of
different TVS, via an in-depth investigation of the estimation properties of
bridge estimators. Rather than ``order-wise" error bounds with loose constants,
our analysis focuses on precise error characterization. Various interesting signal-to-noise
ratio and sparsity settings are studied. Our results offer new and thorough 
insights into high-dimensional variable selection. For instance, we prove that
a TVS with Ridge in its first stage outperforms TVS with other bridge
estimators in large noise settings; two-stage LASSO becomes inferior when the
signal is rare and weak. As a by-product, we show that two-stage
methods outperform some standard variable selection techniques, such as
$\lasso$ and Sure Independence Screening, under certain conditions.
\end{abstract}



\end{frontmatter}

\section{Introduction}

\subsection{Motivation and problem statement}\label{ssec:motive}
Although linear models can be traced back to two hundred years ago, they keep shining in the modern statistical research. A problem of major interest in this literature is {\em variable selection}. Consider the linear regression model
\begin{equation*}\label{intro:problem:describe}
y=X\beta+w,
\end{equation*}
with $y\in\mathbb{R}^n$, $X\in\mathbb{R}^{n\times p}$, $\beta\in\mathbb{R}^p$
and $w \in \mathbb{R}^n$. Suppose only a few elements of $\beta$ are nonzero.
The problem of variable selection is to find these nonzero
locations of $\beta$.  Motivated by the concerns about the instability and high computational cost of classical variable selection techniques, such as best subset selection and stepwise selection, Tibshirani proposed $\lasso$  \cite{tibshirani1996regression} to perform parameter estimation and variable selection simultaneously. The $\lasso$ estimate is given by
\begin{equation} \label{lasso96}
\hat{\beta} (1, \lambda) 
:=
\argmin{\beta}\frac{1}{2}\|y-X\beta\|_2^2+\lambda\|\beta\|_1,
\end{equation}
where $\lambda\in(0,\infty)$ is the tuning parameter, and $\|\cdot\|_1$ is the
$\ell_1$ norm. The regularization term $\|\beta\|_1$ stablizes the variable
selection process while the convex formulation of \eqref{lasso96} reduces the computational cost.

Compared to $\lasso$, other convex regularizers such as $\|\beta\|_2^2$ imposes larger penalty to
large components of $\beta$. Hence, their estimates might be more stable than
$\lasso$. Even though the solutions of many of these regularizers are not
sparse (and thus not automatically perform variable selection), we may
threshold their estimates to select variables. This observation leads us to the
following questions: can such two-stage methods with other
regularizers outperform $\lasso$ in variable selection? If so, which
regularizer should be used in the first stage? The goal of this paper is to
address these questions. In particular, we study the performances of the
two-stage variable selection (TVS) techniques mentioned above, with the first stage
based on the class of bridge estimators \cite{frank1993statistical}:
\begin{equation}
\hat{\beta}(q, \lambda)
:=
\argmin{\beta}\frac{1}{2}\|y-X\beta\|_2^2+\lambda\|\beta\|_q^q, \label{eq:bridge regression problem}
\end{equation}
where $\|\beta\|_q^q=\sum_i |\beta_i|^q$ with $q\geq 1$. Our variable selection technique  takes $\hat{\beta}(q, \lambda)$ and returns the sparse estimate $\bar{\beta} (q, \lambda,s )$ defined as follows:
\begin{equation*}
\bar{\beta} (q, \lambda, s) = \eta_0 (\hat{\beta}(q, \lambda); s^2/2),
\end{equation*}
where $\eta_0(u; \chi) = u\1{|u|\geq \sqrt{2\chi}}$ denotes the hard threshold
function and it operates on a vector in a component-wise manner. The nonzero
elements of $\bar{\beta}(q, \lambda, s)$ are used as selected
variables. In this paper, we give a thorough
investigation of such TVS techniques under the asymptotic setting
$n/p\rightarrow \delta \in (0,\infty)$. Specifically the following fundamental
questions are addressed:
\begin{center}
\begin{tabular}{p{14cm}}
    \textit{
        Which value of $q$ offers the best variable selection
        performance? Does $\lasso$ outperform the two-stage methods based
        on other bridge estimators? What is the impact of the
        signal-to-noise ratio (SNR) and the sparisty level on the optimal choice of
        $q$?
    }
\end{tabular}
\end{center}

\subsection{Our Contribution}
Different from most of the previous works, our study adopts a high-dimensional
regime in which variable selection consistency is unattainable. Under our
asymptotic framework, we are able to obtain a sharp characterization of the
variable selection ``error'' (we will clarify our definition of this error in
Section \ref{sec:asymptoticanalysisDef}). The \emph{asymptotically exact expressions} we derive for the error
 open a new way for comparing the
aforementioned variable selection techniques accurately.

It turns out that the variable selection performance of TVS is closely
connected with the estimation quality of the bridge estimator in the first
stage; a bridge estimator with a smaller asymptotic mean square error (AMSE) 
in the first stage offers a better variable selection performance in the TVS. This novel
observation enables us to connect and translate the study of TVS to the comparison of the
estimation accuracy of different bridge estimators.

Due to the nature of different $\ell_q$ regularizers, each bridge estimator
has its own strength under different model settings. To clarify the
strength and weakness of different bridge estimators, we study and compare
their AMSE under the following important scenarios: (i) rare signal scenario; (ii) large noise scenario; 
(iii) large sample scenario. For the first two, new
phenomena are discovered: the Ridge estimator is optimal among all the bridge
estimators in large noise settings; in the setting of rare signals, LASSO achieves the 
best performance when the signal strength exceeds a certain level.
However for signals below that level, other bridge estimators may outperform 
LASSO. In the large sample scenario, we connect our analyese with the
fruits of the classical low-dimensional asymptotic studies. We will provide new comparison results not 
available in classical asymptotic analyses of bridge estimators.

In summary, our studies reveal the intricate impact of
the combination of SNR and sparsity level on the estimation
of the coefficients. New insights into high-dimensional variable selection are
discovered. We present our contributions more formally in Section \ref{sec:contribution}.

\subsection{Related Work}\label{ssec:relatedwork}
The literature on variable selection is very rich. Hence, the related works we choose to discuss can only be illustrative rather than exhaustive.  

Traditional methods of variable selection include  best subset selection and stepwise procedures. Best subset selection suffers from high computational complexity and high variance. The greedy nature of stepwise procedures reduces the computational complexity, but limits the number of models that are checked by such procedures. See \cite{miller2002subset} for a comprehensive treatment of classical subset selection. To overcome these limitations, \cite{tibshirani1996regression} proposed the $\lasso$ that aims to perform variable selection and parameter estimation simultaneously.  Both the variable selection and estimation performance of $\lasso$ have been studied extensively in the past decade. It has been justified in the works of
\cite{meinshausen2006high, zhao2006model, zhang2008sparsity} that
a type of ``irrepresentable condition'' is almost sufficient and necessary to
guarantee sign consistency for the $\lasso$. Later  \cite{wainwright2009sharp}
established sharp conditions under which $\lasso$ can perform a consistent
variable selection. One implication of  \cite{wainwright2009sharp} that is
relevant to our paper is that, consistent variable selection is impossible
under the linear asymptotic regime\footnote{Throughout the paper, the linear asymptotic is referred to  the asymptotic setting with (a) and (b) in Definition 2.1 satisfied. Typically in this case, we have $n$, $p$ and the number of nonzero coefficients $k$ go to
infinity proportionally.} that we consider in this paper. This result is consistent with that of \cite{su2015false} and our paper. Hence, we should expect that both the true positive proportion (TPP) and false discovery proportion (FDP) play a major role in our analyses and comparisons. It is worth mentioning that the rate of convergence for variable selection under Hamming loss has been studied in a sequel of works \cite{genovese2012comparison, ke2014, jin2014optimality, butucea2018variable}.

Since $\lasso$ requires strong conditions for variable selection consistency,
several authors have considered a few variants, such as adaptive $\lasso$
\cite{zou2006adaptive} and thresholded $\lasso$ \cite{meinshausen2009lasso}.
Thresholded $\lasso$ is an instance of two-stage variable selection schemes we
study in this paper. \cite{meinshausen2009lasso} proved that thresholded
$\lasso$ offers a variable selection consistency under weaker conditions than
the irrepresentable condition required by $\lasso$. As we will see later, even
the thresholded $\lasso$ does not obtain variable selection consistency under
the asymptotic framework of this paper. However, we will show that it
outperforms the $\lasso$ in variable selection. Other authors have also studied
two-step or even multi-step variable selection schemes in the
hope of weakening the required conditions \cite{zhou2009thresholding,
zhang2009some, luo2014sequential, weng2017regularization}. Note that none of these methods provide
consistent variable selection under the linear asymptotic setting we consider
in this paper.  Study and comparison of these other schemes under our
asymptotic setting is an interesting open problem for future research. 

A more delicate study of the $\lasso$ estimator and more generally the bridge estimators is necessary for an accurate analysis of two-stage methods under the linear asymptotic regime. Our analysis relies on the recent results in the study of bridge estimators \cite{donoho2009message, donoho2011noise, bayati2011dynamics, bayati2012lasso, weng2018overcoming, maleki2013asymptotic, su2015false}. These papers use the platform offered by approximate message passing (AMP) to characterize sharp asymptotic properties. In particular, the most relevant work to our paper is  \cite{su2015false} which studies the solution path of LASSO through the trade-off diagram of the asymptotic $\fdp$ and $\tpp$. The present paper makes further steps in the analysis of bridge estimator based two-stage methods under various interesting signal-to-noise ratio settings that have not been considered in \cite{su2015false}.

Another line of two-stage methods is the idea of screening \cite{fan2008sure,
wasserman2009high, ji2012ups, cho2012high}. For instance, in \cite{fan2008sure} a preliminary estimate
of the $j$th regression coefficient is obtained by regressing $y$ on only the
$j$th predictor. Then a hard threshold function is applied to all the estimates
to infer the location of the non-zero coefficients. As we will discuss in
Section \ref{sec:surescreen}, this approach is a special form of our TVS with a
debiasing performed in the first stage, and hence our variable selection
technique under appropriate tuning outperforms Sure Independence Screening of
\cite{fan2008sure}. Compared to Sure Independence Screening, the work of
\cite{wasserman2009high} uses more complicated estimators  in the first stage,
which is more aligned to our approach. However, \cite{wasserman2009high}
requires data splitting. While  this data splitting achieves certain
theoretical improvement, in practice (especially in high-dimensions) this may
degrade the performance of a variable selection technique. In this paper, we
avoid data splitting. We should also mention that two-stage or
multi-stage methods (that have a thresholding step) are also popular for
estimation purposes. See for instance \cite{yang2014elementary}. Due to limited space, the current paper will be focused on variable selection and not discuss the estimation
performance of TVS. However, an accurate analysis of multi-stage estimation techniques is
an interesting problem to study. 

Finally, there exists one stream of research with emphasis on the derivation of sufficient and necessary conditions for variable selection consistency under different types of restrictions on the model parameters \cite{fletcher2009necessary, wainwright2009information, aeron2010information, wang2010information, rad2011nearly, david2017high, ndaoud2018optimal}. These works typically assume that all the entries of the design matrix $X$ and error vector $w$ are independent zero-mean Gaussian, with which they are able to obtain accurate information theoretical thresholds and phase transition for exact support recovery of the coefficients $\beta$. We refer to \cite{ndaoud2018optimal} for a detailed discussion of such results. As will be shown shortly in Section \ref{sec:asymptoticanalysisDef}, we make the same assumption on the design $X$, but allow much weaker conditions on the error term $w$. More importantly, we push the analysis one step further by analyzing a class of TVS when exact recovery is impossible information theoretically. 

\section{Our Asymptotic Framework and Some Preliminaries}\label{sec:asymptoticanalysisDef}

\subsection{Asymptotic framework}
In this section, we review the asymptotic framework under which our studies are performed. We start with the definition of a converging sequence adapted from  \cite{bayati2012lasso}. 

\begin{definition}\label{definition:asymptotics}
The sequence of instances $\{\beta(p), w(p), X(p)\}_{p\in\mathbb{N}}$, indexed by $p$, is said to be a standard converging sequence if
\begin{itemize}
 \item[(a)] $n=n(p)$ such that $\frac{n}{p}\rightarrow\delta\in(0,\infty)$.

\item[(b)] The empirical distribution of the entries of $\beta(p)$ converges
    weakly to a probability measure $p_{B}$ on $\mathbb{R}$ with finite second moment. Further, $\frac{1}{p}\sum_{i=1}^p \beta_i(p)^2$ converges to the second moment of $p_B$; and $\frac{1}{p}\sum_{i=1}^p\mathbb{I}(\beta_i(p)=0)\rightarrow p_B(\{0\})$.

\item[(c)] The empirical distribution of the entries of $w(p)$ converges weakly to a zero-mean distribution with variance $\sigma^2$. Furthermore, $\frac{1}{n}\sum_{i=1}^n w_i(p)^2 \rightarrow \sigma^2$. 

\item[(d)] $X_{ij}(p) \overset{i.i.d.}{\sim} N \left(0, \frac{1}{n} \right)$. 
\end{itemize}
\end{definition}

The asymptotic scaling $n/p\rightarrow\delta$ specified in Condition (a) was proposed by Huber in 1973 \cite{huber1973robust}, and has become one of the most popular asymptotic settings especially for studying problems with moderately large dimensions \cite{el2010high, el2013robust, donoho2016high, sur2017likelihood, dobriban2018high, sur2018modern}. Regarding Condition (b), suppose the entries of $\beta(p)$ form a stationary ergodic sequence with marginal distribution determined by some probability measure $p_B$. According to Birkhoff's ergodic theorem, it is clear that Condition (b) will hold almost surely. Thus Condition (b) can be considered as a weaker notion of this Bayesian set-up. Similar interpretation works for Condition (c). Regarding Condition (d), as discussed in Section \ref{ssec:relatedwork}, many related works assume it as well. Moreover, we would like to point out that there are a lot of empirical and a few theoretical studies revealing the universal behavior of i.i.d. Gaussian design matrices over a wider class of distributions. See \cite{bayati2015universality} and references therein. Hence, the Gaussianity of the design does not play a critical role in our final results. The numerical studies  presented in Section \ref{ssec:general-design} confirm this claim.
The independence assumption of the design entries is critical for our analysis. Given that our analyses for i.i.d. matrices are already complicated, and the obtained results are highly non-trivial (as will be seen in Section \ref{sec:contribution}), we leave the study of general design matrices for a future research. However, the numerical studies performed in Section \ref{ssec:general-design} imply that the main conclusions of our paper are valid even when the design matrix is correlated.

 In the rest of the paper, we assume the vector of regression coefficients $\beta$ is sparse. More specifically, we assume $p_B=(1-\epsilon)\delta_0+\epsilon p_G$, where $\delta_0$ denotes a point mass at 0 and $p_G$ is a probability measure without any point mass at 0. Accordingly, the mixture proportion $\epsilon$ represents the sparsity level of $\beta(p)$ in the converging sequence. Throughout the paper, $B$ and $G$ will be used as random variables with distribution specified by $p_B$ and $p_G$, respectively. $Z$ represents a standard normal random variable. Subscripts like $i$ attached to a vector are used to denote its $i$th component.
 The \textit{asymptotic mean square error} (AMSE) of the bridge estimator $\hat{\beta}(q,\lambda)$ is defined as the almost sure limit
\begin{equation}
{\rm AMSE}(q,\lambda) \triangleq \lim_{p \rightarrow \infty} \frac{1}{p} \|\hat{\beta}(q, \lambda) - \beta\|_2^2. \label{eq:def:amse}
\end{equation}
According to \cite{bayati2011dynamics, weng2018overcoming}, AMSE$(q,\lambda)$ is well defined for $q\in [1,\infty)$ and $\lambda >0$. In this paper, one of our focuses will be on bridge estimators with optimal tuning $\lambda^*_q$ defined as
\[
\lambda_q^*\triangleq \arg\min_{\lambda > 0}\amse(q, \lambda).
\]
Further, we denote the thresholded estimators as 
\[
\bar{\beta}(q, \lambda, s) = \eta_0(\hat{\beta}(q, \lambda); s^2/2)=\hat{\beta}(q,\lambda)\1{|\hat{\beta}(q, \lambda)|\geq s}. 
\]
Since under our asymptotic setting the exact recovery of the non-zero locations
of $\beta$ is impossible \cite{wainwright2009sharp, reeves2013approximate}, we
expect to observe both false positives and false negatives. Hence, for a given sparse
estimator $\hat{\beta}$, we follow \cite{su2015false} and measure its variable
selection performance by the false discovery proportion ($\fdp$) and true
positive proportion ($\tpp$), defined as: 
\begin{equation*}
\fdp(\hat{\beta})=\frac{\#\{i: \hat{\beta}_i\neq0,\beta_i=0\}}{\#\{i: \hat{\beta}_i\neq0\}}, \quad \tpp(\hat{\beta})=\frac{\#\{i:\hat{\beta}_i\neq 0,\beta_i \neq 0\}}{\#\{i: \beta_i \neq 0\}}.
\end{equation*}
In particular, our study will focus on the
asymptotic version of FDP and TPP for the LASSO estimate $\hat{\beta}(1,\lambda)$ and
thresholded estimators $\bar{\beta}(q, \lambda, s)$. We define (the
limits are in almost surely senses)
\begin{eqnarray*}
\hspace{0.2cm} {\rm AFDP}(1,\lambda)=\lim_{p \rightarrow \infty} {\rm FDP}(\hat{\beta}(1,\lambda)),~~ {\rm AFDP}(q,\lambda, s)=\lim_{p \rightarrow \infty} {\rm FDP}(\bar{\beta}(q,\lambda,s)).
\end{eqnarray*}
Similar definitions are used for ${\rm ATPP}(1,\lambda)$ and $ {\rm
ATPP}(q,\lambda, s)$.  The following result adapted from
\cite{bogdan2013supplementary} characterizes the AFDP and ATPP for $\lasso$.

\begin{lemma} \label{CITE:WEIJIE}
For any given $\lambda>0$, almost surely
\begin{eqnarray}
{\rm AFDP}(1, \lambda)
&=&
\frac{(1-\epsilon)\mathbb{P}(|Z|>\alpha)}{(1-\epsilon)\mathbb{P}(|Z|>\alpha)+\epsilon\mathbb{P}(|G+\tau Z| > \alpha \tau)}, \nonumber \\
{\rm ATPP}(1, \lambda)
&=&\mathbb{P}(|G+\tau Z| > \alpha\tau), \label{lemma:afdp:atpp:l1}
\end{eqnarray}
where $(\alpha, \tau)$ is the unique solution to the following equations with $q=1$:
\begin{eqnarray} \label{eq:fixedpointfirstappearance}      
\tau^2
&=&
\sigma^2+\frac{1}{\delta}\mathbb{E}\left(\eta_q(B+\tau Z;\alpha\tau^{2-q})-B\right)^2, \label{state_evolution1} \\
\lambda
&=&
\alpha\tau^{2-q}\big(1-\frac{1}{\delta}\mathbb{E}\eta_q'(B+\tau Z;\alpha\tau^{2-q})\big), \label{state_evolution2}
\end{eqnarray}
with $\eta_q (\cdot; \cdot)$ being the proximal operator defined as
\begin{equation*}
\eta_q (u; \chi) = \arg\min_z \frac{1}{2} (u - z)^2 + \chi |z|^q,
\end{equation*}
and $\eta_q'(\cdot;\cdot)$ being the derivative of $\eta_q$ with respect to its first argument.
\end{lemma}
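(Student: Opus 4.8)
The plan is to exploit the approximate message passing (AMP) characterization of the LASSO solution together with its state evolution recursion. First I would recall from \cite{bayati2012lasso, bayati2011dynamics} that $\hat{\beta}(1,\lambda)$ arises as the limit of an AMP iteration whose scalar state evolution describes the coordinates through the effective one-dimensional model $\hat{\beta}_i\approx \eta_1(\beta_i+\tau Z_i;\alpha\tau)$. Here $\eta_1(\cdot;\chi)$ is soft-thresholding (the $q=1$ case of the proximal operator $\eta_q$), the $Z_i$ are i.i.d.\ standard normals independent of the $\beta_i$, and $(\alpha,\tau)$ is the fixed point of \eqref{state_evolution1}--\eqref{state_evolution2} at $q=1$, for which the effective threshold is $\alpha\tau^{2-1}=\alpha\tau$. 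Concretely, AMP theory gives that for every pseudo-Lipschitz $\psi$,
\begin{equation*}
\frac{1}{p}\sum_{i=1}^p \psi\big(\hat{\beta}_i,\beta_i\big)\rightarrowas \mathbb{E}\,\psi\big(\eta_1(B+\tau Z;\alpha\tau),\,B\big),
\end{equation*}
which one first establishes along the AMP iterates $x^t$ and then transfers to the LASSO solution using $\lim_{t\to\infty}\lim_{p\to\infty}\tfrac{1}{p}\|x^t-\hat{\beta}(1,\lambda)\|_2^2=0$.

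Next I would reduce both functionals to empirical averages of counting functions. Since $\eta_1(u;\alpha\tau)\neq 0$ precisely when $|u|>\alpha\tau$, the support indicator of $\hat{\beta}_i$ is $\1{|\beta_i+\tau Z_i|>\alpha\tau}$. Splitting on whether $\beta_i=0$ and invoking $p_B=(1-\epsilon)\delta_0+\epsilon p_G$ yields the limits
\begin{align*}
\frac{1}{p}\#\{i:\hat{\beta}_i\neq 0,\ \beta_i\neq 0\}&\rightarrowas \epsilon\,\mathbb{P}\big(|G+\tau Z|>\alpha\tau\big),\\
\frac{1}{p}\#\{i:\hat{\beta}_i\neq 0,\ \beta_i= 0\}&\rightarrowas (1-\epsilon)\,\mathbb{P}\big(|Z|>\alpha\big),
\end{align*}
where on null coordinates the effective observation reduces to $\tau Z_i$, so $\{|\tau Z_i|>\alpha\tau\}=\{|Z_i|>\alpha\}$. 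Dividing the first limit by the sum of the two reproduces the claimed $\afdp$, while dividing it by $\epsilon=\lim_p \tfrac1p\#\{i:\beta_i\neq 0\}$ reproduces the claimed ATPP.

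The hard part is that these functionals are built from indicators, which are not pseudo-Lipschitz, so the limit above does not apply to them verbatim. To overcome this I would sandwich each indicator $\1{|u|>\alpha\tau}$ between Lipschitz functions agreeing with it outside a width-$\eta$ band around the threshold, apply the pseudo-Lipschitz convergence to the upper and lower envelopes, and then send $\eta\to 0$. The residual error is bounded by the limiting probability that the effective observation falls within distance $\eta$ of $\alpha\tau$; because $B+\tau Z$ (and $\tau Z$ on null coordinates) is absolutely continuous owing to the additive Gaussian term, this probability is continuous in $\eta$ and tends to $0$, and in particular $\mathbb{P}(|B+\tau Z|=\alpha\tau)=0$, so no mass sits exactly at the boundary and the upper and lower limits coincide. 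A final point to verify is that the fixed point $(\alpha,\tau)$ is unique, which together with the well-posedness of the state evolution is inherited from \cite{bayati2012lasso, bogdan2013supplementary}; this ensures the limiting constants are well defined and identical to the stated expressions.
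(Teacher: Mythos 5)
There is a genuine gap, and it sits exactly at the step you flag as ``the hard part.'' Your sandwiching argument is carried out on the effective observation $B+\tau Z$, whose law is absolutely continuous, so a width-$\eta$ band around the threshold $\alpha\tau$ indeed carries vanishing mass. But the functionals in the lemma are built from the indicator $\1{\hat{\beta}_i(1,\lambda)\neq 0}$, a functional of the LASSO solution itself, and there is no exact coordinate-wise identity $\hat{\beta}_i=\eta_1(\beta_i+\tau Z_i;\alpha\tau)$ — only a distributional (pseudo-Lipschitz / weak-convergence) statement. The limiting law of $\hat{\beta}_i$ is that of $\eta_1(B+\tau Z;\alpha\tau)$, which has an atom at $0$; any continuous majorant of $x\mapsto\1{x\neq 0}$ must equal $1$ at the origin, so the upper envelope in your sandwich is trivial and weak convergence only yields $\liminf_p \frac{1}{p}\#\{i:\hat{\beta}_i\neq 0\}\geq \mathbb{P}(|B+\tau Z|>\alpha\tau)$. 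The matching upper bound can fail if a non-negligible fraction of LASSO coordinates are nonzero but arbitrarily small, and the $\ell_2$-transfer from the AMP iterates to $\hat{\beta}(1,\lambda)$ that you invoke cannot rule this out: two vectors can be close in normalized $\ell_2$ while having supports of very different sizes.

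This is precisely the ingredient the paper's proof supplies and yours omits: an almost-sure bound of the form $\limsup_{p}\frac{1}{p}\#\{i:0<\hat{\beta}_i(1,\lambda)<h\}\leq -C/\log h$ (display \eqref{as:key2}), proved by importing an exponential-in-$p$ probability estimate from \cite{bogdan2013supplementary} on the event that many LASSO coordinates fall in $(0,h)$, intersected with a lower bound on the active-set size and an upper bound on $\sigma_{\max}(X)$, and then applying Borel--Cantelli. Only after this control on small nonzero coordinates (together with the analogous, much easier, statement $\frac1p\sum_i\mathbb{I}(0<\beta_i<h)\to 0$ coming from Definition \ref{definition:asymptotics}(b)) does the Lipschitz-approximation of the indicator close. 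Your outline would be complete for the AMP iterates, whose support is exactly a threshold-crossing event of the effective observations, but for $\hat{\beta}(1,\lambda)$ you need either this small-coordinate estimate or an exact pseudo-data representation of the LASSO support via the KKT conditions, neither of which appears in your proposal.
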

The formulas in this lemma have been derived in terms of convergence in probability in \cite{bogdan2013supplementary}. The extension to almost sure convergence is straightforward and is hence skipped. See Appendix C.1 of \cite{wang2017bridge} for more information. One of the main goals of this paper is to compare the performance of two-stage variable selection techniques with $\lasso$. In the next lemma we derive the AFDP and ATPP of the thresholded estimate $\bar{\beta}(q, \lambda, s)$. 

\begin{lemma}
\label{LEMMA:FDP:LQ}
For any given $q \in [1,\infty), \lambda>0, s > 0$, almost surely
\begin{eqnarray}
\hspace{-0.cm} {\rm AFDP}(q, \lambda, s)&=&
\frac{(1-\epsilon)\mathbb{P}(\eta_q(|Z|;\alpha) > \frac{s}{\tau})}{(1-\epsilon)\mathbb{P}(\eta_q(|Z|;\alpha) > \frac{s}{\tau})+\epsilon\mathbb{P}(|\eta_q(G+\tau Z; \alpha \tau^{2-q})|>s)}, \nonumber \\
\hspace{-0.cm} {\rm ATPP}(q, \lambda, s)&=&
\mathbb{P}(|\eta_q(G+\tau Z; \alpha \tau^{2-q})|>s), \label{afdp:atpp}
\end{eqnarray}
where $(\alpha, \tau)$ is the unique solution of \eqref{state_evolution1} and \eqref{state_evolution2}.
\end{lemma}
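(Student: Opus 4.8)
The plan is to reduce the whole statement to the master convergence result furnished by the approximate message passing (AMP) state evolution for bridge regression, established in the works cited in Section \ref{ssec:relatedwork} (the AMP dynamics and its extension to $\ell_q$ penalties). That theory yields, for every pseudo-Lipschitz test function $\psi:\mathbb{R}^2\to\mathbb{R}$ of order two, the almost sure limit
$$\frac{1}{p}\sum_{i=1}^p \psi\big(\hat\beta_i(q,\lambda),\beta_i\big) \rightarrowas \mathbb{E}\,\psi\big(\eta_q(B+\tau Z;\alpha\tau^{2-q}),\,B\big),$$
where $(\alpha,\tau)$ is the fixed point of \eqref{state_evolution1}--\eqref{state_evolution2}, with $B\sim p_B$ and $Z\sim N(0,1)$ independent. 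Equivalently, the joint empirical law of $(\hat\beta_i,\beta_i)$ converges to that of $(\hat B,B)$, where $\hat B:=\eta_q(B+\tau Z;\alpha\tau^{2-q})$. This is exactly the ingredient underlying Lemma \ref{CITE:WEIJIE}, and the upgrade from convergence in probability to almost sure convergence proceeds as in Appendix \ref{apxb:31}.

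Granting this, I would write both quantities as ratios of empirical averages of indicator functionals of the pair $(\hat\beta_i,\beta_i)$, namely
$$\tpp(\bar\beta)=\frac{\frac1p\sum_i \1{|\hat\beta_i|\ge s,\;\beta_i\ne0}}{\frac1p\sum_i \1{\beta_i\ne0}},\qquad \fdp(\bar\beta)=\frac{\frac1p\sum_i \1{|\hat\beta_i|\ge s,\;\beta_i=0}}{\frac1p\sum_i \1{|\hat\beta_i|\ge s}}.$$
The denominator of $\tpp$ tends to $\epsilon$ by Condition (b) of Definition \ref{definition:asymptotics}. Applying the master limit to the remaining averages, the numerator of $\tpp$ tends to $\mathbb{E}\,\1{|\hat B|\ge s,\,B\ne0}=\epsilon\,\mathbb{P}(|\eta_q(G+\tau Z;\alpha\tau^{2-q})|>s)$, giving the ATPP formula. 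For $\fdp$ the denominator tends to $\mathbb{P}(|\hat B|\ge s)$ while the numerator tends to $\mathbb{P}(|\hat B|\ge s,\,B=0)=(1-\epsilon)\,\mathbb{P}(|\eta_q(\tau Z;\alpha\tau^{2-q})|>s)$; splitting the denominator over $\{B=0\}$ and $\{B\ne0\}$ produces the two-term denominator in \eqref{afdp:atpp}.

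To put the false-positive term into the stated shape I would invoke the scaling identity $\eta_q(\tau u;\alpha\tau^{2-q})=\tau\,\eta_q(u;\alpha)$, which follows from the substitution $z=\tau w$ in the definition of $\eta_q$. Combined with the oddness and monotonicity of $\eta_q(\cdot;\alpha)$, so that $|\eta_q(Z;\alpha)|=\eta_q(|Z|;\alpha)$, this converts $\mathbb{P}(|\eta_q(\tau Z;\alpha\tau^{2-q})|>s)$ into $\mathbb{P}(\eta_q(|Z|;\alpha)>s/\tau)$, matching \eqref{afdp:atpp}.

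The main obstacle is that both the selection functional $\1{|\hat\beta_i|\ge s}$ and the support functional $\1{\beta_i=0}$ are discontinuous, whereas the AMP limit applies to (pseudo-)Lipschitz functions. I would resolve this by sandwiching: approximate $\1{|u|\ge s}$ from above and below by Lipschitz functions that agree with it outside a $\zeta$-band around $\pm s$, pass to the limit, and send $\zeta\downarrow0$. The two bounds share the same limit because, for $s>0$, the law of $|\hat B|$ is absolutely continuous away from the origin---its only possible atom (at $0$, present when $q=1$) lies strictly below $s$---so $\mathbb{P}(|\hat B|=s)=0$. The jump at $\beta_i=0$ is handled using that $p_B$ has its sole atom at $0$: replacing $\1{\beta_i=0}$ by a continuous bump on $[-\zeta,\zeta]$ overcounts only the contribution of signals with $0<|\beta_i|\le\zeta$, whose limiting mass $\mathbb{P}(0<|B|\le\zeta)$ vanishes as $\zeta\downarrow0$ since $p_G$ has no mass at $0$. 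Verifying that these boundary sets are null under the limit law of $(\hat B,B)$ is where the genuine work lies; the rest is bookkeeping.
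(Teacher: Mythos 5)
Your proposal is correct and follows essentially the same route as the paper's proof in Appendix \ref{apxb:32}: both reduce the claim to the almost-sure weak convergence of the joint empirical law of $(\hat\beta_i,\beta_i)$ to that of $(\eta_q(B+\tau Z;\alpha\tau^{2-q}),B)$ furnished by the AMP state evolution, handle the discontinuity of the indicator functionals via the boundary sets being null under the limit law (the paper does this with the portmanteau lemma along a sequence $t_m\downarrow 0$ avoiding atoms of $G$, plus a Cauchy--Schwarz bound on the discrepancy from $\mathbb{P}(0<|G|\le t_m)\to 0$; your Lipschitz sandwiching is the same argument in a different guise), and finish with the scaling identity $\eta_q(\tau u;\alpha\tau^{2-q})=\tau\,\eta_q(u;\alpha)$. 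Your explicit verification that $\mathbb{P}(|\hat B|=s)=0$ for $s>0$ is a point the paper leaves implicit, but otherwise the two arguments coincide.
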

The proof of this lemma is presented in Appendix \ref{apxnewb}. 

\section{Our Main Contribution}\label{sec:contribution}

\subsection{How to compare two variable selection schemes?}

The main objective of this paper is to compare the performance of the TVS techniques under the asymptotic setting of Section \ref{sec:asymptoticanalysisDef}.  A natural way for performing this comparison is to set $\atpp$ to a fixed value $\zeta\in [0,1]$ for different variable selection schemes and then compare their $\afdp$s.

The first challenge we face in such a comparison is that the TVS
may have many different ways for setting $\atpp$ to $\zeta$. If
$q>1$, Lemma \ref{LEMMA:FDP:LQ} shows that for every given value of the
regularization parameter $\lambda$, we can set $s$ (the threshold parameter) in
a way that it returns the right level of $\atpp$. Which of these parameter
choices should be used when we compare a TVS with another variable selection
technique, such as LASSO? Despite the fact that different choices of
$(\lambda,s)$ achieve the same $\atpp$ level $\zeta$, they may result in
different values of $\afdp$. Thus for fair comparison we pick the one that minimizes $\afdp$. The next theorem explains how this optimal pair can be found.

\begin{theorem}\label{THM:MSEMINFDRMIN}
Consider $q \in (1, \infty)$. Given an {\rm ATPP} level $\zeta \in [0,1]$, for every value of $\lambda>0$ there exists $s=s(\lambda, \zeta)$ such that ${\rm ATPP}(q, \lambda, s) = \zeta$. Furthermore, the value of $\lambda$ that minimizes ${\rm AFDP}(q,\lambda,s(\lambda, \zeta))$ also minimizes {\rm AMSE}$(q,\lambda)$.
\end{theorem}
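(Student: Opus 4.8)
The plan is to collapse the whole comparison onto a single scalar, the effective noise level $\tau$ appearing in \eqref{state_evolution1}, and to show that both $\amse(q,\lambda)$ and $\afdp\big(q,\lambda,s(\lambda,\zeta)\big)$ are \emph{strictly increasing functions of} $\tau$; since a strictly monotone reparametrization preserves the $\arg\min$, they must then be minimized at the same $\lambda$. The first and easiest step is to record that the coordinatewise AMP characterization underlying \eqref{state_evolution1} gives $\amse(q,\lambda)=\mathbb{E}\big(\eta_q(B+\tau Z;\alpha\tau^{2-q})-B\big)^2$, so that \eqref{state_evolution1} reads $\amse(q,\lambda)=\delta(\tau^2-\sigma^2)$. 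Thus minimizing $\amse(q,\lambda)$ over $\lambda$ is exactly minimizing $\tau$ over $\lambda$.

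The key structural simplification, and the reason the theorem restricts to $q\in(1,\infty)$, is that $\eta_q(\cdot;\alpha)$ is then an odd, continuous, strictly increasing bijection of $\mathbb{R}$ fixing the origin, obeying the scaling identity $\eta_q(cu;\alpha c^{2-q})=c\,\eta_q(u;\alpha)$ for $c>0$. Taking $c=\tau$ yields $\eta_q(G+\tau Z;\alpha\tau^{2-q})=\tau\,\eta_q(G/\tau+Z;\alpha)$, and likewise for the null part. Consequently each threshold event reduces to a threshold on the \emph{pre-proximal} Gaussian argument: setting $u_0:=\eta_q^{-1}(s/\tau;\alpha)\in(0,\infty)$, one obtains $\atpp(q,\lambda,s)=\mathbb{P}(|G/\tau+Z|>u_0)$ together with the null-selection probability $V:=\mathbb{P}\big(\eta_q(|Z|;\alpha)>s/\tau\big)=\mathbb{P}(|Z|>u_0)=2\bar\Phi(u_0)$. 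This absorbs both $\alpha$ and $s$ into the single threshold $u_0$, and it settles the existence claim at once: for fixed $\lambda$, as $s$ ranges over $(0,\infty)$ so does $u_0$, and $\mathbb{P}(|G/\tau+Z|>u_0)$ decreases continuously and strictly from $1$ to $0$, so a unique $s=s(\lambda,\zeta)$ realizes any $\zeta\in(0,1)$.

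It remains to show $\afdp$ is increasing in $\tau$. Since $\afdp(q,\lambda,s)=\frac{(1-\epsilon)V}{(1-\epsilon)V+\epsilon\zeta}$ with $\zeta$ held fixed, $\afdp$ is strictly increasing in $V=2\bar\Phi(u_0)$, hence strictly decreasing in $u_0$; so it suffices to prove that, along the constraint $\zeta=\mathbb{P}(|G/\tau+Z|>u_0)$, the threshold $u_0$ is a strictly decreasing function of $\tau$. This is the genuine computational step. Writing $\zeta=\mathbb{E}_G\big[\bar\Phi(u_0-G/\tau)+\bar\Phi(u_0+G/\tau)\big]$ and differentiating gives $\partial\zeta/\partial u_0=-\mathbb{E}_G\big[\phi(u_0-G/\tau)+\phi(u_0+G/\tau)\big]<0$ and
\[
\frac{\partial\zeta}{\partial\tau}=-\frac{1}{\tau^2}\,\mathbb{E}_G\big[\,G\,\big(\phi(u_0-G/\tau)-\phi(u_0+G/\tau)\big)\big].
\]
The integrand $G\big(\phi(u_0-G/\tau)-\phi(u_0+G/\tau)\big)$ is strictly positive for every $G\neq0$: when $G>0$ one has $|u_0-G/\tau|<|u_0+G/\tau|$, so $\phi(u_0-G/\tau)>\phi(u_0+G/\tau)$, and the case $G<0$ is symmetric. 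Because $p_G$ carries no mass at $0$, this forces $\partial\zeta/\partial\tau<0$, whence by the implicit function theorem $du_0/d\tau|_\zeta=-(\partial\zeta/\partial\tau)/(\partial\zeta/\partial u_0)<0$.

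Assembling the pieces, $u_0$ decreasing in $\tau$ makes $V=2\bar\Phi(u_0)$ strictly increasing in $\tau$, hence $\afdp\big(q,\lambda,s(\lambda,\zeta)\big)$ is a strictly increasing function of $\tau(\lambda)$; so is $\amse(q,\lambda)=\delta(\tau^2-\sigma^2)$. Both are therefore minimized at the same $\lambda$, which is the assertion. I expect the real obstacle to be precisely the sign determination for $\partial\zeta/\partial\tau$ above, i.e. the monotonicity of $u_0$ in $\tau$; the hypothesis $q>1$ is essential both there and in the change of variables, since at $q=1$ the soft-threshold plateau of $\eta_1(\cdot;\alpha)$ at the origin breaks the bijectivity of $\eta_q(\cdot;\alpha)$ on which the reduction to $u_0$ rests.
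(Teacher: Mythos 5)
Your proof is correct and follows essentially the same route as the paper's: both reduce the problem to the pre-proximal threshold $u_0=\eta_q^{-1}(s/\tau;\alpha)$ via the scaling and strict monotonicity of $\eta_q$ for $q>1$, and both rest on the single fact that $\mathbb{P}(|G/\tau+Z|>u_0)$ is strictly decreasing in $\tau$ --- your sign computation for $\partial\zeta/\partial\tau$ is exactly the infinitesimal form of the paper's observation that $\mu\mapsto\mathbb{P}(|\mu+Z|>t)$ is strictly increasing on $[0,\infty)$. The only packaging difference is that you run the monotone comparative statics through the implicit function theorem, whereas the paper does a two-point comparison by contradiction between $\lambda_q^*$ and an arbitrary $\lambda$; both are valid.
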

The proof of this theorem can be found in Appendix
\ref{proof:THM:MSEMINFDRMIN}. Before discussing the implications of this
theorem, we state a similar result for LASSO.

\begin{theorem}\label{THM:LASSOFDPCOMPARE}
For any $\zeta \in [0, {\rm ATPP}(1,\lambda_1^*)]$, there exists at least one $\lambda$ s.t. ${\rm ATPP}(1, \lambda)=\zeta$. Further there exists a unique $s=s_\zeta$ such that ${\rm ATPP}(1, \lambda_1^*, s) = \zeta$. There may also exist other $(\lambda, s)$ s.t. ${\rm ATPP}(1, \lambda, s) = \zeta$. Among all these estimators, the one that offers the minimal ${\rm AFDP}$ is $\bar{\beta}(1, \lambda^*_1, s_\zeta)$, i.e., the two-stage $\lasso$ with the optimal tuning value $\lambda=\lambda_1^*$.
\end{theorem}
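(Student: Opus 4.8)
The plan is to treat this as the $q=1$ counterpart of Theorem \ref{THM:MSEMINFDRMIN}, exploiting the fact that the proximal operator is now the explicit soft-threshold $\eta_1(u;\chi)=\sgn(u)(|u|-\chi)_+$ to collapse the formulas of Lemma \ref{LEMMA:FDP:LQ} into a single scalar comparison in the state-evolution noise $\tau$. Writing $(\alpha,\tau)$ for the fixed point attached to a given $\lambda$ through \eqref{state_evolution1}--\eqref{state_evolution2}, I would first record that $|\eta_1(G+\tau Z;\alpha\tau)|=(|G+\tau Z|-\alpha\tau)_+$ and $\eta_1(|Z|;\alpha)=(|Z|-\alpha)_+$. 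Introducing the single effective threshold $t:=\alpha\tau+s$ (so that $s\ge0\Leftrightarrow t\ge\alpha\tau$) and using the cancellation $\alpha+s/\tau=t/\tau$, the formulas become, for every $s\ge0$,
\[
\atpp(1,\lambda,s)=\mathbb{P}(|G+\tau Z|>t),\qquad
\afdp(1,\lambda,s)=\frac{(1-\epsilon)\mathbb{P}(|Z|>t/\tau)}{(1-\epsilon)\mathbb{P}(|Z|>t/\tau)+\epsilon\,\mathbb{P}(|G+\tau Z|>t)}.
\]
This cancellation, unavailable for $q>1$, is exactly what makes the LASSO case clean, and it is also what forces the feasibility restriction $\zeta\le\atpp(1,\lambda)$ that is absent in Theorem \ref{THM:MSEMINFDRMIN}.

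Next I would dispatch the two existence claims. Continuity of $\lambda\mapsto(\alpha(\lambda),\tau(\lambda))$ together with $\atpp(1,\lambda)\to0$ as $\lambda\to\infty$ (the estimate vanishes under heavy regularization) lets the intermediate value theorem on $[\lambda_1^*,\infty)$ produce, for each $\zeta\in[0,\atpp(1,\lambda_1^*)]$, at least one $\lambda$ with $\atpp(1,\lambda)=\zeta$. Fixing instead $\lambda=\lambda_1^*$ and writing $(\alpha_*,\tau_*)$ for its fixed point, the reduced formula $\atpp(1,\lambda_1^*,s)=\mathbb{P}(|G+\tau_*Z|>\alpha_*\tau_*+s)$ is continuous and strictly decreasing in $s$, from $\atpp(1,\lambda_1^*)$ down to $0$ (the law of $G+\tau_*Z$ has a density since $\tau_*>0$), which yields the unique $s_\zeta$.

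For the optimality statement I would reparametrize by $u:=t/\tau$. Since $\epsilon\zeta$ is held fixed, $\afdp$ is a strictly increasing function of the false-positive mass $(1-\epsilon)\mathbb{P}(|Z|>u)$, so minimizing $\afdp$ is equivalent to maximizing $u$. Rewriting the power constraint as $\mathbb{P}(|G/\tau+Z|>u)=\zeta$ shows the feasible $u$ is a function $u(\tau)$ of $\tau$ \emph{alone}, while the feasibility condition $s\ge0$ reads $u(\tau)\ge\alpha$, i.e. $\atpp(1,\lambda)\ge\zeta$. The heart of the argument is to prove $u(\tau)$ is strictly decreasing: for fixed $u$ and $G=g\neq0$, $\mathbb{P}(|g/\tau+Z|>u)=\Phi(|g|/\tau-u)+\Phi(-|g|/\tau-u)$ has derivative in $|g|/\tau$ equal to $\phi(|g|/\tau-u)-\phi(|g|/\tau+u)>0$ (with $\phi,\Phi$ the standard normal density and distribution function), so the probability is strictly decreasing in $\tau$; integrating over $p_G$, which has no atom at $0$, and applying the implicit function theorem gives $u'(\tau)<0$. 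Consequently, at fixed level $\zeta$, $\afdp$ is a strictly increasing function of $\tau$.

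It then remains to minimize $\tau$ over the feasible set. Combining the AMP characterization of the AMSE with \eqref{state_evolution1} gives $\amse(1,\lambda)=\delta(\tau(\lambda)^2-\sigma^2)$, so $\amse(1,\lambda)$ and $\tau(\lambda)$ are minimized by the same $\lambda$, namely $\lambda_1^*$, with minimal value $\tau_*$. Since $\tau_*$ is the global minimum of $\tau(\lambda)$ and $\lambda_1^*$ is feasible (because $\zeta\le\atpp(1,\lambda_1^*)$), it is a fortiori the minimizer over the feasible set; hence any competing $(\lambda,s)$ achieving $\atpp=\zeta$ has $\tau(\lambda)\ge\tau_*$ and therefore weakly larger $\afdp$, with equality forcing $\tau(\lambda)=\tau_*$, hence $\lambda=\lambda_1^*$ and $s=s_\zeta$. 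The main obstacle is precisely the monotonicity $u'(\tau)<0$, where the Gaussianity of $Z$ and the absence of an atom of $p_G$ at the origin genuinely enter; a secondary point is that the uniqueness of the optimal estimator requires $\lambda_1^*$ to be the unique minimizer of $\amse(1,\lambda)$, which I would invoke from the strict quasi-convexity of the LASSO AMSE along the state-evolution path.
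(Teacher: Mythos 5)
Your proof is correct and follows essentially the same route as the paper's: the existence and uniqueness claims are handled by the same continuity/monotonicity arguments, and the optimality claim rests on the same two facts --- that $\mathbb{P}(|\mu+Z|>t)$ is strictly increasing in $\mu\ge 0$ (your computation $\phi(m-u)-\phi(m+u)>0$) and that ${\rm AMSE}(1,\lambda)=\delta(\tau(\lambda)^2-\sigma^2)$, so minimizing ${\rm AFDP}$ at fixed ${\rm ATPP}$ reduces to minimizing $\tau$ over the feasible tunings. Your reparametrization by the effective threshold $u=\alpha+s/\tau$ and the direct statement $u'(\tau)<0$ is a cleaner packaging of the paper's proof by contradiction, but the mathematical content is identical.
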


The proof of this theorem can be found in Appendix \ref{proof:THM:LASSOFDPCOMPARE}. There are a couple of points we would like to emphasize here:
\begin{enumerate}[(i)]
	\item
	Consider a TVS technique. According to
        Theorems \ref{THM:MSEMINFDRMIN} and \ref{THM:LASSOFDPCOMPARE}, for
        $q\in (1,\infty)$, the optimal choice of $\lambda$ does not depend on
        the $\atpp$ level $\zeta$ we are interested in. Even for $q=1$, the
        optimal choice of $\lambda$ is independent of $\zeta$ in a large range
        of $\atpp$s. It is the optimal tuning $\lambda^*_q$ for AMSE.
	\item
	An implication of Theorem \ref{THM:LASSOFDPCOMPARE} is that, for a wide
        range of $\zeta$, a second thresholding step helps with the variable
        selection of $\lasso$. Figure \ref{fig:Lasso_roc_cpr} compares the
        AFDP-ATPP curve of $\lasso$ with that of the two-stage $\lasso$. As is
        clear in this figure, when SNR is higher,
        the gap between the performance of two-stage $\lasso$ and $\lasso$
        becomes larger. We should emphasize that the $\atpp$ level
        of the two-stage $\lasso$ (with optimal tuning) can not exceed that of $\hat{\beta}(1,\lambda^*_1)$. We discuss \textit{debiasing} to resolve this issue in Section \ref{ssec:discuss:debias}.

	\item Theorems \ref{THM:MSEMINFDRMIN} and \ref{THM:LASSOFDPCOMPARE} do
            not explain how $\lambda_q^*$ can be estimated in practice. This
            issue will be discussed in Section \ref{sec:simulation}. But in  a
            nutshell, any approach that optimizes $\lambda$ for minimizing the
            out-of-sample prediction error works well. 

\end{enumerate}
    
{
\setlength{\tabcolsep}{0pt}
\begin{figure}[t!]
\begin{center}
\begin{tabular}{rccc}
    & \scriptsize{$\sigma=0.5$} & \scriptsize{$\sigma=0.22$} & \scriptsize{$\sigma=0.15$} \\
	\rotatebox{90}{\qquad\qquad\quad\tiny{AFDP}} & \includegraphics[scale=0.4]{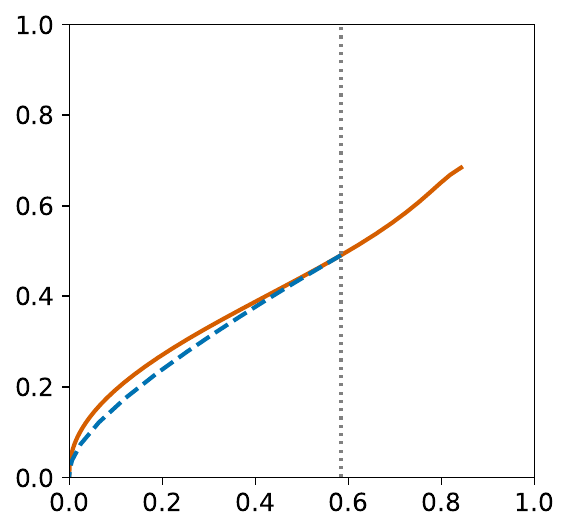}
	&
	\includegraphics[scale=0.4]{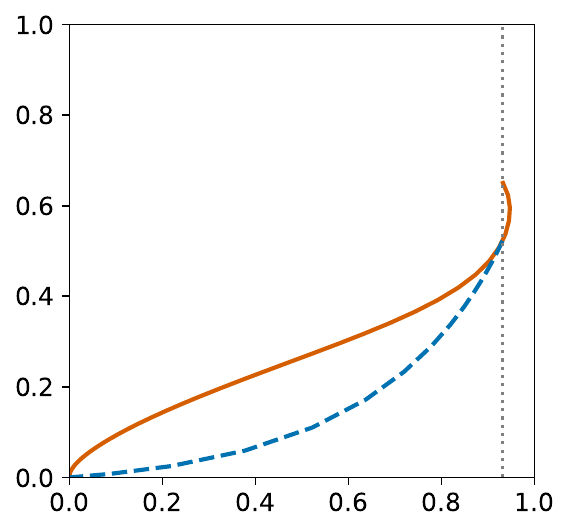}
	&
	\includegraphics[scale=0.4]{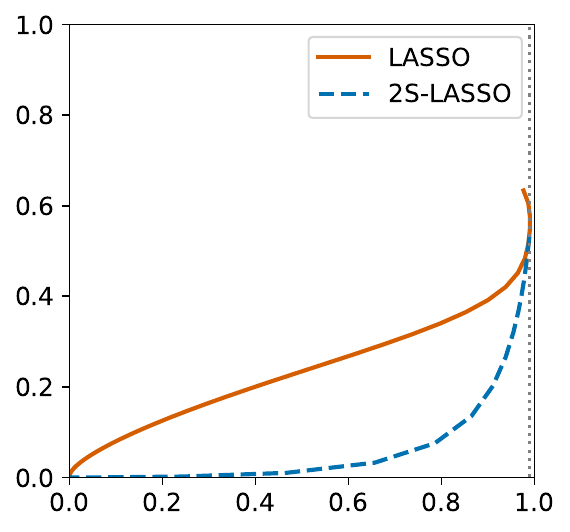} \\
	& \tiny{ATPP} & \tiny{ATPP} & \tiny{ATPP}
\end{tabular}
\caption{Comparison of {\rm AFDP-ATPP} curve between $\lasso$ and two-stage $\lasso$. Here we pick the setting $\delta=0.8$, $\epsilon=0.3$, $\sigma\in\{0.5, 0.22, 0.15\}, p_G=\delta_1$. For two-stage LASSO, we use optimal tuning $\lambda^*_1$ in the first stage. All the curves are calculated based on Equations \eqref{lemma:afdp:atpp:l1} and \eqref{afdp:atpp}. The gray dotted line is the upper bound of {\rm ATPP} that the two-stage $\lasso$ can reach. Notice that even for $\lasso$, there is an upper bound which it cannot exceed.}\label{fig:Lasso_roc_cpr}
\end{center}
\end{figure}
}

\begin{remark}
Theorems \ref{THM:MSEMINFDRMIN} and \ref{THM:LASSOFDPCOMPARE} prove that the optimal way to use two-stage variable selection is to set $\lambda =\lambda^*_q$ for the regularization parameter in the first stage. It is important to point out that $\lambda_q^*$ minimizes AMSE$(q, \lambda)$ and thus is the optimal tuning for parameter estimation. Therefore, the optimal tuning of the regularization parameter in bridge regression is the same for estimation and variable selection.  
\end{remark}

In the rest of the paper we will use the notation $s^*_q(\zeta)$ for the value of threshold that satisfies ${\rm ATPP}(q, \lambda^*_q, s^*_q(\zeta)) = \zeta$.

\subsection{The best bridge estimator for variable selection} \label{three:settings}
\subsubsection{Summary}
The two theorems we presented in the last section pave our way in addressing the question we raised in Section \ref{ssec:motive}, i.e., finding the best bridge estimator based TVS technique. Consider $q_1, q_2 \in [1,\infty)$. We would like to compare ${\rm AFDP}(q_1, \lambda^*_{q_1}, s^*_{q_1}(\zeta))$ and ${\rm AFDP}(q_2, \lambda^*_{q_2}, s^*_{q_2}(\zeta))$. The following corollary of Theorems \ref{THM:MSEMINFDRMIN} and \ref{THM:LASSOFDPCOMPARE}  shows the equivalence of the variable selection and estimation performance of bridge estimators. 

\begin{corollary}
\label{THM:FDPCOMPARE:LQ}
Let $q_1,q_2\geq 1$. If ${\rm AMSE}(q_1,\lambda^*_{q_1}) < {\rm AMSE}(q_2,\lambda^*_{q_2})$, then for every $\zeta \in [0,1]$
\[
{\rm AFDP}(q_1, \lambda^*_{q_1}, s^*_{q_1}(\zeta)) \leq {\rm AFDP}(q_2, \lambda^*_{q_2}, s^*_{q_2}(\zeta)).
\]
\end{corollary}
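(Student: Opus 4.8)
The plan is to reduce the whole comparison to a one-dimensional monotonicity statement in the effective noise level $\tau$. First I would translate the hypothesis on $\amse$ into a statement about $\tau$. From the state evolution equation \eqref{state_evolution1} together with the AMP characterization of the limiting mean square error, one has
\[
\amse(q,\lambda)=\mathbb{E}\big(\eta_q(B+\tau Z;\alpha\tau^{2-q})-B\big)^2=\delta(\tau^2-\sigma^2),
\]
so $\amse(q,\lambda)$ is a strictly increasing function of $\tau^2$. Writing $\tau_i:=\tau(q_i,\lambda^*_{q_i})$ for the fixed-point noise level at the optimal tuning, the assumption $\amse(q_1,\lambda^*_{q_1})<\amse(q_2,\lambda^*_{q_2})$ is therefore \emph{equivalent} to $\tau_1<\tau_2$, and this is the only way the hypothesis will enter.

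Next I would strip the $\afdp$ formula of Lemma \ref{LEMMA:FDP:LQ} down to its essential content. For a fixed $\atpp=\zeta$, the quantity $\afdp(q,\lambda,s)$ is strictly increasing in the false-positive term $\fp:=\mathbb{P}(\eta_q(|Z|;\alpha)>s/\tau)$, so it suffices to show that the scheme with the smaller $\tau$ achieves the smaller $\fp$ at every common $\atpp$ level. The key structural observation is that $\eta_q(\cdot;\chi)$ is odd and non-decreasing, so for each $s>0$ the selection event $\{|\eta_q(u;\chi)|>s\}$ coincides with $\{|u|>t\}$ for a single effective threshold $t=t(s)>0$; equivalently, the two-stage rule is exactly hard-thresholding the pre-proximal variable $B+\tau Z$ at level $t$. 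Using the scaling identity $\eta_q(\tau u;\alpha\tau^{2-q})=\tau\,\eta_q(u;\alpha)$ for the null coordinates, the two operating characteristics become
\[
\fp=\mathbb{P}(|\tau Z|>t)=\mathbb{P}(|Z|>t/\tau),\qquad \zeta=\mathbb{P}(|G+\tau Z|>t),
\]
so that all dependence on $q$ and $\alpha$ is absorbed into the pair $(\tau,t)$.

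Setting $a:=t/\tau$, I would finish with a monotonicity argument. With $\fp=2\Phi(-a)$ and $\zeta=\mathbb{P}(|G/\tau+Z|>a)=:\Psi_\tau(a)$, the map $a\mapsto\Psi_\tau(a)$ is strictly decreasing (pinning down $a$ uniquely from $\zeta$), while the elementary fact that $c\mapsto\mathbb{P}(|c+Z|>a)=\Phi(c-a)+\Phi(-c-a)$ is increasing on $[0,\infty)$ shows, after integrating over $G\sim p_G$ (which has no atom at $0$, so $|G|>0$ a.s.), that $\Psi_\tau(a)$ is decreasing in $\tau$ for each fixed $a$. Hence if $\Psi_{\tau_1}(a_1)=\Psi_{\tau_2}(a_2)=\zeta$ with $\tau_1<\tau_2$, then $\Psi_{\tau_1}(a_2)>\Psi_{\tau_2}(a_2)=\Psi_{\tau_1}(a_1)$, and strict monotonicity in $a$ forces $a_1>a_2$. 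Therefore $\fp_1=2\Phi(-a_1)<2\Phi(-a_2)=\fp_2$, which, via the monotonicity of $\afdp$ in $\fp$ established above, yields $\afdp(q_1,\lambda^*_{q_1},s^*_{q_1}(\zeta))\le\afdp(q_2,\lambda^*_{q_2},s^*_{q_2}(\zeta))$.

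The step I expect to be the main obstacle is the reduction to a single effective threshold $t$ together with the bookkeeping of attainable $\atpp$ levels. For $q>1$ the operator $\eta_q(\cdot;\chi)$ is a strictly increasing bijection of $[0,\infty)$, so $t$ sweeps all of $(0,\infty)$ and every $\zeta\in(0,1)$ is attainable; for $q=1$ the soft-threshold is flat on $[-\chi,\chi]$, so $t$ only ranges over $(\chi,\infty)=(\alpha\tau,\infty)$, which reproduces the $\atpp$ ceiling appearing in Theorem \ref{THM:LASSOFDPCOMPARE}. The comparison must therefore be read on the range of $\zeta$ attainable by both schemes, with the endpoints $\zeta\in\{0,1\}$ handled by the limits $t\to\infty$ and $t\to0^+$, and one must check that $s^*_q(\zeta)$ is well defined there. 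Verifying these domain issues, rather than the monotonicity itself, is where the care is needed.
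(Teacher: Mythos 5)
Your proof is correct and follows essentially the same route as the paper's: both reduce the hypothesis to $\tau_1<\tau_2$ via ${\rm AMSE}=\delta(\tau^2-\sigma^2)$, absorb the proximal operator into an effective hard threshold on $B+\tau Z$ (the paper writes this as $\eta_q^{-1}(s/\tau;\alpha)$), and use monotonicity of $\mathbb{P}(|G/\tau+Z|>a)$ in both $\tau$ and $a$ to force the smaller-$\tau$ scheme to operate at a larger standardized threshold and hence a smaller null exceedance probability, which makes the AFDP smaller since it is increasing in that quantity at fixed ATPP. Your direct two-variable monotonicity argument merely replaces the paper's proof by contradiction, and your caveat about the attainable ATPP range for $q=1$ is a genuine point the paper glosses over (its proof only remarks that $\zeta\in\{0,1\}$ gives $s_1=s_2\in\{0,\infty\}$), but neither difference changes the substance.
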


The proof of this result is presented in Appendix \ref{proof:THM:FDPCOMPARE:LQ}. According to Corollary \ref{THM:FDPCOMPARE:LQ}, in order to see which two-stage method is better, we can compare their $\amse$ under optimal tuning $\lambda_q^*$. Such $\amse$ is given by (see Theorem \ref{theorem:amp:bridge2} and Lemma \ref{lem:opttunesimpler} in the appendix)
\begin{equation*}
{\rm AMSE}(q, \lambda^*_q) = \mathbb{E}\left(\eta_q(B+\tau_* Z;\alpha_*\tau_*^{2-q})-B\right)^2,
\end{equation*}
where $\tau_*$ and $\alpha_*$ satisfy \eqref{state_evolution1} and \eqref{state_evolution2} with $\lambda=\lambda_q^*$. \\

The stage is finally set for comparing different two-stage variable selection
techniques. Note that in the calculation of ${\rm AMSE}(q, \lambda^*_q)$, the
values of $\alpha_*$ and $\tau_*$ are required and can only be calculated
through the fixed point equations \eqref{state_evolution1} and
\eqref{state_evolution2}. Therefore, we have no access to an explicit formula
for ${\rm AMSE}(q, \lambda^*_q)$. Furthermore, $\amse$ depends on many factors
including $\delta$, $\sigma$ and $p_B$. This poses an extra challenge to
completely evaluate and compare $\amse$ for different values of $q$. To address
these issues, we focus on a few regimes that
researchers have found useful in applications, and develop techniques to obtain explicit 
and accurate expressions for ${\rm AMSE}(q, \lambda^*_q)$. These
sharp results enable an accurate comparison among different TVS methods in each setting. The regimes we will consider are the following:   
\begin{itemize}

       \item[(i)] Nearly black objects or rare signals: In this regime,
           $\epsilon$ is assumed to be small. In other words, there are very
           few non-zero coefficients that need to be detected. This model is
           called nearly black objects \cite{donoho1992maximum} or rare signals
           \cite{donoho2015higher}. Intuitively speaking, it is also equivalent
           to the models considered in many other papers in which the sparsity
           level is assumed to be much smaller than the number of features. See
           for instance, \cite{meinshausen2006high, zhao2006model,
           zhang2008sparsity} and the references therein. We
           will allow the signal strength to vary with respect to $\epsilon$. It turns
           out that the rate of signal strength affects the choice of optimal bridge
           estimator.

	\item[(ii)] Low SNR: In this model, $\sigma$ is considered to be large. This assumption is accurate in many social and medical studies. For more information, the reader may refer to \cite{hastie2017extended}. To explain the effect of SNR on the best choice of $q$, we will also mention a result for high SNR. Such assumption is also standard in the engineering applications, where the quality of measurements is carefully controlled.  The analysis that is performed under the low noise setting is often called phase transition analysis, noise sensitivity analysis, or nearly exact recovery. See for instance \cite{oymak2016sharp, donoho2005sparse, donoho2011noise}.

	\item[(iii)] Large sample regime: In this regime the per-feature sample size
            $\delta$ is large. This regime, as will be seen later, is closely
            related to the classical asymptotic regime $n/p \rightarrow
            \infty$, and is appropriate for traditional applied statistical problems. See for instance \cite{knight2000asymptotics} for the asymptotic analysis of bridge estimators.

\end{itemize}

\subsubsection{Analysis of {\rm AMSE} for nearly black objects}
\label{sssec:nbo}
As discussed in the preceding section, the formulas of $\amse$ are implicit and
depend on $\delta$, $\sigma$ and $p_B$ in a complicated way. The goal of this
section is to obtain explicit and accurate expressions for ${\rm AMSE}(q,
\lambda_q^*)$ when $\epsilon$ is small (i.e. the signal is very sparse).
Towards this goal, a critical issue as made  in e.g. \cite{donoho1992maximum} for the case of orthogonal design, is that the strength of the signal affects the performance of each estimator. Hence, in our analysis we let the strength of the signal vary with $\epsilon$. This generalization requires an extra notation we introduce here. Recall $G$ is the random variable with probability measure $p_G$, which determines the values of the non-zero entries of $\beta$. Define
\[
b_{\epsilon}=\sqrt{\mathbb{E}G^2}, \quad \tilde{G}=G/b_{\epsilon}.
\]

Under this parameterization, $\mathbb{E}\tilde{G}^2=1$ and $b_\epsilon$ represents the (average) magnitude of each non-zero coefficient. We refer to $b_{\epsilon}$ as the signal strength and will allow it to change with the sparsity level $\epsilon$. Our first theorem characterizes the behavior of bridge estimators for $q>1$ and small values of $\epsilon$. 
\begin{theorem}\label{THM:STRONGRAREELL_Q}
Suppose that $b_\epsilon \rightarrow \infty$ and $b_\epsilon = O(1/
\sqrt{\epsilon})$.\footnote{$O$ notation used here is the standard big-$O$
notation. We will also use other standrd asymptotic notations. If the reader is
not familiar with these notation, he/she may refer to Appendix
\ref{appendix:notations}. } For $q > 1$, we have
\begin{itemize}
\item If $b_\epsilon = \omega(\epsilon^{\frac{1-q}{2}})$, then
\begin{equation*}
    \lim_{\epsilon \rightarrow 0}
    \epsilon^{-\frac{1}{q}}b_\epsilon^{-\frac{2(q-1)}{q}} \mathrm{AMSE}(q, \lambda_q^*)
    =
    q (q-1)^{\frac{1}{q} - 1} \sigma^\frac{2}{q}
    \big[\mathbb{E} |Z|^{\frac{2}{q-1}} \big]^{\frac{q-1}{q}}
    \big[ \mathbb{E} |\tilde{G}|^{2q - 2} \big]^{\frac{1}{q}}.
\end{equation*}

\item  If $b_\epsilon = o(\epsilon^{\frac{1-q}{2}})$, then $\lim_{\epsilon \rightarrow 0} \epsilon^{-1} b_\epsilon^{-2} {\rm AMSE}(q, \lambda_q^*)= 1$. 

\item If $\lim_{\epsilon \rightarrow 0} b_\epsilon \epsilon^{\frac{q-1}{2}} = c_r \in (0,\infty)$, then 
\[
\lim_{\epsilon \rightarrow 0} \epsilon^{-\frac{1}{q}} b_\epsilon^{-\frac{2(q-1)}{q}} {\rm AMSE} (q, \lambda^*_q) = \min_C h(C),
\]
where $h: \mathbb{R}^+ \rightarrow \mathbb{R}$ and $h(C)\triangleq
(Cq)^{-\frac{2}{q-1}}  \sigma^2  \mathbb{E} |Z|^{\frac{2}{q-1}} + \mathbb{E} \Big(\eta_q \big(c_r \tilde{G}; C
\sigma^{2-q} \big) - c_r \tilde{G} \Big)^2$. Furthermore, the minimizer of $h(C)$ is finite. 
\end{itemize}

We note that when $q > 2$, $b_\epsilon=o(\epsilon^{\frac{1 - q}{2}})$ always holds, hence
only the second item applies. When $q = 2$, only the second and the thrid items apply.

\end{theorem}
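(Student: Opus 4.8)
The plan is to start from the optimally-tuned AMSE formula recalled just before the theorem, namely
\[
\amse(q,\lambda_q^*)=\mathbb{E}\big(\eta_q(B+\tau_* Z;\alpha_*\tau_*^{2-q})-B\big)^2,
\]
where $(\alpha_*,\tau_*)$ solve \eqref{state_evolution1}--\eqref{state_evolution2} at $\lambda=\lambda_q^*$. Writing $\chi_*=\alpha_*\tau_*^{2-q}$ for the effective threshold and splitting the expectation according to $p_B=(1-\epsilon)\delta_0+\epsilon p_G$ gives
\[
\amse(q,\lambda_q^*)=(1-\epsilon)\,\mathbb{E}\,\eta_q(\tau_* Z;\chi_*)^2+\epsilon\,\mathbb{E}\big(\eta_q(b_\epsilon\tilde G+\tau_* Z;\chi_*)-b_\epsilon\tilde G\big)^2 ,
\]
the first (``noise'') term being the false-positive cost on the null coordinates and the second (``signal'') term the shrinkage/bias cost on the active coordinates. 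By the self-consistency $\tau_*^2=\sigma^2+\delta^{-1}\amse$, minimizing $\amse$ over $\lambda$ is, through the fixed-point map, equivalent to minimizing over $\chi_*$ at the self-consistent $\tau_*$ (this is the content of Lemma \ref{lem:opttunesimpler}); this reduction is the engine of the argument.

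The two analytic ingredients are the small- and large-argument behaviors of the proximal operator when $\chi$ is large: for fixed $u$, $\eta_q(u;\chi)=\sgn(u)\,(|u|/(q\chi))^{1/(q-1)}(1+o(1))$, whereas for $|u|\to\infty$, $\eta_q(u;\chi)=u-q\chi|u|^{q-1}\sgn(u)(1+o(1))$, together with the scaling identity $\eta_q(au;\chi)=a\,\eta_q(u;\chi a^{q-2})$. First I would show that under optimal tuning $\amse\to0$ in the regimes hosting Cases~1 and~3, so that the self-consistency forces $\tau_*\to\sigma$; in Case~2 I instead only keep $\tau_*$ bounded and argue directly. The small-argument expansion then turns the noise term into $(\sigma/(q\chi_*))^{2/(q-1)}\mathbb{E}|Z|^{2/(q-1)}$ up to lower order (the Gaussian tail renders the region where the expansion fails negligible), while the scaling identity with $a=b_\epsilon$ together with $\tau_*/b_\epsilon\to0$ rewrites the signal term as $\epsilon b_\epsilon^2\,\mathbb{E}(\eta_q(\tilde G;\chi_* b_\epsilon^{q-2})-\tilde G)^2$ up to lower order.

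The three regimes are then distinguished by where the rescaled threshold $\chi_* b_\epsilon^{q-2}$ sits relative to the now $O(1)$ signal, i.e.\ by the limit $c_r=\lim b_\epsilon\epsilon^{(q-1)/2}$. In Case~1 ($c_r\to\infty$, which forces $q<2$) the active coordinates are large on the rescaled scale, so I apply the large-argument expansion to the signal term to extract the squared bias $\epsilon q^2\chi_*^2 b_\epsilon^{2q-2}\mathbb{E}|\tilde G|^{2q-2}$ (the residual $\epsilon\tau_*^2$ is $o(\amse)$); balancing this against the noise term in a one-variable minimization over $\chi_*$ gives $\chi_*\asymp\epsilon^{-(q-1)/(2q)}b_\epsilon^{-(q-1)/q}$ and, after substitution, exactly the stated constant times $\epsilon^{1/q}b_\epsilon^{2(q-1)/q}$. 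In Case~3 ($c_r$ finite) neither expansion applies to the signal term, so I introduce $C$ through $\chi_*=C\sigma^{2-q}\cdot(\text{power of }\epsilon,b_\epsilon)$ chosen so that $\amse/(\epsilon^{1/q}b_\epsilon^{2(q-1)/q})$ converges termwise to $h(C)$; the minimization over $\lambda$ passes to $\min_C h(C)$, and finiteness of the minimizer follows from $h(C)\to\infty$ as $C\to0$ together with the existence of a $C$ where $h$ drops below its $C\to\infty$ value $c_r^2$. In Case~2 ($c_r\to0$, the only case once $q\ge2$) the signal is too weak to retain, so I show the optimal tuning shrinks the estimate of the active coordinates to zero, whence the signal term approaches $\epsilon b_\epsilon^2$, dominates the negligible noise term, and gives $\amse/(\epsilon b_\epsilon^2)\to1$.

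The hard part will be the rigorous control of the two proximal expansions inside the expectations, uniformly enough to interchange the $\epsilon\to0$ limit with the integrals and with the minimization over $\lambda$. Concretely, I expect the delicate points to be: (a) proving $\tau_*\to\sigma$ (respectively that $\tau_*$ stays bounded in Case~2) from the coupled fixed point, which relies on the well-posedness and monotonicity of the state-evolution recursion established in Theorem \ref{theorem:amp:bridge2} and Lemma \ref{lem:opttunesimpler}; (b) justifying that the $o(1)$ remainders in the prox expansions survive integration against the Gaussian and against $p_G$, for which a dominated-convergence argument with explicit envelopes for $\eta_q$ and $\eta_q'$ is needed; and (c) transferring the optimization from $\lambda$ (equivalently $\chi_*$) to the rescaled variable $C$ and showing the minimizer of $h$ is finite. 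The remaining bookkeeping---matching the exact constant in Case~1 and verifying the $q\ge2$ remarks---is routine once (a)--(c) are in place.
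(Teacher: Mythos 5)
Your strategy is essentially the one the paper follows: the same decomposition of the risk into a null ("noise") term and an active ("signal") term, the same reduction of the tuning over $\lambda$ to a minimization over the threshold via Lemma \ref{lem:opttunesimpler}, the same two proximal expansions from Lemma \ref{prox:property}, the same preliminary step $\tau_*\rightarrow\sigma$, and the same trichotomy governed by $c_r=\lim b_\epsilon\epsilon^{(q-1)/2}$. The one methodological difference is how the optimal constant is extracted in the $\omega$-case: you minimize the two-term asymptotic surrogate $A\chi^{-2/(q-1)}+B\chi^2$ directly, whereas the paper first proves by contradiction that $b_\epsilon\alpha_*^{-1/(2-q)}\rightarrow\infty$ (so that the large-argument expansion of the signal term is legitimate at the actual optimizer), and then pins down $\alpha_*$ from the exact stationarity equation $\partial_\alpha\tilde R_q=0$ before substituting into the risk formula. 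This ordering is not cosmetic: the surrogate is only a valid approximation of the risk on the range of $\chi$ where the expansions hold, so to conclude that the minimizer of the surrogate tracks the true minimizer you need precisely the localization of $\alpha_*$ that the paper's contradiction arguments supply. You flag this under your point (c), so the gap is acknowledged rather than overlooked.

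Two concrete corrections. First, your stated scaling $\chi_*\asymp\epsilon^{-(q-1)/(2q)}b_\epsilon^{-(q-1)/q}$ is wrong in the $b_\epsilon$ exponent: balancing $\sigma^2(q\chi)^{-2/(q-1)}\mathbb{E}|Z|^{2/(q-1)}$ against $\epsilon q^2\chi^2\sigma^{2(2-q)}b_\epsilon^{2q-2}\mathbb{E}|\tilde G|^{2q-2}$ gives $\chi_*\asymp\epsilon^{-(q-1)/(2q)}b_\epsilon^{-(q-1)^2/q}$, which is what the paper uses in Lemma \ref{lem:optimalriskorder}; with your exponent the rescaled signal threshold $\chi_*b_\epsilon^{q-2}$ would not tend to $0$ and the substitution would not reproduce the stated constant. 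Second, your claim that $\tau_*\rightarrow\sigma$ "in the regimes hosting Cases~1 and~3" fails at the $q=2$ endpoint of Case~3: there $c_r=\lim\sqrt{\epsilon}\,b_\epsilon>0$, so $\epsilon b_\epsilon^2$ and hence the AMSE do not vanish, and the self-consistency forces $\lim\tau_*^2>\sigma^2$. The paper treats $q\geq 2$ with $\sqrt{\epsilon}b_\epsilon=\Theta(1)$ by a separate argument for exactly this reason, and an analysis that substitutes $\sigma$ for the limiting $\tau_*$ there would produce the wrong constant. For $1<q<2$ all three of your cases do have $\epsilon b_\epsilon^2\rightarrow 0$, so the claim is fine on the main range of $q$.
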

This theorem is proved in Appendix \ref{sec:proofThemstrongrareq}.  Before we
interpret this result, we characterize ${\rm AMSE}(1, \lambda_1^*)$ in Theorem
\ref{EQ:THM:STRONGRAREELL_1}.

\begin{theorem}\label{EQ:THM:STRONGRAREELL_1}
Suppose that $b_\epsilon \rightarrow \infty$ and $b_\epsilon = O(1/ \sqrt{\epsilon})$. We have
\begin{itemize}
\item If $b_\epsilon = \omega (\sqrt{\log \epsilon^{-1}})$, then $\lim_{\epsilon \rightarrow 0} \frac{{\rm AMSE}(1, \lambda_1^*)}{\epsilon \log \epsilon^{-1}} = 2 \sigma^2.$ 
\item If $b_\epsilon = o (\sqrt{\log \epsilon^{-1}})$, then $\lim_{\epsilon \rightarrow 0} \frac{{\rm AMSE}(1, \lambda_1^*)}{\epsilon b_\epsilon^2} = 1$.

\item If $\frac{b_\epsilon}{\sqrt{2\log \epsilon^{-1}}} \rightarrow c \in
    (0,\infty)$, then $\lim_{\epsilon \rightarrow 0} \frac{{\rm AMSE}(1,
    \lambda_1^*)}{\epsilon \log \epsilon^{-1}} =  \mathbb{E}
    (\eta_1(c\tilde{G}; \sigma) -c\tilde{G})^2$.
\end{itemize}
\end{theorem}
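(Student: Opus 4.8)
The plan is to reduce everything to the asymptotics of a scalar risk. Starting from the optimally-tuned formula quoted just before the theorem, for $q=1$ we have $\amse(1,\lambda_1^*)=\mathbb{E}\bigl(\eta_1(B+\tau_* Z;\alpha_*\tau_*)-B\bigr)^2$, where $(\alpha_*,\tau_*)$ solve the fixed-point equations \eqref{state_evolution1}--\eqref{state_evolution2} at $\lambda=\lambda_1^*$. Writing $B=0$ with probability $1-\epsilon$ and $B=b_\epsilon\tilde G$ with probability $\epsilon$, and using the homogeneity $\eta_1(au;a\chi)=a\,\eta_1(u;\chi)$, I would split
\begin{equation*}
\amse(1,\lambda_1^*)=(1-\epsilon)\,\tau_*^2\,\mathbb{E}\bigl[(|Z|-\alpha_*)_+^2\bigr]+\epsilon\,\mathbb{E}\bigl(\eta_1(b_\epsilon\tilde G+\tau_* Z;\alpha_*\tau_*)-b_\epsilon\tilde G\bigr)^2
\end{equation*}
into a false-positive (null) term and a signal term. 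The first step is to pin down the fixed point: since minimizing $\amse$ over $\lambda$ is equivalent, via Theorem~\ref{theorem:amp:bridge2} and Lemma~\ref{lem:opttunesimpler}, to choosing the effective threshold $\alpha_*$, and since $\tau_*^2=\sigma^2+\delta^{-1}\amse(1,\lambda_1^*)$, I would first exhibit one near-optimal threshold whose risk is small, conclude $\amse\to0$, and hence $\tau_*\to\sigma$.

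With $\tau_*\to\sigma$ established, the core is to locate $\alpha_*$ and evaluate both contributions. For the null term I would use the Gaussian tail estimate $\mathbb{E}[(|Z|-\alpha)_+^2]\sim 4\phi(\alpha)/\alpha^3$ together with $\phi(\alpha)=(2\pi)^{-1/2}\epsilon^{\alpha^2/(2\log\epsilon^{-1})}$ to see that the false-positive mass is negligible exactly when $\alpha_*\gtrsim\sqrt{2\log\epsilon^{-1}}$, whereas raising $\alpha_*$ beyond this only inflates the signal bias; balancing the two derivatives shows the optimizer obeys $\alpha_*^2=2\log\epsilon^{-1}\,(1+o(1))$ in all regimes. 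For the signal term I would rescale by $\alpha_*$: using homogeneity and $\tau_*\to\sigma$, and noting that the noise $\tau_* Z=O(1)$ is of lower order than the $\sqrt{\log\epsilon^{-1}}$ signal scale,
\begin{equation*}
\eta_1(b_\epsilon\tilde G+\tau_* Z;\alpha_*\tau_*)-b_\epsilon\tilde G\;\approx\;\alpha_*\Bigl[\eta_1\bigl((b_\epsilon/\alpha_*)\tilde G;\sigma\bigr)-(b_\epsilon/\alpha_*)\tilde G\Bigr].
\end{equation*}

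The three cases then correspond to the limit of the ratio $b_\epsilon/\alpha_*$, i.e.\ of $b_\epsilon/\sqrt{2\log\epsilon^{-1}}$. When $b_\epsilon=\omega(\sqrt{\log\epsilon^{-1}})$ every nonzero coordinate survives, the per-signal squared error is $\tau_*^2(1+\alpha_*^2)\sim 2\sigma^2\log\epsilon^{-1}$, the signal term dominates, and $\amse\sim 2\sigma^2\,\epsilon\log\epsilon^{-1}$. When $b_\epsilon=o(\sqrt{\log\epsilon^{-1}})$ the signal sits below the threshold and is essentially zeroed, so the risk collapses to the discarded signal energy $\epsilon b_\epsilon^2$; here the crucial ingredient is a converse showing the optimally-tuned LASSO cannot beat the null estimator by more than lower order. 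In the critical case $b_\epsilon/\sqrt{2\log\epsilon^{-1}}\to c$, the rescaled signal term, normalized by $\epsilon\log\epsilon^{-1}$, converges to the fixed soft-thresholding risk $\mathbb{E}(\eta_1(c\tilde G;\sigma)-c\tilde G)^2$, from which the stated limit is read off.

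I expect the main obstacle to be the rigorous optimization over the threshold coupled to the self-consistency of the fixed point. Two points are delicate. First, one must show the optimizer concentrates at $\alpha_*^2=2\log\epsilon^{-1}\,(1+o(1))$ tightly enough that the $o(1)$ corrections do not perturb the leading constant; this calls for a two-sided argument (an achievability bound from a concrete threshold and a matching converse), with careful control of the subleading terms in the Gaussian tail and in the homogeneity approximation, and of the replacement $\tau_*\to\sigma$ inside the threshold. Second, the weak-signal converse in the second regime is subtle: one must rule out that retaining a vanishing fraction of signals produces a non-negligible gain over the null estimator. Handling the boundary at which the constraint $\alpha_*\gtrsim\sqrt{2\log\epsilon^{-1}}$ becomes active, and verifying in each regime that the contributions of $\tau_* Z$ and of $\tau_*\neq\sigma$ are genuinely lower order, is where most of the technical effort will lie.
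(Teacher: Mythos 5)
Your proposal follows essentially the same route as the paper's proof: the same null/signal decomposition of the scalar risk, the reduction $\tau_*\to\sigma$, locating $\alpha_*$ at the scale $\sqrt{2\log\epsilon^{-1}}$ via a concrete achievability threshold combined with a first-order-condition/converse argument, and reading off the three regimes from the limit of $b_\epsilon/\alpha_*$. The only small difference is that the paper does not (and need not) establish $\alpha_*^2=2\log\epsilon^{-1}\,(1+o(1))$ in the weak-signal regime as you claim ``in all regimes''; there it proves only the one-sided bound $\alpha_*\geq(1+o(1))\sqrt{2\log\epsilon^{-1}}$, which already forces $b_\epsilon/\alpha_*\to0$ and hence the collapse of the risk to the discarded signal energy $\epsilon b_\epsilon^2$.
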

This theorem will be proved in Appendix \ref{thmsmallepsilonell_1}. There are a few points that we should emphasize about Theorems \ref{THM:STRONGRAREELL_Q} and  \ref{EQ:THM:STRONGRAREELL_1}.

\begin{remark}
First let us discuss the assumptions of these two theorems. It is
straightforward to show that  with $b_\epsilon = \omega(1/ \sqrt{\epsilon})$,
the SNR per measurement goes to infinity. Such scenarios seem uncommon in applications, and for the sake of brevity we have only considered  $b_\epsilon
= O(1/ \sqrt{\epsilon})$. Otherwise, the techniques we developed can be applied
to higher SNR as well. Furthermore, we postpone the discussion about the case $\aeps = O(1)$ to Theorem \ref{THM:SPARSE:MAIN}.\footnote{For the definitions of the asymptotic notations such as $\Omega$ refer to Appendix \ref{appendix:notations}. } 
\end{remark}

\begin{remark}
The work of \cite{donoho1992maximum} has studied the problem of estimating an extremely sparse signal under the orthogonal design. The main goal of \cite{donoho1992maximum} is to obtain the minimax risk for the class of $\epsilon$-sparse signals (similar to our model) without any constraint on the signals' power. They have shown that the approximately least favorable distribution has a point mass at $\Theta (\sqrt{\log(\epsilon^{-1})})$, and that LASSO achieves the minimax risk. Note that there are two major differences between Theorem \ref{EQ:THM:STRONGRAREELL_1} and the work of \cite{donoho1992maximum}: (i) our result is for non-orthogonal design, and (ii) we are not concerned with the minimax performance. In fact, we fix the power of the signal and obtain the asymptotic mean square error. This platform enables us to observe several delicate phenomena that are not observed in minimax settings. For instance, as is clear from Theorem \ref{EQ:THM:STRONGRAREELL_1}, the rate of ${\rm AMSE}(1, \lambda_1^*)$ undergoes a transition at the signal strength level $\Theta (\sqrt{\log(\epsilon^{-1})})$. As we will discuss later, below this threshold, LASSO is not necessarily optimal. However, since the risk of the Bayes estimator and LASSO is maximized for $\aeps = \Theta (\sqrt{\log(\epsilon^{-1})})$, this important information is missed in minimax analysis. 
\end{remark}

\begin{remark}
Compared to other bridge estimators, the performance of LASSO is much less sensitive to the strength of the signal: ${\rm AMSE}(1, \lambda_1^*) \sim \epsilon \log \epsilon^{-1}$ as long as $\aeps = \Omega(\sqrt{\log \epsilon^{-1}})$, while the order of ${\rm AMSE}(q,\lambda_q^*)$ continuously changes as $\aeps$ varies.\end{remark}

Theorems \ref{THM:STRONGRAREELL_Q} and \ref{EQ:THM:STRONGRAREELL_1} can be used for comparing different bridge estimators, as clarified in our next corollary.

\begin{corollary}
Suppose that $b_\epsilon = \epsilon^{-\gamma}$ for $\gamma \in (0,1/2]$. We have 
\begin{itemize}
\item If $q > 2\gamma + 1$, then ${\rm AMSE}(q, \lambda_q^*) \sim \epsilon^{1- 2 \gamma}$. 
\item If $1<q \leq 2\gamma + 1$, then ${\rm AMSE}(q, \lambda_q^*) \sim \epsilon^{\frac{1-2 \gamma (q-1)}{q}}$.
\item If $q =1$, then ${\rm AMSE}(q, \lambda_q^*) \sim \epsilon \log (\epsilon^{-1})$. 
\end{itemize}
\end{corollary}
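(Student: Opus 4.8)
The plan is to treat this corollary as a direct specialization of Theorems \ref{THM:STRONGRAREELL_Q} and \ref{EQ:THM:STRONGRAREELL_1} to the power-law signal strength $b_\epsilon = \epsilon^{-\gamma}$, so that the whole argument reduces to (i) checking that the standing hypotheses of those theorems hold and (ii) deciding, for each $q$, which of their three regimes is activated by this particular scaling. First I would verify the standing assumptions: since $\gamma \in (0,1/2]$ we have $b_\epsilon = \epsilon^{-\gamma} \to \infty$ and $b_\epsilon = \epsilon^{-\gamma} = O(\epsilon^{-1/2}) = O(1/\sqrt{\epsilon})$, so both theorems are applicable throughout.

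For $q = 1$ the regime in Theorem \ref{EQ:THM:STRONGRAREELL_1} is settled by comparing the polynomial $b_\epsilon = \epsilon^{-\gamma}$ against $\sqrt{\log \epsilon^{-1}}$. Because any positive power of $\epsilon^{-1}$ dominates $\sqrt{\log \epsilon^{-1}}$, we have $b_\epsilon = \omega(\sqrt{\log \epsilon^{-1}})$, so only the first item applies, giving $\amse(1,\lambda_1^*)/(\epsilon \log \epsilon^{-1}) \to 2\sigma^2$ and hence the claimed order $\epsilon \log(\epsilon^{-1})$.

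The substantive bookkeeping is for $q > 1$, where the regime in Theorem \ref{THM:STRONGRAREELL_Q} is dictated by comparing $b_\epsilon = \epsilon^{-\gamma}$ with the critical scale $\epsilon^{(1-q)/2}$. Both are powers of $\epsilon$, so the comparison reduces to comparing the exponents $-\gamma$ and $(1-q)/2$, and one checks that $-\gamma \lessgtr (1-q)/2$ holds according as $q \lessgtr 2\gamma+1$. Thus, when $q > 2\gamma+1$ we have $b_\epsilon = o(\epsilon^{(1-q)/2})$, so the second item gives $\amse(q,\lambda_q^*) \sim \epsilon b_\epsilon^2 = \epsilon^{1-2\gamma}$; when $1 < q < 2\gamma+1$ we have $b_\epsilon = \omega(\epsilon^{(1-q)/2})$, so the first item gives the prefactor $\epsilon^{1/q} b_\epsilon^{2(q-1)/q} = \epsilon^{(1 - 2\gamma(q-1))/q}$; and at the boundary $q = 2\gamma+1$ we have $b_\epsilon\, \epsilon^{(q-1)/2} = \epsilon^{-\gamma+\gamma} = 1 \to c_r = 1$, so the third item applies and its prefactor is once more $\epsilon^{1/q} b_\epsilon^{2(q-1)/q} = \epsilon^{(1 - 2\gamma(q-1))/q}$. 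Collecting the last two subcases yields the stated rate for all $1 < q \leq 2\gamma+1$.

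I do not anticipate a genuine obstacle, since the corollary is purely a translation of limits already established; the only point needing a moment of care is the boundary $q = 2\gamma+1$, where one must invoke the third item of Theorem \ref{THM:STRONGRAREELL_Q} rather than the first, and then observe that its limiting constant $\min_C h(C)$ is finite and strictly positive (guaranteed by the theorem's assertion that the minimizer is finite, together with $h > 0$), so that it may be absorbed into the order symbol $\sim$ exactly as the constant $2\sigma^2$ is absorbed in the $q=1$ case. As a sanity check I would also record that for $q = 2$ this scheme never selects the first item, consistent with the remark following Theorem \ref{THM:STRONGRAREELL_Q}: indeed $\gamma \in (0,1/2]$ forces $2\gamma+1 \leq 2 = q$, landing us in the second item when $\gamma < 1/2$ and in the boundary third item when $\gamma = 1/2$.
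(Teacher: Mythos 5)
Your proposal is correct and is exactly the argument the paper intends: the corollary is stated without a separate proof precisely because it is the specialization of Theorems \ref{THM:STRONGRAREELL_Q} and \ref{EQ:THM:STRONGRAREELL_1} to $b_\epsilon=\epsilon^{-\gamma}$, with the regime selected by comparing exponents exactly as you do. Your extra care at the boundary $q=2\gamma+1$ (invoking the third item, noting $\min_C h(C)\in(0,\infty)$, and checking that the two exponents agree there since $(1-2\gamma(q-1))/q=1-2\gamma$ when $q=2\gamma+1$) is the only nontrivial point, and you handle it correctly.
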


The above result implies that in a wide range of signal strength, $q=1$ offers the smallest AMSE when the value of $\epsilon$ is very small. Consequently, according to Corollary
\ref{THM:FDPCOMPARE:LQ}, the two-stage LASSO provides the best variable selection performance. One can further confirm that the same conclusion continues to hold as long as $b_\epsilon = \omega (\sqrt{\log \epsilon^{-1}})$. \\

So far, we have seen that if the signal is reasonably strong, i.e. $b_\epsilon = \omega (\sqrt{\log \epsilon^{-1}})$, then two-stage LASSO outperforms all the other variable selection techniques. However, once $\aeps =  O (\sqrt{\log \epsilon^{-1}})$, we can see that ${\rm AMSE}(q, \lambda_q^*) \sim \epsilon \aeps^2$ for all $q \geq 1$. Hence, in order to provide a fair comparison, one should perform finer analyses and obtain a more accurate expression for $\amse$. Our next result shows how this can be done. 

 \begin{theorem}\label{THM:SPARSE:MAIN}
     Consider $\aeps=1$ and hence $\tilde{G} = G$. Assume $G$ is bounded from above. Then we have 
    \begin{align}
        \text{For } q = 1:&\quad
        \amse(1, \lambda_1^*)
        =
        \epsilon \mathbb{E} G^2 + o(\epsilon^k),
        \quad
        \forall k\in\mathbb{N}; \label{extreme:sparse:q1} \\
        \text{For } q > 1:&\quad
        \amse(q, \lambda^*_q)
        =
        \epsilon\mathbb{E} G^2 - \epsilon^2 \frac{ \mathbb{E}^2
        \Big( \big| \frac{G}{\sigma} + Z \big|^{\frac{1}{q-1}} \sgn \big(
        \frac{G}{\sigma} + Z \big) G \Big)} {\mathbb{E}|Z|^{\frac{2}{q-1}}}
        + o(\epsilon^2),\label{extreme:sparse:q12}
    \end{align}
    where $\sgn(\cdot)$ denotes the sign of a random variable.
\end{theorem}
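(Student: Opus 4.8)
The plan is to work directly from the optimally-tuned risk formula $\amse(q,\lambda_q^*)=\Psi(\alpha_*,\tau_*)$, where $\Psi(\alpha,\tau)\triangleq\mathbb{E}(\eta_q(B+\tau Z;\alpha\tau^{2-q})-B)^2$ and $(\alpha_*,\tau_*)$ solve the fixed-point system \eqref{state_evolution1}--\eqref{state_evolution2} at $\lambda=\lambda_q^*$. Two facts drive the argument: (i) the optimal-tuning stationarity $\partial_\alpha\Psi(\alpha_*,\tau_*)=0$ (the simplification recorded in Lemma \ref{lem:opttunesimpler}; concretely, differentiating the fixed-point relation $\tau^2=\sigma^2+\delta^{-1}\Psi$ along the optimal path shows $d\tau/d\lambda$ and $d\Psi/d\lambda$ vanish together, forcing $\partial_\alpha\Psi=0$ with $\tau$ frozen), and (ii) the decomposition, using $p_B=(1-\epsilon)\delta_0+\epsilon p_G$,
\[
\Psi(\alpha,\tau)=(1-\epsilon)\,\mathbb{E}\big[\eta_q(\tau Z;\chi)^2\big]+\epsilon\,\mathbb{E}\big[(\eta_q(G+\tau Z;\chi)-G)^2\big],\qquad \chi=\alpha\tau^{2-q}.
\]
A first bootstrap records $\amse=\Theta(\epsilon)$ (the estimator $0$, i.e.\ $\lambda\to\infty$, already gives $\mathbb{E} B^2=\epsilon\mathbb{E} G^2$), so the $\tau$-equation yields $\tau_*=\sigma+O(\epsilon)$ and $\chi_*\to\infty$. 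Boundedness of $G$ guarantees finiteness of every moment below and confines the ``signal passes the threshold'' event to a Gaussian tail.

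For $q>1$ the engine is the large-$\chi$ expansion of the smooth proximal map. From the stationarity $u=z+\chi q|z|^{q-1}\sgn(z)$ defining $z=\eta_q(u;\chi)$ one gets, as $\chi\to\infty$,
\[
\eta_q(u;\chi)=\sgn(u)\Big(\tfrac{|u|}{\chi q}\Big)^{\frac{1}{q-1}}\big(1+o(1)\big),
\]
uniformly on compacts of $u\neq 0$. Writing $t\triangleq(\chi q)^{-1/(q-1)}\to 0$ and inserting this into the decomposition, the null part contributes $At^2$ with $A=\tau^{2/(q-1)}\mathbb{E}|Z|^{2/(q-1)}$, while the signal part expands as $\epsilon\mathbb{E} G^2-2\epsilon\,B\,t+O(\epsilon t^2)$ with $B=\mathbb{E}\big[G\,\sgn(G+\tau Z)\,|G+\tau Z|^{1/(q-1)}\big]$; since $t=\Theta(\epsilon)$ at the optimum the $O(\epsilon t^2)$ remainder is $o(\epsilon^2)$. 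Hence $\Psi=\epsilon\mathbb{E} G^2+At^2-2\epsilon Bt+o(\epsilon^2)$, and the stationarity $\partial_\alpha\Psi=0$ (equivalently $\partial_t\Psi=0$ at fixed $\tau$) gives $t^*=\epsilon B/A+o(\epsilon)$. Substituting back produces $\amse=\epsilon\mathbb{E} G^2-\epsilon^2 B^2/A+o(\epsilon^2)$. Replacing $\tau_*$ by $\sigma$ (the difference is $O(\epsilon)$ and enters $B$ only at order $\epsilon^3$) and using $|G+\sigma Z|^{1/(q-1)}=\sigma^{1/(q-1)}|G/\sigma+Z|^{1/(q-1)}$ to cancel the $\sigma$-powers between $B^2$ and $A$ yields exactly \eqref{extreme:sparse:q12}.

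For $q=1$ the proximal map is soft-thresholding, $\eta_1(u;\alpha\tau)=\sgn(u)(|u|-\alpha\tau)_+$, and the decomposition becomes $\Psi-\epsilon\mathbb{E} G^2=(1-\epsilon)\tau^2\,\mathbb{E}(|Z|-\alpha)_+^2+\epsilon\,\mathbb{E}\big[((\eta_1-G)^2-G^2)\1{|G+\tau Z|>\alpha\tau}\big]$. Because $G$ is bounded by some $M$, the event $\{|G+\tau Z|>\alpha\tau\}$ forces $|Z|>\alpha-M/\tau$, so the second expectation is $O(\mathrm{poly}(\alpha)\,e^{-(\alpha-M/\tau)^2/2})$, while the first (false-positive) term is $O(\mathrm{poly}(\alpha)\,e^{-\alpha^2/2})$. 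Balancing the decreasing false-positive term against the $\epsilon$-scaled signal term in $\partial_\alpha\Psi=0$ shows the optimal threshold grows like $\alpha_*=\Theta(\log\epsilon^{-1})$ --- far faster than the $\sqrt{\log\epsilon^{-1}}$ scale --- so that at $\alpha_*$ both terms are of order $e^{-c(\log\epsilon^{-1})^2}$, which is $o(\epsilon^k)$ for every $k$. This gives \eqref{extreme:sparse:q1}.

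The main obstacles are two. For $q>1$, the proximal-operator expansion must be made quantitative and integrable: the approximation degrades when $|u|$ is comparable to $\chi$, so the Gaussian and compact-support tails have to be split off and shown to contribute only at $o(\epsilon^2)$, and one must justify differentiating the expansion term-by-term to legitimately pass from $\partial_\alpha\Psi=0$ to $t^*=\epsilon B/A+o(\epsilon)$ --- a self-consistent bootstrap on the order of $t^*$ and of $\tau_*-\sigma$. For $q=1$, the delicate point is rigorously pinning the growth rate $\alpha_*=\Theta(\log\epsilon^{-1})$; this is precisely where boundedness of $G$ is essential, since it is what shifts the signal tail by only $M/\tau$ and thereby forces the super-polynomial, rather than merely polynomial, smallness of the correction.
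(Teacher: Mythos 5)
Your proposal is correct and follows essentially the same route as the paper: establish $\tau_*\to\sigma$ and $\alpha_*\to\infty$, then for $q>1$ use the large-$\chi$ asymptotic $\eta_q(u;\chi)\sim\sgn(u)\big(|u|/(q\chi)\big)^{1/(q-1)}$ together with stationarity in $\alpha$ to locate $\alpha_*^{-1/(q-1)}=\Theta(\epsilon)$ and extract the $-\epsilon^2B^2/A$ correction, and for $q=1$ use boundedness of $G$ to force $\alpha_*\sim(\sigma/M)\log\epsilon^{-1}$ so that $\phi(\alpha_*)$ is super-polynomially small in $\epsilon$. The paper's appendix carries out exactly this program --- its lemmas on the rate of $\alpha_*$ (via the first-order condition) and its Laplace-approximation lemma are the rigorous versions of your ``balancing'' and ``bootstrap'' steps --- so the only remaining work is the quantitative tail-splitting and DCT justifications you already identify as the main obstacles.
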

The proof of this theorem is presented in Appendix
\ref{sec:proofFinitePowerExtremeSPARSE}. The first interesting observation
about this theorem is that, the first dominant term of $\amse$ is the same for
all bridge estimators. The second dominant term, on the other hand, is much
smaller for $q=1$ compared to the other values of $q$. Hence, LASSO is
\emph{suboptimal} in this setting. Accordingly, two-stage LASSO is outperformed
by other TVS methods. However, as is clear from Theorem \ref{THM:SPARSE:MAIN}, we should not expect the bridge estimator with $q>1$ to outperform LASSO by a large margin when $\epsilon$ is too small. In fact, the second dominant term is proportional to $\epsilon^2$ (for $q>1$), while the first dominant term is proportional to  $\epsilon$. Hence, the second dominant term is expected to become important for moderately small values of $\epsilon$. In such cases, we expect $q>1$ to offer more significant improvements. Regarding the optimal choice of $q$, it is determined by the constant of the second order term in \eqref{extreme:sparse:q12}. As is shown in Figure \ref{extreme:sparse:plot}, while the optimal value of $q$ is case-dependent, it gets closer to 1 as the signal strength increases. This observation is consistent with the message delivered by Theorems \ref{THM:STRONGRAREELL_Q} and \ref{EQ:THM:STRONGRAREELL_1}.

{
\setlength{\tabcolsep}{0pt}
\begin{figure}[t!]
\begin{center}
\begin{tabular}{rccc}
    & \scriptsize{$M=1$} & \scriptsize{$M=2$} & \scriptsize{$M=3$} \\
	\rotatebox{90}{\qquad\quad\scriptsize{2nd order coef}} & \includegraphics[scale=0.4]{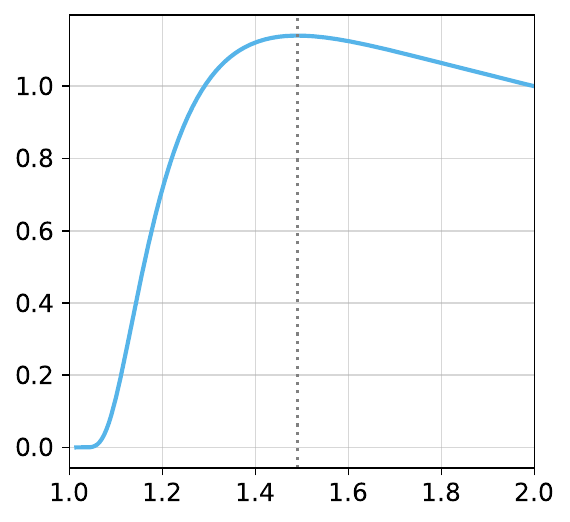}
	&
	\includegraphics[scale=0.4]{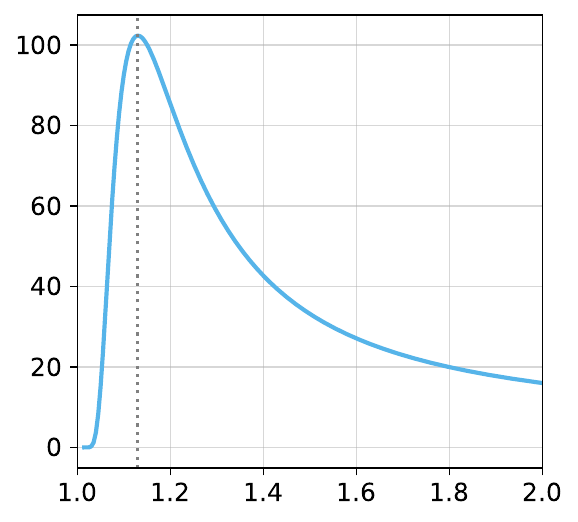}
	&
	\includegraphics[scale=0.4]{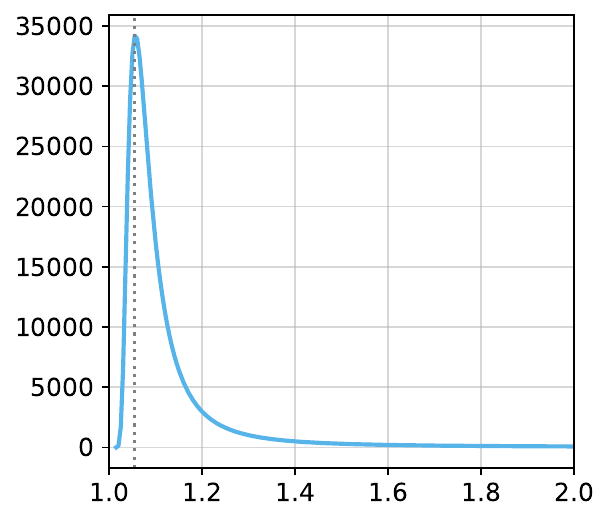} \\
	& \scriptsize{$q$} & \scriptsize{$q$} & \scriptsize{$q$}
\end{tabular}
\caption{The constant coefficient of the second order term in \eqref{extreme:sparse:q12}. We set $G=\delta_M$ with $M=1, 2, 3$ respectively and $\sigma = 1$. As the signal strength $M$ increases, the optimal
        choice of $q$ shifts towards 1.}\label{extreme:sparse:plot}
\end{center}
\end{figure}
}

\subsubsection{Analysis of {\rm AMSE} in large noise scenario} \label{large:noise:analysis}

This section aims to obtain explicit formulas for the optimal $\amse$ of bridge
estimators in low SNR. This regime is particularly important, since
in many social and medical studies, variable selection plays a key
role and the SNR is low. The following theorem summarizes the main result of this section. 

\begin{theorem}\label{THM:NOISY:MAIN}
As $\sigma\rightarrow\infty$, we have the following expansions of {\rm AMSE}$(q,\lambda_q^*)$:
\begin{enumerate}[(i)]
	\item
	For $q=1$, when $G$ has a sub-Gaussian tail, we have
\begin{equation} \label{largenoise:lasso}
{\rm AMSE}(1, \lambda_1^*)
=
\epsilon\mathbb{E}|G|^2+o(e^{-\frac{C^2\sigma^2}{2}}),
\end{equation}
where $C$ can be any positive number smaller than $C_0$, and $C_0>0$ is a constant only depending on $\epsilon$ and $G$. The explicit definition of $C_0$ can be found in the proof.

\vspace{0.2cm}
	\item
	For $1<q\leq 2$, if all the moments of $G$ are finite, then
\begin{equation} \label{largenoiseresult:q12}
{\rm AMSE}(q, \lambda_q^*)
=\epsilon \mathbb{E}|G|^2-\frac{\epsilon^2 (\mathbb{E}|G|^2)^2c_q}{\sigma^2}+o(\sigma^{-2}),
\end{equation}
with $c_q=\frac{\big(\mathbb{E}|Z|^{\frac{2-q}{q-1}}\big)^2}{(q-1)^2\mathbb{E}|Z|^{\frac{2}{q-1}}}$.

\item For $q > 2$, if $G$ has sub-Gaussian tail, then \eqref{largenoiseresult:q12} holds.
\end{enumerate}
\end{theorem}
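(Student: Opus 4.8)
The plan is to start from the optimally-tuned risk formula $\amse(q,\lambda_q^*)=\mathbb{E}\big(\eta_q(B+\tau_* Z;\alpha_*\tau_*^{2-q})-B\big)^2$ recorded just after Corollary \ref{THM:FDPCOMPARE:LQ}, and to exploit two structural facts. First, since the estimator $\hat\beta=0$ already attains risk $\epsilon\mathbb{E} G^2$, the optimal AMSE is bounded, so the first state-evolution equation \eqref{state_evolution1} forces $\tau_*^2=\sigma^2+\frac1\delta\amse(q,\lambda_q^*)=\sigma^2+O(1)$; hence $\tau_*=\sigma+O(\sigma^{-1})$ and I may replace $\tau_*$ by $\sigma$ everywhere, incurring only $o(\sigma^{-2})$ errors. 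Second, the scale-homogeneity $\eta_q(cu;c^{2-q}\chi)=c\,\eta_q(u;\chi)$ gives $\eta_q(\tau_* Z;\alpha_*\tau_*^{2-q})=\tau_*\,\eta_q(Z;\alpha_*)$ and $\eta_q'(\tau_* Z;\alpha_*\tau_*^{2-q})=\eta_q'(Z;\alpha_*)$, so that after splitting $B$ into its null part ($B=0$, mass $1-\epsilon$) and its signal part ($B=G$, mass $\epsilon$) the whole problem collapses onto the single rescaled threshold $\alpha_*$.

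For $q>1$ I would expand the signal contribution $\mathbb{E}\big(\eta_q(G+\tau_* Z;\alpha_*\tau_*^{2-q})-G\big)^2$ by a Taylor expansion in $G$ around $\tau_* Z$. Because $Z$ is symmetric and $\eta_q$ is odd (so $\eta_q'$ is even), the linear-in-$G$ cross term integrates to zero for each fixed $G$, and the signal risk equals the null risk plus $\mathbb{E} G^2\,\mathbb{E}\big(1-\eta_q'(Z;\alpha_*)\big)^2$ up to a remainder. Assembling both parts,
\begin{equation*}
\amse(q,\lambda_q^*)=\epsilon\mathbb{E} G^2+\tau_*^2\,\mathbb{E}\,\eta_q(Z;\alpha_*)^2-2\epsilon\mathbb{E} G^2\,\mathbb{E}\,\eta_q'(Z;\alpha_*)+\epsilon\mathbb{E} G^2\,\mathbb{E}\,\eta_q'(Z;\alpha_*)^2+(\text{rem}).
\end{equation*}
Minimizing over the tuning is now one-dimensional in $\alpha_*$. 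Small risk forces heavy shrinkage, $\alpha_*\to\infty$, so I insert the large-$\alpha$ asymptotics obtained from the defining relation $z+\alpha q|z|^{q-1}\sgn z=u$: for $u=O(1)$ one gets $\eta_q(Z;\alpha)\approx\sgn(Z)(|Z|/(q\alpha))^{1/(q-1)}$ and $\eta_q'(Z;\alpha)\approx (q-1)^{-1}(q\alpha)^{-1/(q-1)}|Z|^{(2-q)/(q-1)}$. Writing $t=(q\alpha)^{-1/(q-1)}\to0$, the two variable terms become $\tau_*^2\,\mathbb{E}|Z|^{2/(q-1)}\,t^2$ and $-2\epsilon\mathbb{E} G^2(q-1)^{-1}\mathbb{E}|Z|^{(2-q)/(q-1)}\,t$; the optimal $t_*=\Theta(\epsilon/\sigma^2)$ makes both $\Theta(\epsilon^2/\sigma^2)$, and the quadratic minimization returns exactly $-\epsilon^2(\mathbb{E} G^2)^2 c_q/\sigma^2$ with $c_q$ as stated, while the quadratic-in-$t$ term and the feedback of $\amse$ into $\tau_*$ are both $o(\sigma^{-2})$.

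The case $q=1$ I would treat separately, since soft-thresholding has a genuine cutoff. Using $\eta_1(\tau_* Z;\alpha_*\tau_*)=\tau_*\sgn(Z)(|Z|-\alpha_*)_+$ and $\eta_1'(Z;\alpha_*)=\1{|Z|>\alpha_*}$, both the null risk $\tau_*^2\,\mathbb{E}\,(|Z|-\alpha_*)_+^2$ and $\mathbb{E}\,\eta_1'=\mathbb{P}(|Z|>\alpha_*)$ are Gaussian-tail quantities, exponentially small in $\alpha_*^2$. The same quadratic tradeoff is optimized at an $\alpha_*$ growing linearly in $\sigma$, so the entire improvement over $\epsilon\mathbb{E} G^2$ is of order $e^{-\Theta(\sigma^2)}$; identifying the best achievable rate yields the constant $C_0$ and the bound $o(e^{-C^2\sigma^2/2})$ for every $C<C_0$.

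The main obstacle, and where the distinction between $1<q\le2$ (all moments of $G$) and $q>2$ or $q=1$ (sub-Gaussian $G$) enters, is making these approximations uniform. I must (a) control the remainder of the $G$-Taylor expansion after integrating against $p_G$, and (b) justify the large-$\alpha$ asymptotics of $\mathbb{E}\,\eta_q^2$ and $\mathbb{E}\,\eta_q'$ including the shrinking neighborhood of $Z=0$ where, for $q>2$, $\eta_q'(0;\alpha)=1$ and the power-law approximation fails. For $q\le2$ the derivative $\eta_q'$ is bounded by $1$ and the estimator is nearly unbiased on large arguments, so finite moments of $G$ suffice to push the remainder below $o(\sigma^{-2})$; for $q>2$ even large signals are shrunk like $(\,\cdot\,)^{1/(q-1)}$, so controlling the tail contribution of $G$ at the $\sigma^{-2}$ level (and, for $q=1$, at the exponential level) requires the sub-Gaussian tail. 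A secondary technical point is to confirm that minimizing the fixed-point $\tau^2$ over the tuning is equivalent to the one-dimensional minimization over $\alpha_*$ above and that the minimizer lands in the $\alpha_*\to\infty$ regime; this I would verify using the optimal-tuning characterization in Theorem \ref{theorem:amp:bridge2} and Lemma \ref{lem:opttunesimpler}.
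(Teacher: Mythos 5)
For $q>1$ your route is, up to bookkeeping, the paper's own: one shows $\tau_*^2=\sigma^2+O(1)$, derives from the stationarity condition in $\alpha$ that $\alpha_*\asymp\sigma^{2(q-1)}$ (equivalently $t_*=(q\alpha_*)^{-1/(q-1)}\asymp\epsilon/\sigma^2$), and balances the null-risk term $\tau_*^2\mathbb{E}\eta_q^2(Z;\alpha_*)\approx\tau_*^2\,t^2\,\mathbb{E}|Z|^{2/(q-1)}$ against the cross term $-2\epsilon\mathbb{E}G^2\,\mathbb{E}\eta_q'(\cdot;\alpha_*)\approx-2\epsilon\mathbb{E}G^2(q-1)^{-1}t\,\mathbb{E}|Z|^{(2-q)/(q-1)}$; the quadratic minimum reproduces $-\epsilon^2(\mathbb{E}G^2)^2c_q/\sigma^2$. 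The obstacles you flag (uniformity of the large-$\alpha$ asymptotics, the neighborhood of $Z=0$ where $\eta_q'(0;\alpha)=1$ for $q>2$, tail control of $G$) are exactly the ones the paper handles, via a truncated DCT argument for $1<q\le2$ and a separate splitting lemma under sub-Gaussianity for $q>2$. Minor omissions in your expansion (the $\mathbb{E}[\eta_q\eta_q'']\,\mathbb{E}G^2$ term from the second derivative, the $\mathbb{E}(\eta_q')^2$ term) are all $O(\epsilon t_*^2)=o(\sigma^{-2})$ and harmless.

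The genuine gap is in part (i). For $q=1$ the optimal normalized threshold satisfies $\alpha_*/\tau_*\to C_0$, so the quantity $\alpha_* G/\tau_*$ is $\Theta(1)$, not $o(1)$. All the relevant signal-dependent quantities are Gaussian tails at the shifted argument, e.g. $\phi(\alpha_*-G/\tau_*)=\phi(\alpha_*)\exp(\alpha_*G/\tau_*-G^2/(2\tau_*^2))\to\phi(\alpha_*)e^{C_0G}$, and this exponential tilt by $e^{\pm C_0G}$ is a $\Theta(1)$ multiplicative factor depending on the whole law of $G$. Your second-order Taylor expansion in $G$ retains only the first three terms of $e^{C_0G}$, so the resulting stationarity condition would involve only $\mathbb{E}G^2$ and the untilted tail $\mathbb{P}(|Z|>\alpha_*)$ — it cannot produce the correct constant, which solves the transcendental equation $\mathbb{E}\bigl[e^{C_0G}(C_0G-1)+e^{-C_0G}(-C_0G-1)\bigr]=2(1-\epsilon)/\epsilon$ in the exponential moments of $G$ (this is also precisely where sub-Gaussianity is used, to make those exponential moments finite and to control $|G|>t\tau_*\alpha_*$). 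A further point your sketch misses: at the optimal $\alpha_*$ the coefficient of $\phi(\alpha_*)/\alpha_*^3$ in $\mathrm{AMSE}(1,\lambda_1^*)-\epsilon\mathbb{E}G^2$ cancels exactly against the null-risk contribution (this cancellation is an identity forced by the first-order condition), which is how one upgrades $O(e^{-C^2\sigma^2/2})$ to $o(e^{-C^2\sigma^2/2})$ for every $C<C_0$. So for $q=1$ you need to abandon the moment expansion in $G$, work directly with the tilted Gaussian tails $\Phi(\pm G/\tau_*-\alpha_*)$, and extract $C_0$ as the limit of $\alpha_*/\tau_*$ from the exact stationarity equation.
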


We present our proofs in Appendix \ref{thmlargenoise:main}. Figure \ref{fig:inline_large_noise} compares the accuracy of the first-order approximation and second-order approximation for moderate values of $\sigma$. As is clear, for $q\in (1,\infty)$, the second-order approximation provides an accurate approximation of ${\rm AMSE}(q, \lambda_q^*)$  for a wide range of $\sigma$. Moreover, the first-order approximation for ${\rm AMSE}(1,\lambda_1^*)$ is already accurate as can be justified by its exponentially small second order term in \eqref{largenoise:lasso}. 

{
\setlength{\tabcolsep}{0pt}
\begin{figure}[t!]
\begin{center}
\begin{tabular}{rcc}
    & \scriptsize{$q = 1$} & \scriptsize{$q = 1.5$} \\
	\rotatebox{90}{\scriptsize{\qquad \qquad relative error}} & \includegraphics[scale=0.4]{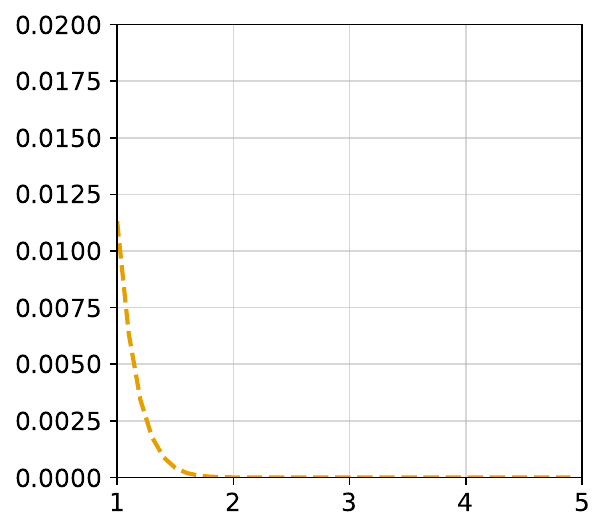}
	&
	\includegraphics[scale=0.4]{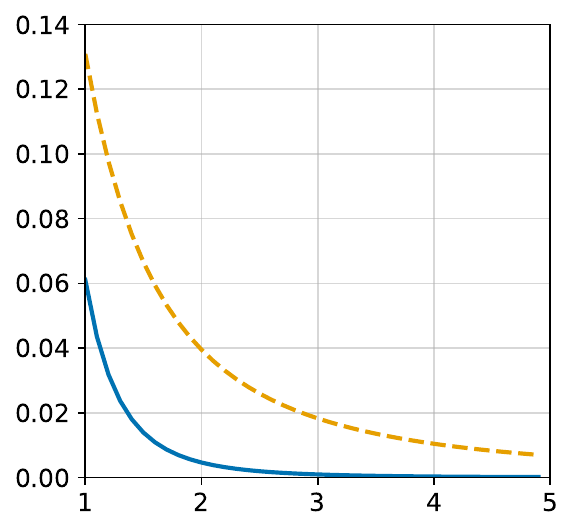} \\
	& \scriptsize{$\sigma$} & \scriptsize{$\sigma$} \\
	&& \\
	&& \\
    & \scriptsize{$q = 2$} & \scriptsize{$q = 2.5$} \\
	\rotatebox{90}{\scriptsize{\qquad \qquad relative error}} & \includegraphics[scale=0.4]{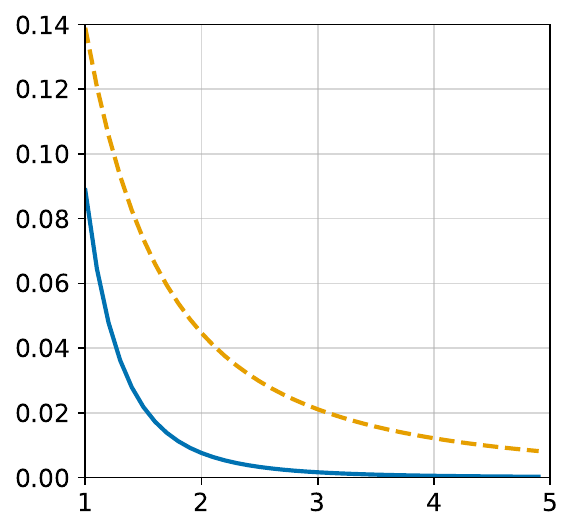}
	&
	\includegraphics[scale=0.4]{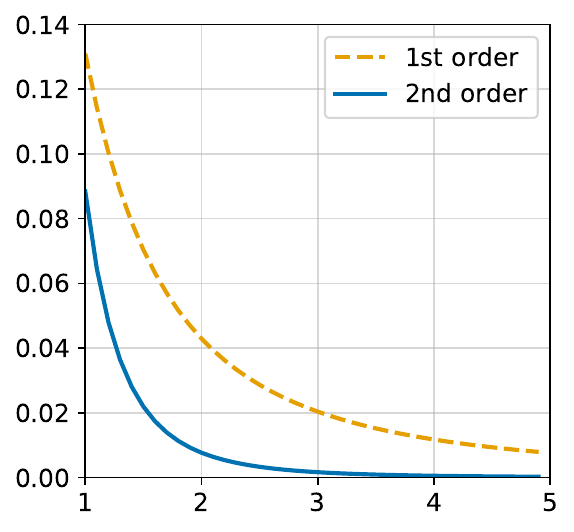} \\
	& \scriptsize{$\sigma$} & \scriptsize{$\sigma$}
\end{tabular}
\caption{Absolute relative error of first-order and second-order approximations of $\amse$ under large noise scenario. In these four figures, $p_B = (1 - \epsilon)\delta_0 + \epsilon\delta_1$, $\delta=0.4$, $\epsilon=0.2$.}\label{fig:inline_large_noise}
\end{center}
\end{figure}
}
  
According to this theorem, we can conclude that for sufficiently large $\sigma$, two-stage method with any $q > 1$ can outperform the two-stage $\lasso$. This is because while the first dominant term is the same for all the bridge estimators with $q\in [1,\infty)$, the second order term for $\lasso$ is exponentially smaller (in magnitude) than that of the other estimators. More interestingly, the following lemma shows that in fact $q=2$ leads to the smallest AMSE in the large noise regime.    

{
\setlength{\tabcolsep}{0pt}
\begin{figure}[t!]
\begin{center}
\begin{tabular}{rc}
	\rotatebox{90}{\qquad\qquad\quad\scriptsize{$C_q$}} & \includegraphics[scale=0.4]{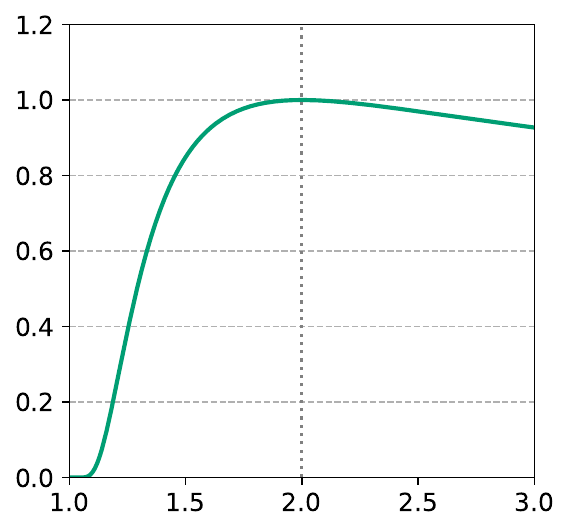} \\
	& \scriptsize{$q$}
\end{tabular}
\caption{The constant $c_q$ in Theorem \ref{THM:NOISY:MAIN} part (ii). The maximum is achieved at $q=2$. }\label{fig:c_q}
\end{center}
\end{figure}
}

\begin{lemma}\label{lemma:ridgebestlargenoise}
The maximum of $c_q$, defined in Theorem \ref{THM:NOISY:MAIN}, is achieved at $q=2$. 
\end{lemma}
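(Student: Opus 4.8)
The plan is to reduce the maximization to a clean one-variable calculus problem via the substitution $t=\frac{1}{q-1}$, which maps $q\in(1,\infty)$ bijectively onto $t\in(0,\infty)$ and sends $q=2$ to $t=1$. Under this change of variable the exponents simplify: $\frac{2-q}{q-1}=t-1$, $\frac{2}{q-1}=2t$, and $(q-1)^{-2}=t^2$, so that
\[
c_q = t^2\,\frac{\big(\mathbb{E}|Z|^{t-1}\big)^2}{\mathbb{E}|Z|^{2t}}.
\]
I would then insert the standard Gaussian absolute-moment identity $\mathbb{E}|Z|^s=\frac{2^{s/2}}{\sqrt{\pi}}\,\Gamma\!\big(\frac{s+1}{2}\big)$ (valid for $s>-1$, and here $t-1>-1$, $2t>-1$ hold for every $t>0$). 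After cancellation this yields the closed form
\[
c_q=\frac{t^2}{2\sqrt{\pi}}\,\frac{\Gamma(t/2)^2}{\Gamma(t+\tfrac12)}.
\]

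Next I would study $L(t):=\log c_q=2\log t+2\log\Gamma(t/2)-\log\Gamma(t+\tfrac12)-\log(2\sqrt\pi)$. Differentiating with the digamma function $\psi=(\log\Gamma)'$,
\[
L'(t)=\frac{2}{t}+\psi\!\Big(\frac t2\Big)-\psi\!\Big(t+\tfrac12\Big).
\]
Using the recurrence $\psi(x+1)=\psi(x)+\frac1x$ with $x=\tfrac12$ one checks $\psi(\tfrac32)-\psi(\tfrac12)=2$, hence $L'(1)=2+\psi(\tfrac12)-\psi(\tfrac32)=0$; thus $t=1$ (i.e.\ $q=2$) is a critical point. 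It then remains to show this critical point is the unique global maximizer, for which I would prove that $L$ is strictly concave on $(0,\infty)$.

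The main obstacle—and the only genuinely non-trivial step—is this concavity. Log-convexity of $s\mapsto\mathbb{E}|Z|^s$ does not suffice by itself, because in $L(t)$ the term $2\log\Gamma(t/2)$ enters convexly; one must exploit the specific Gaussian structure through the trigamma series $\psi'(x)=\sum_{n\ge0}(x+n)^{-2}$. Computing
\[
L''(t)=-\frac{2}{t^2}+\tfrac12\psi'\!\Big(\frac t2\Big)-\psi'\!\Big(t+\tfrac12\Big),
\]
and noting $\tfrac12\psi'(t/2)=2\sum_{n\ge0}(t+2n)^{-2}=\frac{2}{t^2}+2\sum_{n\ge1}(t+2n)^{-2}$, the $\frac{2}{t^2}$ terms cancel and I am left with $L''(t)=2\sum_{n\ge1}(t+2n)^{-2}-\psi'(t+\tfrac12)$.

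I would then bound the subtracted trigamma term from below by pairing, for each $n\ge1$, the two summands of $\psi'(t+\tfrac12)=\sum_{m\ge0}(t+\tfrac12+m)^{-2}$ with indices $m=2n-1$ and $m=2n$; these are the points $t+2n-\tfrac12$ and $t+2n+\tfrac12$, symmetric about $t+2n$, so convexity of $x\mapsto x^{-2}$ gives $(t+2n-\tfrac12)^{-2}+(t+2n+\tfrac12)^{-2}\ge 2(t+2n)^{-2}$. Since the index pairs $\{2n-1,2n\}_{n\ge1}$ exhaust all $m\ge1$, summing gives $\psi'(t+\tfrac12)\ge (t+\tfrac12)^{-2}+2\sum_{n\ge1}(t+2n)^{-2}>2\sum_{n\ge1}(t+2n)^{-2}$, whence $L''(t)<0$ for every $t>0$. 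Strict concavity together with $L'(1)=0$ forces $t=1$ to be the unique maximizer, i.e.\ $c_q$ is maximized at $q=2$, as claimed.
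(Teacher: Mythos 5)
Your proof is correct, but it takes a genuinely different route from the paper's. The paper first uses an integration by parts (via $\phi'(z)=-z\phi(z)$) to show $\mathbb{E}|Z|^{\frac{2-q}{q-1}}=(q-1)\mathbb{E}|Z|^{\frac{q}{q-1}}$, which absorbs the $(q-1)^{-2}$ factor and rewrites $c_q=\big(\mathbb{E}|Z|^{\frac{q}{q-1}}\big)^2/\mathbb{E}|Z|^{\frac{2}{q-1}}$; a single application of Cauchy--Schwarz to $\mathbb{E}\big[|Z|^{\frac{1}{q-1}}\cdot|Z|\big]$ then gives $c_q\leq\mathbb{E}Z^2=1=c_2$ in two lines. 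Your argument instead substitutes $t=1/(q-1)$, passes to the closed form $c_q=\frac{t^2}{2\sqrt{\pi}}\Gamma(t/2)^2/\Gamma(t+\tfrac12)$, and proves strict log-concavity in $t$ by a careful pairing bound on the trigamma series; I checked the exponent identities, the moment formula, the cancellation of the $2/t^2$ terms in $L''$, and the convexity pairing $(t+2n-\tfrac12)^{-2}+(t+2n+\tfrac12)^{-2}\geq 2(t+2n)^{-2}$, and all steps are sound. What your longer route buys is an explicit Gamma-function formula for $c_q$ and strict concavity, hence uniqueness of the maximizer and monotonicity of $c_q$ on either side of $q=2$ — information the paper's Hölder bound does not directly provide (though uniqueness could also be extracted from the equality case of Cauchy--Schwarz). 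What it costs is considerable: the special-function machinery is heavy for a statement the paper dispatches with one elementary inequality, so if brevity matters you should at least note that the Cauchy--Schwarz shortcut exists.
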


See Figure \ref{fig:c_q} for the plot of $c_q$.

\begin{proof}
A simple integration by part yields:
\begin{eqnarray*}
\mathbb{E}|Z|^{\frac{2-q}{q-1}}
&=&
2(q-1)\int_0^\infty z^{\frac{q}{q-1}}\phi(z)dz
=
(q-1)\mathbb{E}|Z|^{\frac{q}{q-1}}
\end{eqnarray*}
We can then apply H$\ddot{o}$lders's inequality to obtain
\begin{equation*}
c_q
=
\frac{(\mathbb{E}|Z|^{\frac{q}{q-1}})^2}{\mathbb{E}|Z|^{\frac{2}{q-1}}}
\leq
\frac{\mathbb{E}|Z|^{\frac{2}{q-1}}\mathbb{E}Z^2}{\mathbb{E}|Z|^{\frac{2}{q-1}}}
=1
=c_2.
\end{equation*}
\end{proof}

Therefore, while the AMSE of all bridge estimators share the same first
dominant term, Ridge offers the largest second dominant term (in magnitude),
and hence the lowest AMSE. If we combine this result with Corollary
\ref{THM:FDPCOMPARE:LQ}, we conclude that in low SNR regime, two-stage Ridge obtains the best variable selection performance among TVS schemes with their first stage picked from the class of bridge estimators. 

A comparison of this result with that for the high SNR derived in
\cite{weng2018overcoming} clarifies the impact of SNR on the best choice of $q$.

\begin{theorem}
\label{THM:LOWNOISE:MAIN}
Assume $\epsilon \in (0, 1)$. As $\sigma\rightarrow 0$, we have the following expansions of {\rm AMSE}$(q, \lambda_q^*)$ in terms of $\sigma$.
	\begin{enumerate}[(i)]
	\item
	For $q = 1$, if $\mathbb{P}(|G| \geq \mu)=1$ for some $\mu>0$, $\delta> M_1(\epsilon)$, and $\mathbb{E}|G|^2<\infty$, then
	\begin{equation}
	{\rm AMSE}(1, \lambda_1^*)
	=
	\frac{\delta M_1(\epsilon)}{\delta - M_1(\epsilon)}\sigma^2
	+
	o\big(e^{\frac{(M_1(\epsilon)-\delta)\tilde{\mu}^2}{2\delta \sigma^2}}\big), \label{eq:lownoise:q=1}
	\end{equation}
	where
        $M_1(\epsilon)=\min_{\chi}(1-\epsilon)\mathbb{E}\eta_1^2(Z;\chi)+\epsilon
        (1+\chi^2)$, and $\tilde{\mu}$ can be any positive number smaller than $\mu$.
	\vspace{0.2cm}
	\item
	For $1 < q < 2$, if $\mathbb{P}(|G| \leq x)=O(x)$ (as $x\rightarrow 0$), $\delta>1$, and $\mathbb{E}|G|^2<\infty$ then
	\begin{equation}
	{\rm AMSE}(q, \lambda_q^*)
	=
	\frac{\sigma^2}{1-1/\delta}
	-
	\sigma^{2q}\frac{\delta^{q+1}(1-\epsilon)^2(\mathbb{E}|Z|^q)^2}{(\delta-1)^{q+1}\epsilon\mathbb{E}|G|^{2q-2}}+o(\sigma^{2q}). \label{eq:lownoise:2>q>1}
	\end{equation}
	
	\item
	For $q = 2$, if $\delta>1$ and $\mathbb{E}|G|^2<\infty$, we have
	\begin{equation}
	{\rm AMSE}(2, \lambda_2^*)
	=
	\frac{\sigma^2}{1-1/\delta}
	-
	\sigma^4 \frac{\delta^3}{(\delta-1)^3\epsilon \mathbb{E}|G|^2}+o(\sigma^4). \label{eq:lownoise:q=2}
	\end{equation}
	
	\item For $q>2$, if $\delta>1$ and $\mathbb{E}|G|^{2q-2}<\infty$, then
	\begin{equation} \label{expansion:amseq2}
	{\rm AMSE}(q, \lambda_q^*)
	=
	\frac{\sigma^2}{1-1/\delta}
	-
	\sigma^4\frac{\delta^3\epsilon (q-1)^2(\mathbb{E}|G|^{q-2})^2}{(\delta-1)^3\mathbb{E}|G|^{2q-2}}+o(\sigma^4).
	\end{equation}
\end{enumerate}
\end{theorem}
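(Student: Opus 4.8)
The plan is to collapse the whole statement onto the single scalar $\tau_*$ via state evolution and then expand as $\sigma\to 0$. The starting point is the optimally-tuned representation $\amse(q,\lambda_q^*)=\mathbb{E}\big(\eta_q(B+\tau_* Z;\alpha_*\tau_*^{2-q})-B\big)^2$ (Theorem \ref{theorem:amp:bridge2}), which, fed into \eqref{state_evolution1}, yields the clean identity $\amse(q,\lambda_q^*)=\delta(\tau_*^2-\sigma^2)$. Hence it suffices to locate $\tau_*^2$ to the required order. For the tuning I will use the characterization (Lemma \ref{lem:opttunesimpler}) that at $\lambda=\lambda_q^*$ the pair $(\tau_*,\alpha_*)$ satisfies $\partial_\alpha R(\tau_*,\alpha_*)=0$, where $R(\tau,\alpha):=\mathbb{E}(\eta_q(B+\tau Z;\alpha\tau^{2-q})-B)^2$; combined with the fixed point this is equivalent to first minimizing $R(\tau,\cdot)$ over $\alpha$ at fixed $\tau$ and then solving $\tau^2=\sigma^2+R/\delta$ for $\tau$.

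The structural simplification driving the $q>1$ cases is the scaling identity $\eta_q(\tau Z;\alpha\tau^{2-q})=\tau\,\eta_q(Z;\alpha)$, which writes the null part of $R$ as exactly $(1-\epsilon)\tau^2\,\mathbb{E}\eta_q(Z;\alpha)^2$. Taylor-expanding the proximal map via $\eta_q(u;\chi)=u-q\chi|u|^{q-1}\sgn(u)+O(\chi^2)$ (applied to $Z$ on the null part and around $G$ on the signal part) gives, writing $\chi=\alpha\tau^{2-q}$,
\begin{align*}
R(\tau,\alpha)
&=\tau^2\big[1-2(1-\epsilon)q\,\mathbb{E}|Z|^q\,\alpha\big]
-2\epsilon q(q-1)\mathbb{E}|G|^{q-2}\,\tau^{4-q}\alpha\\
&\quad+\epsilon q^2\mathbb{E}|G|^{2q-2}\,\tau^{4-2q}\alpha^2+(\text{higher order}).
\end{align*}
This is quadratic in $\alpha$ with positive leading coefficient $A_2=\epsilon q^2\mathbb{E}|G|^{2q-2}\tau^{4-2q}$, so $\partial_\alpha R=0$ gives $R(\tau,\alpha_*)=\tau^2-A_1^2/(4A_2)$ with $A_1=-2q[(1-\epsilon)\tau^2\mathbb{E}|Z|^q+\epsilon(q-1)\tau^{4-q}\mathbb{E}|G|^{q-2}]$, and one checks $\alpha_*\to0$ like a power of $\tau$. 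Which term of $A_1$ dominates as $\tau\to0$ is decided by $q$: the $\tau^2$ (noise) term when $1<q<2$ produces a correction of order $\tau^{2q}$, the $\tau^{4-q}$ (signal) term when $q>2$ produces order $\tau^4$, and the two coincide at $q=2$ (where the closed form $\eta_2(u;\chi)=u/(1+2\chi)$ gives an independent check). Substituting $R(\tau,\alpha_*)\approx\tau^2-c\,\tau^{2q}$ (resp. $\tau^2-c'\tau^4$) into $\tau^2=\sigma^2+R/\delta$ gives $\tau_*^2\sim\frac{\delta}{\delta-1}\sigma^2$, and a one-step inversion of $\amse=\delta(\tau_*^2-\sigma^2)=\frac{\sigma^2}{1-1/\delta}-\frac{\delta}{\delta-1}c\,\tau_*^{2q}+\cdots$ with $\tau_*^{2q}\sim(\tfrac{\delta}{\delta-1})^q\sigma^{2q}$ reproduces the constants in \eqref{eq:lownoise:2>q>1}, \eqref{eq:lownoise:q=2} and \eqref{expansion:amseq2}.

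The case $q=1$ (part (i)) is separate because the soft threshold has a genuine dead zone. Using $\eta_1(\tau Z;\alpha\tau)=\tau\,\eta_1(Z;\alpha)$ together with the hypothesis $\mathbb{P}(|G|\ge\mu)=1$, on the overwhelming event $\{|G+\tau Z|>\alpha\tau\}$ one has $\eta_1(G+\tau Z;\alpha\tau)-G=\tau(Z-\alpha\sgn G)+o(\tau)$, so the risk collapses to $R(\tau,\alpha)\approx\tau^2\big[(1-\epsilon)\mathbb{E}\eta_1^2(Z;\alpha)+\epsilon(1+\alpha^2)\big]$, whose minimum over $\alpha$ is exactly $\tau^2 M_1(\epsilon)$. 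The fixed point then gives $\tau_*^2=\sigma^2/(1-M_1(\epsilon)/\delta)$, finite precisely because $\delta>M_1(\epsilon)$, and $\amse=\frac{\delta M_1(\epsilon)}{\delta-M_1(\epsilon)}\sigma^2$, reproducing \eqref{eq:lownoise:q=1}. The discarded complementary event $\{|G+\tau Z|\le\alpha\tau\}$ forces $|Z|\gtrsim\mu/\tau$, a Gaussian tail of order $e^{-\mu^2/2\tau^2}$; since $\tau_*^2\sim\frac{\delta\sigma^2}{\delta-M_1(\epsilon)}$ this is exactly the remainder $o\big(e^{(M_1(\epsilon)-\delta)\tilde\mu^2/(2\delta\sigma^2)}\big)$, with the slack $\tilde\mu<\mu$ absorbing polynomial prefactors.

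The main obstacle will be rigor rather than the formal algebra. First, I must establish $\alpha_*\to0$ and $\tau_*\to0$, and pin their rates, before expanding, using monotonicity/continuity of the optimally-tuned state-evolution map from Theorem \ref{theorem:amp:bridge2}, rather than assuming them. Second, and most delicate, the expansion of $\eta_q(G+\tau Z;\chi)-G$ around $G$ fails where $|G|$ is comparable to $\tau$; for $1<q<2$ the factor $|G|^{q-2}$ is even non-integrable near $0$, so that region must be excised and shown negligible — this is exactly what $\mathbb{P}(|G|\le x)=O(x)$ buys, through a separate bound on $\mathbb{E}[(\eta_q(G+\tau Z;\chi)-G)^2\1{|G|\le K\tau}]$. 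Third, I must justify interchanging the expansions with the expectations by exhibiting integrable envelopes uniform in the small parameters, and verify that the terms lumped into ``higher order'' stay higher order after substituting $\alpha_*$ (they do, since minimization only improves the order). Once these uniform controls are in place the computation is routine, and the three difficulties are neatly compartmentalized by the case split $q=1$, $1<q<2$, and $q\ge2$.
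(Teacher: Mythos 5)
Your proposal is correct: the reduction $\amse(q,\lambda_q^*)=\delta(\tau_*^2-\sigma^2)$, the expansion of the optimally tuned risk as $\tau\to 0$, and the final inversion of the fixed point all match the paper, and every constant you produce agrees with \eqref{eq:lownoise:q=1}--\eqref{expansion:amseq2}. The organization, however, is genuinely different. The paper proves only the case $q>2$ here (Appendix \ref{smallnoise:supproof}), importing $q\in[1,2]$ from \cite{weng2018overcoming}, and its argument is two-staged: it first extracts the rate of $\alpha_q(\tau)$ from the \emph{exact} stationarity equation $\partial_1 R_q(\alpha,\tau)=0$ (decomposed into the terms $H_1,H_2,H_3$, handled by Stein's lemma and DCT after an a priori contradiction argument showing $\tau^{2-q}\alpha\to0$), and only then expands $R_q(\alpha_q(\tau),\tau)$ separately (the terms $K_1,K_2,K_3$). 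You instead replace the risk by an explicit quadratic surrogate in $\alpha$ and read off both $\alpha_*\approx -A_1/(2A_2)$ and $\min_\alpha R\approx \tau^2-A_1^2/(4A_2)$ in one shot, with the case split $1<q<2$ versus $q>2$ (and the coincidence at $q=2$) emerging transparently from which term of $A_1$ dominates. This buys a unified treatment of all $q>1$ and makes the provenance of the constants visible; what it costs is that the approximation-then-minimization step needs a two-sided justification the paper's exact-equation route gets for free.

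Two points in your rigor plan need adjustment. First, the blanket claim that you must establish $\alpha_*\to0$ is wrong for $q=1$: there $\alpha_*$ converges to the \emph{nonzero} minimizer $\chi_0$ of $(1-\epsilon)\mathbb{E}\eta_1^2(Z;\chi)+\epsilon(1+\chi^2)$, which your own part-(i) computation implicitly uses; only for $q>1$ does $\alpha_*\to0$ (at rate $\tau^{2q-2}$ for $q<2$ and $\tau^{q}$ for $q>2$, so that $\chi=\alpha_*\tau^{2-q}\to0$ in both cases). Second, the parenthetical ``minimization only improves the order'' gives only the upper bound $\min_\alpha R\le \min_\alpha\tilde R+E(\tau,\tilde\alpha_*)$; for the matching lower bound you must first localize the true minimizer $\alpha_*$ (e.g.\ via the comparison $R(\alpha_*,\tau)\le R(0,\tau)=\tau^2$, which is exactly how the paper shows $\tau^{2-q}\alpha_*\to0$) and then show the error of the quadratic surrogate is $o(A_1^2/A_2)$ uniformly on that localized range. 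Neither issue is fatal --- the paper's Lemmas \ref{taurate:qlarger2} and \ref{riskrate:amseq2} supply precisely these ingredients --- but both must appear explicitly for the argument to close.
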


The results for $q \in [1,2]$ are taken from \cite{weng2018overcoming}. The proof for the case $q>2$ can be found in Appendix I of \cite{wang2017bridge}. It is straightforward to see that $M_1(\epsilon)$ is an increasing function of $\epsilon \in [0,1]$ and $M_1(1)=1$. This implies that ${\rm AMSE}(1,\lambda_1^*)$ is the smallest among all ${\rm AMSE}(q,\lambda_q^*)$ with $q\in [1,\infty)$. As is clear, the first order terms in the expansion of ${\rm AMSE}(q,\lambda_q^*)$ are the same for all $q\in (1,\infty)$. However, the second dominant term shows that the smaller values of $q$ are preferable (note the strict monotonicity only occurs in the range $(1,2]$). 

Combining the above results with Corollary \ref{THM:FDPCOMPARE:LQ} implies that in the high SNR setting, two-stage $\lasso$ offers the best variable selection performance. We should also emphasize that as depicted in Figure \ref{fig:Lasso_roc_cpr}, in this regime two-stage $\lasso$ offers a much better variable selection performance than $\lasso$.

\begin{remark}
Theorems \ref{THM:NOISY:MAIN} and \ref{THM:LOWNOISE:MAIN} together give a full and sharp evaluation of the noise-sensitivity of bridge estimators. Among all the bridge estimators with $q\in [1,\infty)$, LASSO and Ridge are optimal for parameter estimation and variable selection, in the low and large noise settings respectively. This result delivers an intriguing message: sparsity inducing regularization is not necessarily preferable even in sparse models. Such phenomenon might be well explained by the bias-variance tradeoff: variance is the major factor in very noisy settings, thus a regularization that produces more stable estimator is preferred, when the noise is large.
\end{remark}

\subsubsection{Analysis of {\rm AMSE} in large sample scenario} \label{large:sample:study}

Our analysis in this section is concerned with the large $\delta$ regime. Since $n/p\rightarrow \delta$ in our asymptotic setting, large $\delta$ means large sample size (relative to the dimension $p$). Intuitively speaking, this is similar to the classical asymptotic setting where $n \rightarrow \infty$ and $p$ is fixed (specially if we assume the fixed number $p$ is large). We will later connect the results we derive in the large $\delta$ regime to those obtained in classical asymptotic regime, and provide new insights. 

In our original set-up, the elements of the design matrix are $X_{ij} \overset{i.i.d.}{\sim} N(0, \frac{1}{n})$. This means the SNR ${\rm var}(\sum_j X_{ij} \beta_j)/{\rm var}(w_i)\rightarrow \frac{\mathbb{E}|B|^2}{\delta \sigma^2}$ as $n\rightarrow \infty$. Therefore, if we let $\delta \rightarrow \infty$, the SNR will decrease to zero, which is not consistent with the classical asymptotics in which the SNR is assumed to be fixed. To resolve this discrepancy we scale the noise term by $\sqrt{\delta}$ and use the model:
\begin{equation}\label{eq:largedeltabridge}
y= X\beta + \frac{1}{\sqrt{\delta}} w,
\end{equation}
where $\{\beta,w, X\}$ is the converging sequence in Definition \ref{definition:asymptotics}. Under this model we compare the AMSE of different bridge estimators. The next theorem summarizes the main result.

\begin{theorem}\label{THM:SAMPLE:MAIN}
Consider the model in \eqref{eq:largedeltabridge} and $\epsilon \in (0,1)$. As $\delta \rightarrow \infty$, we have 
\begin{enumerate}[(i)]
	\item
	For $q = 1$, if $\mathbb{P}(|G| \geq \mu)=1$ for some $\mu>0$ and  $\mathbb{E}|G|^2 < \infty$, then
	\begin{equation}
	{\rm AMSE}(1,\lambda^*_1)
	=
	\frac{M_1(\epsilon)\sigma^2}{\delta}+o(\delta^{-1}), \label{eq:large:sample:q=1}
	\end{equation}
        where $M_1(\epsilon)$ has the same definition as in Theorem \ref{THM:LOWNOISE:MAIN} (i).
	
	\vspace{0.2cm}
	\item
	For $1 < q < 2$, if $\mathbb{P}(|G| \leq x) = O(x)$ (as $x \rightarrow 0$) and  $\mathbb{E}|G|^2 < \infty$, then
	\begin{equation}
	{\rm AMSE}(q, \lambda^*_q)
	=
	\frac{\sigma^2}{\delta} - \frac{\sigma^{2q}}{\delta^q}\frac{(1-\epsilon)^2(\mathbb{E}|Z|^q)^2}{\epsilon\mathbb{E}|G|^{2q-2}}+o\big(\delta^{-q}\big)\label{eq:large:sample:1<q<2}
	\end{equation}
	
	\item
	For $q = 2$, if  $\mathbb{E}|G|^2 < \infty$, then we have
	\begin{equation}
	{\rm AMSE}(2,\lambda^*_2)
	=
	\frac{\sigma^2}{\delta} + \frac{\sigma^2}{\delta^2} \big[1 - \frac{\sigma^2}{\epsilon\mathbb{E}G^2}\big] + o(\delta^{-2})
	\end{equation}
	\item 
	For $q>2$, if  $\mathbb{E}|G|^{2q-2} < \infty$, then
	\begin{equation}
	{\rm AMSE}(q,\lambda^*_q)
	=
	\frac{\sigma^2}{\delta} + \frac{\sigma^2}{\delta^2}
	\bigg[1	- \frac{\epsilon (q-1)^2\sigma^2(\mathbb{E}|G|^{q-2})^2}{\mathbb{E}|G|^{2q-2}}\bigg] +o(\delta^{-2}).   \label{eq:large:sample:q=2}
	\end{equation}
\end{enumerate}
\end{theorem}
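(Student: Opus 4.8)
The plan is to reduce Theorem~\ref{THM:SAMPLE:MAIN} to an asymptotic analysis of the state-evolution fixed point \eqref{state_evolution1}--\eqref{state_evolution2} for the rescaled model \eqref{eq:largedeltabridge}. The factor $1/\sqrt\delta$ on the noise merely replaces the noise variance $\sigma^2$ by the \emph{effective} variance $\sigma^2/\delta$ in \eqref{state_evolution1}, so the fixed point becomes $\tau^2=\sigma^2/\delta+\frac1\delta\,R_q(\tau,\alpha)$, where $R_q(\tau,\alpha)\triangleq\mathbb{E}(\eta_q(B+\tau Z;\alpha\tau^{2-q})-B)^2$ is the risk integrand. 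I would record two structural facts used throughout: (i) along the fixed point, $\amse(q,\lambda_q^*)=R_q(\tau_*,\alpha_*)=\delta\tau_*^2-\sigma^2$; and (ii) by Lemma~\ref{lem:opttunesimpler}, optimizing $\lambda$ is equivalent to choosing the effective threshold $\alpha$ so as to minimize $\tau^2$ (equivalently $R_q$). Since both terms on the right of the fixed-point equation are $O(1/\delta)$, one gets $\tau_*^2=\Theta(1/\delta)$, hence $\tau_*\to0$: the large-sample regime \emph{is} a vanishing-effective-noise regime, which explains why the four cases mirror Theorem~\ref{THM:LOWNOISE:MAIN}. The formal substitution $\sigma^2\mapsto\sigma^2/\delta$ in the low-noise expansions indeed reproduces \eqref{eq:large:sample:q=1}--\eqref{eq:large:sample:q=2} and is a reliable guide to the constants; but since Theorem~\ref{THM:LOWNOISE:MAIN} holds for fixed $\delta$, it cannot be cited verbatim, and I would redo the expansion while tracking the $\delta$-dependence.

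The analytic core is the expansion of $R_q(\tau,\alpha)$ as $\tau\to0$. For the null coordinates ($B=0$, mass $1-\epsilon$) I would use the exact scaling identity $\eta_q(\tau Z;\alpha\tau^{2-q})=\tau\,\eta_q(Z;\alpha)$, contributing $(1-\epsilon)\tau^2\,\mathbb{E}\eta_q^2(Z;\alpha)$. For the signal coordinates ($B=G$, mass $\epsilon$) I would use the perturbation $\eta_q(u;\chi)=u-\chi q|u|^{q-1}\sgn(u)+O(\chi^2)$, legitimate because the optimal $\chi=\alpha\tau^{2-q}$ will tend to $0$. Squaring $\eta_q(G+\tau Z;\chi)-G$, taking $\mathbb{E}_Z$, and evaluating the cross term by Gaussian integration by parts, $\mathbb{E}[Z\,|G+\tau Z|^{q-1}\sgn(G+\tau Z)]=\tau(q-1)\,\mathbb{E}|G+\tau Z|^{q-2}$, gives the signal contribution $\epsilon[\tau^2-2q(q-1)\tau^{4-q}\alpha\,\mathbb{E}|G|^{q-2}+q^2\tau^{4-2q}\alpha^2\,\mathbb{E}|G|^{2q-2}]+o(\cdot)$. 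Thus $R_q(\tau,\alpha)=\tau^2-L\alpha+Q\alpha^2+\cdots$ is (to leading order) quadratic in $\alpha$, whose minimization yields the risk reduction $L^2/(4Q)$; the whole task reduces to identifying which of the null and signal linear/quadratic coefficients dominates for each $q$, and then solving the resulting scalar fixed point to the required order.

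I would then treat the four cases. For $q=1$, soft-thresholding with $|G|\ge\mu$ makes the signal coordinates exact up to an event of exponentially small probability, so $R_1(\tau,\alpha)=\tau^2[(1-\epsilon)\mathbb{E}\eta_1^2(Z;\alpha)+\epsilon(1+\alpha^2)]+(\text{exp.\ small})$; minimizing the bracket gives $M_1(\epsilon)$, and $\tau_*^2=\sigma^2/(\delta-M_1(\epsilon))$ yields \eqref{eq:large:sample:q=1}. For $1<q<2$ the null linear term ($\Theta(\tau^2)$) dominates the signal one ($\Theta(\tau^{4-q})$) while the signal quadratic term ($\Theta(\tau^{4-2q})$) dominates the null one, so $\alpha_*\sim\tau^{2(q-1)}$ and the reduction is $\Theta(\tau^{2q})$, producing the $\delta^{-q}$ correction in \eqref{eq:large:sample:1<q<2}; the hypothesis $\mathbb{P}(|G|\le x)=O(x)$ is exactly what makes $\mathbb{E}|G|^{2q-2}$ finite and legitimizes the signal expansion near $G\approx0$. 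For $q=2$ (case (iii)) I would not perturb but use the linearity $\eta_2(u;\alpha)=u/(1+2\alpha)$ to obtain the exact quadratic $R_2=(1-a)^2\epsilon\,\mathbb{E}G^2+a^2\tau^2$ with $a=1/(1+2\alpha)$, optimize in closed form, and expand. For $q>2$ the decisive reversal is that the signal linear term $\Theta(\tau^{4-q})$ now dominates the null one, so $L\sim\tau^{4-q}$, $Q\sim\tau^{4-2q}$, $\alpha_*\sim\tau^{q}$, and the reduction is $\Theta(\tau^4)$; combined with the $\sigma^2/\delta^2$ piece coming from $\tau_*^2\approx\sigma^2/(\delta-1)$, this yields exactly the bracket in \eqref{eq:large:sample:q=2}, and the finiteness of $\mathbb{E}|G|^{2q-2}$ is what the quadratic coefficient requires.

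The main obstacle is the rigorous justification of this ``expand-then-optimize'' scheme: one must show that the minimizing $\alpha_*$ truly vanishes at the predicted rate, that the $O(\chi^2)$ proximal remainders and the replacements $\mathbb{E}|G+\tau Z|^{q-2}\to\mathbb{E}|G|^{q-2}$ are genuinely lower order \emph{uniformly} as $\tau\to0$, and that $\min_\alpha$ may be interchanged with the $\tau\to0$ expansion. The case $q>2$ is the most delicate, because the leading $\Theta(\tau^4)$ correction originates from the Stein cross term of order $\tau^{4-q}$ rather than from the naive squared bias, forcing the expansion to an order where several competing powers of $\tau$ must be compared with care; this is also where the moment hypothesis on $G$ is used most sharply. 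For $q=1$ the separate difficulty is the rigorous control of the exponentially small missed-detection events, which is precisely what restricts the remainder to $o(\delta^{-1})$.
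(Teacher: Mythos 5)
Your proposal is correct and follows essentially the same route as the paper: rewrite the fixed point as $\tau_*^2=\sigma^2/\delta+\tau_*^2R_q(\alpha_q(\tau_*),\tau_*)/\delta$, show $\tau_*^2=\sigma^2/\delta+o(1/\delta)$ so that $\tau_*\to0$, and then feed in the small-$\tau$ expansion of the optimally tuned risk; the only economy you miss is that $R_q(\alpha_q(\tau),\tau)$ depends on $\tau$ alone (not on $\sigma$ or $\delta$), so the paper imports the low-noise lemmas verbatim via Lemma \ref{riskconvergence:all} instead of redoing the expand-then-optimize computation with $\delta$-dependence, and your rates, constants, and identification of the delicate points (uniformity of the remainders, the $q>2$ Stein cross term) all match. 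One small correction: for $1<q<2$ the hypothesis $\mathbb{P}(|G|\le x)=O(x)$ is there to control the negative moment $\mathbb{E}|G|^{q-2}$ arising in the signal cross term, not to make $\mathbb{E}|G|^{2q-2}$ finite --- that exponent lies in $(0,2)$ and is already covered by $\mathbb{E}|G|^2<\infty$.
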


The proof of Theorem \ref{THM:SAMPLE:MAIN} can be found in Appendix \ref{apxc}. Figure \ref{fig:inline_large_sample} compares the accuracy of the first and second order expansions in large range of $\delta$. As is clear from this figure, the second-order term often offers an accurate approximation over a wide range of $\delta$. 

\begin{remark}
As mentioned in Section \ref{large:noise:analysis}, $M_1(\epsilon)$ is an increasing function of $\epsilon \in [0,1]$ and $M_1(1) =1$. This implies that ${\rm AMSE}(1,\lambda_1^*)$ is the smallest among all ${\rm AMSE}(q,\lambda_q^*)$ with $q\in [1,\infty)$. Therefore, in this regime $\lasso$ gives the smallest estimation error and thus two-stage $\lasso$ offers the best variable selection performance.  
\end{remark}

\begin{remark}
The ${\rm AMSE}(q,\lambda_q^*)$ with $q>1$ share the same first dominant term, but have different second order terms. Furthermore, for $q\in (1,2]$, the smaller $q$ is, the better its performance will be. Such monotonicity does not hold beyond $q=2$. 
\end{remark}

{
\setlength{\tabcolsep}{0pt}
\begin{figure}[t!]
\begin{center}
\begin{tabular}{rcc}
    & \scriptsize{$q = 1$} & \scriptsize{$q = 1.5$} \\
	\rotatebox{90}{\scriptsize{\qquad \qquad relative error}} & \includegraphics[scale=0.4]{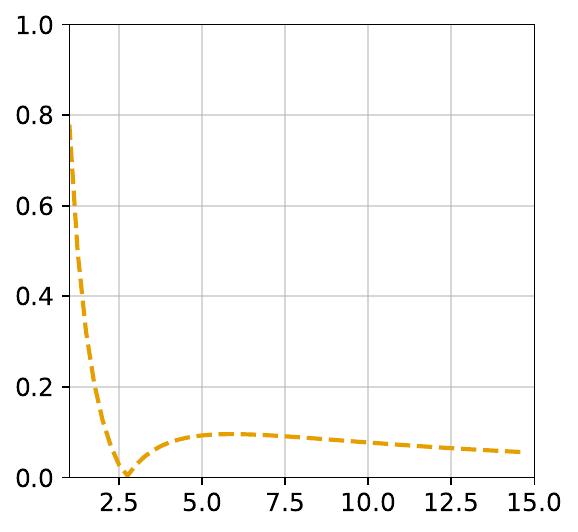}
	&
	\includegraphics[scale=0.4]{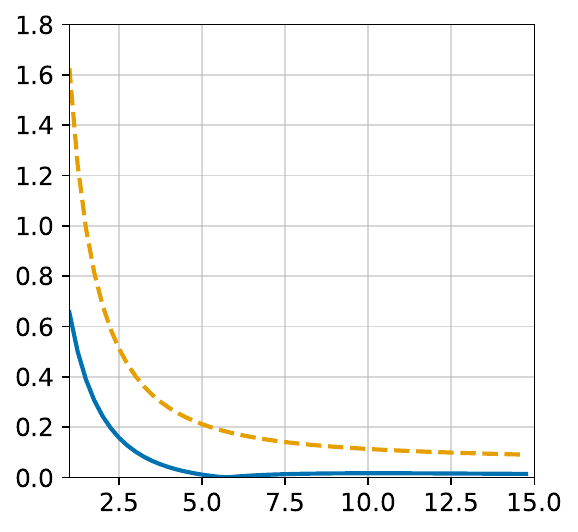} \\
	& \scriptsize{$\delta$} & \scriptsize{$\delta$} \\
	&& \\
	&& \\
    & \scriptsize{$q = 2$} & \scriptsize{$q = 2.5$} \\
	\rotatebox{90}{\scriptsize{\qquad \qquad relative error}} & \includegraphics[scale=0.4]{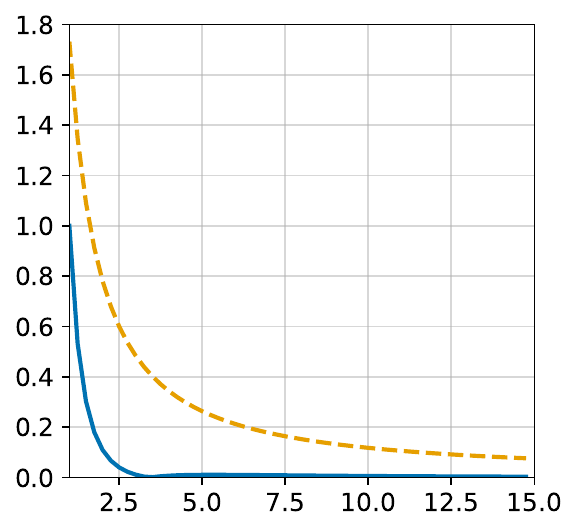}
	&
	\includegraphics[scale=0.4]{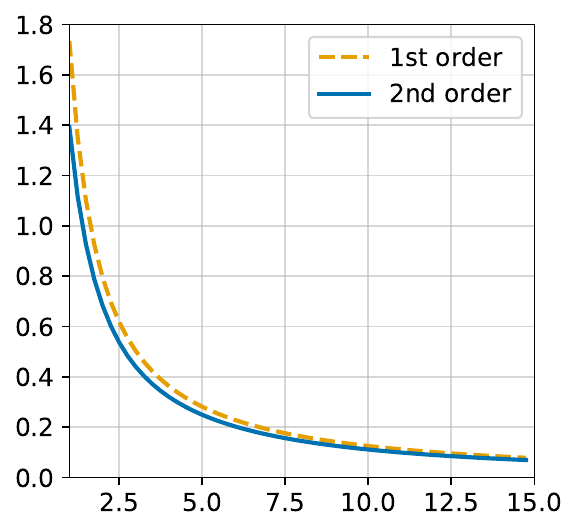} \\
	& \scriptsize{$\delta$} & \scriptsize{$\delta$}
\end{tabular}
\caption{Absolute relative error of first-order and second-order approximations of AMSE under large sample scenario. In these four figures, $p_B = (1 - \epsilon)\delta_0 + \epsilon\delta_1$, $\epsilon=0.5$, $\sigma=1$.}\label{fig:inline_large_sample}
\end{center}
\end{figure}
}

We now connect our results in this large $\delta$ regime to those obtained in classical asymptotic setting. The classical aymptotics ($p$ fixed) of bridge estimators for all the values of $q\in [0,\infty)$ is studied in \cite{knight2000asymptotics}. We explain $\lasso$ first. According to \cite{knight2000asymptotics}, if $\frac{\lambda}{\sqrt{n}}\rightarrow\lambda_0\geq 0$ and $\frac{1}{n}X^TX\rightarrow C$, then
\begin{equation} \label{classical:result}
\sqrt{n}(\hat{\beta} - \beta)
\xrightarrow{d}
\arg\min_u V(u),
\end{equation}
where $V(u) = -2u^TW + u^T C u + \lambda_0\sum_{j=1}^p[u_j\sgn(\beta_j)\1{\beta_j\neq 0} + |u_j|\1{\beta_j = 0}]$ with $W\sim\mathcal{N}(0, \sigma^2 C)$. We will do the following calculations to explore the connections. Since $X_{ij}\sim N(0,1/n)$ in our paper, we first make the following changes to $\lasso$ to make our set-up consistent with that of  \cite{knight2000asymptotics}:
\begin{equation*}
\frac{1}{2}\|y-X\beta\|_2^2 + \lambda\|\beta\|_1
=
\frac{1}{2}\Big(\|y-\sqrt{n}X\frac{\beta}{\sqrt{n}}\|_2^2 + 2\sqrt{n}\lambda\|\frac{\beta}{\sqrt{n}}\|_1\Big).
\end{equation*}

We thus have $C=\frac{1}{n}(\sqrt{n}X)^T(\sqrt{n}X)\rightarrow I$ and $\lambda_0 = 2\lambda$. Now suppose the result \eqref{classical:result} works for $\hat{\beta}(1,\lambda)$. Then we have
\begin{equation} \label{cal:formula}
\hat{\beta}(1, \lambda) - \beta
\xrightarrow{d}
\arg\min_u V(u),
\end{equation}
where $V(u) = -2u^TW + u^Tu + 2\lambda\sum_{j=1}^p[u_j\sgn(\beta_j)\1{\beta_j\neq 0} + |u_j|\1{\beta_j = 0}]$ with $W \sim\mathcal{N}(0, \frac{\sigma^2}{\delta}I)$. It is straightforward to see that the optimal choice of $u$ in \eqref{cal:formula} has the following form:
\begin{equation*}
\hat{u}_j
=
\left\{\begin{array}{ll}
	W_j - \lambda\sgn(\beta_j) & \text{ when }\beta_j\neq 0\\
	W_j - \lambda s(\hat{u}_j) & \text{ when }\beta_j = 0
\end{array}\right.
\end{equation*}
where $s(u_j)=\sgn(u_j)$ when $u_j\neq 0$ and $|s(u_j)|\leq 1$ when $u_j = 0$. Furthermore, for the case of $\beta_j=0$, $\hat{u}_j=0$ is equivalent to $|W_j|\leq\lambda$ and $\sgn(W_j)=\sgn(\hat{u}_j)$ when $\hat{u}_j\neq 0$. Based on this result, we do the following heuristic calculation to connect our results with those of \cite{knight2000asymptotics}:
\begin{align*}
\frac{1}{p}\|\hat{\beta}(1, \lambda) - \beta\|_2^2
\approx&
\frac{1}{p}\mathbb{E}\bigg[\sum_{j:\beta_j\neq 0}\big[W_j^2 - 2\lambda\sgn(\beta_j)W_j + \lambda^2\big]
+
\sum_{j:\beta_j=0, \hat{u}_j\neq 0}\big[W_j^2 - 2\lambda W_j\sgn(\hat{u}_j) + \lambda^2\big]
\bigg]\nonumber\\
\approx&
\frac{1}{p}\bigg[\sum_{j:\beta_j\neq 0}(\frac{\sigma^2}{\delta} + \lambda^2)
+
\sum_{j:\beta_j=0}\mathbb{E}\eta_1^2(W_j;\lambda)\bigg]
=
\frac{k}{p}(\frac{\sigma^2}{\delta} + \lambda^2) + \frac{p-k}{p}\mathbb{E}\eta_1^2(W_j;\lambda) \nonumber \\
=&
\frac{\sigma^2}{\delta}\Big[\frac{p-k}{p}\mathbb{E}\eta^2_1(Z; \sqrt{\delta}\lambda/\sigma)+\frac{k}{p}(1+(\sqrt{\delta}\lambda/\sigma)^2) \Big], \nonumber 
\end{align*}
where $k$ is the number of non-zero elements of $\beta$ and $Z \sim N(0,1)$. Note that in our asymptotic setting $k/p \rightarrow \epsilon$ and we consider the optimal tuning $\lambda_1^*$. Therefore following the above calculations we obtain
\[
\min_{\lambda}\frac{1}{p}\|\hat{\beta}(1, \lambda) - \beta\|_2^2\approx \frac{\sigma^2}{\delta}\min_{\chi} (1-\epsilon)\mathbb{E}\eta_1^2(Z;\chi)+\epsilon (1+\chi^2)=\frac{M_1(\epsilon)\sigma^2}{\delta}.
\]

This is consistent with \eqref{eq:large:sample:q=1} in our asymptotic analysis. We can do similar calculations to show that the asumptotic analysis of \cite{knight2000asymptotics} leads to the first order expansion of AMSE in Theorem \ref{THM:SAMPLE:MAIN} for the case $q>1$. 

Based on this heuristic argument, we may conclude that the information provided by the classical asymptotic analysis is reflected in the first order term of AMSE$(q,\lambda_q^*)$. Moreover, our large sample analysis is able to derive the second dominant term for $q>1$. This term enables us to compare the performance of different values of $q>1$ more accurately (note they all have the same first order term). Such comparisons cannot be performed  in \cite{knight2000asymptotics}. 

\section{Debiasing}\label{ssec:discuss:debias}

\subsection{Implications of debiasing for $\lasso$}

As is clear from Theorem \ref{THM:LASSOFDPCOMPARE}, since $\lasso$ produces a sparse solution, it is not possible for a $\lasso$ based two-stage method to achieve $\atpp$ values beyond what is already reached by the first stage. This problem can be resolved by \textit{debiasing}. In this approach, instead of thresholding the $\lasso$ estimate (or in general a bridge estimate), we threshold its debiased version. Below we will add a dagger $\dagger$ to aforementioned notations to denote their corresponding debiased version. Recall $\hat{\beta}(q, \lambda)$ denotes the solution of bridge regression for any $q\geq 1$. Define the debiased estimates as
\begin{enumerate}[(i)]
	\item
	For $q = 1$,
	\begin{equation*}
	\hat{\beta}^\dagger(1, \lambda) \triangleq \hat{\beta} (1, \lambda) + X^T \frac{y-X\hat{\beta}(1, \lambda) }{1- \|\hat{\beta}(1, \lambda)\|_0 / n},
	\end{equation*}
	where $\|\cdot \|_0$ counts the number of non-zero elements in a vector.  

	\item
	For $q > 1$,
	\begin{equation}
	\hat{\beta}^\dagger(q, \lambda) \triangleq \hat{\beta} (q, \lambda) + X^T \frac{y-X\hat{\beta}(q, \lambda) }{1- f(\hat{\beta}(q,\lambda),\hat{\gamma}_{\lambda}) / n}, \label{eq:debiasing:lq}
	\end{equation}
	where $f(v,w)=\sum_{i=1}^p\frac{1}{1+wq(q-1)|v_i|^{q-2}}$ and $\gamma=\hat{\gamma}_\lambda$ is the unique solution of the following equation:
	\begin{eqnarray}
	\frac{\lambda}{\gamma}=1-\frac{1}{n}f(\hat{\beta}(q,\lambda),\gamma). \label{eq:solvegamma}
	\end{eqnarray}
\end{enumerate}

We have the following theorem to confirm the validity of the debiasing estimator $\hat{\beta}^\dagger(q, \lambda)$.

\begin{theorem}\label{THEOREM:DEBIASING:VALID}
For any given $q\in [1,\infty)$, with probability one, the empirical distribution of the components of $\hat{\beta}^\dagger(q, \lambda)-\beta$ converges weakly to $N(0,\tau^2)$, where $\tau$ is the solution of \eqref{state_evolution1} and \eqref{state_evolution2}.
\end{theorem}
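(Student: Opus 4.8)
The plan is to recognize the debiased estimator $\hat{\beta}^\dagger(q,\lambda)$ as the ``effective observation'' produced by the approximate message passing (AMP) iteration whose fixed point is the bridge estimator $\hat{\beta}(q,\lambda)$, and then to invoke the state evolution analysis of \cite{bayati2011dynamics, weng2018overcoming}. Concretely, I would consider the AMP recursion
\begin{align*}
x^{t+1} &= \eta_q(x^t + X^T z^t; \theta_t), \\
z^t &= y - X x^t + \tfrac{1}{\delta} z^{t-1} \cdot \tfrac{1}{p}\sum_{i=1}^p \eta_q'\big((x^{t-1} + X^T z^{t-1})_i; \theta_{t-1}\big),
\end{align*}
with thresholds $\theta_t$ tuned so that the recursion converges to $\hat{\beta}(q,\lambda)$. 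The state evolution result guarantees that, almost surely, the empirical distribution of the components of the effective observation $u^t := x^t + X^T z^t$ minus $\beta$ converges weakly to $N(0,\tau_t^2)$, where the scalar $\tau_t$ obeys a recursion converging to the fixed point $\tau$ of \eqref{state_evolution1}--\eqref{state_evolution2}.

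The second step is to verify that at the fixed point the Onsager correction reproduces the debiasing denominator exactly. Differentiating the optimality condition of the proximal operator gives the identity $\eta_q'(u;\chi) = (1 + \chi q(q-1)|\eta_q(u;\chi)|^{q-2})^{-1}$; evaluated at the fixed point $\hat{\beta}_i = \eta_q(u_i; \chi)$ with $\chi = \alpha\tau^{2-q}$, this yields $\sum_i \eta_q'(u_i;\chi) = f(\hat{\beta}(q,\lambda), \chi)$. I would then observe that the defining equation \eqref{eq:solvegamma} for $\hat{\gamma}_\lambda$ is precisely the empirical counterpart of the tuning equation \eqref{state_evolution2}, so that $\hat{\gamma}_\lambda \to \alpha\tau^{2-q}$ and $f(\hat{\beta}, \hat{\gamma}_\lambda)/n \to \frac{1}{\delta}\mathbb{E}\eta_q'(B+\tau Z; \alpha\tau^{2-q})$. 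Consequently the AMP residual at the fixed point satisfies $z = (y - X\hat{\beta})/(1 - f(\hat{\beta},\hat{\gamma}_\lambda)/n)$, and the effective observation $u = \hat{\beta} + X^T z$ coincides term by term with the debiased estimator $\hat{\beta}^\dagger(q,\lambda)$ in \eqref{eq:debiasing:lq}. The case $q=1$ is handled identically, using that the soft-threshold derivative is an indicator, whence $\sum_i \eta_1'(u_i) = \|\hat{\beta}(1,\lambda)\|_0$, which is exactly the denominator appearing in the $q=1$ debiasing formula.

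Combining the two steps, $\hat{\beta}^\dagger(q,\lambda) - \beta = u - \beta$, whose empirical law converges to $N(0,\tau^2)$ once we pass $t \to \infty$. The main obstacle is precisely this interchange of the two limits $p \to \infty$ (state evolution, valid for each fixed iteration index $t$) and $t \to \infty$ (AMP convergence to the bridge minimizer). I would control it by the now-standard route: state evolution holds for each fixed $t$; the AMP iterates converge to $\hat{\beta}(q,\lambda)$, and the residuals $z^t$ to $z$, strongly enough (as established in \cite{bayati2011dynamics, weng2018overcoming}) that the Wasserstein distance between the empirical law of $u^t - \beta$ and that of $\hat{\beta}^\dagger - \beta$ is negligible for large $t$; and the scalar map $\tau_t \to \tau$ is continuous at the fixed point. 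An $\varepsilon/3$-type estimate, splitting the deviation of the empirical law of $\hat{\beta}^\dagger - \beta$ from $N(0,\tau^2)$ into a state-evolution error, an AMP-convergence error, and a $\tau_t$-to-$\tau$ error, then closes the argument; upgrading convergence in probability to almost sure convergence follows the same device used for Lemma \ref{CITE:WEIJIE}.
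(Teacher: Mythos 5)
Your proposal is correct and follows essentially the same route as the paper: identify $\hat{\beta}^\dagger(q,\lambda)$ with the AMP effective observation $\beta^t + X^T z^t$, invoke the state evolution result (the paper's Theorem \ref{tobeused:later}(iv)) to get the Gaussian limit $N(0,\tau_t^2)$, prove $\hat{\gamma}_\lambda \rightarrow \alpha\tau^{2-q}$ almost surely (the paper's Lemma \ref{lemma:debiasing:tuning:converge}), and control the interchange of the limits $p\to\infty$ and $t\to\infty$ by bounding the $\ell_2$ discrepancy between $\hat{\beta}^\dagger$ and $\beta^t + X^T z^t$. The paper carries out your final step via an explicit four-term decomposition ($Q_1$--$Q_4$) rather than a Wasserstein $\varepsilon/3$ argument, and the almost-sure statement comes directly from the AMP theorem rather than requiring a separate upgrade, but these are cosmetic differences.
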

See Appendix \ref{apxd} for the proof. In order to perform variable selection, one may apply the hard thresholding function to these debiased estimates, i.e.,
\begin{equation*}
\bar{\beta}^\dagger(q, \lambda,s) =  \eta_0(\hat{\beta}^\dagger(q, \lambda); s^2/2)=\hat{\beta}^\dagger(q,\lambda)\1{|\hat{\beta}^\dagger(q, \lambda)|\geq s}. 
\end{equation*}
We use the notations ${\rm ATPP}^\dagger(q, \lambda, s)$ and ${\rm AFDP}^\dagger(q, \lambda, s)$ to denote the ATPP and AFDP of $\bar{\beta}^\dagger(q, \lambda, s)$ respectively. 
In the case of $\lasso$, note that unlike $\hat{\beta}(1,\lambda)$ the debiased estimator $\hat{\beta}^\dagger(1, \lambda)$ is dense. Hence we expect the two-stage variable selection estimate $\bar{\beta}^\dagger(1, \lambda,s) $ to be able to reach any value of $\atpp$ between $[0,1]$.  The following theorem confirms this claim.

\begin{theorem}\label{THEOREM:DEBIASING:ATTP}
Given the {\rm ATPP} level $\zeta \in [0,1]$, for every value of $\lambda>0$, there exists $s(\lambda, \zeta)$ such that ${\rm ATPP}^\dagger(1, \lambda, s(\lambda, \zeta)) = \zeta$. Furthermore, whenever $\bar{\beta}^{\dagger}(1,\lambda,s)$ and $\bar{\beta}(1,\lambda, \tilde{s})$ reach the same level of {\rm ATPP}, they have the same {\rm AFDP}. The value of $\lambda$ that minimizes ${\rm AFDP}^\dagger(1, \lambda, s(\lambda, \zeta))$ also minimizes ${\rm AMSE}(1, \lambda)$.   
\end{theorem}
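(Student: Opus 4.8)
The plan is to reduce all three assertions to the state-evolution description of the debiased estimator and then to a single monotonicity statement in the noise parameter $\tau$. By Theorem \ref{THEOREM:DEBIASING:VALID}, together with the joint convergence of the pairs $(\beta_i,\hat{\beta}^\dagger(1,\lambda)_i)$ to $(B,B+\tau Z)$ (with $Z\sim N(0,1)$ independent of $B$) furnished by the underlying AMP state evolution, and the fact that $B+\tau Z$ has a density so that every threshold set is a continuity set, the indicators defining $\fdp$ and $\tpp$ pass to the limit exactly as in Lemma \ref{LEMMA:FDP:LQ}. This gives
\[
{\rm ATPP}^\dagger(1,\lambda,s)=\mathbb{P}(|G+\tau Z|\geq s),\qquad
{\rm AFDP}^\dagger(1,\lambda,s)=\frac{(1-\epsilon)\mathbb{P}(|Z|\geq s/\tau)}{(1-\epsilon)\mathbb{P}(|Z|\geq s/\tau)+\epsilon\,\mathbb{P}(|G+\tau Z|\geq s)}.
\]
The key structural observation is that, unlike the LASSO formulas in Lemma \ref{CITE:WEIJIE}, these depend on the tuning only through $\tau$; debiasing has removed the threshold $\alpha$. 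This is what makes the final minimization transparent.

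For the first assertion I would fix $\lambda$ (hence $\tau>0$) and study $s\mapsto\mathbb{P}(|G+\tau Z|\geq s)$. Since $G+\tau Z$ has a strictly positive density on $\mathbb{R}$, this map is continuous and strictly decreasing, equal to $1$ at $s=0$ and tending to $0$ as $s\to\infty$; the intermediate value theorem then produces a unique $s(\lambda,\zeta)$ with ${\rm ATPP}^\dagger(1,\lambda,s(\lambda,\zeta))=\zeta$ for each $\zeta\in(0,1)$, the endpoints $\zeta\in\{0,1\}$ being trivial.

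For the second assertion I would use the exact relation between the LASSO estimate and its debiased version. The KKT conditions for \eqref{lasso96} give $X^T(y-X\hat\beta(1,\lambda))=\lambda\,\partial\|\hat\beta(1,\lambda)\|_1$, so with $c=1-\|\hat\beta(1,\lambda)\|_0/n$ the defining formula for $\hat\beta^\dagger(1,\lambda)$ yields the coordinatewise identity $\hat\beta(1,\lambda)=\eta_1(\hat\beta^\dagger(1,\lambda);\lambda/c)$, where $\lambda/c\to\alpha\tau$ by \eqref{state_evolution2}. Consequently a coordinate of $\bar\beta(1,\lambda,\tilde s)$ is nonzero iff $|\eta_1(\hat\beta^\dagger(1,\lambda)_i;\lambda/c)|\geq\tilde s$, i.e. iff $|\hat\beta^\dagger(1,\lambda)_i|\geq\tilde s+\lambda/c$. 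Thus at finite sample $\bar\beta(1,\lambda,\tilde s)$ and $\bar\beta^\dagger(1,\lambda,s)$ select exactly the same coordinates when $s=\tilde s+\lambda/c$, so they have identical $\fdp$ and $\tpp$; passing to the limit ($\lambda/c\to\alpha\tau$) shows the asymptotic correspondence is $s=\tilde s+\alpha\tau$, and hence whenever both reach the same $\atpp$ they share the same $\afdp$. A direct comparison of the closed forms (using $\eta_1(G+\tau Z;\alpha\tau)$ and $\alpha\tau/\tau=\alpha$) confirms this.

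The third assertion is the substantive one, and I would prove it by monotonicity in $\tau$. Holding $\zeta$ fixed, the constraint ${\rm ATPP}^\dagger=\zeta$ determines $s=s(\tau)$, and since $x\mapsto x/(x+\epsilon\zeta)$ is increasing while $r\mapsto\mathbb{P}(|Z|\geq r)$ is decreasing, ${\rm AFDP}^\dagger$ is a strictly increasing function of $\tau$ once I show $r(\tau):=s(\tau)/\tau$ is strictly decreasing. Writing $a=1/\tau$, the constraint reads $\mathbb{P}(|aG+Z|\geq r)=\zeta$ and the false-positive level is $\mathbb{P}(|Z|\geq r)$, so the claim is the ROC-type statement that at fixed true-positive level the false-positive level decreases in $a$. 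The engine is the elementary fact that for each $r>0$ the map $a\mapsto\mathbb{P}(|aG+Z|\geq r)$ is strictly increasing: conditioning on $G=c$ and differentiating, $\partial_a\mathbb{P}(|ac+Z|\geq r)=|c|\big(\phi(r-a|c|)-\phi(r+a|c|)\big)>0$ for $c\neq0$, and integrating over $p_G$ (which has no atom at $0$) preserves strict positivity. Hence raising $a$ raises the true-positive level at any fixed $r$, so to restore it to $\zeta$ one must raise $r$, which lowers $\mathbb{P}(|Z|\geq r)$; thus $r(\tau)$ is strictly decreasing and ${\rm AFDP}^\dagger$ strictly increasing in $\tau$. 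Finally, \eqref{state_evolution1} gives ${\rm AMSE}(1,\lambda)=\delta(\tau^2-\sigma^2)$, itself strictly increasing in $\tau$, so the $\lambda$ minimizing ${\rm AFDP}^\dagger(1,\lambda,s(\lambda,\zeta))$ is exactly the $\lambda$ minimizing $\tau$, which is the $\lambda$ minimizing ${\rm AMSE}(1,\lambda)$, namely $\lambda_1^*$. I expect this monotonicity lemma to be the main obstacle; the reduction making ${\rm AFDP}^\dagger$ a function of $\tau$ alone, and the identity ${\rm AMSE}=\delta(\tau^2-\sigma^2)$, are what convert it into the stated conclusion.
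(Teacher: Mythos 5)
Your proposal is correct and follows essentially the same route as the paper's proof: derive the Gaussian-limit formulas ${\rm ATPP}^\dagger(1,\lambda,s)=\mathbb{P}(|G+\tau Z|>s)$ and the corresponding ${\rm AFDP}^\dagger$, match them to the thresholded-LASSO formulas of Lemma \ref{LEMMA:FDP:LQ} via the shift $s=\tilde s+\alpha\tau$, and settle the optimality claim by the same ROC-monotonicity-in-$\tau$ argument underlying Theorem \ref{THM:MSEMINFDRMIN}, which the paper invokes by reference. Your finite-sample KKT identity $\hat\beta(1,\lambda)=\eta_1(\hat\beta^\dagger(1,\lambda);\lambda/c)$ and the explicit computation $\partial_a\mathbb{P}(|ac+Z|\geq r)=|c|\big(\phi(r-a|c|)-\phi(r+a|c|)\big)>0$ merely make explicit two steps the paper leaves implicit (it compares the limiting formulas directly and skips the monotonicity details), so both arguments are sound and interchangeable.
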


As expected since the solution of bridge regression for $q>1$ is dense, the debiasing step does not help variable selection for $q>1$. Our next theorem confirms this claim.

\begin{theorem} \label{DEBIASING:Q>1}
Consider $q>1$. Given the {\rm ATPP} level $\zeta \in [0,1]$, for every value of $\lambda>0$, there exists $s(\lambda, \zeta)$ such that ${\rm ATPP}^\dagger(q, \lambda, s(\lambda, \zeta)) = \zeta$. Furthermore, whenever $\bar{\beta}^{\dagger}(q,\lambda,s)$ and $\bar{\beta}(q,\lambda, \tilde{s})$ reach the same level of {\rm ATPP}, they have the same {\rm AFDP}. Also, the value of $\lambda$ that minimizes ${\rm AFDP}^\dagger(q, \lambda, s(\lambda, \zeta))$ also minimizes ${\rm AMSE}(q, \lambda)$. As a result, the optimal value of ${\rm AFDP}^\dagger(q, \lambda, s(\lambda, \zeta))$ is the same as ${\rm AFDP}(q, \lambda^*_{q}, s^*_{q}(\zeta))$. 
\end{theorem}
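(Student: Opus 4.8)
The plan is to reduce everything to the Gaussian sequence model supplied by Theorem~\ref{THEOREM:DEBIASING:VALID}, which guarantees that the empirical distribution of the coordinates of $\hat{\beta}^\dagger(q,\lambda)-\beta$ converges to $N(0,\tau^2)$, with $(\alpha,\tau)$ the solution of \eqref{state_evolution1}--\eqref{state_evolution2}. First I would use this to identify the limiting detection probabilities coordinate by coordinate: a null coordinate ($\beta_i=0$) behaves like $\tau Z$ and a signal coordinate behaves like $G+\tau Z$, so that
\begin{align*}
{\rm ATPP}^\dagger(q,\lambda,s) &= \mathbb{P}(|G+\tau Z|>s),\\
{\rm AFDP}^\dagger(q,\lambda,s) &= \frac{(1-\epsilon)\mathbb{P}(|\tau Z|>s)}{(1-\epsilon)\mathbb{P}(|\tau Z|>s)+\epsilon\,\mathbb{P}(|G+\tau Z|>s)}.
\end{align*}
Making this rigorous is an almost-sure empirical-measure argument identical in form to the proof of Lemma~\ref{LEMMA:FDP:LQ}, the only structural change being that the proximal map $\eta_q$ is absent because debiasing removes it.

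Given these formulas, the existence of $s(\lambda,\zeta)$ is immediate: $s\mapsto \mathbb{P}(|G+\tau Z|>s)$ is continuous and strictly decreasing from $1$ to $0$ (the law of $G+\tau Z$ has an everywhere positive density thanks to the Gaussian convolution), so the intermediate value theorem produces $s(\lambda,\zeta)$ attaining any target $\zeta\in[0,1]$, with the endpoints handled by the limits $s\to 0,\infty$.

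The heart of the argument is to show that the debiased and the vanilla thresholding families trace the \emph{same} ${\rm AFDP}$--${\rm ATPP}$ curve at each fixed $\lambda$. Here I would exploit two properties of the proximal operator for $q>1$: it is odd and strictly increasing in its first argument, and it satisfies the scaling identity $\eta_q(\tau u;\alpha\tau^{2-q})=\tau\,\eta_q(u;\alpha)$. Starting from the vanilla formulas of Lemma~\ref{LEMMA:FDP:LQ} at threshold $\tilde s$, monotonicity rewrites $|\eta_q(G+\tau Z;\alpha\tau^{2-q})|>\tilde s$ as $|G+\tau Z|>a$ with $a:=\eta_q^{-1}(\tilde s;\alpha\tau^{2-q})$, while the scaling identity turns the null event $\eta_q(|Z|;\alpha)>\tilde s/\tau$ into $|\tau Z|>a$. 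Substituting these shows ${\rm ATPP}(q,\lambda,\tilde s)=\mathbb{P}(|G+\tau Z|>a)$ and that ${\rm AFDP}(q,\lambda,\tilde s)$ equals the displayed ${\rm AFDP}^\dagger$ expression with $a$ in place of $s$. Since $\tilde s\mapsto a$ is a bijection of $(0,\infty)$, both families sweep out exactly the same curve; matching a common ${\rm ATPP}$ level forces the two effective thresholds to coincide (strict monotonicity of $\mathbb{P}(|G+\tau Z|>\cdot)$), whence the two ${\rm AFDP}$s agree, establishing the second assertion.

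Finally, the optimization claims follow by transport. Because ${\rm AFDP}^\dagger(q,\lambda,s(\lambda,\zeta))$ equals the vanilla ${\rm AFDP}(q,\lambda,\cdot)$ evaluated at the threshold realizing ${\rm ATPP}=\zeta$, minimizing over $\lambda$ is literally the same optimization on both sides; Theorem~\ref{THM:MSEMINFDRMIN} then identifies the minimizer as $\lambda_q^*$, which minimizes ${\rm AMSE}(q,\lambda)$, and the optimal value is ${\rm AFDP}(q,\lambda_q^*,s_q^*(\zeta))$ by the curve coincidence. I expect the main obstacle to be the rigorous passage from Theorem~\ref{THEOREM:DEBIASING:VALID} to the ${\rm AFDP}^\dagger$/${\rm ATPP}^\dagger$ formulas: weak convergence of the empirical measure controls $\mathbb{P}(|\cdot|>s)$ only at continuity points, so I would need the continuity of the limiting laws (again from the Gaussian convolution) together with a joint empirical-distribution statement for the pairs $(\beta_i,\hat{\beta}^\dagger_i)$ in order to legitimately separate the null and signal coordinates, mirroring the care taken in Appendix~\ref{apxnewb}.
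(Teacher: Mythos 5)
Your proposal is correct and follows essentially the same route as the paper: invoke Theorem \ref{THEOREM:DEBIASING:VALID} to get the joint limiting law $(B+\tau Z, B)$ and hence the ${\rm AFDP}^\dagger/{\rm ATPP}^\dagger$ formulas, observe via the monotonicity and scaling of $\eta_q$ that the debiased and thresholded families trace the same ${\rm AFDP}$--${\rm ATPP}$ curve, and then transfer the $\lambda$-optimization to the argument of Theorem \ref{THM:MSEMINFDRMIN}. The paper actually writes out only the $q=1$ case and leaves $q>1$ as ``similar,'' so your explicit change of threshold $a=\eta_q^{-1}(\tilde s;\alpha\tau^{2-q})$ just fills in the details the paper omits.
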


For the proof of Theorems \ref{THEOREM:DEBIASING:ATTP} and \ref{DEBIASING:Q>1}, please refer to Appendix \ref{apxd}.
\begin{remark}\label{remark:bebiasing:lasso}
Comparing Theorem \ref{THEOREM:DEBIASING:ATTP} with Theorem \ref{THM:LASSOFDPCOMPARE}, we see that replacing $\lasso$ in the first stage with the debiased version enables to achieve wider range of ATPP level. On the other hand, given the value of $\lambda$, if $\bar{\beta}^{\dagger}(1,\lambda,s)$ and $\bar{\beta}(1,\lambda,\tilde{s})$ reach the same level of {\rm ATPP}, their {\rm AFDP} are equal as well. Therefore, the debiasing for $\lasso$ expands the range of {\rm AFDP}-{\rm ATPP} curve without changing the original one. Figure \ref{fig:Lasso_roc_cpr_dbs} compares the variable selection performance of $\lasso$ with that of the two-stage scheme having the debiased $\lasso$ estimate in the first stage. Compare this figure with Figure \ref{fig:Lasso_roc_cpr} to see the difference between the two-stage $\lasso$ and two-stage debiased $\lasso$.  
\end{remark}
{
\setlength{\tabcolsep}{0pt}
\begin{figure}[t!]
\begin{center}
\begin{tabular}{rccc}
    & \scriptsize{$\sigma=0.5$} & \scriptsize{$\sigma=0.22$} & \scriptsize{$\sigma=0.15$} \\
	\rotatebox{90}{\qquad\qquad\quad\tiny{AFDP}} & \includegraphics[scale=0.4]{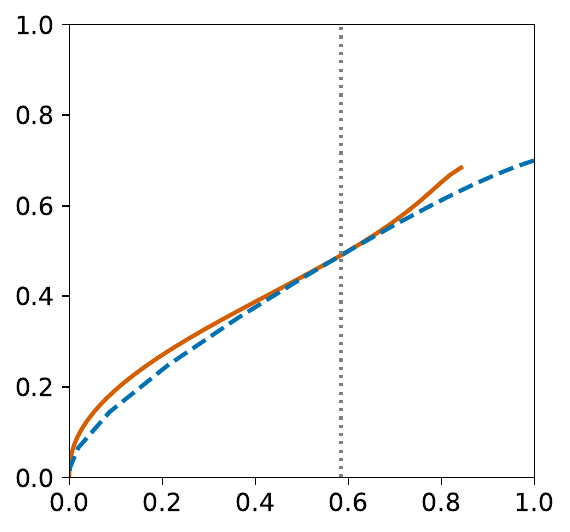}
	&
	\includegraphics[scale=0.4]{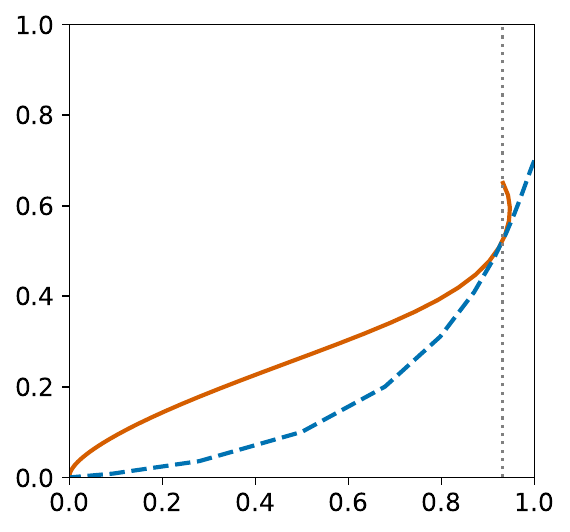}
	&
	\includegraphics[scale=0.4]{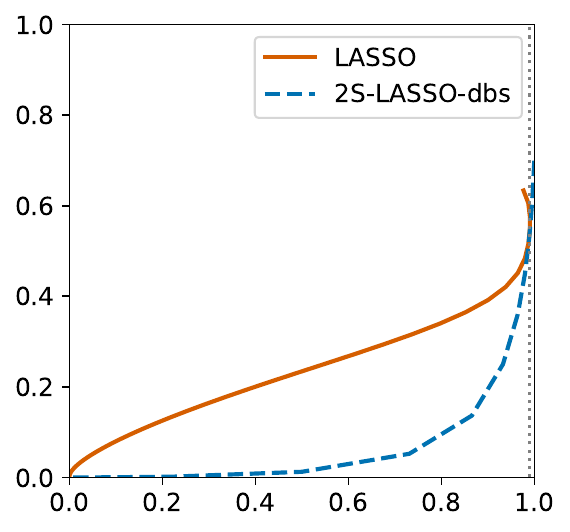} \\
	& \tiny{ATPP} & \tiny{ATPP} & \tiny{ATPP}
\end{tabular}
\caption{Comparison of {\rm AFDP-ATPP} curve between $\lasso$ and two-stage debiased $\lasso$. Here we pick the setting $\delta=0.8$, $\epsilon=0.3$, $\sigma\in\{0.5, 0.22, 0.15\}, p_G=\delta_1$. For the two-stage debiased $\lasso$, we use optimal tuning $\lambda_1^*$ in the first stage. The gray dotted line is the upper bound for the two-stage $\lasso$ without debiasing can reach.}\label{fig:Lasso_roc_cpr_dbs}
\end{center}
\end{figure}
}
\begin{remark}
The debiasing does not present any extra gain to the two-stage variable selection technique based on bridge estimators with $q>1$. In other words, debiasing does not change the {\rm AFDP-ATPP} curve for $q>1$. 
\end{remark}

\subsection{Debiasing and Sure Independence Screening}\label{sec:surescreen}

Sure Independence Screening (SIS) is  a variable selection scheme proposed for ultra-high dimensional settings \cite{fan2008sure}. Our asymptotic setting is not considered an ultra-high dimensional asymptotic. We are also aware that SIS is typically used for screening out irrelevant variables and other variable selection methods, such as LASSO, will be applied afterwards. Nevertheless, we present a connection and comparison between our two-stage methods and SIS in the linear asymptotic regime. Such comparisons shed more light on the performance of SIS. It is straightforward to confirm that Sure Independence Screening is equivalent to 
\begin{equation*}
\bar{\beta}^\dagger(q, \infty,s) =  \eta_0(\hat{\beta}^\dagger(q, \infty); s^2/2)=  \eta_0(X^Ty; s^2/2).
\end{equation*}
Therefore, the main difference between the approach we propose in this paper and SIS, is that SIS sets $\lambda$ to $\infty$, while we select the value of $\lambda$ that minimizes AMSE.\footnote{Our approach is more aligned with the approach proposed in \cite{wasserman2009high}. However, \cite{wasserman2009high} uses data splitting to select $\lambda$. } This simple difference may give a major boost to the variable selection performance. The following lemma confirms this claim. 

{
\setlength{\tabcolsep}{0pt}
\begin{figure}[t!]
\begin{center}
\begin{tabular}{rccc}
    & \scriptsize{$\sigma=0.5$} & \scriptsize{$\sigma=0.22$} & \scriptsize{$\sigma=0.15$} \\
	\rotatebox{90}{\qquad\qquad\quad\tiny{AFDP}} & \includegraphics[scale=0.4]{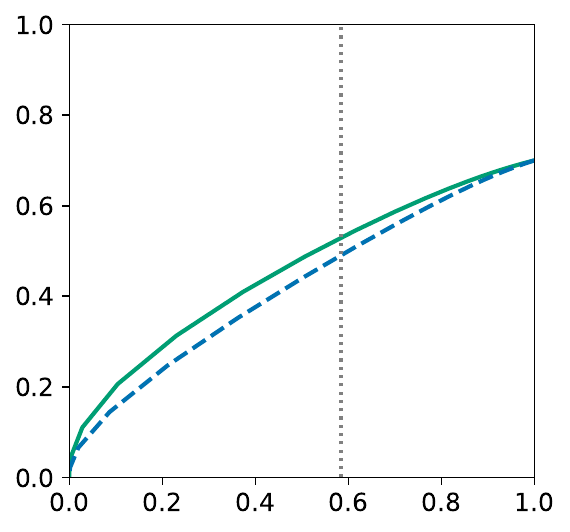}
	&
	\includegraphics[scale=0.4]{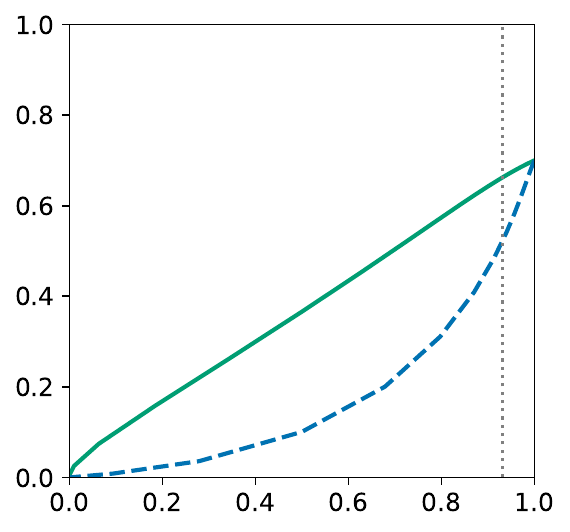}
	&
	\includegraphics[scale=0.4]{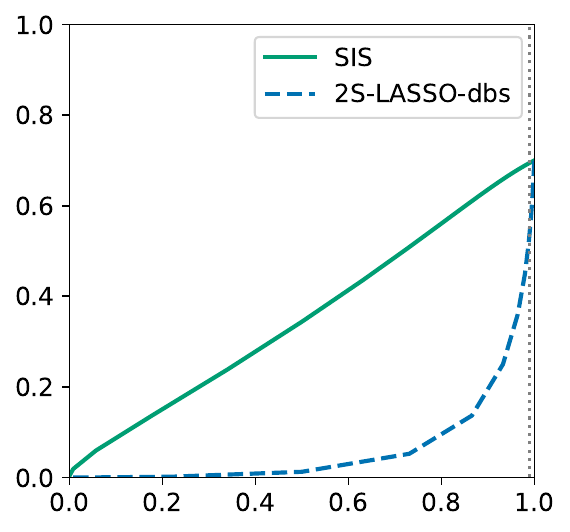} \\
	& \tiny{ATPP} & \tiny{ATPP} & \tiny{ATPP}
\end{tabular}
\caption{Comparison of {\rm AFDP-ATPP} curve between SIS and the two-stage debiased $\lasso$. Here we pick the setting $\delta=0.8$, $\epsilon=0.3$, $\sigma\in\{0.5, 0.22, 0.15\}, p_G=\delta_1$. For the two-stage debiased $\lasso$, we use optimal tuning $\lambda^*_{1}$ in the first stage. The gray dotted line is the upper bound that the two-stage $\lasso$ without debiasing can reach.}\label{fig:roc_cpr_sis_dbs}
\end{center}
\end{figure}
}

\begin{lemma}\label{COMP:SIS}
Consider $q\geq 1$. Given any {\rm ATPP} level $\zeta \in [0,1]$, let ${\rm AFDP}_{\rm sis} (\zeta)$ and ${\rm AFDP}^{\dagger}(q,\lambda_q^*,s(\lambda_q^*,\zeta))$ denote the asymptotic {\rm FDP} of SIS and two-stage debiased bridge estimator respectively, when their {\rm ATPP} is equal to $\zeta$. Then, 
${\rm AFDP}^{\dagger}(q, \lambda^*_{q}, s(\lambda^*_q,\zeta)) \leq {\rm AFDP}_{\rm sis} (\zeta)$.
\end{lemma}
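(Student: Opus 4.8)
The plan is to reduce the comparison to a single monotonicity property of the debiased estimator in its state-evolution parameter $\tau$, and then to observe that SIS is nothing but the debiased bridge estimator at $\lambda=\infty$, which carries the largest possible $\tau$.

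First I would record the asymptotic form of the two competitors. By Theorem~\ref{THEOREM:DEBIASING:VALID} the components of $\hat{\beta}^\dagger(q,\lambda)-\beta$ behave like $\tau Z$ with $Z\sim N(0,1)$, so the hard threshold at level $s$ amounts to Gaussian denoising of $\beta+\tau Z$. This yields
\[
{\rm ATPP}^\dagger = \mathbb{P}(|G+\tau Z|>s),\qquad
{\rm AFDP}^\dagger = \frac{(1-\epsilon)\mathbb{P}(|Z|>s/\tau)}{(1-\epsilon)\mathbb{P}(|Z|>s/\tau)+\epsilon\,\mathbb{P}(|G+\tau Z|>s)}.
\]
Since SIS equals $\bar{\beta}^\dagger(q,\infty,s)=\eta_0(X^Ty;s^2/2)$, I would identify it with the same family at $\lambda=\infty$: either the direct computation $X^Ty-\beta=(X^TX-I)\beta+X^Tw$ or the $\lambda\to\infty$ limit of the fixed point \eqref{state_evolution1}--\eqref{state_evolution2} (where $\eta_q(\cdot)\to 0$) gives $\tau_{\rm sis}^2=\sigma^2+\frac{1}{\delta}\mathbb{E}B^2=\sigma^2+\frac{\epsilon\mathbb{E}G^2}{\delta}$ and the identical AFDP/ATPP formulas with $\tau=\tau_{\rm sis}$. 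Thus both estimators obey the same formulas and differ only through the value of $\tau$.

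Next, fixing the ${\rm ATPP}$ level $\zeta$ determines the threshold $s=s(\tau)$ via $\mathbb{P}(|G+\tau Z|>s)=\zeta$, and substituting reduces ${\rm AFDP}^\dagger$ to $\frac{(1-\epsilon)P(\tau)}{(1-\epsilon)P(\tau)+\epsilon\zeta}$ with $P(\tau)=\mathbb{P}(|Z|>s(\tau)/\tau)$. Since this is increasing in $P$, the whole problem collapses to showing that $P(\tau)$ is nondecreasing in $\tau$, equivalently that $t(\tau):=s(\tau)/\tau$ is nonincreasing. Rewriting the constraint as $\mathbb{E}_G\,g(G/\tau)=\zeta$ with $g(a)=\Phi(a-t)+\Phi(-a-t)$, I would show $g$ is increasing in $|a|$ (indeed $g'(a)=\phi(a-t)-\phi(a+t)\geq0$ for $a,t>0$ because $|a-t|\le a+t$ and $\phi$ is decreasing in $|\cdot|$), so the left side is decreasing in $\tau$ for fixed $t$; combined with its monotonicity in $t$, the implicit function theorem gives $t'(\tau)\le 0$. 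This is the step I expect to be the main obstacle, since it is where the Gaussian tail geometry must be handled carefully: the sign of $G$ is absorbed by the evenness $g(a)=g(|a|)$, and the boundary cases $\zeta\in\{0,1\}$ (where both AFDPs equal $0$ and $1-\epsilon$ respectively) must be checked separately.

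Finally I would close the argument by comparing the two $\tau$ values. Because $\lambda_q^*$ minimizes ${\rm AMSE}(q,\lambda)$ and the state-evolution identity \eqref{state_evolution1} reads $\tau^2=\sigma^2+\frac{1}{\delta}{\rm AMSE}(q,\lambda)$, the optimal tuning also minimizes $\tau$; in particular $\tau(\lambda_q^*)\le \lim_{\lambda\to\infty}\tau(\lambda)=\tau_{\rm sis}$, since ${\rm AMSE}(q,\lambda_q^*)=\min_{\lambda>0}{\rm AMSE}(q,\lambda)\le \mathbb{E}B^2={\rm AMSE}(q,\infty)$. Combining the monotonicity with $\tau(\lambda_q^*)\le\tau_{\rm sis}$ gives ${\rm AFDP}^\dagger(q,\lambda_q^*,s(\lambda_q^*,\zeta))\le {\rm AFDP}^\dagger(q,\infty,\cdot)={\rm AFDP}_{\rm sis}(\zeta)$, which is the claim. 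The argument is uniform in $q\ge1$, as the debiased estimator is asymptotically Gaussian for every $q$, so no separate treatment of $q=1$ and $q>1$ is required.
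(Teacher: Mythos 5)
Your proposal is correct and follows essentially the same route as the paper: both reduce SIS and the debiased bridge estimator to the common Gaussian form $\beta+\tau Z$ with effective noise levels $\tau_0^2=\sigma^2+\mathbb{E}B^2/\delta$ and $\tau_*$ respectively, establish $\tau_*\le\tau_0$ from the optimality of $\lambda_q^*$ in the state evolution, and conclude via monotonicity of the AFDP in $\tau$ at fixed ATPP. The only cosmetic difference is that you verify this last monotonicity by an implicit-function/derivative computation, whereas the paper reuses the contradiction argument from Theorem \ref{THM:MSEMINFDRMIN} based on the strict monotonicity of $\mathbb{P}(|\mu+Z|>t)$ in $\mu$; both are valid instances of the same fact.
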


Refer to Appendix \ref{apxd} for the proof. Note that when the noise $\sigma$
is large, we expect the optimally tuned $\lambda$ to be large, and hence the
performance of SIS gets closer to the TVS. However, as $\sigma$ decreases,
the gain obtained from using a better estimator in the first stage improves.
Figure \ref{fig:roc_cpr_sis_dbs} compares the performance of SIS and TVS under different noise settings.

\section{Numerical experiments}\label{sec:simulation}

\subsection{Objective and Simulation Set-up}
This section aims to investigate the finite sample performances of various
two-stage variable selection estimators under the three different regimes
analyzed in Section \ref{three:settings}. In particular, we will study to what
extent our theory works for more realistic situations, where model parameters
$\sigma$, $\epsilon$, $\delta$ are of moderate magnitudes or the iid-Gaussian
design assumption is violated. For brevity, we will use bridge estimator to refer to the corresponding 
two-stage method whenever it does not cause any confusion. More specifically, in all the figures, $\ell_q$ will be used to denote the TVS that uses the bridge estimator with $q$ in the first stage, and $\ell_1$-db denotes the two-stage debiased LASSO. The performances of different methods
will be compared via the $\afdp$-$\atpp$ curves.\footnote{Since the simulations
are in finite samples, the curve we calculate is actually FDP-TPP instead of
the asymptotic version. With a little abuse of notation, we will call it
AFDP-ATPP curve throughout the section.}

The organization of this section is as follows. In Sections
\ref{sec:optimalLambda} - \ref{ssec:lasso-vs-two-stage}, we focus on experiments
under iid-Gaussian design as assumed in our
theories. In Section \ref{ssec:general-design}, we present numerical results for non-i.i.d. or non-Gaussian designs to evaluate the accuracy of our results, when i.i.d. Gaussian assumption on $X$ is violated. 

We adopt the following settings for iid-Gaussian design. The settings for general design are described in Section \ref{ssec:general-design}.

\begin{enumerate}
    \item
    Number of variables is fixed at $p=5000$. Sample size $n=p\delta$ is then decided by $\delta$.
    \item
    Given the values of $\delta$, $\epsilon$, $\sigma$, we sample $X\in\mathbb{R}^{n\times p}$ with
    $X_{ij}\follow\mathcal{N}(0, \frac{1}{n})$. We pick the probability measure
    $p_G$ as a point mass at $M$ where $M$ will be specified in each scenario.
    We generate $\beta\in\mathbb{R}^p$ with $\beta_i\follow p_B=(1-\epsilon)\delta_0 + \epsilon p_G$, and 
    $w\in\mathbb{R}^n$ with $w_i\follow\mathcal{N}(0, \sigma^2)$ or
    $\mathcal{N}(0, \frac{\sigma^2}{\delta}).$\footnote{The setting $w_i\follow\mathcal{N}(0, \frac{\sigma^2}{\delta})$ will be  used in the large sample scenario, since we have scaled the error term by $\sqrt{\delta}$ in our asymptotic analysis in Section \ref{large:sample:study}.} Construct $y$ according to $y = X\beta + w$.
    \item
    For each data set $(y, X)$, $\afdp$-$\atpp$ curves will be generated for
    different variable selection methods. In each setting of parameters, 80 samples are drawn
    and the average $\afdp$-$\atpp$ curves are calculated. The associated one standard deviation confidence interval will be presented. 
\end{enumerate}

We compute bridge estimators via coordinate descent algorithm, with the proximal operator $\eta_q(x; \tau)$ calculated through a properly implemented Newton's method.

We discuss how to pick optimal tuning
under iid-Gaussian design in Section \ref{sec:optimalLambda}. Section \ref{ssec:smalllarge} presents the
large/small noise scenario. Section \ref{ssec:largenoisesim} is devoted to the
large sample regime. Section \ref{ssec:simu:near-black} covers the
nearly black object scenario. In Section \ref{ssec:lasso-vs-two-stage}, we compare the performance of LASSO and 
two-stage LASSO to shed more lights on our two-stage methods.

\subsection{Estimating the optimal tuning $\lambda_q^*$}\label{sec:optimalLambda}
For two-stage variable selection procedures, it is critical to have a good estimator in the first step. One challenge here is to search for the optimal tuning that minimizes $\amse$ of $\hat{\beta}(q, \lambda)$. According to the result of Theorem \ref{theorem:amp:bridge2} and the definition of ${\rm AMSE}$ in \eqref{eq:def:amse}, it is straightforward to see that $\tau^2 = \sigma ^ 2 + \frac{1}{\delta}\amse$. Hence, one can minimize $\tau^2$ to achieve the same optimal tuning. Motivated by \cite{mousavi2015consistent}, we can obtain a consistent estimator of $\tau^2$:
\begin{equation*}
	q = 1:\quad
	\hat{\tau}^2
	=
	  \frac{\|y - X\hat{\beta}(1, \lambda)\|_2^2}{n(1 -
        \|\hat{\beta}(1, \lambda)\|_0 / n)^2} ,
        \qquad
	q > 1:\quad
	\hat{\tau}^2
	=
	 \frac{\|y - X\hat{\beta}(q, \lambda)\|_2^2}{n(1 - f(\hat{\beta}(q, \lambda), \hat{\gamma}_\lambda) / n)^2} ,
\end{equation*}
where $f(\cdot,\cdot), \hat{\gamma}_{\lambda}$ are the same as the ones in
\eqref{eq:debiasing:lq} and \eqref{eq:solvegamma}. The consistency
$\hat{\tau}\overset{a.s.}{\rightarrow} \tau$ can be easily seen from the proof
of Theorem \ref{THEOREM:DEBIASING:VALID}. We thus do not repeat it. As a
result, we approximate $\lambda_q^*$ by searching for the $\lambda$ that
minimizes $\hat{\tau}^2$. Notice that this problem has been studied for
$\lasso$ in \cite{mousavi2015consistent} and a generalization is
straightforward for other bridge estimators. We use the following grid search strategy:
\begin{itemize}
	\item
	Initialization: An initial search region $[a, b]$, a window size $\Delta$ and a grid size $m$.
	\item
	Searching:
	A grid with size $m$ is built over $[a, b]$, upon which we search in descending order for $\lambda$ that minimizes $\hat{\tau}^2$ with warm initialization.
	\begin{itemize}
		\item
		If the minimal point $\hat{\lambda}\in(a, b)$, stop searching and return $\hat{\lambda}$.
		\item
		If $\hat{\lambda}=a$ or $b$, update the search region with $[\frac{a}{10}, a]$ or $[b, b + \Delta]$ and do the next round of searching.
	\end{itemize}
	\item
	Stability:
	If the optimal $\hat{\lambda}$ obtained from two consecutive search regions are smaller than a threshold $\epsilon_0$, we stop and return the previous optimal $\hat{\lambda}$; If the number of non-zero locations of a $\lasso$ estimator is larger than $n$ (which may happen numerically for very small tuning), we set its $\hat{\tau}^2$ to $\infty$.
\end{itemize}

For our experiments, we pick the initial $[a,b ] = [0.1, \frac{1}{2}\|X^Ty\|_\infty]$, $\Delta = \frac{1}{2}\|X^Ty\|_\infty$ and $m=15$. 

\subsection{From large noise to small noise}\label{ssec:smalllarge}
Theorems \ref{THM:NOISY:MAIN} and \ref{THM:LOWNOISE:MAIN} showed that in
low and high SNR situations, ridge and $\lasso$ offer the best
performances respectively. These results are obtained for limiting cases $\sigma \rightarrow
\infty$ and $\sigma \rightarrow 0$. In this section, we run a few simulations
to clarify the scope of applicability of our analysis. Toward this goal, we fix
the probability measure $p_G=\delta_M$ with $M=8$ and run TVS for $q\in\{1, 1.2, 2, 4\}$ and debiased LASSO\footnote{We include the results for two-stage debiased LASSO in Sections \ref{ssec:smalllarge} - \ref{ssec:simu:near-black} to validate the effect of debiasing stated in Theorem \ref{THEOREM:DEBIASING:ATTP} and Remark \ref{remark:bebiasing:lasso}.} under four settings:

\begin{enumerate}
	\item
	$\delta = 0.8$, $\epsilon = 0.2$: The results are shown in Figure
        \ref{fig:fLN_4}. Here we pick $\sigma \in \{1.5,3,5\}$. As
        expected from our theoretical results, for small values of noise
        $\lasso$ offers the best performance. As we increase the noise,
        eventually ridge outperforms $\lasso$ and the other bridge estimators. Note that under this setting, the
        outperformance occurs at a high noise level so that all estimators have large errors. In this example, we make $1>\delta> M_1(\epsilon)$. Refer to Theorem \ref{THM:LOWNOISE:MAIN} for the importance of this condition. 

{
\setlength{\tabcolsep}{0pt}
\begin{figure}[htbp!]
\begin{center}
\begin{tabular}{rccc}
    & \scriptsize{$\sigma=1.5$} & \scriptsize{$\sigma=3$} & \scriptsize{$\sigma=5$} \\
	\rotatebox{90}{\qquad\qquad\quad\tiny{AFDP}} & \includegraphics[scale=0.4]{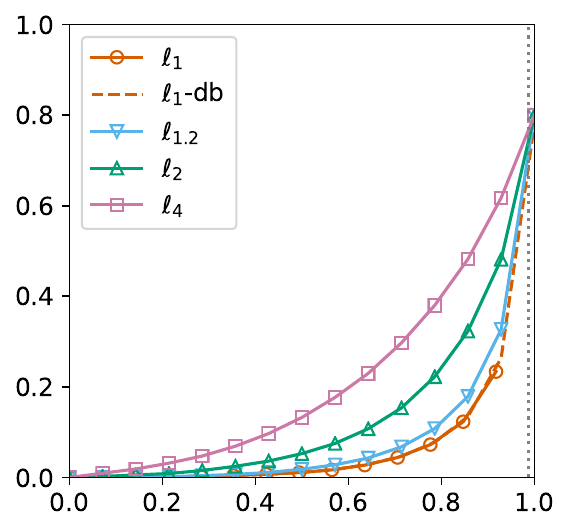}
	&
	\includegraphics[scale=0.4]{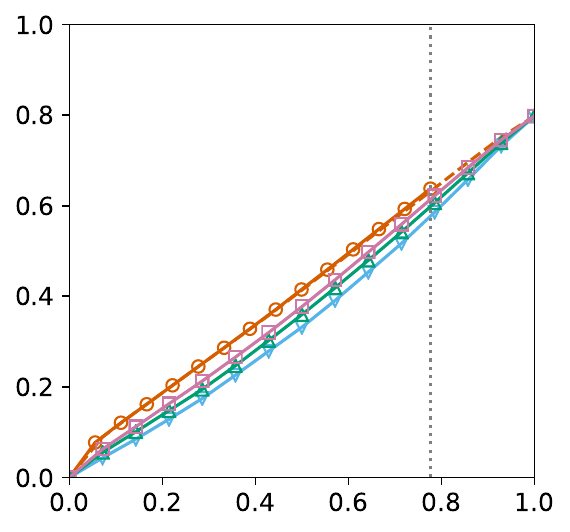}
	&
	\includegraphics[scale=0.4]{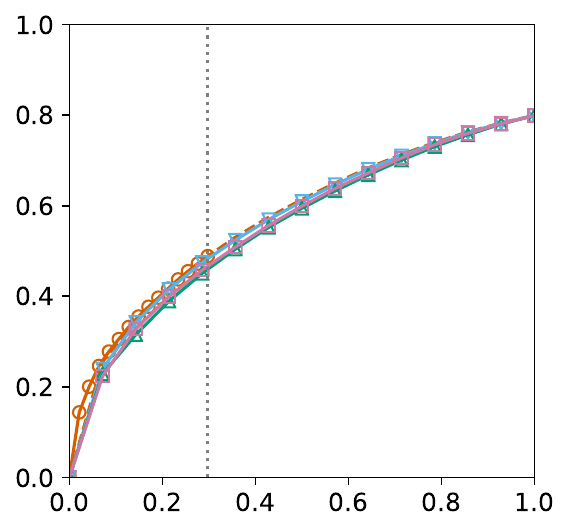} \\
	& \tiny{ATPP} & \tiny{ATPP} & \tiny{ATPP} \\
	&  &  &  \\
	\rotatebox{90}{\qquad\quad\tiny{$\mathrm{AFDP}_q - \mathrm{AFDP}_2$}} & \includegraphics[scale=0.4]{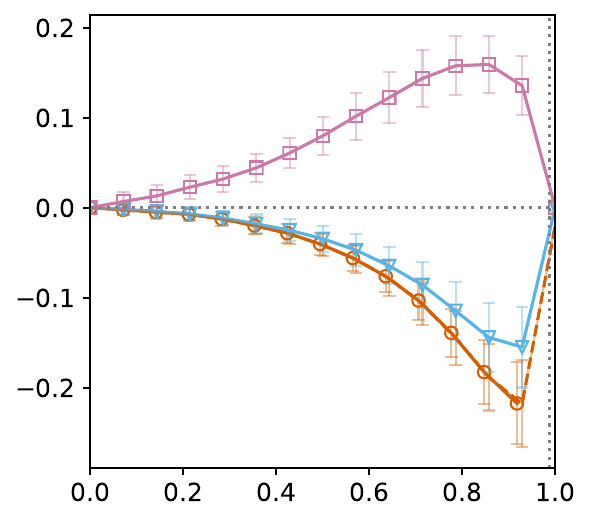}
	&
	\includegraphics[scale=0.4]{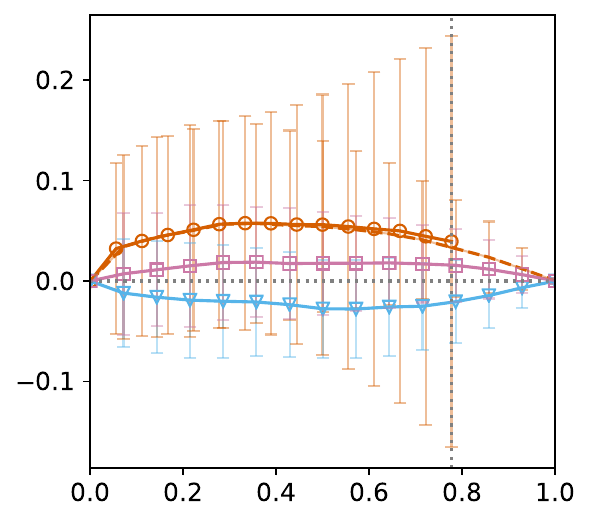}
	&
	\includegraphics[scale=0.4]{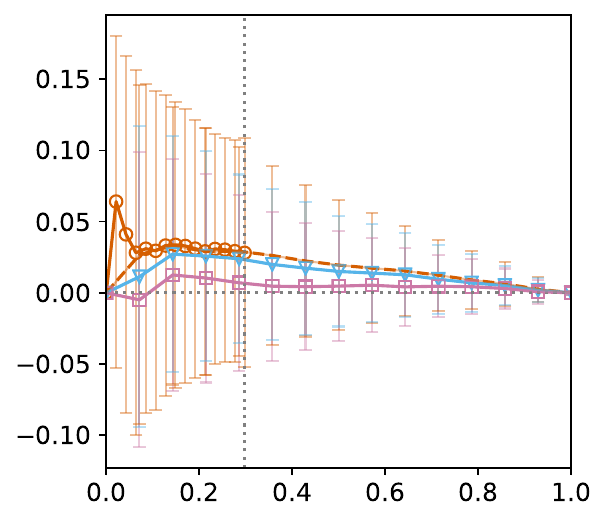} \\
	& \tiny{ATPP} & \tiny{ATPP} & \tiny{ATPP} \\
	&  &  &  \\
	\hline
	&  &  &  \\
    & \scriptsize{$\sigma=2$} & \scriptsize{$\sigma=4$} & \scriptsize{$\sigma=8$} \\
	\rotatebox{90}{\qquad\qquad\quad\tiny{AFDP}} & \includegraphics[scale=0.4]{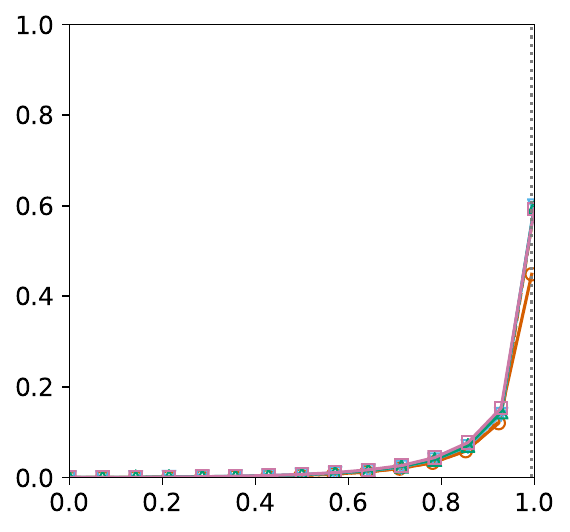}
	&
	\includegraphics[scale=0.4]{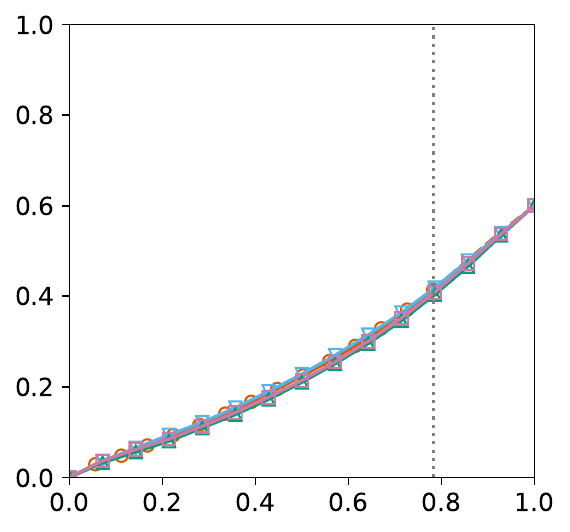}
	&
	\includegraphics[scale=0.4]{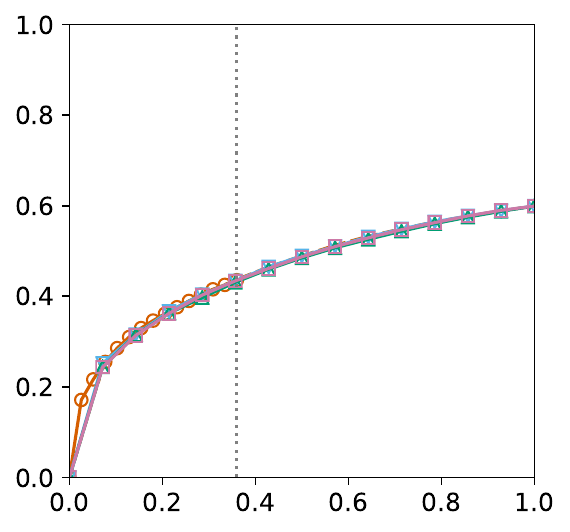} \\
	& \tiny{ATPP} & \tiny{ATPP} & \tiny{ATPP} \\
	&  &  &  \\
	\rotatebox{90}{\qquad\quad\tiny{$\mathrm{AFDP}_q - \mathrm{AFDP}_2$}} & \includegraphics[scale=0.4]{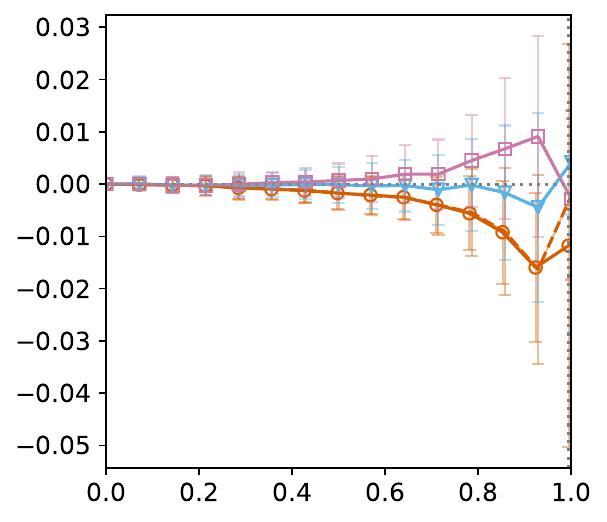}
	&
	\includegraphics[scale=0.4]{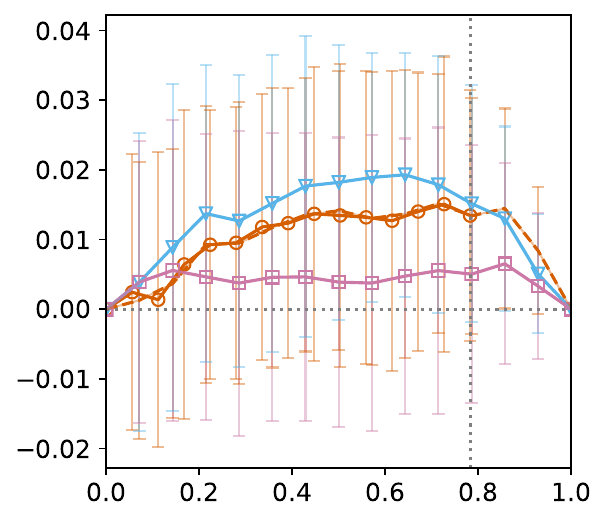}
	&
	\includegraphics[scale=0.4]{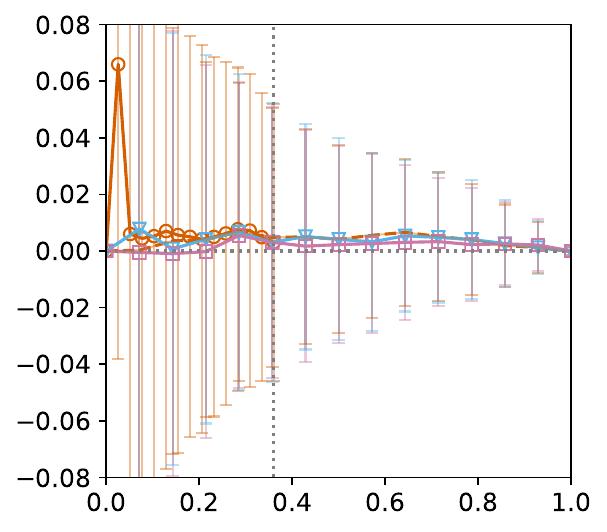} \\
	& \tiny{ATPP} & \tiny{ATPP} & \tiny{ATPP} \\
\end{tabular}
\caption{Top row: $\afdp$-$\atpp$ curve under the setting $\delta=0.8$, $\epsilon=0.2, \sigma\in\{1.5, 3, 5\}$. Second row: Y-axis is the difference of $\afdp$ between the other bridge estimators and ridge. One standard deviation of the difference is added. Third and fourth rows: the same type of plots as in the first two rows, under the setting $\delta=2$, $\epsilon=0.4, \sigma\in\{2, 4, 8\}$.}\label{fig:fLN_4}
\end{center}
\end{figure}
}

{
\setlength{\tabcolsep}{0pt}
\begin{figure}[thbp!]
\begin{center}
\begin{tabular}{rccc}
    & \scriptsize{$\sigma=0.25$} & \scriptsize{$\sigma=0.75$} & \scriptsize{$\sigma=2$} \\
	\rotatebox{90}{\qquad\qquad\quad\tiny{AFDP}} & \includegraphics[scale=0.4]{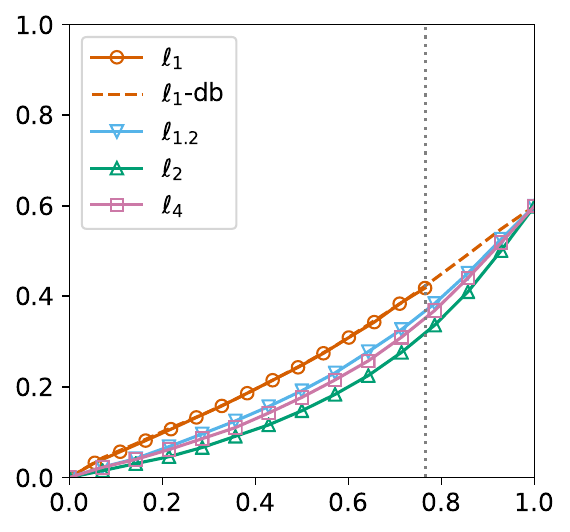}
	&
	\includegraphics[scale=0.4]{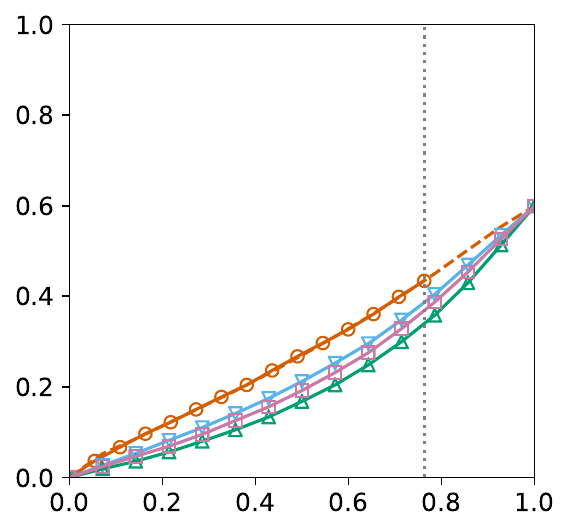}
	&
	\includegraphics[scale=0.4]{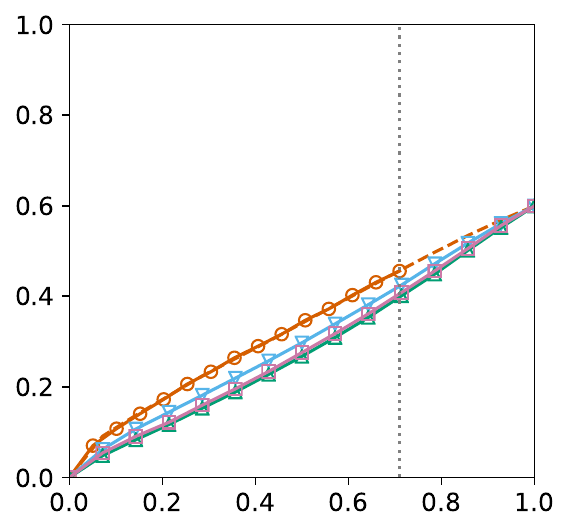} \\
	& \tiny{ATPP} & \tiny{ATPP} & \tiny{ATPP} \\
	&  &  &  \\
	\rotatebox{90}{\qquad\quad\tiny{$\mathrm{AFDP}_q - \mathrm{AFDP}_2$}} & \includegraphics[scale=0.4]{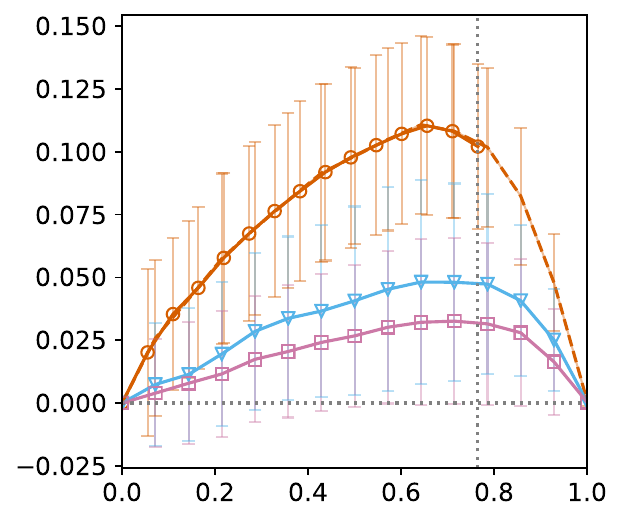}
	&
	\includegraphics[scale=0.4]{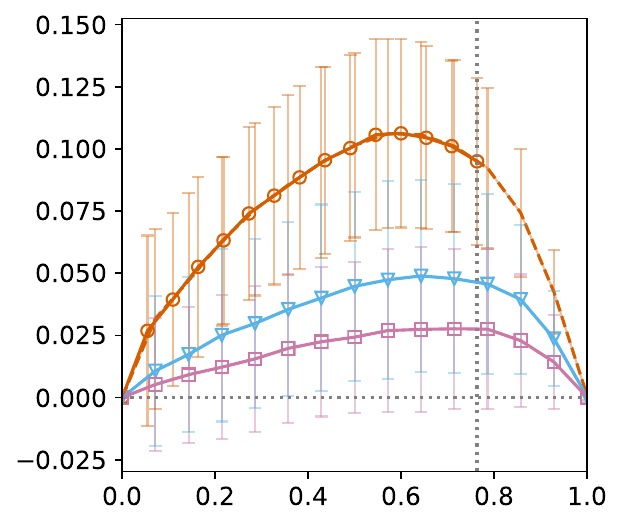}
	&
	\includegraphics[scale=0.4]{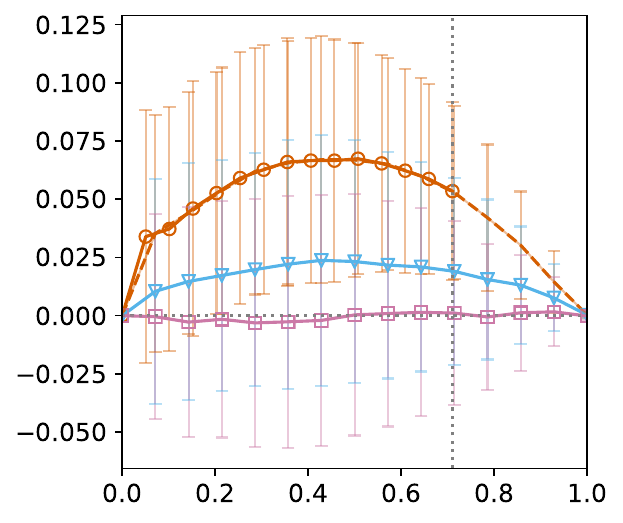} \\
	& \tiny{ATPP} & \tiny{ATPP} & \tiny{ATPP} \\
	&  &  &  \\
	\hline
	&  &  &  \\
    & \scriptsize{$\sigma=1.2$} & \scriptsize{$\sigma=1.5$} & \scriptsize{$\sigma=1.9$} \\
	\rotatebox{90}{\qquad\qquad\quad\tiny{AFDP}} & \includegraphics[scale=0.4]{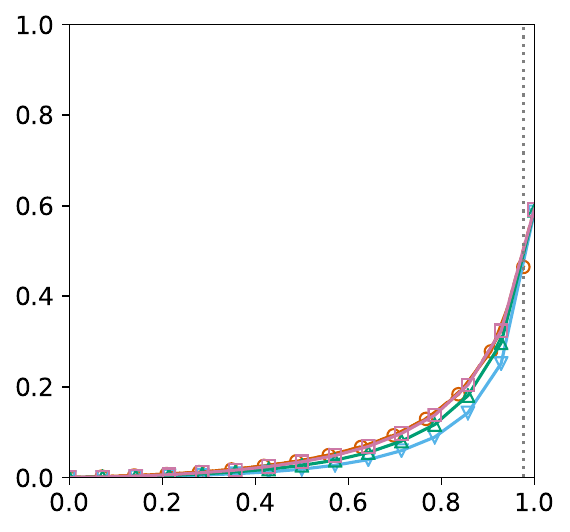}
	&
	\includegraphics[scale=0.4]{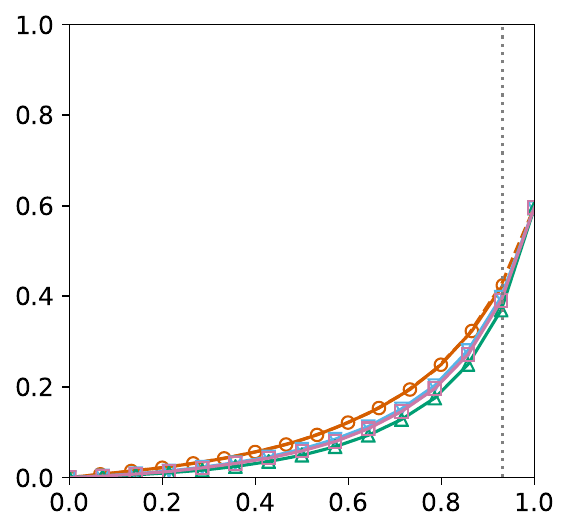}
	&
	\includegraphics[scale=0.4]{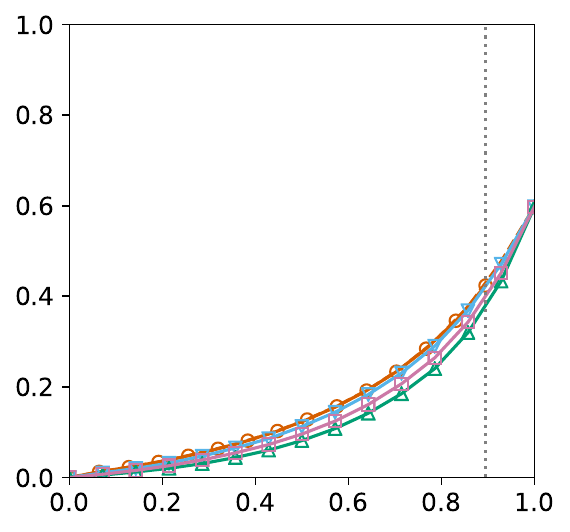} \\
	& \tiny{ATPP} & \tiny{ATPP} & \tiny{ATPP} \\
	&  &  &  \\
	\rotatebox{90}{\qquad\quad\tiny{$\mathrm{AFDP}_q - \mathrm{AFDP}_2$}} & \includegraphics[scale=0.4]{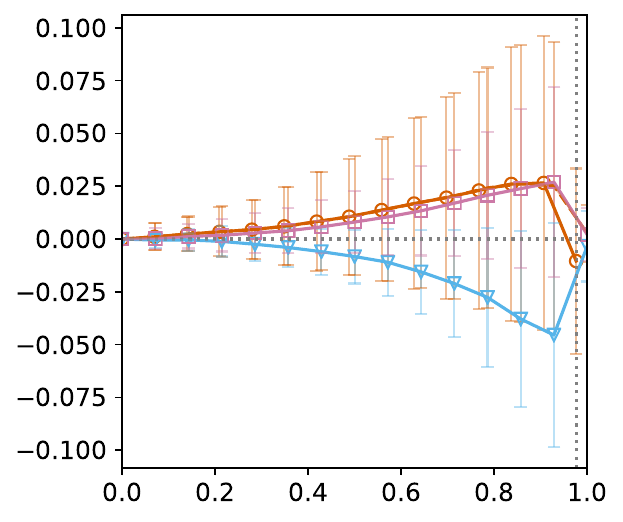}
	&
	\includegraphics[scale=0.4]{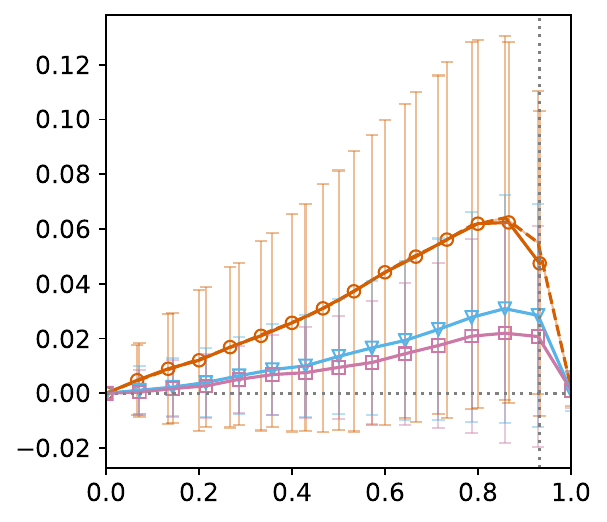}
	&
	\includegraphics[scale=0.4]{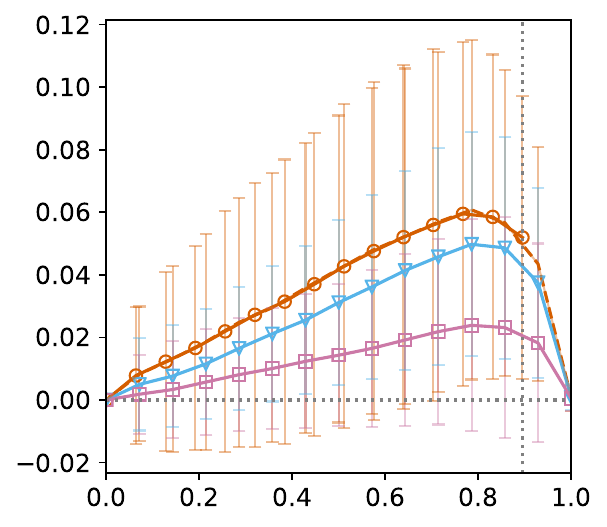} \\
	& \tiny{ATPP} & \tiny{ATPP} & \tiny{ATPP} \\
\end{tabular}
\caption{Top row: $\afdp$-$\atpp$ curve under the setting $\delta=0.6$, $\epsilon=0.4, \sigma\in\{0.25, 0.75, 2\}$. Second row: Y-axis is the difference of $\afdp$ between the other bridge estimators and ridge. One standard deviation of the difference is added. Third and fourth rows: the same type of plots as in the first two rows, under the setting $\delta=0.9$, $\epsilon=0.4, \sigma\in\{1.2, 1.5, 1.9\}$.}\label{fig:fLN_1}
\end{center}
\end{figure}
}

\item $\delta= 2$, $\epsilon=0.4$:  The results are included in Figure \ref{fig:fLN_4}. Here we pick $\sigma \in \{2, 4, 8\}$. Similar phenomena are observed. However for all choices of $\sigma$, the $\afdp$-$\atpp$ curves of different methods are quite close to each other. 

\item $\delta = 0.6$, $\epsilon=0.4$: Figure \ref{fig:fLN_1} contains the
    results for this part. Here we have $\sigma \in \{0.25, 0.75, 2\}$. An 
    important feature of this simulation is that $\delta< M_1(\epsilon)$, which does
    not satisfy the condition of Theorem \ref{THM:LOWNOISE:MAIN}. 
    It is interesting to observe that in this case, ridge outperforms $\lasso$ even for small values of the noise. 
    We thus see that the superiority of LASSO in small noise characterized by Theorem \ref{THM:LOWNOISE:MAIN} may not 
    hold when the conditions of the theorem are violated. In fact, Theorem \ref{THM:LOWNOISE:MAIN} is restricted to the regime below 
    the phase transition (i.e., when the signal can be fully recovered without noise). However, in the current setting, the optimal $\amse$ for $q=1, 1.2,
    2, 4$ at $\sigma=0$ are $14.9, 12.2, 10.2, 11.6$, respectively.

\item $\delta = 0.9$, $\epsilon=0.4$: The results are shown in Figure \ref{fig:fLN_1}. Here we have $\sigma \in \{1.2, 1.5, 1.9\}$.  This
    group of figures provide us with examples where ridge based TVS
    outperforms the other two-stage methods, and at the same time reaches a quite satisfactory AFDP-ATPP trade-off. For instance, when $\sigma=1.5$ and $\afdp\approx 0.2$, for ridge we have $\atpp\approx 0.8$ while that for $\lasso$ is around 0.7. Note that here $M_1(\epsilon) < \delta<1$.
\end{enumerate}

\subsection{Large sample regime}\label{ssec:largenoisesim}

We will validate the results in Theorem \ref{THM:SAMPLE:MAIN},
which are obtained under the limiting case $\delta \rightarrow \infty$. We
fix the probability measure $p_G=\delta_M$ with $M=1$ and consider the following settings for $q\in\{1, 1.5, 2, 4\}$ and debiased LASSO:

{
\setlength{\tabcolsep}{0pt}
\begin{figure}[htbp!]
\begin{center}
\begin{tabular}{rccc}
    & \scriptsize{$\delta=2$} & \scriptsize{$\delta=3$} & \scriptsize{$\delta=4$} \\
	\rotatebox{90}{\qquad\qquad\quad\tiny{AFDP}} & \includegraphics[scale=0.4]{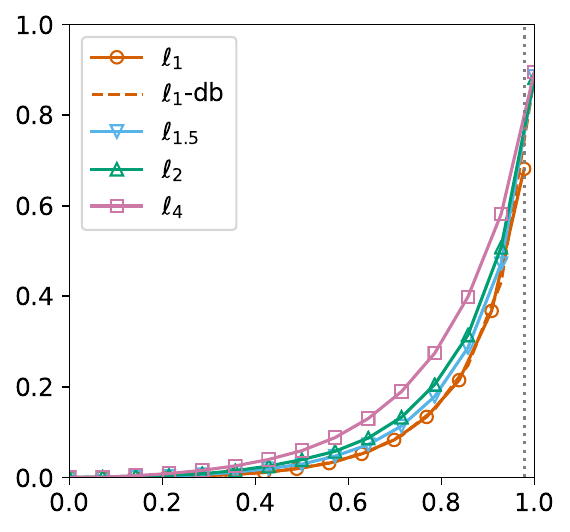}
	&
	\includegraphics[scale=0.4]{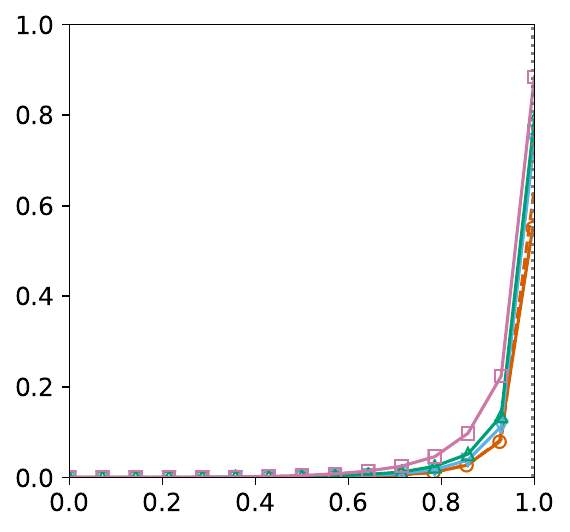}
	&
	\includegraphics[scale=0.4]{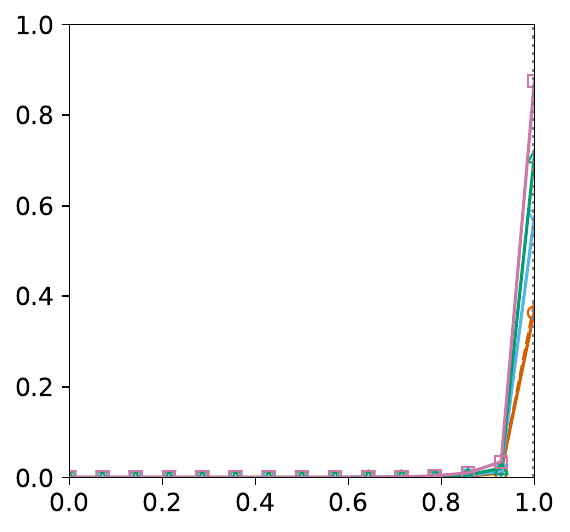} \\
	& \tiny{ATPP} & \tiny{ATPP} & \tiny{ATPP} \\
	&  &  &  \\
	\rotatebox{90}{\qquad\quad\tiny{$\mathrm{AFDP}_q - \mathrm{AFDP}_1$}} & \includegraphics[scale=0.4]{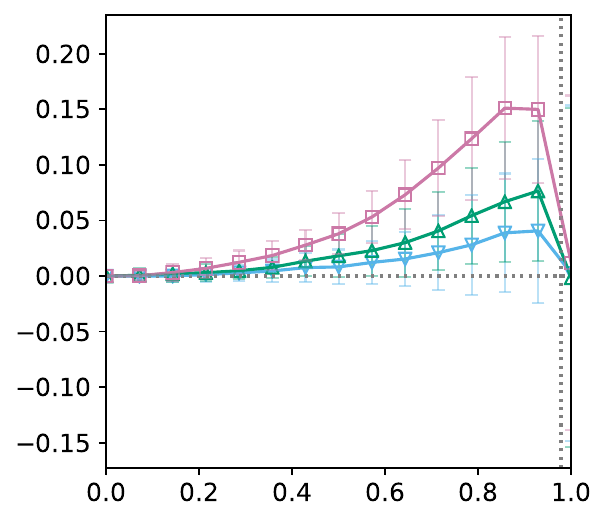}
	&
	\includegraphics[scale=0.4]{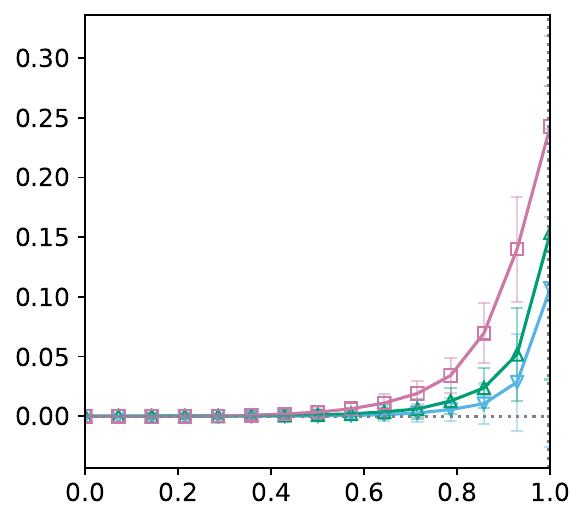}
	&
	\includegraphics[scale=0.4]{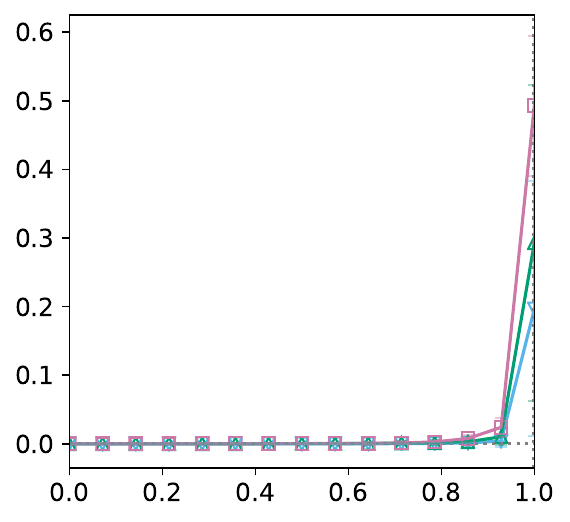} \\
	& \tiny{ATPP} & \tiny{ATPP} & \tiny{ATPP} \\
	&  &  &  \\
	\hline
	&  &  &  \\

    & \scriptsize{$\delta=2$} & \scriptsize{$\delta=3$} & \scriptsize{$\delta=4$} \\
	\rotatebox{90}{\qquad\qquad\quad\tiny{AFDP}} & \includegraphics[scale=0.4]{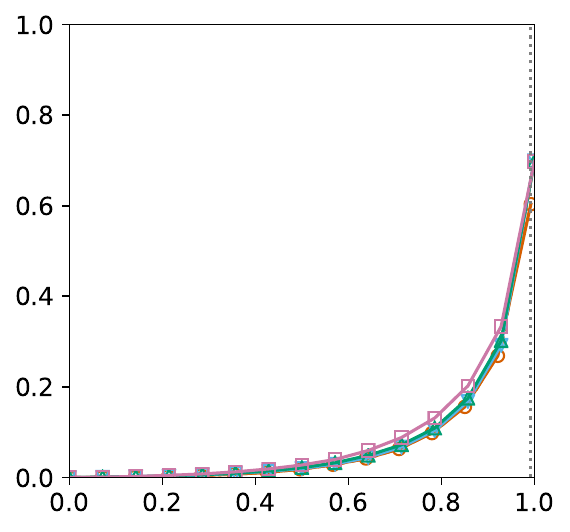}
	&
	\includegraphics[scale=0.4]{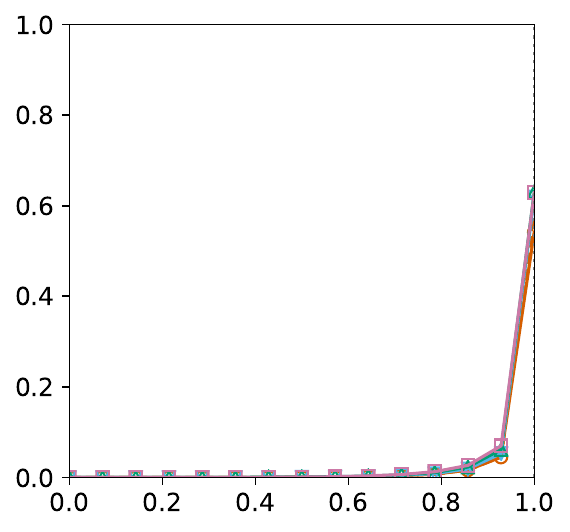}
	&
	\includegraphics[scale=0.4]{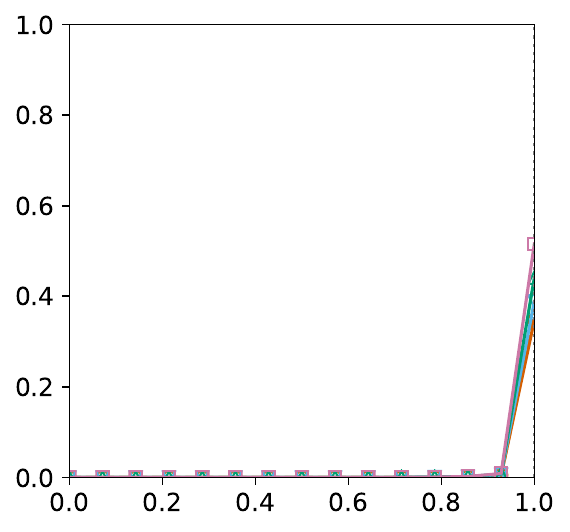} \\
	& \tiny{ATPP} & \tiny{ATPP} & \tiny{ATPP} \\
	&  &  &  \\
	\rotatebox{90}{\qquad\quad\tiny{$\mathrm{AFDP}_q - \mathrm{AFDP}_1$}} & \includegraphics[scale=0.4]{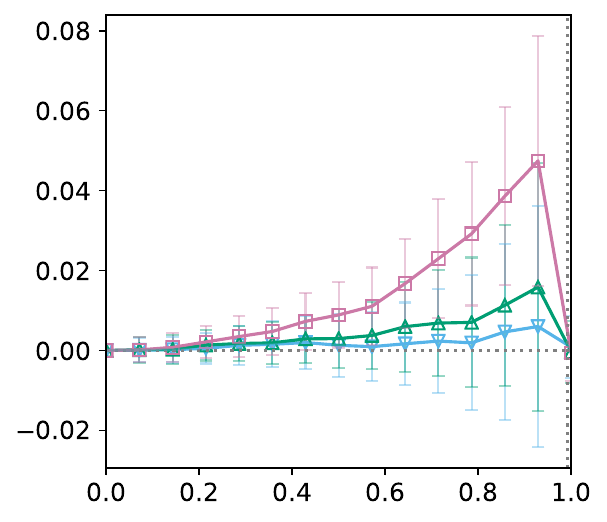}
	&
	\includegraphics[scale=0.4]{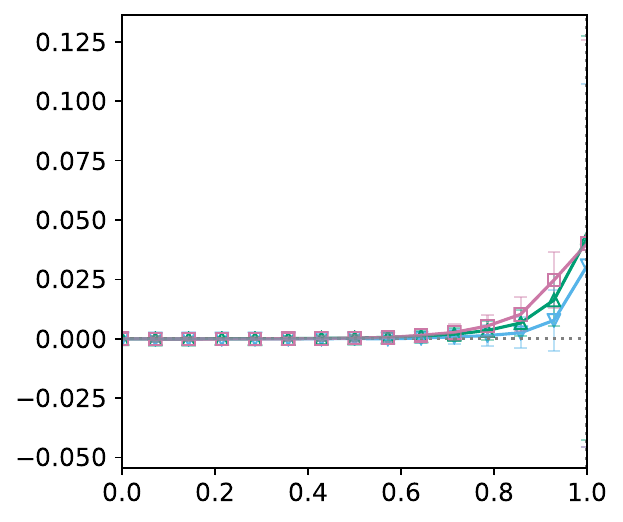}
	&
	\includegraphics[scale=0.4]{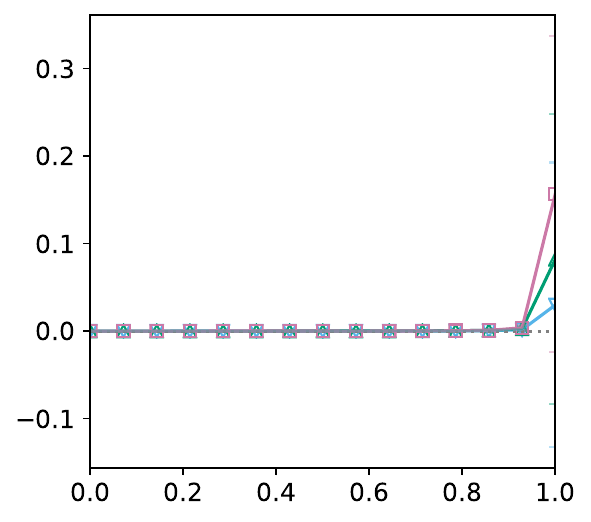} \\
	& \tiny{ATPP} & \tiny{ATPP} & \tiny{ATPP} \\
\end{tabular}
\caption{Top row: $\afdp$-$\atpp$ curve under the setting $\epsilon=0.1, \sigma=0.4, \delta \in\{2, 3, 4\}$. Second row: Y-axis is the difference of $\afdp$ between the other bridge estimators and LASSO. One standard deviation of the difference is added. Third and fourth rows: the same type of plots as in the first two rows, under the setting $\epsilon=0.3, \sigma=0.4, \delta \in\{2, 3, 4\}$.}\label{fig:fLS_1}
\end{center}
\end{figure}
}
 
\begin{enumerate}
    \item $\epsilon = 0.1$, $\sigma = 0.4$: The results for this setting are shown in Figure \ref{fig:fLS_1}. We vary $\delta \in \{2, 3, 4\}$. As is clear, $\lasso$ starts to outperform the others even when $\delta=2$. As $\delta$ increases, $\lasso$ remains the best, but all the methods are becoming better and the $\afdp$-$\atpp$ curves get closer to each other.
    
    \item $\epsilon = 0.3$, $\sigma = 0.4$: The results can be found 
        in Figure \ref{fig:fLS_1}. Again $\delta \in \{2, 3, 4\}$.
        Similar phenomena are observed. Compared to the previous setting, a
        larger $\epsilon$ leads to a higher SNR and all the methods have improved performances.
    \item $\epsilon = 0.4$, $\sigma = 0.22$: The results are shown in Figure
        \ref{fig:fLS_3}. We set $\delta\in\{0.7, 0.8, 1.2\}$. When $\delta$ is 0.7 or 0.8, ridge significantly outperforms the others. As $\delta$ is increased to 1.2, LASSO starts to lead the performances.
\end{enumerate}

{
\setlength{\tabcolsep}{0pt}
\begin{figure}[htbp!]
\begin{center}
\begin{tabular}{rccc}
    & \scriptsize{$\delta=0.7$} & \scriptsize{$\delta=0.8$} & \scriptsize{$\delta=1.2$} \\
	\rotatebox{90}{\qquad\qquad\quad\tiny{AFDP}} & \includegraphics[scale=0.4]{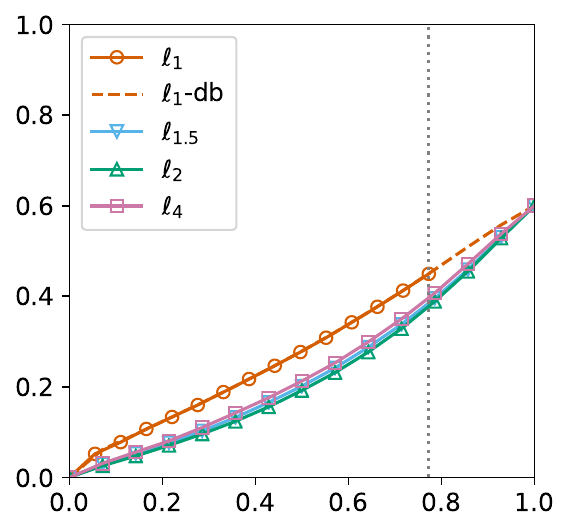}
	&
	\includegraphics[scale=0.4]{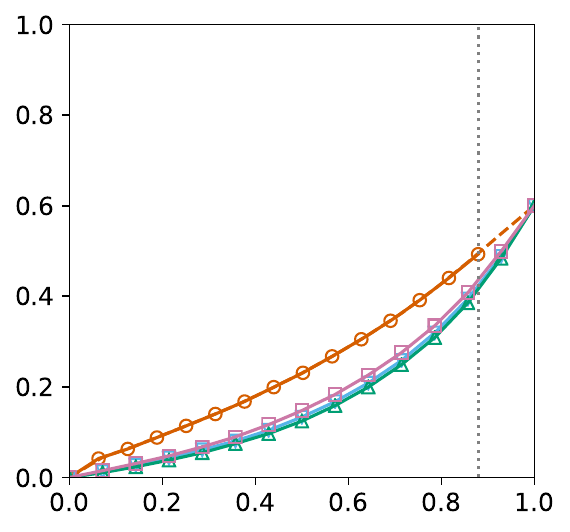}
	&
	\includegraphics[scale=0.4]{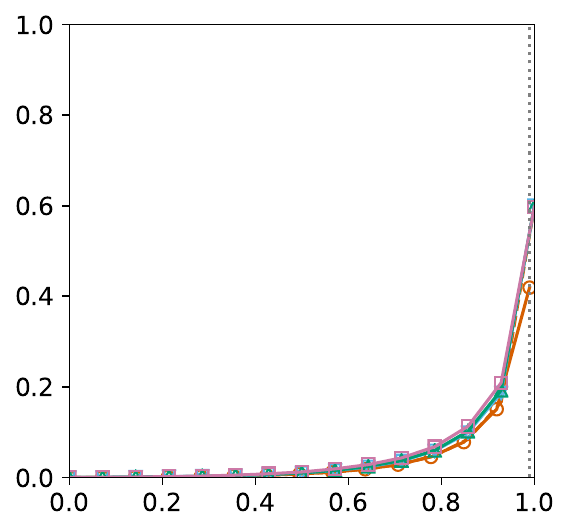} \\
	& \tiny{ATPP} & \tiny{ATPP} & \tiny{ATPP} \\
	&  &  &  \\
	\rotatebox{90}{\qquad\quad\tiny{$\mathrm{AFDP}_q - \mathrm{AFDP}_1$}} & \includegraphics[scale=0.4]{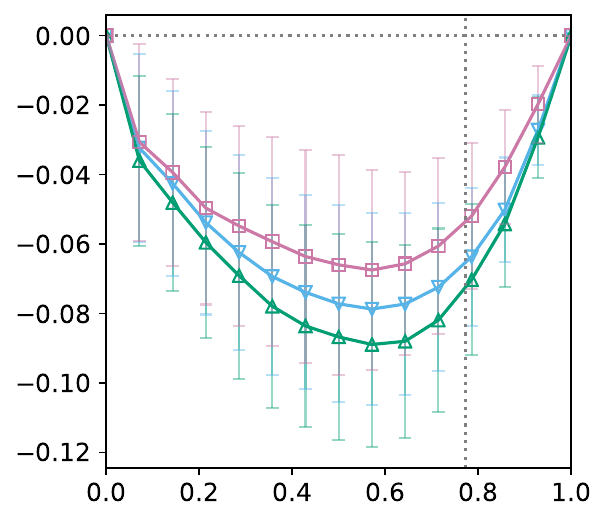}
	&
	\includegraphics[scale=0.4]{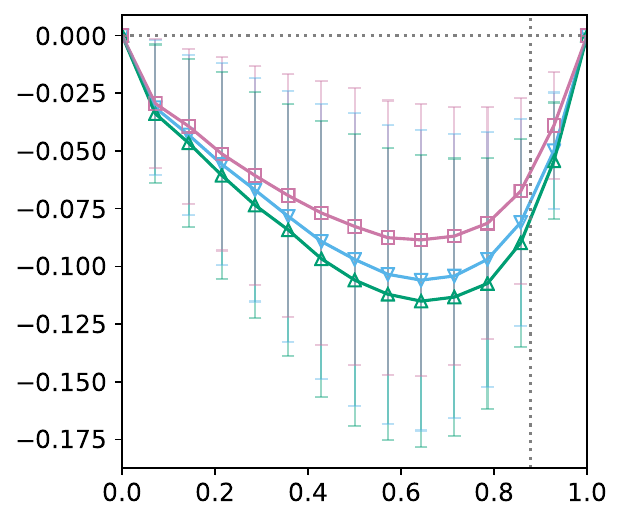}
	&
	\includegraphics[scale=0.4]{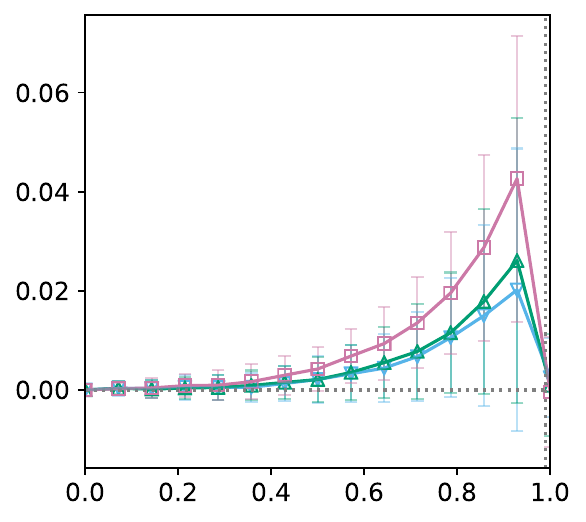} \\
	& \tiny{ATPP} & \tiny{ATPP} & \tiny{ATPP} \\
\end{tabular}
\caption{Top row: $\afdp$-$\atpp$ curve under the setting $\epsilon=0.4, \sigma=0.22, \delta \in\{0.7, 0.8, 1.2\}$. Second row: Y-axis is the difference of $\afdp$ between the other bridge estimators and LASSO. One standard deviation of the difference is added.}\label{fig:fLS_3}
\end{center}
\end{figure}
}

\subsection{Nearly black object}\label{ssec:simu:near-black}
In this section, we verify our theoretical results which are presented in Section \ref{sssec:nbo} for the nearly black object setting. Recall $b_{\epsilon}=\sqrt{\mathbb{E}G^2}$ and $\tilde{G}=G/b_{\epsilon}$. 
We consider the following setting:
$\delta=0.8$, $\sigma\in\{3, 5\}$, $b_\epsilon=4 / \sqrt{\epsilon}, \tilde{G}=1, \epsilon \in \{0.25, 0.0625, 0.04\}$. 
The simulation results are displayed in Figure \ref{fg:near-black-object}. We observe that under both noise levels $\sigma=3, 5$, LASSO is 
suboptimal at sparsity level $\epsilon=0.25$. As $\epsilon$ decreases, LASSO becomes better. When $\epsilon$ is reduced to $0.04$, LASSO outperforms the other bridge estimators by a large margin. Note that in this simulation, the signal strength $b_{\epsilon}$ scales with $\epsilon$ at the rate $\epsilon^{-1/2}$. This is the regime where LASSO is proved to be optimal in Section \ref{sssec:nbo}.

{
\setlength{\tabcolsep}{0pt}
\begin{figure}[htb!]
\begin{center}
\begin{tabular}{rccc}
    & \scriptsize{$\sigma=3$, $\epsilon=0.25$} & \scriptsize{$\sigma=3$, $\epsilon=0.0625$} & \scriptsize{$\sigma=3$, $\epsilon=0.04$} \\
	\rotatebox{90}{\qquad\qquad\quad\tiny{AFDP}} & \includegraphics[scale=0.4]{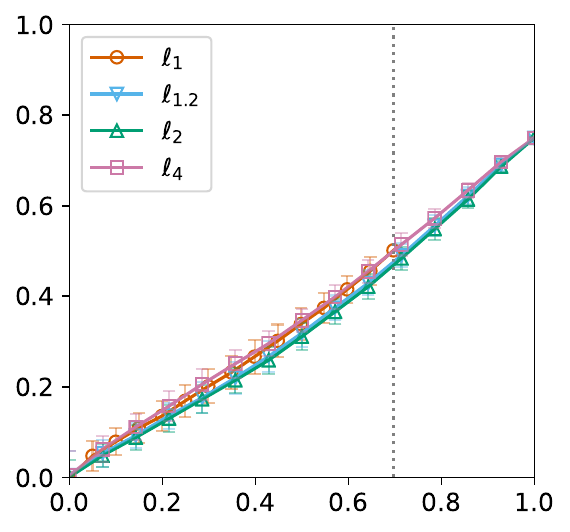}
	&
	\includegraphics[scale=0.4]{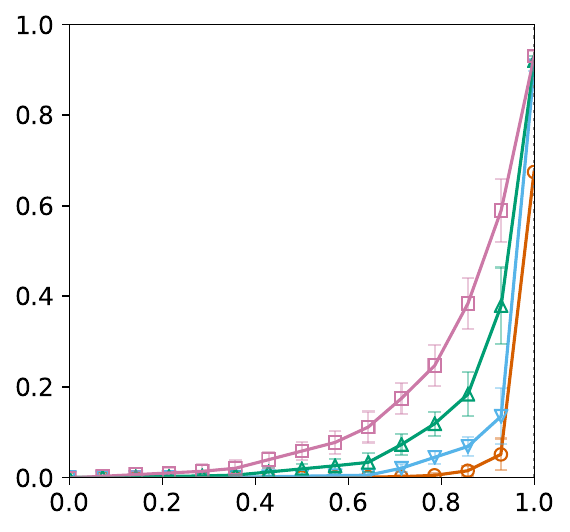}
	&
	\includegraphics[scale=0.4]{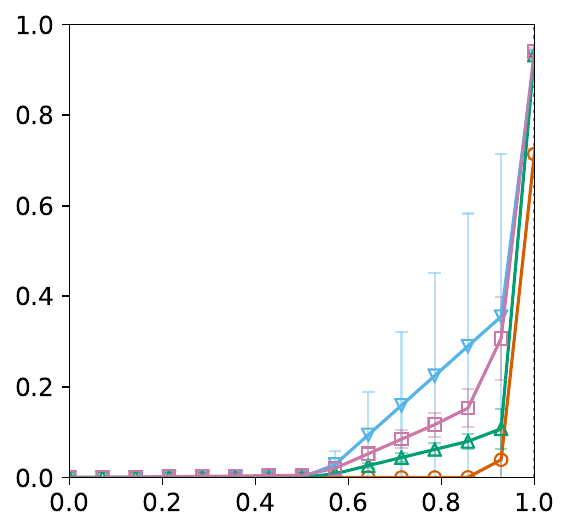} \\
	& \tiny{ATPP} & \tiny{ATPP} & \tiny{ATPP} \\
	&  &  &  \\ 
    & \scriptsize{$\sigma=5$, $\epsilon=0.25$} & \scriptsize{$\sigma=5$, $\epsilon=0.0625$} & \scriptsize{$\sigma=5$, $\epsilon=0.04$} \\
	\rotatebox{90}{\qquad\qquad\quad\tiny{$\mathrm{AFDP}$}} & \includegraphics[scale=0.4]{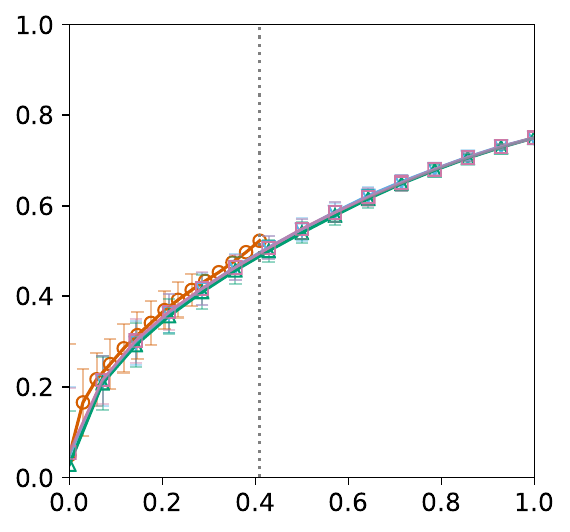}
	&
	\includegraphics[scale=0.4]{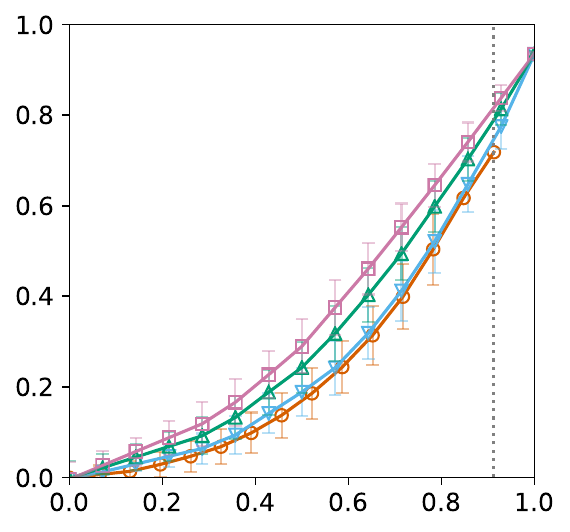}
	&
	\includegraphics[scale=0.4]{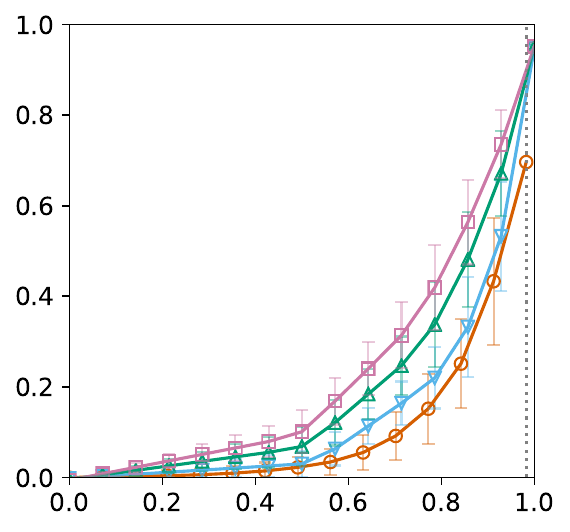} \\
	& \tiny{ATPP} & \tiny{ATPP} & \tiny{ATPP} \\
\end{tabular}
\caption{Top row: $\afdp$-$\atpp$ curve under the setting $b_{\epsilon}=4/\sqrt{\epsilon}, \sigma=3, \delta=0.8, \epsilon \in\{0.25, 0.0625, 0.04\}$. Second row: 
        $\afdp$-$\atpp$ curve under the setting $b_{\epsilon}=4/\sqrt{\epsilon}, \sigma=5, \delta=0.8, \epsilon \in\{0.25, 0.0625, 0.04\}$. One standard deviation is added.}\label{fg:near-black-object}
\end{center}
\end{figure}
}

\subsection{LASSO vs. two-stage LASSO} 
\label{ssec:lasso-vs-two-stage}
In Theorem \ref{THM:LASSOFDPCOMPARE} we proved that two-stage LASSO
with its first stage optimally tuned outperforms LASSO on variable
selection. We now provide a brief simulation to verify this
result. We choose $p_G=\delta_M$ with $M=8$ and set $\delta=0.8, \epsilon=0.2, \sigma\in \{1,3,5\}$. As shown in Figure \ref{fg:lasso-vs-2stage}, two-stage LASSO improves over LASSO. When the noise is small ($\sigma=1$), the improvement is the most significant. As the noise level increases, the difference between the two approaches becomes smaller. When the noise is large
($\sigma=5$), both have large errors. 
{
\setlength{\tabcolsep}{0pt}
\begin{figure}[htb!]
\begin{center}
\begin{tabular}{rccc}
    & \scriptsize{$\sigma=1$} & \scriptsize{$\sigma=3$} & \scriptsize{$\sigma=5$} \\
	\rotatebox{90}{\qquad\qquad\quad\tiny{$\mathrm{AFDP}$}} & \includegraphics[scale=0.4]{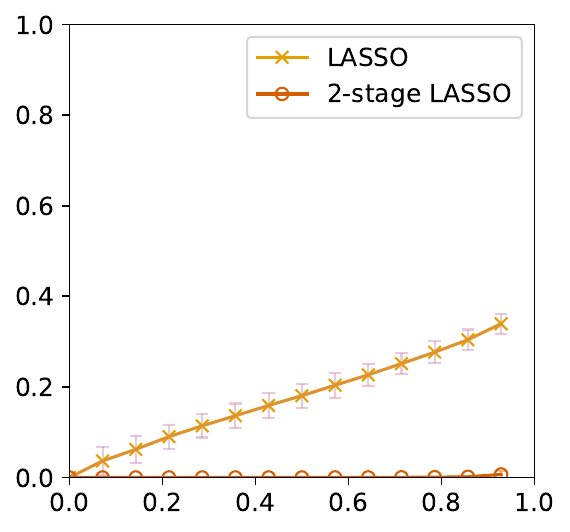}
	&
	\includegraphics[scale=0.4]{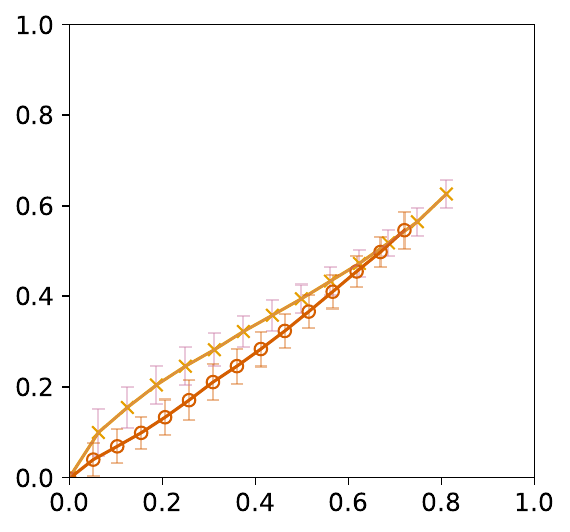}
	&
	\includegraphics[scale=0.4]{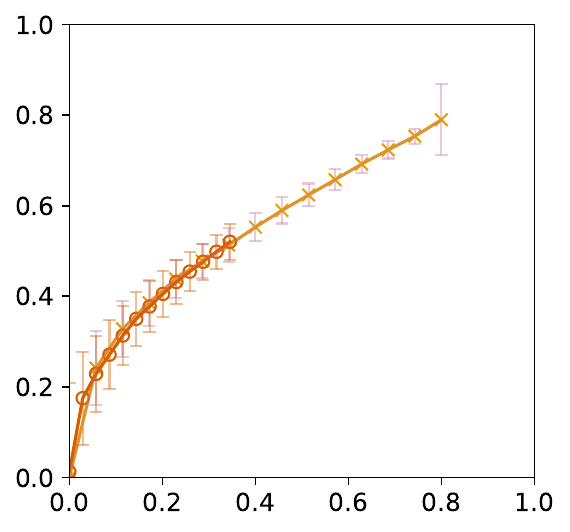} \\
	& \tiny{ATPP} & \tiny{ATPP} & \tiny{ATPP}
\end{tabular}
\caption{LASSO vs. two-stage LASSO. Here $\delta=0.8$, $\epsilon=0.2$,
            $M=8, \sigma\in \{1, 3, 5\}$. The outperformance of two-stage LASSO is
            the most significant when the noise level is low. When noise gets
            higher, the gap becomes smaller and smaller.}\label{fg:lasso-vs-2stage}
\end{center}
\end{figure}
}

\subsection{General design}\label{ssec:general-design}

In this section, we extend our simulations to general design matrices. Given that our  theoretical
results in Section \ref{sec:contribution} are derived under the i.i.d. Gaussian assumption on $X$, 
the aim of this section is to numerically study the validity scope of our main conclusions when such an assumption does not hold. In particular, we consider the following correlated
designs and i.i.d. non-Gaussian designs:
\begin{itemize}
    \item Correlated design: We consider the model $y = X\Sigma^{\frac{1}{2}}\beta + w$, where $X_{ij}\follow \mathcal{N}(0, \frac{1}{n})$ and $\Sigma$ is a Toeplitz matrix with $\Sigma_{ij} = \rho^{|i - j|}$. Here $\rho \in (0, 1)$ controls the
        correlation strength.
    \item i.i.d. non-Gaussian design: 
        We generate $X$ with i.i.d. components $X_{ij} \sim
        \sqrt{\frac{\nu - 2}{n\nu}}t_{\nu}$ where $t_{\nu}$ is the t-distribution with degrees of freedom $\nu$. The scaling $\sqrt{\frac{\nu - 2}{n\nu}}$ ensures $\mbox{var}(X_{ij})=\frac{1}{n}$ as in the i.i.d Gaussian case. 
\end{itemize}

Throughout this section, we choose $p=2500, p_G=\delta_M, n=\delta p, w_i\follow N(0,\sigma^2)$.

\paragraph{Large/small noise}
We set $M=8, \delta=0.9$, $\epsilon=0.4$. For correlated design, we vary $\rho\in\{0.1,
0.5, 0.9\}$ to allow for different levels of correlations among the predictors.
Figure \ref{fg:cor-design} shows the simulation results. There are a few important observations:
\begin{itemize}
\item[(i)] For a given $\rho \in \{0.1,0.5, 0.9\}$, the comparison of bridge estimators under different noise levels is similar to what we observe for 
i.i.d. Gaussian designs: LASSO performs best in low noise case, and ridge becomes optimal when the noise is large.

\item[(ii)] Given the noise level $\sigma=0.8$, as the design correlation $\rho$ varies in $\{0.1,0.5,0.9\}$, it is interesting to observe that, LASSO outperforms the other estimators when the correlation is not high ($\rho=0.1,0.5$), while ridge becomes the optimal one when the correlation is increased to $0.9$. Similar phenomenon happens at the noise level $\sigma=1$. It seems that in terms of variable selection performance comparison of TVS, adding dependency among the predictors is like increasing the noise level in the system. We leave a theoretical analysis of the impact of correlation on our results as an interesting future research.

\end{itemize}

{
\setlength{\tabcolsep}{0pt}
\begin{figure}[htb!]
\begin{center}
\begin{tabular}{rccc}
    & \scriptsize{$\rho=0.1$, $\sigma=0.8$} & \scriptsize{$\rho=0.1$, $\sigma=1$} & \scriptsize{$\rho=0.1$, $\sigma=2$} \\
	\rotatebox{90}{\qquad\qquad\quad\tiny{AFDP}} & \includegraphics[scale=0.4]{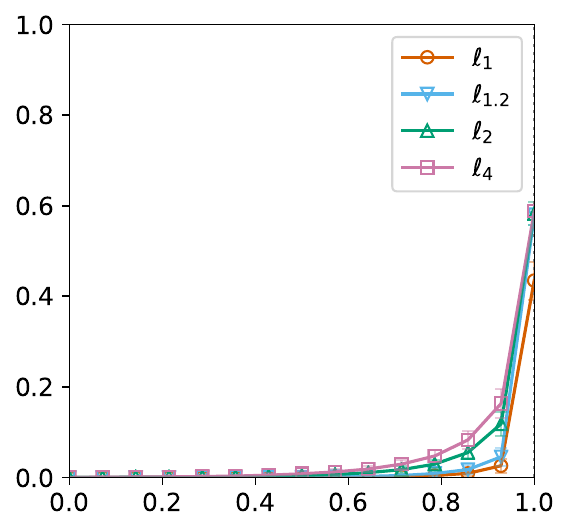}
	&
	\includegraphics[scale=0.4]{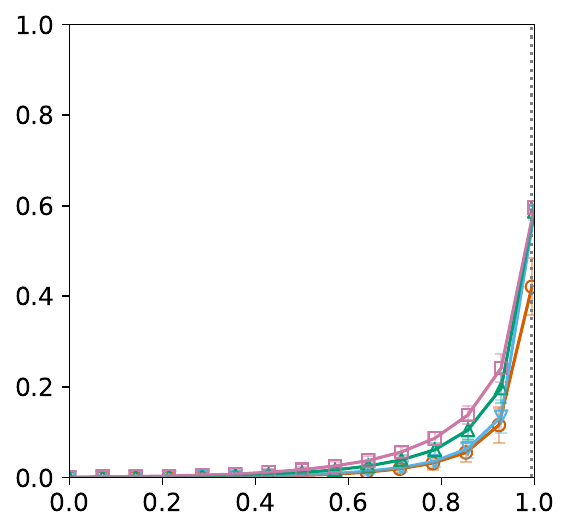}
	&
	\includegraphics[scale=0.4]{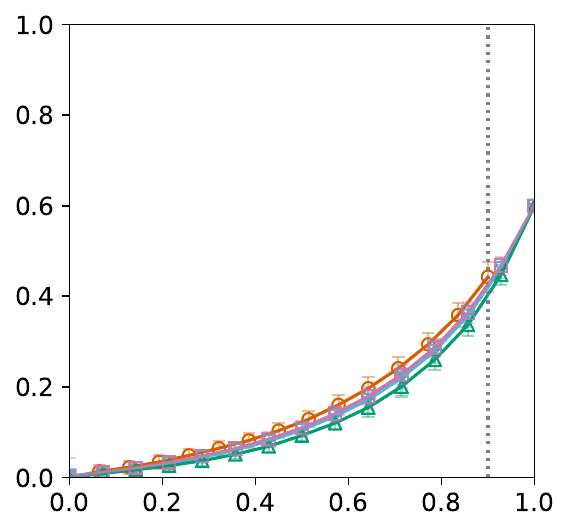} \\
	& \tiny{ATPP} & \tiny{ATPP} & \tiny{ATPP} \\
	&  &  &  \\ 
    & \scriptsize{$\rho=0.5$, $\sigma=0.8$} & \scriptsize{$\rho=0.5$, $\sigma=1$} & \scriptsize{$\rho=0.5$, $\sigma=2$} \\
	\rotatebox{90}{\qquad\qquad\quad\tiny{$\mathrm{AFDP}$}} & \includegraphics[scale=0.4]{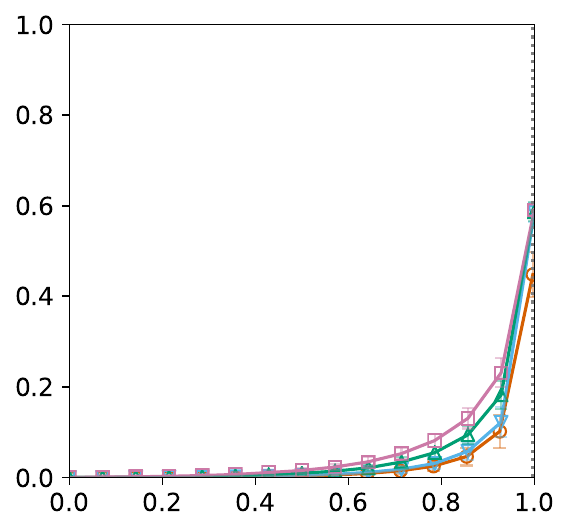}
	&
	\includegraphics[scale=0.4]{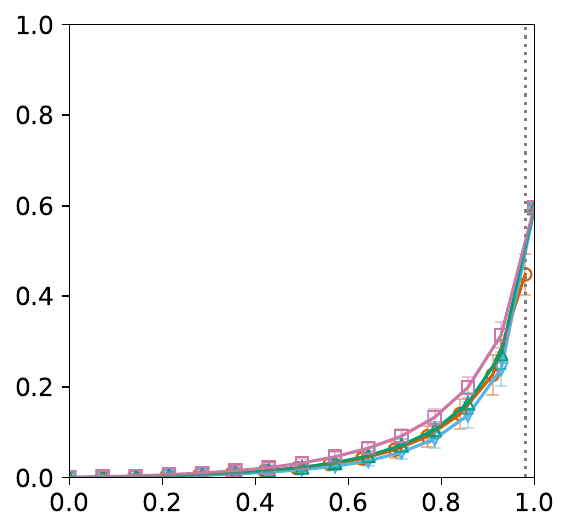}
	&
	\includegraphics[scale=0.4]{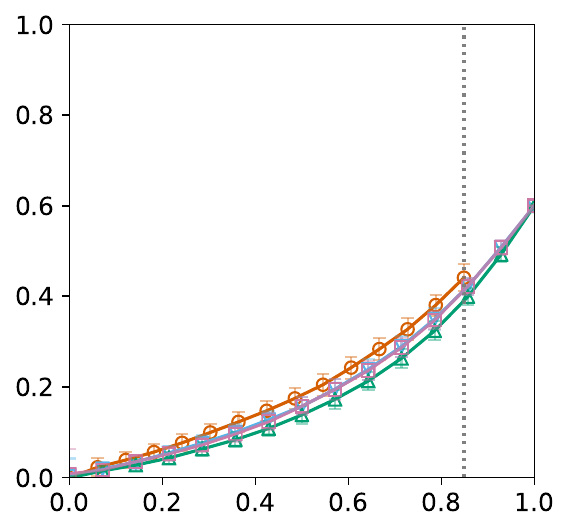} \\
	& \tiny{ATPP} & \tiny{ATPP} & \tiny{ATPP} \\
	&  &  &  \\ 
    & \scriptsize{$\rho=0.9$, $\sigma=0.3$} & \scriptsize{$\rho=0.9$, $\sigma=0.8$} & \scriptsize{$\rho=0.9$, $\sigma=1$} \\
	\rotatebox{90}{\qquad\qquad\quad\tiny{$\mathrm{AFDP}$}} & \includegraphics[scale=0.4]{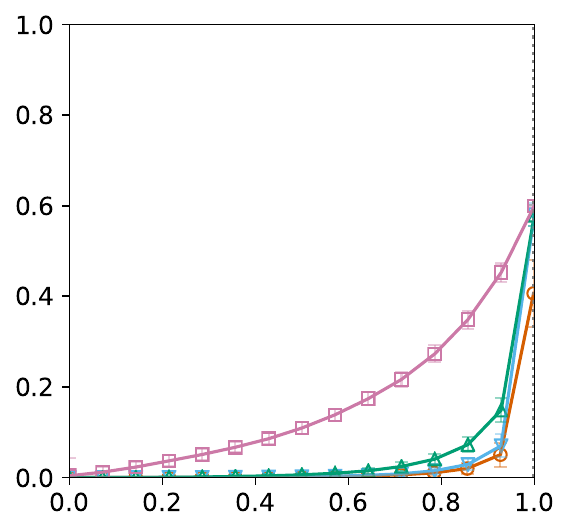}
	&
	\includegraphics[scale=0.4]{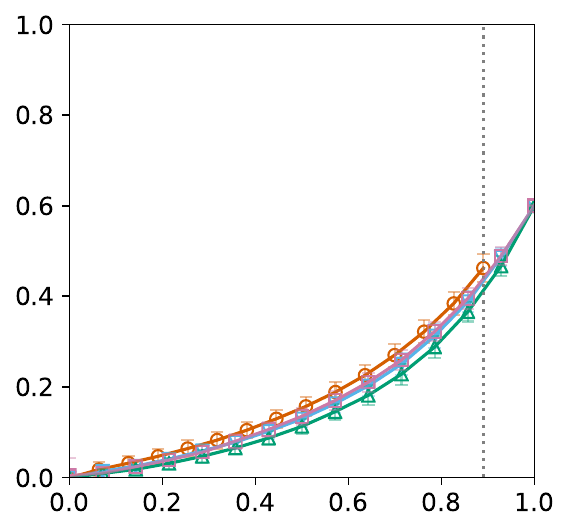}
	&
	\includegraphics[scale=0.4]{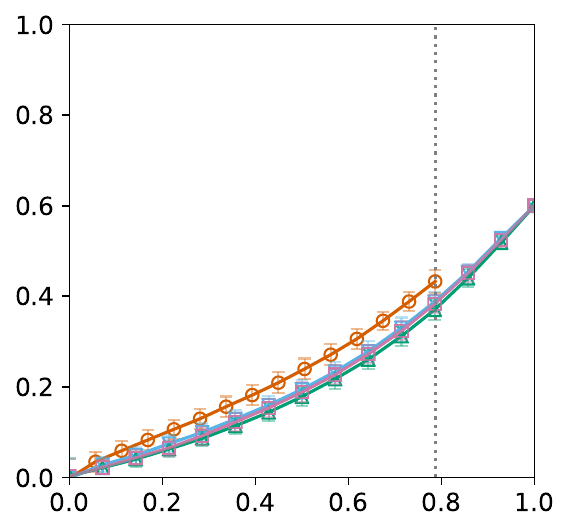} \\
	& \tiny{ATPP} & \tiny{ATPP} & \tiny{ATPP}
\end{tabular}
\caption{Large/small noise scenario under correlated design.}\label{fg:cor-design}
\end{center}
\end{figure}
}

{
\setlength{\tabcolsep}{0pt}
\begin{figure}[htb!]
\begin{center}
\begin{tabular}{rccc}
    & \scriptsize{$\sigma=0.8$} & \scriptsize{$\sigma=1$} & \scriptsize{$\sigma=2$} \\
	\rotatebox{90}{\qquad\qquad\quad\tiny{AFDP}} & \includegraphics[scale=0.4]{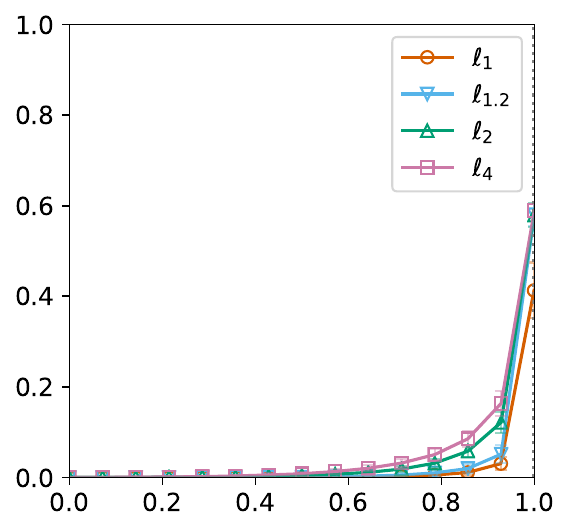}
	&
	\includegraphics[scale=0.4]{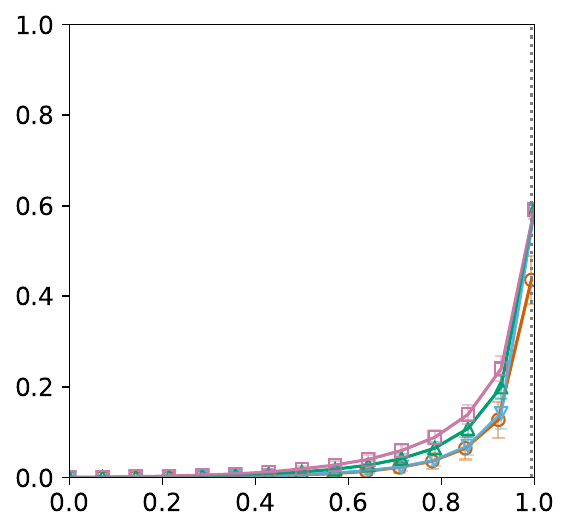}
	&
	\includegraphics[scale=0.4]{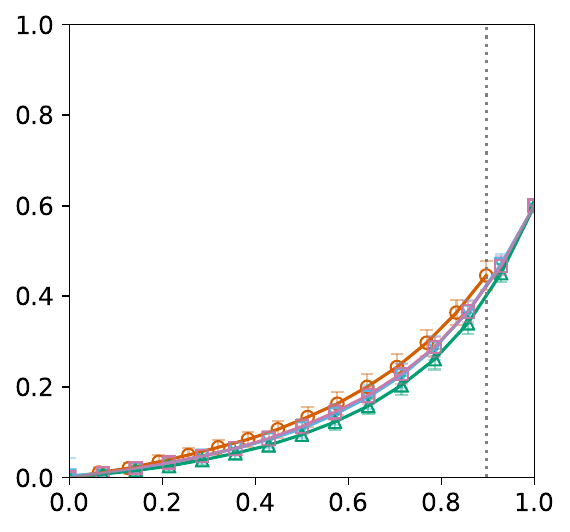} \\
	& \tiny{ATPP} & \tiny{ATPP} & \tiny{ATPP}
\end{tabular}
\caption{Large/small noise scenario under i.i.d. non-Gaussian design. We set 
        $\delta=0.9, \epsilon=0.4, M=8, \sigma\in \{0.8, 1, 2\}$. The degrees of
        freedom of the t-distribution is $\nu=3$.}\label{fg:nonGauss-design-large-noise}
\end{center}
\end{figure}
}

Regarding i.i.d. non-Gaussian design, we choose the t-distribution $t_{\nu}$ with $\nu=3$. Note that among all the t-distributions $\{t_{\nu}, \nu \in \mathbb{N}\}$ with finite variance$, t_3$ has the heaviest tail. The results are shown in Figure
\ref{fg:nonGauss-design-large-noise}. We again observe the comparison predicted by our theory:  LASSO outperforms the other bridge
estimators when the noise level is low ($\sigma=0.8$), and ridge performs best as the noise level increases to $\sigma=2$.

\paragraph{Nearly black object}
For nearly black objects, we consider $\delta=0.8, \sigma=3, b_\epsilon=\frac{4}{\sqrt{\epsilon}}, \tilde{G}=1, \epsilon \in \{0.25, 0.0625, 0.04\}$. We construct the design matrix in the following ways:
\begin{itemize}
\item [(i)] Set a correlated Gaussian design with correlation levels $\rho=0.5, 0.9$.
\item [(ii)] Set an i.i.d. non-Gaussian design with $t_3$.
\end{itemize}

Figures \ref{fg:non-Gauss-design} and \ref{fg:near-black-non-Gauss-design}
contain the results for the correlated design and i.i.d. non-Gaussian design, respectively. We can see that as the model becomes sparser,  LASSO starts to outperform other choices of bridge estimator and eventually becomes optimal. This is consistent with the main conclusion we have proved for the i.i.d. Gaussian designs.
{
\setlength{\tabcolsep}{0pt}
\begin{figure}[htb!]
\begin{center}
\begin{tabular}{rccc}
    & \scriptsize{$\rho=0.5$, $\epsilon=0.25$} & \scriptsize{$\rho=0.5$, $\epsilon=0.0625$} & \scriptsize{$\rho=0.5$, $\epsilon=0.04$} \\
	\rotatebox{90}{\qquad\qquad\quad\tiny{AFDP}} & \includegraphics[scale=0.4]{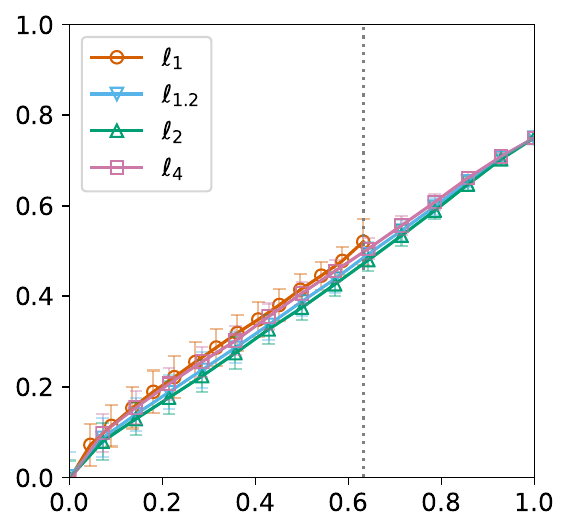}
	&
	\includegraphics[scale=0.4]{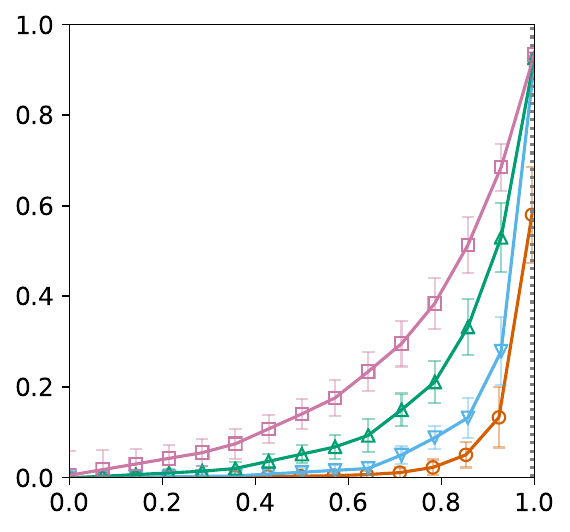}
	&
	\includegraphics[scale=0.4]{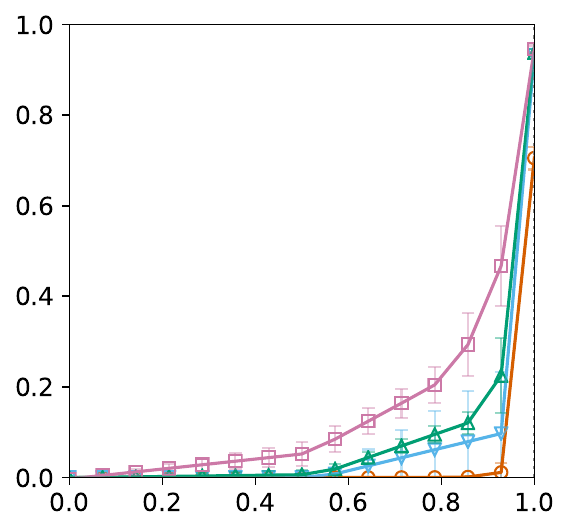} \\
	& \tiny{ATPP} & \tiny{ATPP} & \tiny{ATPP} \\
	&  &  &  \\ 
    & \scriptsize{$\rho=0.9$, $\epsilon=0.25$} & \scriptsize{$\rho=0.9$, $\epsilon=0.0625$} & \scriptsize{$\rho=0.9$, $\epsilon=0.04$} \\
	\rotatebox{90}{\qquad\qquad\quad\tiny{$\mathrm{AFDP}$}} & \includegraphics[scale=0.4]{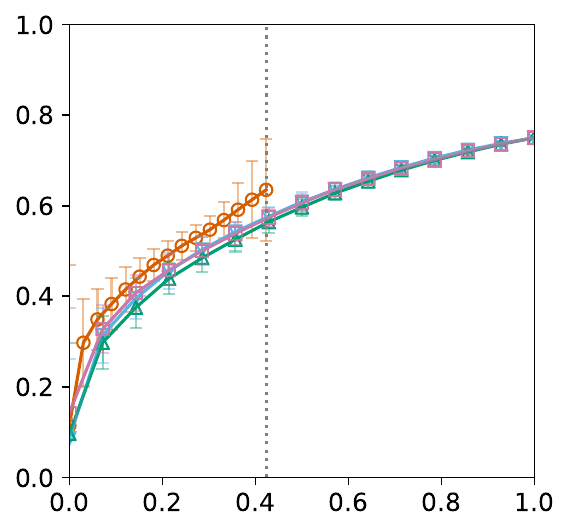}
	&
	\includegraphics[scale=0.4]{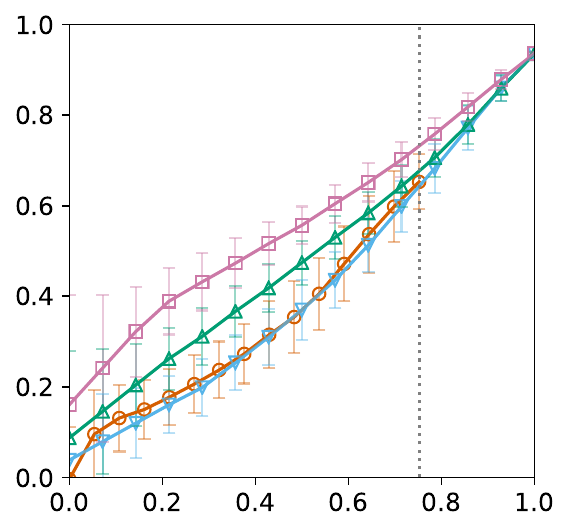}
	&
	\includegraphics[scale=0.4]{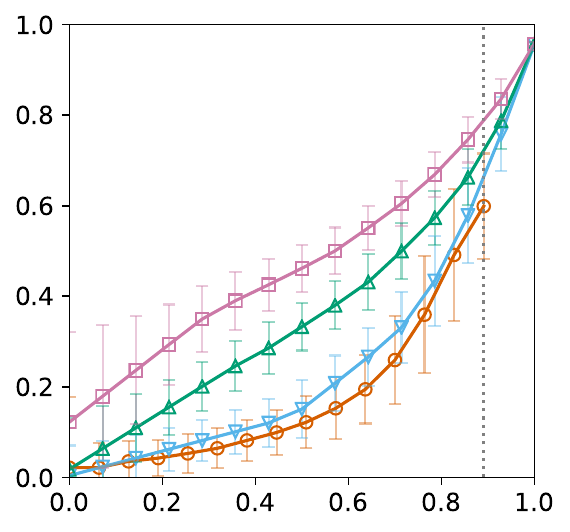} \\
	& \tiny{ATPP} & \tiny{ATPP} & \tiny{ATPP}
\end{tabular}
\caption{Nearly black object with correlated design. We fix
            $\delta=0.8$, $\sigma=3$ and $b_\epsilon=4 / \sqrt{\epsilon}, \epsilon \in
        \{0.25, 0.0625, 0.04\}$. The
        correlation $\rho$ is set to $0.5$ and $0.9$ in the two rows.}\label{fg:non-Gauss-design}
\end{center}
\end{figure}
}

{
\setlength{\tabcolsep}{0pt}
\begin{figure}[htb!]
\begin{center}
\begin{tabular}{rccc}
    & \scriptsize{$\epsilon=0.25$} & \scriptsize{$\epsilon=0.0625$} & \scriptsize{$\epsilon=0.04$} \\
	\rotatebox{90}{\qquad\qquad\quad\tiny{AFDP}} & \includegraphics[scale=0.4]{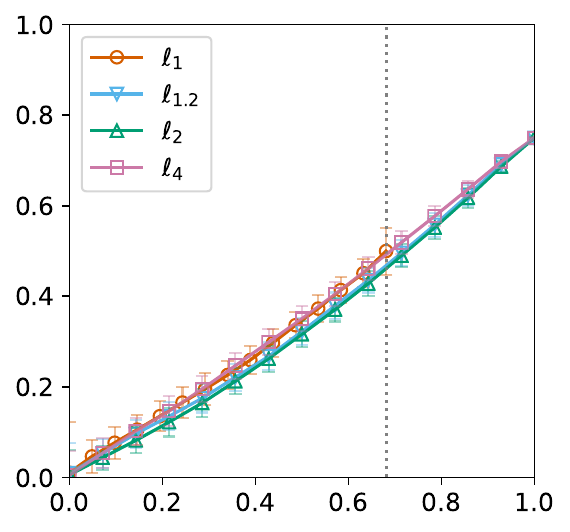}
	&
	\includegraphics[scale=0.4]{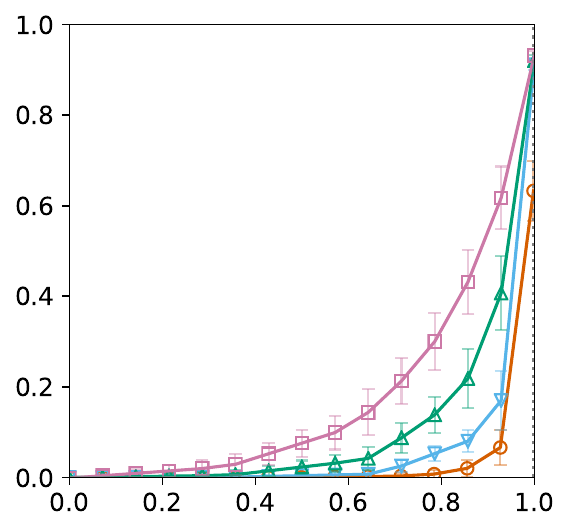}
	&
	\includegraphics[scale=0.4]{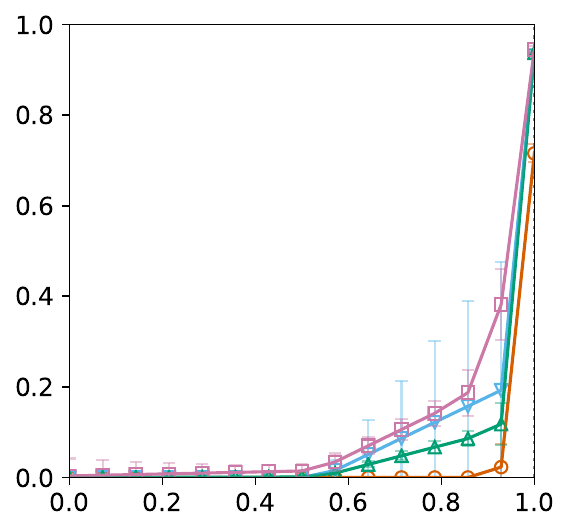} \\
	& \tiny{ATPP} & \tiny{ATPP} & \tiny{ATPP}
\end{tabular}
\caption{Nearly black object with i.i.d. non-Gaussian design. We fix
            $\delta=0.8$, $\sigma=3$ and $b_\epsilon=4 / \sqrt{\epsilon}, \epsilon \in
        \{0.25, 0.0625, 0.04\}$. The degrees
        of freedom for the t-distribution design is $\nu=3$.}\label{fg:near-black-non-Gauss-design}
\end{center}
\end{figure}
}

\paragraph{LASSO vs two-stage LASSO}
We compare LASSO and two-stage LASSO under more general designs. As in Section
\ref{ssec:lasso-vs-two-stage} for i.i.d. Gaussian design, we set $\delta=0.8$, $\epsilon=0.2$, $M=8$ and
$\sigma=1, 3, 5$. For correlated designs, we pick $\rho=0.5, 0.9$. For i.i.d. non-Guassian design, we choose $\nu=3$. As is seen in Figure \ref{fg:lasso-vs-2stage-corr}, the same phenomenon observed in i.i.d. Gaussian design also occurs under general designs:  two-stage LASSO outperforms LASSO by a large margin when the noise is small, and the outperformance becomes marginal in large noise. 

{
\setlength{\tabcolsep}{0pt}
\begin{figure}[htb!]
\begin{center}
\begin{tabular}{rccc}
    & \scriptsize{$\rho=0.5$, $\sigma=1$} & \scriptsize{$\rho=0.5$, $\sigma=3$} & \scriptsize{$\rho=0.5$, $\sigma=5$} \\
	\rotatebox{90}{\qquad\qquad\quad\tiny{AFDP}} & \includegraphics[scale=0.4]{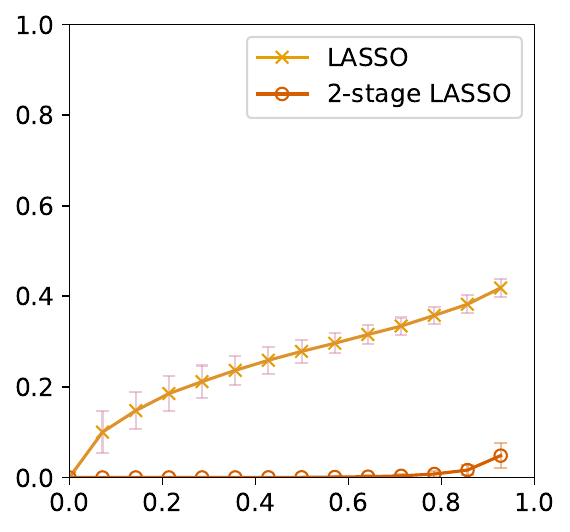}
	&
	\includegraphics[scale=0.4]{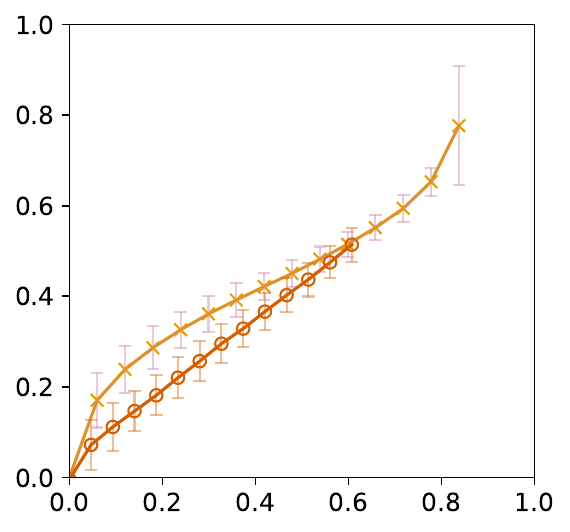}
	&
	\includegraphics[scale=0.4]{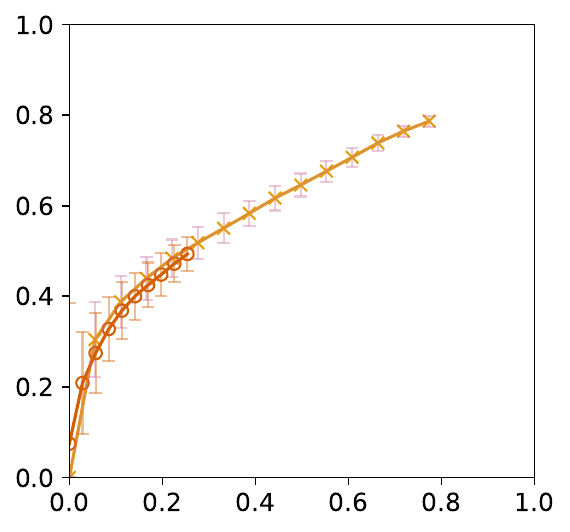} \\
	& \tiny{ATPP} & \tiny{ATPP} & \tiny{ATPP} \\
	&  &  &  \\ 
    & \scriptsize{$\rho=0.9$, $\sigma=1$} & \scriptsize{$\rho=0.9$, $\sigma=3$} & \scriptsize{$\rho=0.9$, $\sigma=5$} \\
	\rotatebox{90}{\qquad\qquad\quad\tiny{$\mathrm{AFDP}$}} & \includegraphics[scale=0.4]{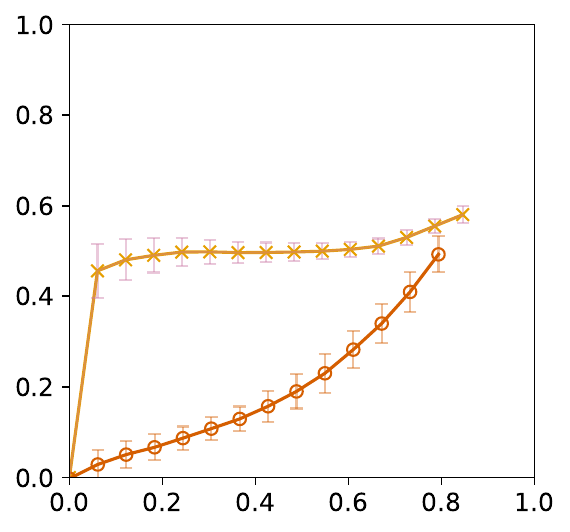}
	&
	\includegraphics[scale=0.4]{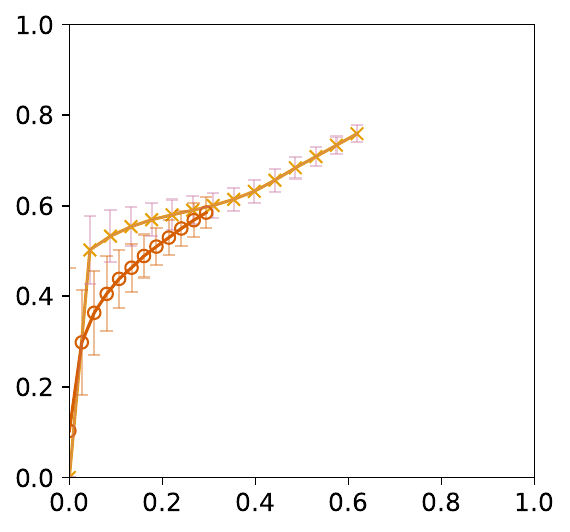}
	&
	\includegraphics[scale=0.4]{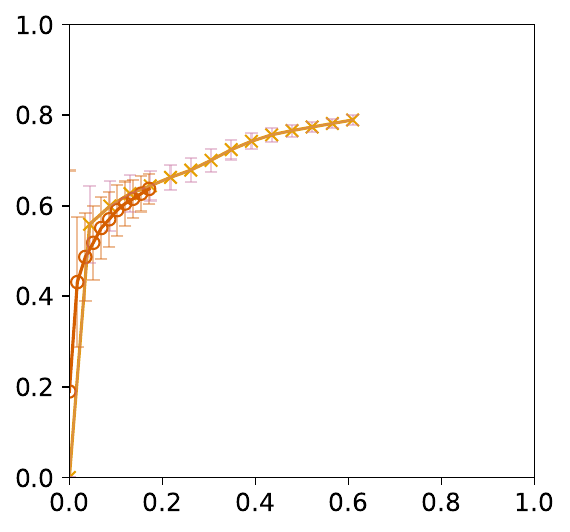} \\
	& \tiny{ATPP} & \tiny{ATPP} & \tiny{ATPP} \\
	&  &  &  \\ 
    & \scriptsize{$\nu=3$, $\sigma=1$} & \scriptsize{$\nu=3$, $\sigma=3$} & \scriptsize{$\nu=3$, $\sigma=5$} \\
	\rotatebox{90}{\qquad\qquad\quad\tiny{$\mathrm{AFDP}$}} & \includegraphics[scale=0.4]{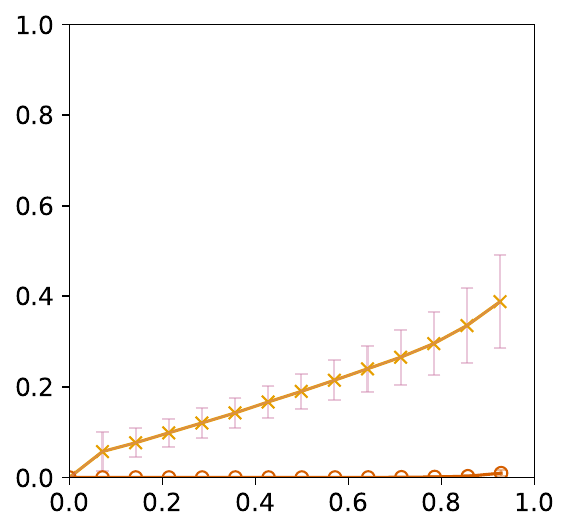}
	&
	\includegraphics[scale=0.4]{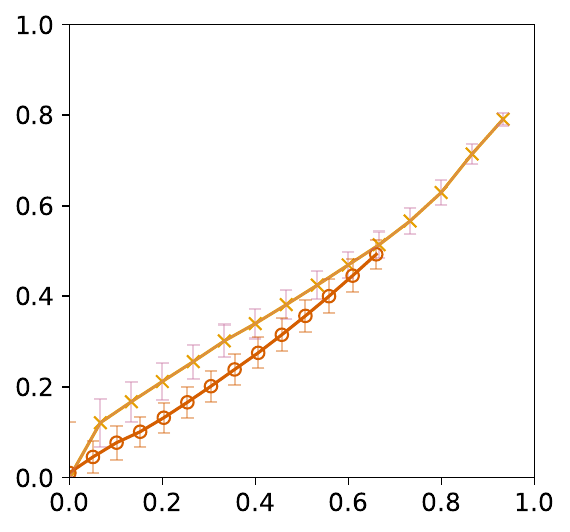}
	&
	\includegraphics[scale=0.4]{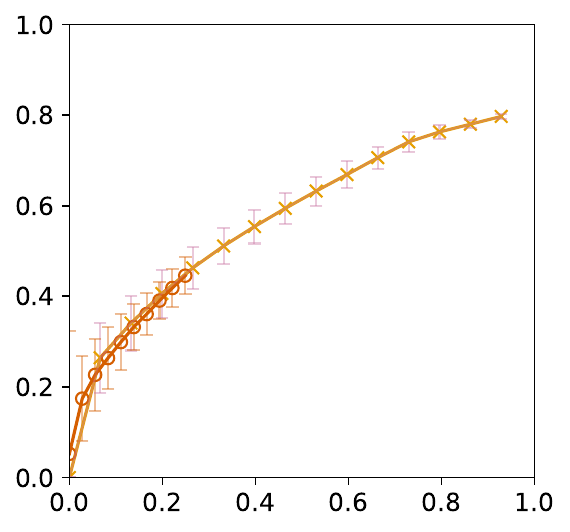} \\
	& \tiny{ATPP} & \tiny{ATPP} & \tiny{ATPP}
\end{tabular}
\caption{LASSO vs. two-stage LASSO under general designs. Here $\delta=0.8, \epsilon=0.2, M=8, \sigma\in \{1, 3, 5\}$. The first two rows are for $\rho=0.5, 0.9$
        in correlated design. The last row is for $\nu=3$ in i.i.d. non-Gaussian design.}\label{fg:lasso-vs-2stage-corr}
\end{center}
\end{figure}
}

\section{Discussion}\label{sec:discussion}

\subsection{Nonconvex bridge estimators} 

In this paper, our discussion has been focused on the bridge estimators with $q\in [1,\infty)$. When $q$ falls in $[0,1)$, the corresponding bridge regression becomes a nonconvex problem. Given that certain nonconvex regularizations have been shown to achieve variable selection consistency under weaker conditions than LASSO \cite{loh2017support}, it is of great interest to analyze the variable selection performance of nonconvex bridge estimators. An early work \cite{huang2008asymptotic} has showed that bridge estimators for $q\in (0,1)$ enjoy an oracle property in the sense of \cite{fan2001variable} under appropriate conditions. However, the asymptotic regime considered in \cite{huang2008asymptotic} is fundamentally different from the linear asymptotic in the current paper. A more relevant work is \cite{zheng2017does} which studied the estimation property of bridge regression when $q$ belongs to $[0,1]$ under a similar asymptotic framework to ours. Nevertheless, the main focus of \cite{zheng2017does} is on the estimators returned by an iterative local algorithm. The analysis of the global minimizer in \cite{zheng2017does} relies on the replica method \cite{rangan2009asymptotic} from statistical physics, which has not been fully rigorous yet. To the best of our knowledge, under the linear asymptotic setting, no existing works have provided a fully rigorous analysis of the global solution from nonconvex regularization in linear regression models. We leave this important and challenging problem as a future research.

\subsection{Tuning parameter selection for a two-stage variable selection scheme}
Two-stage variable selection techniques discussed in this paper have two tuning parameters: the regularization parameter $\lambda$ in the first stage and the threshold $s$ from the second stage. Furthermore, given that TVS using different bridge estimators offer the best performance in different regimes, we may see $q$ as another tuning parameter. How can these parameters be optimally tuned in practice? As proved in Section \ref{sec:contribution}, the TVS with an estimator of smaller AMSE in the first stage provides a better variable selection. Hence, the parameter $\lambda$ can be set by minimizing the estimated risk of the bridge estimator. Similarly, one can estimate the risk for different values of $q$ and choose the one that offers the smallest estimated risk. Section \ref{sec:optimalLambda} has showed how this can be done. 

It remains to determine the parameter $s$. As presented in our results, the threshold $s$ controls the
trade-off between AFDP and ATPP. By increasing $s$ we decrease the number of false discoveries, but at the same time, we decrease the number of correct discoveries. Therefore, the choice of $s$ depends on the accepted level of false discoveries (or similar quantities). For instance, one can control the false discovery rate by combining the two-stage approach with the knockoff framework
\cite{barber2015controlling}. Specifically, if we would like to control FDP at
a rate of $\rho \in (0, 1)$, we can go through the following procedure.

\begin{enumerate}
    \item
        Construct the knockoff features $\tilde{X}\in\mathbb{R}^{n\times p}$ as stated in \cite{barber2015controlling};
    \item
        Run bridge regression on the joint design $[X, \tilde{X}]$ and obtain the corresponding estimator $\begin{bmatrix}\hat{\beta} \\ \tilde{\beta} \end{bmatrix}$. Let
    $W_j=\max(|\hat{\beta}_j|, |\tilde{\beta}_j|) \mathrm{sign}(|\hat{\beta}_j|
    -|\tilde{\beta}_j|), j=1,2,\ldots, p$. Define the threshold
    $s$ as $s = \min\Big\{t>0: \frac{1+\#\{j: W_j \leq -t\}}{\#\{j:W_j \geq t\}
    \vee 1} \leq \rho \Big\}$.
    \item
     Select all the predictors with $\{j: W_j \geq s\}$.
\end{enumerate}
The above procedure only works for $n\geq p$. We may adapt the new knockoff approach in \cite{candes2018panning} when $n<p$.

\section{Conclusion}

We studied two-stage variable selection schemes for linear models under the high-dimensional asymptotic setting, where the number of observations $n$ grows at the same rate as the number of predictors $p$. Our TVS has a bridge estimator in the first stage and a simple threshold function in the second stage. For such schemes, we proved that for a fixed $\atpp$, in order to obtain the smallest $\afdp$ one should pick an estimator that minimizes the asymptotic mean square error in the first stage of TVS. This connection between parameter estimation and variable selection further led us to a thorough investigation of the AMSE under different regimes including rare and weak signals, small/large noise, and large sample. Our analyses revealed several interesting phenomena and provided new insights into variable selection. For instance, the variable selection of LASSO can be improved by debiasing and thresholding; a TVS with ridge in its first stage outperforms TVS with other bridge estimators for large values of noise; the optimality of two-stage LASSO among two-stage bridge estimators holds for very sparse signals until the signal strength is below some threshold. We conducted extensive numerical experiments to support our theoretical findings and validate the scope of our main conclusions for general design matrices.

\bibliographystyle{alpha}
\bibliography{reference}

\newpage


\appendix
\begin{center}
\textbf{\large{Supplementary material}}
\end{center}

\section{Organization}

This supplement contains the proofs of all the main results. Below we mention the organization of this supplement to help the readers.  
\vspace{0.2cm}
\begin{enumerate}
\item Appendix \ref{preliminary:proof35} includes some preliminaries that will
    be extensively used in the latter proofs. We also
        outline the proofs of Theorems
    \ref{THM:STRONGRAREELL_Q} - \ref{THM:SAMPLE:MAIN} in Appendix
\ref{sec:proofThemstrongrareq} - \ref{apxc} since they share certain similarities.
\vspace{0.2cm}
\item Appendix \ref{apxnewb} proves Lemma \ref{LEMMA:FDP:LQ}. 
\vspace{0.2cm}
\item Appendix \ref{apxb} contains the proof of Theorems \ref{THM:MSEMINFDRMIN}, \ref{THM:LASSOFDPCOMPARE} and Corollary \ref{THM:FDPCOMPARE:LQ}.  
\vspace{0.2cm}
\item Appendix \ref{sec:proofThemstrongrareq} proves Theorem
    \ref{THM:STRONGRAREELL_Q}. 
\vspace{0.2cm}
\item Appendix \ref{thmsmallepsilonell_1} proves Theorem
    \ref{EQ:THM:STRONGRAREELL_1}. 
\vspace{0.2cm}
\item Appendix \ref{sec:proofFinitePowerExtremeSPARSE} proves Theorem \ref{THM:SPARSE:MAIN}. 
\vspace{0.2cm}
\item Appendix \ref{thmlargenoise:main} proves Theorems \ref{THM:NOISY:MAIN}. 
\vspace{0.2cm}
\item Appendix \ref{apxc} proves Theorem   \ref{THM:SAMPLE:MAIN}.     
\vspace{0.2cm}
\item Appendix \ref{apxd} includes the proof of Theorems \ref{THEOREM:DEBIASING:VALID}, \ref{THEOREM:DEBIASING:ATTP}, \ref{DEBIASING:Q>1} and Lemma \ref{COMP:SIS}. 
\end{enumerate}

\section{Preliminaries} \label{preliminary:proof35}

\subsection{Some notations}\label{appendix:notations}
We will use the following notations throughout this supplementary file:
\begin{enumerate}[(i)]
    \item
    We will use $\partial_i f$ to denote the partial derivative of $f(x, y, \ldots)$ with respect to its $i^{\rm th}$ argument. Also for the ease of organizing the proof, we may use $\partial_y f$ to be the partial derivative of $f$ with respect to its argument $y$, which is equivalent to $\partial_2 f$.
    \item
    We will use DCT as a short name for Dominated Convergence Theorem.
    \item
    Recall we have $p_B = (1-\epsilon)\delta_0 + \epsilon p_G$. By symmetry, it
    can be easily verified that $B$ and $G$ appearing in the subsequent proofs
    can be equivalently replaced by $|B|$ and $|G|$. Hence without loss of generality, we assume $B$ and $G$ are nonnegative random variables.
    \item
    Let $\Phi$ and $\phi$ denote the cumulative distribution function and
    probability density function of a standard normal random variable
    respectively. Integration by parts gives us the standard result on the
    Gaussian tails expansion: for $k \in \mathbb{N}^+, s>0$
\begin{equation}
\Phi(-s)
=\phi(s)\Bigg[\sum_{i=0}^{k-1}\frac{(-1)^i(2i - 1)!!}{s^{2i+1}} + (-1)^k
(2k-1)!!\int_s^\infty\frac{\phi(t)}{t^{2k}}dt\Bigg],  \label{gaussiantail:exp}
\end{equation}
where $(2i-1)!! \triangleq 1 \times 3 \times 5 \times \ldots \times (2i-1)$.

\item As $a \rightarrow 0$ (or $a \rightarrow \infty$), $g(a) = O(f(a))$, means that there exists a constant $C$ such that for small enough (or large enough) values of $a$, $g(a)\leq Cf(a)$. Furthermore, $g(a) = o(f(a))$ if and only if $\lim_ {a \rightarrow 0} \frac{g(a)}{f(a)} =0$ (or in case of $a \rightarrow \infty$, $\lim_ {a \rightarrow \infty} \frac{g(a)}{f(a)} =0$). 

\item As $a \rightarrow 0$ (or $a \rightarrow \infty$), $g(a) = \Omega(f(a))$,
    if and only if $f(a) = O(g(a))$. Similarly, $g(a) = \omega(f(a))$ if and only if $f(a) = o(g(a))$. Finally, $f(a) = \Theta(g(a))$, if and only if $f(a) = O(g(a))$ and $g(a) = O(f(a))$. 
\end{enumerate}

\subsection{State evolution and properties of the proximal operator}

\begin{definition}[pseudo-Lipschitz function]
\label{definition:pesudolip}
A function $\psi:\mathbb{R}^2\rightarrow\mathbb{R}$ is said to be pseudo-Lipschitz, if $\exists L>0$ s.t., $\forall x,y\in\mathbb{R}^2$, $|\psi(x)-\psi(y)|\leq L(1+\|x\|_2+\|y\|_2)\|x-y\|_2$.
\end{definition}

The following theorem proved by \cite{bayati2011dynamics} and
\cite{weng2018overcoming} will be used in our proof.

\begin{theorem}(\cite{bayati2011dynamics}, \cite{weng2018overcoming})\label{theorem:amp:bridge2}
For a given $q \in [1,\infty)$, let $\hat{\beta}(q,\lambda)$ be the bridge estimator defined in \eqref{eq:bridge regression problem}. Consider a converging sequence $\{\beta(p),X(p),w(p)\}$. Then, for any pseudo-Lipschitz function $\psi:\mathbb{R}^2\rightarrow\mathbb{R}$, almost surely
\begin{equation*}
\lim_{p\rightarrow\infty}\frac{1}{p}\sum_{i=1}^p\psi\left(\hat{\beta}_{i}(q, \lambda),\beta_i(p)\right)
=
\mathbb{E}\psi\left(\eta_q(B+\tau Z;\alpha\tau^{2-q}),B\right),
\end{equation*}
where $B \sim p_B$ and $Z \sim N(0,1)$ are two independent random variables; $\alpha$ and $\tau$ are two positive numbers satisfying \eqref{eq:fixedpointfirstappearance} and \eqref{state_evolution2}.
\end{theorem}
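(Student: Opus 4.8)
The plan is to realize the bridge estimator $\hat\beta(q,\lambda)$ as the limit of an approximate message passing (AMP) iteration and then transfer the asymptotic empirical distribution of the AMP iterates, supplied by state evolution, onto the estimator itself. Concretely, I would analyze the recursion
\begin{align*}
x^{t+1} &= \eta_q\big(x^t + X^\top z^t;\ \theta_t\big),\\
z^t &= y - X x^t + \tfrac{1}{\delta}\,\big\langle \eta_q'\big(x^{t-1}+X^\top z^{t-1};\theta_{t-1}\big)\big\rangle\, z^{t-1},
\end{align*}
where $\langle\cdot\rangle$ averages the entries of a vector. The role of the Onsager correction term (the second summand in $z^t$) is to decouple the coordinates so that in the high-dimensional limit the effective observation $x^t+X^\top z^t$ behaves like $\beta$ plus independent $N(0,\tau_t^2)$ noise; this is exactly the heuristic that state evolution makes rigorous.

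The first step invokes the general AMP theorem of \cite{bayati2011dynamics}: for each fixed iteration index $t$ and any pseudo-Lipschitz $\psi$, almost surely
\[
\lim_{p\to\infty}\frac{1}{p}\sum_{i=1}^{p}\psi\big(x_i^{t+1},\beta_i\big)=\mathbb{E}\,\psi\big(\eta_q(B+\tau_t Z;\theta_t),\,B\big),
\]
with the scalars propagated by the deterministic recursion $\tau_{t+1}^2=\sigma^2+\tfrac1\delta\mathbb{E}\big(\eta_q(B+\tau_t Z;\theta_t)-B\big)^2$ and the threshold sequence $\theta_t$ calibrated through a relation of the type \eqref{state_evolution2}. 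The second step is to pass $t\to\infty$: I would show that this recursion has a unique fixed point $(\tau,\alpha)$ solving \eqref{state_evolution1}--\eqref{state_evolution2}, using monotonicity/contractivity of the map $\tau\mapsto\sigma^2+\tfrac1\delta\mathbb{E}(\cdots)$, and that the AMP iterates converge to the estimator in the sense $\lim_{t\to\infty}\lim_{p\to\infty}\tfrac1p\|x^t-\hat\beta(q,\lambda)\|_2^2=0$. Since $x^t$ is an AMP realization of a proximal iteration for the convex objective \eqref{eq:bridge regression problem}, any AMP fixed point must be a minimizer, which identifies the limit; a Lyapunov/energy argument then controls the rate of approach. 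Combining the two steps with the pseudo-Lipschitz continuity of $\psi$ lets me replace $x^t$ by $\hat\beta(q,\lambda)$ and $(\tau_t,\theta_t)$ by their limits, giving the claim.

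The hard part is legitimizing the interchange of the limits $p\to\infty$ and $t\to\infty$: state evolution holds with $p\to\infty$ taken first at each fixed $t$, whereas AMP converges to the minimizer as $t\to\infty$, so one must show the double limit is valid and that $\tfrac1p\|x^t-\hat\beta(q,\lambda)\|_2^2$ is uniformly (in $p$) small as $t$ grows. This is delicate for two reasons. For $q=1$ the bridge objective is convex but not strictly convex, so both the uniqueness of the minimizer and the rate of approach require the restricted-strong-convexity afforded by the i.i.d. Gaussian design. For general $q$ one needs quantitative Lipschitz and differentiability estimates on $\eta_q(\cdot;\chi)$ and $\eta_q'(\cdot;\chi)$ to ensure that pseudo-Lipschitz test functions transfer cleanly across the limit, and one must verify that the calibration \eqref{state_evolution2} between the AMP threshold and the regularizer $\lambda$ is self-consistent at the fixed point. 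Assembling these uniform estimates is where essentially all the work resides, and it is carried out in \cite{bayati2011dynamics} and \cite{weng2018overcoming}.
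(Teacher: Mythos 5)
Your proposal correctly outlines the AMP/state-evolution argument (fixed-$t$ state evolution, convergence of the iterates to the minimizer, and the delicate interchange of the $t\to\infty$ and $p\to\infty$ limits), which is precisely the route taken in the cited references \cite{bayati2011dynamics} and \cite{weng2018overcoming}. The paper itself does not prove this theorem but imports it from those works, so there is nothing to contrast: your sketch matches the source proofs' approach and correctly locates where the real technical work lies.
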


For each tuning parameter $\lambda>0$, \cite{weng2018overcoming} has proved that the solution pair $(\alpha, \tau)$ to the nonlinear equations \eqref{state_evolution1} and \eqref{state_evolution2} is unique. We will denote this unique solution pair for the optimal tuning value $\lambda=\lambda_q^*$ by $(\alpha_*,\tau_*)$. Note that we omit the dependency of these two quantities on $q$, since when they appear in this paper, $q$ is clear from the context. 
 
\begin{lemma}\label{lem:opttunesimpler}
If $(\alpha_*, \tau_*)$ are the solutions of \eqref{state_evolution1} and \eqref{state_evolution2} for $\lambda=\lambda_q^*$, then $\tau_*$ satisfies the following equation:
\begin{align} \label{eq:se11}
    \tau_*^2 =& \sigma^2 + \frac{1}{\delta} \min_{\alpha >0} \mathbb{E} (\eta_q
    (B+ \tau_* Z; \alpha \tau_*^{2-q}) - B)^2, \nonumber \\
    \alpha_* =& \arg\min_{\alpha >0} \mathbb{E} (\eta_q (B+ \tau_* Z; \alpha \tau_*^{2-q}) - B)^2
\end{align}
and 
\begin{equation*}
{\rm AMSE} (q, \lambda^*_q) = \mathbb{E} (\eta_q (B+ \tau_* Z; \alpha_* \tau_*^{2-q}) - B)^2. 
\end{equation*}
 \end{lemma}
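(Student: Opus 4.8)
Define the risk functional $R(\tau,\alpha)\triangleq\mathbb{E}\big(\eta_q(B+\tau Z;\alpha\tau^{2-q})-B\big)^2$. The plan is to collapse the two-variable fixed-point system \eqref{state_evolution1}--\eqref{state_evolution2} into a single equation for $\tau$, exploiting the fact that for minimal-AMSE tuning the second equation \eqref{state_evolution2} only serves to \emph{define} $\lambda$ and is not a genuine constraint on $(\alpha,\tau)$. First I would apply Theorem \ref{theorem:amp:bridge2} with $\psi(x,y)=(x-y)^2$; this $\psi$ is pseudo-Lipschitz in the sense of Definition \ref{definition:pesudolip} (since $|(x_1-y_1)^2-(x_2-y_2)^2|\le C(1+\|x\|_2+\|y\|_2)\|x-y\|_2$), so no truncation is needed and the theorem yields $\amse(q,\lambda)=R(\tau,\alpha)$, where $(\alpha,\tau)$ is the unique solution of \eqref{state_evolution1}--\eqref{state_evolution2} for that $\lambda$. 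Substituting \eqref{state_evolution1} then gives the identity
\[
\amse(q,\lambda)=R(\tau,\alpha)=\delta\big(\tau^2-\sigma^2\big),
\]
so minimizing $\amse(q,\lambda)$ over $\lambda>0$ is \emph{equivalent} to minimizing $\tau(\lambda)^2$ over all state-evolution fixed points.

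Next I would set $M(\tau)\triangleq\min_{\alpha>0}R(\tau,\alpha)$ and $h(\tau)\triangleq\tau^2-\sigma^2-\tfrac1\delta M(\tau)$. Every fixed point lies on the curve $\tau^2=\sigma^2+\tfrac1\delta R(\tau,\alpha)$, whence
\[
\tau^2=\sigma^2+\tfrac1\delta R(\tau,\alpha)\ \ge\ \sigma^2+\tfrac1\delta M(\tau),
\]
i.e. $h(\tau)\ge0$ for every attainable $\tau$. To exhibit an attainable fixed point where equality holds, I would take $\tau_*$ solving $h(\tau_*)=0$ and $\alpha_*=\arg\min_{\alpha>0}R(\tau_*,\alpha)$; then $(\alpha_*,\tau_*)$ satisfies \eqref{state_evolution1} by construction, and defining $\lambda$ through \eqref{state_evolution2} (with positivity of this $\lambda$ and existence/uniqueness of the inner minimizer $\alpha_*$ supplied by the regularity analysis of the proximal operator and of $R$ in \cite{weng2018overcoming}) produces, by the uniqueness of the fixed point, a $\lambda$ whose state-evolution solution is exactly $(\alpha_*,\tau_*)$.

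It then remains to prove minimality, namely $\tau(\lambda)\ge\tau_*$ for all $\lambda$. Here I would use that $h(\tau)<0$ for small $\tau$ (since $\tau^2\to0$, $\sigma^2>0$ and $M(\tau)\ge0$), together with the monotonicity of the reduced map $\tau\mapsto\sigma^2+\tfrac1\delta M(\tau)$ established in \cite{weng2018overcoming}, which forces $\tau_*$ to be the unique zero of $h$ with $h<0$ on $(0,\tau_*)$. Since every attainable $\tau$ obeys $h(\tau)\ge0$, no attainable $\tau$ can be smaller than $\tau_*$, so the minimizer of $\tau(\lambda)^2$ is $\tau_*$ and the corresponding $\alpha$ is $\alpha_*$; by the equivalence of the first paragraph this minimizer is attained at $\lambda=\lambda_q^*$. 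The three displayed identities follow immediately: the reduced equation $h(\tau_*)=0$ is the first display, $\alpha_*=\arg\min_{\alpha>0}R(\tau_*,\alpha)$ is the second, and $\amse(q,\lambda_q^*)=\delta(\tau_*^2-\sigma^2)=R(\tau_*,\alpha_*)$ is the third. The main obstacle is the minimality step: it hinges on uniqueness of the zero of $h$ (equivalently, strict monotonicity of the reduced state-evolution map) and on $\alpha\mapsto R(\tau_*,\alpha)$ having a unique interior minimizer yielding $\lambda>0$; all of these rest on the structural properties of $\eta_q$ and the risk functional proved in \cite{weng2018overcoming}, which I would invoke rather than re-derive.
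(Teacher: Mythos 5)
Your argument is correct and is essentially the proof that the paper itself omits: the authors skip the proof of this lemma entirely, deferring to Lemma 15 in Appendix E of \cite{weng2018overcoming}, and the three ingredients you assemble --- the identity $\amse(q,\lambda)=\delta(\tau^2-\sigma^2)$ reducing the problem to minimizing $\tau(\lambda)$, the lower bound $h(\tau)\ge 0$ on all attainable fixed points, and attainability of the zero of $h$ by inverting \eqref{state_evolution2} together with uniqueness of the state-evolution solution --- are exactly the content of that cited lemma. One minor imprecision: monotonicity of $\tau\mapsto\sigma^2+\tfrac1\delta M(\tau)$ alone does not force uniqueness of the zero of $h$ (one also needs, e.g., concavity of the reduced map as a function of $\tau^2$, which \cite{weng2018overcoming} establishes); but your minimality step only requires working with the \emph{smallest} zero of $h$, and $h<0$ on the interval to its left already follows from continuity and $h(0^+)=-\sigma^2-\tfrac1\delta M(0^+)<0$, so the argument goes through regardless.
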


This is a simple extension of Lemma 15 in Appendix E of \cite{weng2018overcoming}. Hence we skip the proof. Define
\begin{eqnarray}
R_q(\alpha,\tau) &\triangleq& \mathbb{E}(\eta_q(B/\tau+Z;\alpha)-B/\tau)^2,    \label{eq:def:Rq} \\
\alpha_q(\tau)  &\triangleq& \arg\min_{\alpha\geq 0} R_q(\alpha,\tau).  \label{eq:def:alphaq}
\end{eqnarray}
For the definition \eqref{eq:def:alphaq}, if the minimizer is not unique, we
choose the smallest one.  

Recall the proximal operator:
\begin{equation*}
\eta_q (u; \chi) = \arg\min_z \frac{1}{2} (u - z)^2 + \chi |z|^q.
\end{equation*}
Note that $\eta_q(u;\chi)$ does not have an explicit form except for $q=0,1,2$.
In the following lemma, we summarize some properties of $\eta_q(u; \chi)$. They
will be used to prove our theorems.
\begin{lemma} \label{prox:property}
For any $q \in (1,\infty)$, we have 
\begin{itemize}
\item[(i)] $\eta_q(u;\chi)=-\eta_q(-u;\chi)$.
\item[(ii)] $u= \eta_q(u;\chi)+\chi q|\eta_q(u;\chi)|^{q-1} {\rm sgn} (u)$, where ${\rm sgn}$ denotes the sign of a variable. 
\item[(iii)] $\eta_q(\alpha u;  \alpha^{2-q} \chi)=\alpha \eta_q(u;\chi), \mbox{~~for~}\alpha>0.$
\item[(iv)] $\partial_1 \eta_q(u;\chi)=\frac{1}{1+\chi q(q-1)|\eta_q(u;\chi)|^{q-2}}$. 
\item[(v)] $\partial_2 \eta_q(u;\chi)=\frac{-q|\eta_q(u;\chi)|^{q-1}{\rm sgn}(u)}{1+\chi q(q-1)|\eta_q(u;\chi)|^{q-2}}$.
\item[(vi)] $0\leq \partial_1 \eta_q(u;\chi) \leq 1$.
\item[(vii)] If $1<q<2$, then $\lim_{u \rightarrow 0} \frac{|u|}{|\eta_q(u; \chi)|^{q-1}} = \chi q$.
\item [(viii)] If $1<q<2$, then $\lim_{u \rightarrow \infty} \frac{|u|}{|\eta_q(u; \chi)|} = 1$,
\end{itemize}
\end{lemma}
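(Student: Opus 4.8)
The plan is to build everything on two pillars: the strict convexity of the objective $z \mapsto \frac{1}{2}(u-z)^2 + \chi|z|^q$, which for $q \in (1,\infty)$ guarantees a unique minimizer and hence a well-defined $\eta_q(u;\chi)$, and the resulting stationarity condition. First I would establish property (ii): since for $q>1$ the map $z \mapsto \frac{1}{q}|z|^q$ is continuously differentiable with derivative $|z|^{q-1}\sgn(z)$, setting the derivative of the objective to zero yields $u = \eta_q(u;\chi) + \chi q |\eta_q(u;\chi)|^{q-1}\sgn(\eta_q(u;\chi))$. To replace $\sgn(\eta_q)$ by $\sgn(u)$ I would observe that the map $T(z) = z + \chi q|z|^{q-1}\sgn(z)$ is a strictly increasing bijection of $\mathbb{R}$ with $T(0)=0$, so its inverse $\eta_q(\cdot;\chi)=T^{-1}$ is strictly increasing and sign-preserving; this gives $\sgn(\eta_q(u;\chi)) = \sgn(u)$, a fact used repeatedly below.

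The algebraic identities then follow cheaply. Property (i) comes from the symmetry of the objective under $(u,z)\mapsto(-u,-z)$. Property (iii) follows by substituting $z=\alpha w$ into the defining optimization for $\eta_q(\alpha u; \alpha^{2-q}\chi)$, which rescales the objective by the constant factor $\alpha^2$ and hence leaves the minimizing $w$ equal to $\eta_q(u;\chi)$; alternatively one can verify it directly against the stationarity condition (ii). The two differential formulas are obtained by implicit differentiation of (ii). Writing the identity as $F(u,\chi,z)=u - z - \chi q|z|^{q-1}\sgn(z)=0$ with $z=\eta_q(u;\chi)$ and using $\frac{d}{dz}[|z|^{q-1}\sgn(z)]=(q-1)|z|^{q-2}$, differentiation in $u$ gives (iv) and differentiation in $\chi$ gives (v). Property (vi) is then immediate from (iv): for $q>1$ and $\chi>0$ the denominator $1+\chi q(q-1)|\eta_q(u;\chi)|^{q-2}$ is at least $1$ and never negative, so $0\le \partial_1\eta_q \le 1$.

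For the two asymptotic statements I would work directly from (ii) in the regime $1<q<2$, using sign-preservation to write (for $u>0$, say) $u = \eta_q + \chi q \,\eta_q^{q-1}$ with $\eta_q=\eta_q(u;\chi)>0$. For (vii), continuity gives $\eta_q(u;\chi)\to 0$ as $u\to 0$, so dividing by $\eta_q^{q-1}$ yields $\frac{|u|}{|\eta_q|^{q-1}} = |\eta_q|^{2-q} + \chi q$; since $2-q>0$ the first term vanishes, leaving $\chi q$. For (viii), as $u\to\infty$ we have $\eta_q(u;\chi)\to\infty$, and dividing by $\eta_q$ gives $\frac{|u|}{|\eta_q|} = 1 + \chi q|\eta_q|^{q-2}$, where now $q-2<0$ forces the second term to $0$, leaving the limit $1$.

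The one place that genuinely needs care --- and which I expect to be the main obstacle --- is differentiability at $u=0$ (equivalently $z=0$) when $1<q<2$, since there $|z|^{q-2}$ blows up. The clean way around this is to invoke the implicit function theorem on $F(u,\chi,z)=0$ away from $z=0$, where $\partial_z F = -(1+\chi q(q-1)|z|^{q-2})$ is bounded away from zero, establishing (iv)--(v) on $u\neq 0$; at $u=0$ the formula in (iv) is read as the limit, giving $\partial_1\eta_q(0;\chi)=0$, consistent with (vii) and leaving (vi) intact. Since every subsequent application in the paper integrates these quantities against the absolutely continuous law of $B+\tau Z$, the single point $u=0$ is negligible and no further regularity is needed.
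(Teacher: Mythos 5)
Your proposal is correct, and it is essentially the same argument as the one the paper relies on: the paper does not prove this lemma itself but defers to Lemmas 7 and 10 of \cite{weng2018overcoming}, where the properties are likewise derived from the stationarity condition $u=\eta_q(u;\chi)+\chi q|\eta_q(u;\chi)|^{q-1}\sgn(\eta_q(u;\chi))$, the sign-preserving monotone bijection $T(z)=z+\chi q|z|^{q-1}\sgn(z)$, and implicit differentiation. Your handling of the $u=0$ singularity for $1<q<2$ (reading (iv) as a limit, which is consistent since $\eta_q(u;\chi)/u\to 0$ there) is the right resolution of the only delicate point.
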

\begin{proof}
Please refer to Lemmas 7 and 10 in \cite{weng2018overcoming} for the proof of $q\in (1,2]$. The proof for $q>2$ is the same. Hence we do not repeat it. 
\end{proof}

\subsection{Proof sketch for Theorem \ref{THM:STRONGRAREELL_Q} - \ref{THM:SAMPLE:MAIN}}
\label{ssec:proof-sketch}
In Appendix \ref{sec:proofThemstrongrareq} - \ref{apxc} we prove Theorem
\ref{THM:STRONGRAREELL_Q} - \ref{THM:SAMPLE:MAIN}. Since the proofs share some
similarities, we sketch the proof idea in this section.

The results in Theorem \ref{THM:STRONGRAREELL_Q} - \ref{THM:SAMPLE:MAIN}
characterize the asymptotic expansion of the optimal $\mathrm{AMSE}(q,
\lambda_q^*)$ under different
scenarios we considered. In Lemma \ref{lem:opttunesimpler}, we connect
$\mathrm{AMSE}(q, \lambda_q^*)$ with $(\alpha_*, \tau_*)$ through the state
evolution equations. Hence in order to prove our theorems, we will characterize
the behavior of the solution $(\alpha_*, \tau_*)$ of the fixed point equations
\eqref{state_evolution1} and \eqref{state_evolution2} with $\lambda =
\lambda_q^*$ under different scenarios. This can be achieved by making use of \eqref{eq:se11} and its first order condition (notice
$\alpha_*$ minimize the AMSE).

Depending on different scenarios, \eqref{eq:se11} may be presented in
slightly different ways. Specifically for nearly black object, we replace $B$
by $b_\epsilon \tilde{B}$ with $p_{\tilde{B}}=(1-\epsilon)\delta_0 + \epsilon
p_{\tilde{G}}$; For large sample scenario, we replace $\sigma^2$ by $\frac{\sigma^2}{\delta}$.

For $R_q(\alpha, \tau)$, the following decomposition holds:
\begin{equation*}
    R_q (\alpha, \tau)
    =
    (1-\epsilon) \mathbb{E} \eta_q^2 (Z; \alpha)
    + \epsilon \mathbb{E} [\eta_q (G / \tau + Z; \alpha) - G / \tau ]^2. 
\end{equation*}
Since both terms are positve, either
can be used as a lower bound for $R_q(\alpha, \tau)$.

For LASSO, the $\ell_1$ norm enables a simple form for $\eta_1$ and hence for
$\eqref{eq:se11}$ and its first order derivative. We present some useful
formula below.
\begin{align}
    R_1(\alpha, \tau)
    =&
    \underbrace{(1-\epsilon) \tau^2 \mathbb{E} \eta_1^2 (Z;
    \alpha)}_{\triangleq F_1}
    +
    \underbrace{\epsilon \mathbb{E} [ \eta_1(b_\epsilon\tilde{G} + \tau Z; \alpha \tau) -
    b_\epsilon \tilde{G} - \tau Z ]^2}_{\triangleq F_2} -
    \underbrace{\epsilon \tau^2}_{\triangleq F_3} \nonumber \\
    & + \underbrace{2\epsilon \tau^2 \mathbb{E}
    \partial_1\eta_1(b_\epsilon \tilde{G} + \tau Z; \alpha \tau
    )}_{\triangleq F_4} \label{eq:L1-risk-expand1}\\
    =&
    2(1-\epsilon) [(1+\alpha^2)\Phi(-\alpha) - \alpha\phi(\alpha)] + \epsilon\mathbb{E}_G\bigg[ (1+\alpha^2-\frac{G^2}{\tau^2})\Phi(\frac{G}{\tau}-\alpha) + \nonumber \\
& (1+\alpha^2-\frac{G^2}{\tau^2})\Phi(-\frac{G}{\tau}-\alpha)- (\alpha+\frac{G}{\tau})\phi(\alpha-\frac{G}{\tau}) - (\alpha-\frac{G}{\tau})\phi(\alpha+\frac{G}{\tau}) + \frac{G^2}{\tau^2} \bigg] \label{eq:L1-risk-expand2}
\end{align}
Each of the two expansions \eqref{eq:L1-risk-expand1} and
\eqref{eq:L1-risk-expand2} will be handy in certain case. Note that
\begin{equation} \label{eq:F1CalcSoftThresholdNoise}
    F_1 = 2(1-\epsilon) \tau^2 \int_{\alpha}^\infty (z- \alpha)^2 \phi(z) dz
    = 2(1-\epsilon) [(1+\alpha^2)\Phi(-\alpha) - \alpha\phi(\alpha)].
\end{equation}

We also provide the following expansion for the first order derivative
$\partial_{\alpha}R_1(\alpha, \tau)$.
\begin{align}
    \frac{\partial R_1(\alpha, \tau)}{\partial \alpha}
    =&
    2(1-\epsilon)[-\phi(\alpha) + \alpha\Phi(-\alpha)]+\epsilon \mathbb{E}
    \Big[\alpha\Phi(\frac{|G|}{\tau}-\alpha)-\phi(\alpha-\frac{|G|}{\tau})
    \Big] \nonumber \\
    &+
    \epsilon\mathbb{E}
    \Big[\alpha\Phi(-\frac{|G|}{\tau}-\alpha) - \phi(\alpha+\frac{|G|}{\tau})\Big]
    \label{eq:L1-risk-deri-expand}
\end{align}
\section{Proof of Lemma \ref{LEMMA:FDP:LQ}} \label{apxnewb}

Define ${\rm FP}=\sum_{i=1}^p \mathbb{I}(\bar{\beta}_i(q,\lambda,s)\neq 0, \beta_i = 0), ~~{\rm TP}=\sum_{i=1}^p \mathbb{I}(\bar{\beta}_i(q,\lambda, s)\neq 0, \beta_i \neq 0)$.
First note that according to Theorem \ref{theorem:amp:bridge2}, almost surely the empirical distribution of $(\hat{\beta}(q,\lambda),\beta)$ converges weakly to the distribution of $(\eta_q(B+\tau Z;\alpha\tau^{2-q}),B)$. We now choose a sequence $t_m\rightarrow 0$ as $m\rightarrow 0$ such that $G$ does not have any point mass on that sequence. Then by portmanteau lemma we have almost surely
\begin{align*}
&\lim_{p\rightarrow \infty}\frac{1}{p}\sum_{i=1}^{p}\mathbb{I}(\bar{\beta}_i(q,\lambda,s)\neq 0, |\beta_i| \leq t_m)  =  \lim_{p\rightarrow \infty}\frac{1}{p}\sum_{i=1}^{p}\mathbb{I}(|\hat{\beta}_i(q,\lambda)|> s, |\beta_i|\leq t_m) \nonumber \\
    =& \mathbb{P}(|\eta_q(B+\tau Z;\alpha \tau^{2-q})|>s, |B|\leq t_m) \nonumber \\
    =& (1-\epsilon)\mathbb{P}(|\eta_q(\tau Z;\alpha \tau^{2-q})|>s) + \epsilon \mathbb{P}(|\eta_q(G+\tau Z;\alpha \tau^{2-q})|>s, |G|\leq t_m), \nonumber 
\end{align*}
which leads to 
\begin{eqnarray}
&&\lim_{m \rightarrow \infty} \lim_{p\rightarrow \infty}\frac{1}{p}\sum_{i=1}^{p}\mathbb{I}(\bar{\beta}_i(q,\lambda,s)\neq 0, |\beta_i| \leq t_m)  \label{quickproof:one}  =(1-\epsilon)\mathbb{P}(|\eta_q(\tau Z;\alpha \tau^{2-q})|>s). \nonumber
\end{eqnarray}

Moreover, it is clear that 
\begin{eqnarray*}
&&\hspace{-0.8cm}\frac{1}{p} \big | \sum_{i=1}^{p}\mathbb{I}(\bar{\beta}_i(q,\lambda,s)\neq 0, |\beta_i| \leq t_m)-{\rm FP} \big |\leq 
\frac{1}{p} \sum_{i=1}^p \mathbb{I}(|\hat{\beta}_i(q,\lambda)|> s)\cdot \mathbb{I}(0<|\beta_i|\leq t_m) \\
&&\leq \sqrt{\frac{1}{p}\sum_{i=1}^p \mathbb{I}(|\hat{\beta}_i(q,\lambda)|> s)}\cdot \sqrt{\frac{1}{p}\sum_{i=1}^p\mathbb{I}(0<|\beta_i|\leq t_m)} \\
&& \overset{a.s.}{\rightarrow} [\mathbb{P}(|\eta_q(B+\tau Z;\alpha \tau^{2-q})|>s)]^{1/2}\cdot \epsilon^{1/2}[\mathbb{P}(0<|G|\leq t_m)]^{1/2} \mbox{~as~}p \rightarrow \infty.
\end{eqnarray*}

Hence we obtain almost surely
\begin{eqnarray*}
\lim_{m\rightarrow \infty}\lim_{p\rightarrow \infty} \frac{1}{p} \bigg | \sum_{i=1}^{p}\mathbb{I}(\bar{\beta}_i(q,\lambda,s)\neq 0, |\beta_i| \leq t_m)-{\rm FP} \bigg |=0.
\end{eqnarray*}

This combined with \eqref{quickproof:one} implies that as $p\rightarrow \infty$
\begin{equation*}
\frac{{\rm FP}}{p}\overset{a.s.}{\rightarrow} (1-\epsilon)\mathbb{P}(|\eta_q(\tau Z;\alpha \tau^{2-q})|>s).
\end{equation*}

We can now conclude that
\begin{equation*}
{\rm AFDP}(q,\lambda, s)
=\frac{\lim_{p\rightarrow \infty}{\rm FP}/p}{\lim_{p\rightarrow
\infty}\sum_{i=1}^p\mathbb{I}(\hat{\beta}_i(q,\lambda) >s)/p} \nonumber \\
=\frac{(1-\epsilon)\mathbb{P}(|\eta_q(\tau Z;\alpha
\tau^{2-q})|>s)}{\mathbb{P}(|\eta_q(B + \tau Z;\alpha \tau^{2-q})|>s)},\quad a.s.
\end{equation*}

The formula of ${\rm AFDP}(q,\lambda, s)$ in Lemma \ref{LEMMA:FDP:LQ} can then be obtained by Lemma \ref{prox:property} part (iii). Regarding ${\rm ATPP}(q,\lambda, s)$ we have
\begin{align*}
    {\rm ATPP}(q,\lambda, s)
    =&\frac{\lim_{p\rightarrow \infty}\sum_{i=1}^p\mathbb{I}(\hat{\beta}_i(q,\lambda) >s)/p-\lim_{p\rightarrow \infty}{\rm FP}/p}{\lim_{p\rightarrow \infty}\sum_{i=1}^p\mathbb{I}(\beta_i \neq 0)/p} \\
    =& \mathbb{P}(|\eta_q(G+\tau Z;\alpha \tau^{2-q})|>s),
    \quad \text{a.s.}
\end{align*}

\section{Proof of Theorems \ref{THM:MSEMINFDRMIN}, \ref{THM:LASSOFDPCOMPARE} and Corollary \ref{THM:FDPCOMPARE:LQ}}  \label{apxb}

We present the proofs of Theorems \ref{THM:MSEMINFDRMIN}, \ref{THM:LASSOFDPCOMPARE} and Corollary \ref{THM:FDPCOMPARE:LQ} in Sections \ref{proof:THM:MSEMINFDRMIN}, \ref{proof:THM:LASSOFDPCOMPARE} and \ref{proof:THM:FDPCOMPARE:LQ},  respectively.

\subsection{Proof of Theorem \ref{THM:MSEMINFDRMIN}} \label{proof:THM:MSEMINFDRMIN}

\begin{proof}
According to Lemma \ref{LEMMA:FDP:LQ}, we know
\begin{equation*}
{\rm ATPP}(q, \lambda, s) = \mathbb{P}(|\eta_q(G+\tau Z; \alpha\tau^{2-q})|>s)
\end{equation*}
where $(\alpha,\tau)$ is the unique solution to \eqref{state_evolution1} and \eqref{state_evolution2}. From Lemma \ref{prox:property} part (iv), the proximal function $\eta_q(u;\chi)=0$ if and only if $u=0$ for $q>1$. Since $G+\tau Z\neq 0~a.s.$, we have ${\rm ATPP}(q, \lambda, 0)=1$. Moreover, it is clear that ${\rm ATPP}(q, \lambda, +\infty)=0$, and ${\rm ATPP}(q, \lambda, s)$ is a continuous and strictly decreasing function of $s$ over $[0,\infty]$. Hence there exists a unique $s$ for which ${\rm ATPP}(q, \lambda, s) = \zeta\in[0, 1]$.

Now consider all possible pairs $(\lambda, s)$ such that ${\rm ATPP}(q,\lambda, s)=\zeta$. Let $(\alpha_*, \tau_*, s_*)$ be the triplet corresponding to the optimal tuning $\lambda_q^*$ (it  minimizes ${\rm AMSE}(q,\lambda)$), and $(\alpha, \tau, s)$ be the one that corresponds to any other $\lambda$. According to Theorem \ref{theorem:amp:bridge2}, we know ${\rm AMSE}(q,\lambda)=\delta(\tau^2-\sigma^2)$. So $\tau_*<\tau$. By the strict monotonicity and symmetry of $\eta_q$ with respect to its first argument (see Lemma \ref{prox:property} parts (i)(iv)), ${\rm ATPP}(q, \lambda_q^*, s_*) = {\rm ATPP}(q, \lambda, s)$ implies that 
\begin{equation}
\mathbb{P}(|G/\tau_* + Z|>\eta_q^{-1}(s_*/\tau_*; \alpha_*))
=
\mathbb{P}(|G/\tau + Z|>\eta_q^{-1}(s/\tau; \alpha)),  \label{contrad:evd}
\end{equation}
where $\eta_q^{-1}$ is the inverse function of $\eta_q$. Now we claim ${\rm AFDP}(q, \lambda_q^*, s_*) < {\rm AFDP}(q, \lambda, s)$. Otherwise, from the formula of ${\rm AFDP}$ in \eqref{afdp:atpp}, we will have
\[
 \mathbb{P}(\eta_q(|Z|;\alpha_*) > s_*/\tau_*) \geq \mathbb{P}(\eta_q(|Z|;\alpha) > s/\tau),
 \]
 which is equivalent to $ \mathbb{P}(|Z| > \eta_q^{-1}(s_*/\tau_*;\alpha_*)) \geq \mathbb{P}(|Z| > \eta_q^{-1}(s/\tau;\alpha)).$
 This implies $\eta_q^{-1}(s_*/\tau_*;\alpha_*) \leq \eta_q^{-1}(s/\tau;\alpha)$. However, combining this result  with $\tau_* < \tau$ and the fact that $\mathbb{P}(|\mu+Z|>t)$ is an strictly increasing function of $\mu$ over $[0,\infty)$, we must have 
 \begin{equation*}
     \small \mathbb{P}\Big(\Big|\frac{G}{\tau_*} + Z\Big| >
     \eta_q^{-1}\Big(\frac{s_*}{\tau_*}; \alpha_*\Big)\Big)
     \geq  \mathbb{P}\Big(\Big|\frac{G}{\tau_*} +
     Z\Big|>\eta_q^{-1}\Big(\frac{s}{\tau}; \alpha\Big)\Big)
     > \mathbb{P}\Big(\Big|G/\tau + Z\Big|>\eta_q^{-1}\Big(\frac{s}{\tau};
     \alpha\Big)\Big).
\end{equation*}

This is in contradiction with \eqref{contrad:evd}. The conclusion follows.
\end{proof}

\subsection{Proof of Theorem \ref{THM:LASSOFDPCOMPARE}} \label{proof:THM:LASSOFDPCOMPARE}
According to Lemma \ref{CITE:WEIJIE}, 
\[
{\rm ATPP}(1,\lambda)=\mathbb{P}(|G + \tau Z| > \alpha\tau)=\mathbb{E}[\Phi(G/\tau-\alpha) + \Phi(-G/\tau - \alpha)].
\]
It has been shown in \cite{bayati2012lasso} that, $\alpha$ is an increasing and continuous function of $\lambda$, and $\alpha\rightarrow\infty$ as $\lambda\rightarrow\infty$. Hence, $\atpp(1, \lambda)$ is continuous in $\lambda$ and $\lim_{\lambda\rightarrow\infty}\atpp(1, \lambda) = \lim_{\alpha\rightarrow\infty}\mathbb{P}(|G + \tau Z| > \alpha\tau) = 0$. Now let $(\alpha_*, \tau_*)$ be the solution to \eqref{state_evolution1} and \eqref{state_evolution2} when $\lambda=\lambda_1^*$. As we decrease $\lambda$ from $\infty$ to $\lambda_1^*$, $\atpp(1, \lambda)$ continuously changes from $0$ to $\atpp(1, \lambda_1^*)$. Therefore, for any $\atpp$ level $\zeta \in [0,{\rm ATPP}(1,\lambda_1^*)]$, there always exists at least a value of $\lambda \in [\lambda_1^*,\infty]$ such that ${\rm ATPP}(1,\lambda)=\zeta$. Regarding the thresholded $\lasso$ $\bar{\beta}(1,\lambda_1^*,s)$, Lemma \ref{LEMMA:FDP:LQ} shows that
\[
{\rm ATPP}(1,\lambda_1^*,s)=\mathbb{P}(|\eta_1(G+\tau_*Z;\alpha_*\tau_*)|>s).
\] 
Note that when $s=0$ the thresholded $\lasso$ is equal to $\lasso$ and thus ${\rm ATPP}(1,\lambda^*_1,0)={\rm ATPP}(1,\lambda_1^*)$. It is also clear that ${\rm ATPP}(1,\lambda_1^*,s)$ is a continuous and strictly decreasing function of $s$ on $[0,\infty]$. As a result, a unique threshold $s_{\zeta}$ exists s.t. ${\rm ATPP}(1,\lambda_1^*,s_{\zeta})$ reaches a given level $\zeta \in [0,{\rm ATPP}(1,\lambda_1^*)]$. We now compare the AFDP of different estimators that have the same ATPP.  Suppose $\hat{\beta}(1, \lambda)$ and $\bar{\beta}(1, \lambda_1^*, s)$ reach the same level of $\atpp$. We have 
\[
\mathbb{P}(|\eta_1(G + \tau Z; \alpha\tau)| > 0)= \mathbb{P}(|\eta_1(G + \tau_*Z; \alpha_*\tau_*)| > s),
\]
which is equivalent to
\begin{equation}
\mathbb{P}(|G/\tau + Z| > \alpha)=
\mathbb{P}(|G/\tau_* + Z| > \alpha_* + s/\tau_*). \label{proof:l1atppafdpcpr}
\end{equation}
Similar to the argument in the proof of Theorem \ref{THM:MSEMINFDRMIN}, we have $\alpha < \alpha_* + s/\tau_*$, since otherwise the left hand side in \eqref{proof:l1atppafdpcpr} will be smaller than the right hand side. Hence, we obtain
\[
\mathbb{P}(|Z|>\alpha)> \mathbb{P}(|Z|>\alpha_*+s/\tau_*)=\mathbb{P}(|\eta_1(Z;\alpha_*)|>s/\tau_*).
\]
This implies ${\rm AFDP}(1,\lambda)>{\rm AFDP}(1,\lambda_1^*,s)$ based on Lemmas \ref{CITE:WEIJIE} and \ref{LEMMA:FDP:LQ}. By the same argument, we can show that $\bar{\beta}(1, \lambda_1^*, s)$ also has smaller AFDP than $\bar{\beta}(1, \lambda, s)$ if $\lambda\neq\lambda_1^*$.

\subsection{Proof of Corollary \ref{THM:FDPCOMPARE:LQ}}\label{proof:THM:FDPCOMPARE:LQ}

This theorem compares the two-stage estimators $\bar{\beta}(q,\lambda_q^*,s)$ for $q\in [1,\infty)$. Consider $q_1,q_2\geq 1$, and ${\rm AMSE}(q_1,\lambda^*_{q_1})<{\rm AMSE}(q_2,\lambda^*_{q_2})$. Let $(\alpha_{q_i*},\tau_{q_i*})$ be the solution to \eqref{state_evolution1} and \eqref{state_evolution2} when $\lambda=\lambda_{q_i}^*$, for $i=1,2$. Then, according to Theorem \ref{theorem:amp:bridge2}, $\tau_{q_1*}<\tau_{q_2*}$. Suppose ${\rm ATPP}(q_1,\lambda_{q_1}, s_1)={\rm ATPP}(q_2,\lambda_{q_2}, s_2)$, i.e.,
\[
\mathbb{P}(\eta_{q_1}(G+\tau_{q_1*}Z;\alpha_{q_1*}\tau_{q_1*}^{2-q_1})>s_1)=\mathbb{P}(\eta_{q_2}(G+\tau_{q_2*}Z;\alpha_{q_2*}\tau_{q_2*}^{2-q_2})>s_2).
\]
When the ATPP level is 0 or 1, we see $s_1$ and $s_2$ are either both $\infty$ or 0. The corresponding AFDP will be the same. We now consider the level of ATPP belong to $(0,1)$. Using arguments similar to the ones presented in the proof of Theorem \ref{THM:MSEMINFDRMIN}, we can conclude $\eta_{q_1}^{-1}(s_1/\tau_{q_1*};\alpha_{q_1*}) > \eta_{q_2}^{-1}(s_2/\tau_{q_2*};\alpha_{q_2*})$\footnote{Note that $\eta_1^{-1}(u;\chi)$ is not well defined for $u=0$ and we define it as $\eta_1^{-1}(0;\chi)=\chi$.}. This gives us
\begin{eqnarray*}
&&\mathbb{P}(|\eta_{q_1}(Z;\alpha_{q_1*})|>s_1/\tau_{q_1*})=
\mathbb{P}(|Z|>\eta_{q_1}^{-1}(s_1/\tau_{q_1*};\alpha_{q_1*})) \\
&&< \mathbb{P}(|Z|>\eta_{q_2}^{-1}(s_2/\tau_{q_2*};\alpha_{q_2*}))=\mathbb{P}(|\eta_{q_2}(Z;\alpha_{q_2*})|>s_2/\tau_{q_2*}),
\end{eqnarray*}
implying ${\rm AFDP}(q_1,\lambda_{q_1}, s_1)<{\rm AFDP}(q_2,\lambda_{q_2}, s_2)$.

\section{Proof of Theorem \ref{THM:STRONGRAREELL_Q}} \label{sec:proofThemstrongrareq}

\subsection{Roadmap of the proof}\label{ssec:roadmapstrraresig}
As we have mentioned in Section \ref{ssec:proof-sketch}, we will characterize
the behavior of $(\alpha_*, \tau_*)$ defined through equation 
\eqref{eq:se11}. Since we are dealing with the nearly black object model, we replace
$B$ by $b_\epsilon \tilde{B}$ with $p_{\tilde{B}}=(1-\epsilon)\delta_0 +
\epsilon p_{\tilde{B}}$. We first handle $q<2$ in Section \ref{ssec:prooftaustarzero} - \ref{ssec:phasetransell_qorder}. Then in Section \ref{ssec:nbo-qlarger2} we deal with $q \geq 2$. We will prove in Section \ref{ssec:prooftaustarzero} that as $\epsilon \rightarrow 0$, $\tau_* \rightarrow \sigma$. Furthermore, it is straightforward to see that $\alpha_* \rightarrow \infty$ as $\epsilon \rightarrow 0$. Otherwise, if $\alpha_* \rightarrow C$, then 
\begin{align*}
\varliminf_{\epsilon \rightarrow 0}\mathbb{E} (\eta_q (\aeps \tilde{B}+ \tau_* Z; \alpha_* \tau_*^{2-q}) -\aeps \tilde{B})^2
\geq \varliminf_{\epsilon \rightarrow 0} (1-\epsilon) \mathbb{E} \eta_q^2 (\tau_* Z; \alpha_* \tau_*^{2-q})
=  \mathbb{E} \eta_q^2 (\sigma Z; C \sigma^{2-q}) > 0.
\end{align*}

However, in Section \ref{ssec:prooftaustarzero} we will prove that
$\lim_{\epsilon \rightarrow 0}\mathbb{E} (\eta_q (\aeps \tilde{B}+ \tau_* Z;
\alpha_* \tau_*^{2-q}) -\aeps \tilde{B})^2 \rightarrow 0$. In order to show the
optimal $\amse$ vanishes as $\epsilon \rightarrow 0$, we need to characterize the rate at which
$\alpha_* \rightarrow \infty$. This requires an accurate analysis of
$\arg\min_{\alpha} \tilde{R}_q (\alpha, \epsilon, \tau_*)$, where
\begin{equation}\label{eq:definitiontilde{R}}
\tilde{R}_q (\alpha, \epsilon, \tau) \triangleq  \mathbb{E} (\eta_q (\aeps \tilde{B} + \tau Z; \alpha \tau^{2-q}) - \aeps \tilde{B}).
\end{equation}

We note the slight differences between $\tilde{R}_q$ and $R_q$ in
\eqref{eq:def:Rq} and $\amse(q, \lambda_q^*) = \tilde{R}(\alpha_*, \epsilon, \tau_*)$. The behavior of $\alpha_*$ depends on the relation between $\aeps$ and $\epsilon$ in the following way:
\begin{itemize}
\item
    Case I - If $\aeps = o (\epsilon^{\frac{1-q}{2}})$, then $\lim_{\epsilon
    \rightarrow 0} \epsilon^{-1} \aeps^{-2} \tilde{R}_q (\alpha_*, \epsilon,
    \tau_*) = \mathbb{E} |\tilde{G}|^2.$ This claim is proved in Section
    \ref{sec:notsostrongsingal}. Note that $\lim_{\alpha \rightarrow \infty}
    \tilde{R}_q (\alpha, \epsilon, \tau)= \epsilon  \aeps^{2}  \mathbb{E}
    |\tilde{G}|^2$ too.

\item Case II - If $\aeps = \omega (\epsilon^{\frac{1-q}{2}})$, then $\epsilon^{\frac{q-1}{2q}} \aeps^{\frac{(q-1)^2}{q}} \alpha_* = \Theta(1)$. This claim is proved in Section \ref{sec:sostrongsingal}. Furthermore, we will show that
\begin{equation*}
    \epsilon^{-\frac{1}{q}} \aeps^{-\frac{2(q-1)}{q}}
    \tilde{R}_q (\alpha_*,\epsilon, \tau_*)
    \rightarrow
    q (q-1)^{\frac{1}{q} - 1} \sigma^\frac{2}{q}
    \big[\mathbb{E} |Z|^{\frac{2}{q-1}} \big]^{\frac{q-1}{q}}
    \big[ \mathbb{E} |\tilde{G}|^{2q - 2} \big]^{\frac{1}{q}}
\end{equation*}

\item Case III - If $\aeps = \Theta (\epsilon^{\frac{1-q}{2}})$, then still the optimal choice of $\alpha$ satisfies  $\epsilon^{\frac{q-1}{2q}} \aeps^{\frac{(q-1)^2}{q}} \alpha_* = \Theta(1)$. This will be proved in Section \ref{ssec:phasetransell_qorder}. 
After obtaining this result, we will show
\[
\lim_{\epsilon \rightarrow 0} \epsilon^{-\frac{1}{q}} \aeps^{-\frac{2(q-1)}{q}} \tilde{R}_q (\alpha_*,\epsilon, \tau_*) = \min_C h(C),
\]
where $h(C)\triangleq (Cq)^{-\frac{2}{q-1}}  \sigma^2  \mathbb{E}
|Z|^{\frac{2}{q-1}} + \mathbb{E} \big(\eta_q (c_r G; C
\sigma^{2-q}) -  c_r G \big)^2$, and $c_r
\triangleq \lim_{\epsilon \rightarrow 0} \aeps \epsilon^{\frac{q-1}{2}}$.

\end{itemize}
 
\subsection{Proof of $\tau_* \rightarrow \sigma$ as $\epsilon \rightarrow 0$}\label{ssec:prooftaustarzero}
We first prove a simple lemma which helps with bounding the optimal $\tau_*^2$.

\begin{lemma}\label{lem:upperbound}
For any value of $\epsilon>0$ we have  
\[
    \sigma^2 \leq \tau_*^2 \leq \sigma^2 + \frac{\epsilon b_\epsilon^2}{\delta} \mathbb{E} \tilde{G}^2.
\]
\end{lemma}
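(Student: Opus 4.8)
The plan is to read both inequalities directly off the state evolution characterization of $\tau_*$ in \eqref{eq:se11}. Substituting $B = \aeps \tilde{B}$ with $p_{\tilde{B}} = (1-\epsilon)\delta_0 + \epsilon p_{\tilde{G}}$ as appropriate for the nearly-black model, that equation reads
\[
\tau_*^2 = \sigma^2 + \frac{1}{\delta}\min_{\alpha>0} \mathbb{E}\big(\eta_q(\aeps \tilde{B} + \tau_* Z;\alpha\tau_*^{2-q}) - \aeps \tilde{B}\big)^2 .
\]
The lower bound $\tau_*^2 \geq \sigma^2$ is then immediate, since the second summand is $\delta^{-1}$ times a minimum of expected squares and is therefore nonnegative.

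For the upper bound, the idea is that a minimum over $\alpha>0$ is bounded above by the limiting value of the objective as $\alpha\to\infty$. Concretely, I would show
\[
\lim_{\alpha\to\infty}\mathbb{E}\big(\eta_q(\aeps \tilde{B} + \tau_* Z;\alpha\tau_*^{2-q}) - \aeps \tilde{B}\big)^2 = \mathbb{E}(\aeps \tilde{B})^2 = \epsilon\, \aeps^2\, \mathbb{E}\tilde{G}^2 ,
\]
the last equality using $\mathbb{E}\tilde{B}^2 = \epsilon\,\mathbb{E}\tilde{G}^2$. Combining this limit with the displayed fixed-point equation yields $\tau_*^2 \leq \sigma^2 + \frac{\epsilon \aeps^2}{\delta}\mathbb{E}\tilde{G}^2$, which is the desired upper bound.

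Two ingredients establish the limit. First, a pointwise statement: for each fixed $u$, $\eta_q(u;\chi)\to 0$ as $\chi\to\infty$. This follows from the identity in Lemma \ref{prox:property}(ii), $u = \eta_q(u;\chi) + \chi q |\eta_q(u;\chi)|^{q-1}\sgn(u)$, together with the bound $|\eta_q(u;\chi)|\leq |u|$ (which comes from $\eta_q(0;\chi)=0$ and $0\leq\partial_1\eta_q\leq 1$ in part (vi)): if $|\eta_q(u;\chi)|$ failed to vanish along some sequence $\chi\to\infty$, the middle term would blow up while the left-hand side stays fixed, a contradiction. Since $\tau_*>0$ (it is the positive fixed-point solution, and indeed $\tau_*\geq\sigma$ by the lower bound just proved), we have $\chi = \alpha\tau_*^{2-q}\to\infty$ as $\alpha\to\infty$, so the integrand converges pointwise to $(\aeps\tilde{B})^2$. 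Second, dominated convergence applies: the same bound $|\eta_q(u;\chi)|\leq|u|$ gives
\[
\big|\eta_q(\aeps\tilde{B}+\tau_* Z;\alpha\tau_*^{2-q}) - \aeps\tilde{B}\big| \leq 2\aeps|\tilde{B}| + \tau_*|Z|,
\]
whose square is integrable uniformly in $\alpha$ because $\tilde{B}$ has a finite second moment and $Z$ is standard normal. Interchanging limit and expectation then gives the claimed limit, and the proof is complete.

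The steps are all routine, and I expect the only point requiring a word of care to be the justification that $\tau_*>0$, so that the threshold parameter $\alpha\tau_*^{2-q}$ genuinely diverges; this is guaranteed by the positivity of the fixed-point solution established in \cite{weng2018overcoming} and, when $\sigma>0$, already by the lower bound. The dominated-convergence bookkeeping is entirely standard and poses no real obstacle.
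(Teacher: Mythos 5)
Your proof is correct and follows essentially the same route as the paper: the lower bound from nonnegativity of the minimized risk term in \eqref{eq:se11}, and the upper bound by dominating the minimum over $\alpha$ with its limit as $\alpha\to\infty$, which equals $\epsilon b_\epsilon^2\mathbb{E}\tilde{G}^2$ (cf.\ Lemma \ref{lem:chiinfty}). The extra dominated-convergence bookkeeping you supply is what the paper leaves implicit.
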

\begin{proof}
$\tau_*> \sigma$ is clear from $\tau_*^2 = \sigma^2 + \frac{1}{\delta} \min_{\alpha >0} \tilde{R}_q (\alpha,\epsilon, \tau_*)$. Furthermore,
\begin{equation*}
    \tau_*^2 - \sigma^2 = \frac{1}{\delta} \min_{\alpha >0} \tilde{R}_q (\alpha,\epsilon, \tau_*)
    \leq \frac{1}{\delta} \lim_{\alpha \rightarrow \infty} \tilde{R}_q (\alpha,\epsilon, \tau_*)
    = \frac{\epsilon \aeps^2}{\delta} \mathbb{E} \tilde{G}^2.
\end{equation*}
\end{proof}

If $\sqrt{\epsilon}\aeps \rightarrow 0$ as $\epsilon \rightarrow 0$, by
Lemma \ref{lem:upperbound}, we have $\tau_* \rightarrow \sigma$. So next we
focus on the case when $\sqrt{\epsilon}\aeps \rightarrow c$, where $c
\in (0, \infty)$. In order to prove $\tau_* \rightarrow \sigma$ under this
case, we prove $\tilde{R}_q (\alpha, \epsilon, \tau_*) \rightarrow 0$ for a
specific choice of $\alpha$.
 
\begin{lemma}\label{lem:correctordergoeszero} 
If $\sqrt{\epsilon} \aeps \rightarrow c$, and $\tilde{\sigma}^2 \triangleq
\sigma^2 + \frac{\epsilon \aeps^2}{\delta} \mathbb{E} \tilde{G}^2$, then as
$\epsilon \rightarrow 0$
\begin{equation*} 
    \sup_{\sigma \leq \tau \leq \tilde{\sigma}} \tilde{R}_q \Big(
    \epsilon^{\frac{(2-q)(q-1)}{-2q}}, \epsilon, \tau \Big)  \rightarrow 0.
\end{equation*}
\end{lemma}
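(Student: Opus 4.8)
The plan is to exploit the two-point structure of $\tilde{B}$, which equals $0$ with probability $1-\epsilon$ and $\tilde{G}$ with probability $\epsilon$, to split the risk into a ``null'' contribution and a ``signal'' contribution:
\begin{equation*}
\tilde{R}_q(\alpha,\epsilon,\tau) = (1-\epsilon)\,\mathbb{E}\,\eta_q^2(\tau Z;\alpha\tau^{2-q}) + \epsilon\,\mathbb{E}\big(\eta_q(\aeps\tilde{G}+\tau Z;\alpha\tau^{2-q}) - \aeps\tilde{G}\big)^2.
\end{equation*}
I would then bound each piece separately at $\alpha = \alpha_\epsilon := \epsilon^{\frac{(2-q)(q-1)}{-2q}}$ and check that both bounds are monotone in $\tau$, so that $\sup_{\sigma\le\tau\le\tilde\sigma}$ only evaluates them at $\tau=\tilde\sigma$. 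Since $\sqrt{\epsilon}\,\aeps\to c$ forces $\tilde\sigma^2\to\sigma^2 + \frac{c^2}{\delta}\mathbb{E}\tilde{G}^2 < \infty$, the admissible range of $\tau$ stays bounded throughout, which is what makes the uniform control possible.

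For the null term, the scaling identity $\eta_q(\tau Z;\alpha\tau^{2-q})=\tau\,\eta_q(Z;\alpha)$ from Lemma \ref{prox:property}(iii) factors out all $\tau$-dependence, leaving $(1-\epsilon)\tau^2\,\mathbb{E}\eta_q^2(Z;\alpha_\epsilon)$. Since the exponent is negative for $1<q<2$, we have $\alpha_\epsilon\to\infty$, hence $\eta_q(Z;\alpha_\epsilon)\to0$ pointwise, and $|\eta_q(Z;\alpha_\epsilon)|\le|Z|$ because $\eta_q(0;\cdot)=0$ and $0\le\partial_1\eta_q\le1$ by parts (i) and (vi). Dominated convergence then gives $\mathbb{E}\eta_q^2(Z;\alpha_\epsilon)\to0$; quantitatively, the fixed-point relation of part (ii) yields $|\eta_q(Z;\alpha_\epsilon)|\sim(|Z|/(q\alpha_\epsilon))^{1/(q-1)}$, so that $\mathbb{E}\eta_q^2(Z;\alpha_\epsilon)=\Theta(\alpha_\epsilon^{-2/(q-1)})=\Theta(\epsilon^{(2-q)/q})$. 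As this factor does not involve $\tau$ and $(1-\epsilon)\tau^2\le\tilde\sigma^2$ is bounded, the null term tends to $0$ uniformly in $\tau$.

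For the signal term, I would set $u=\aeps\tilde{G}+\tau Z$, $\chi=\alpha_\epsilon\tau^{2-q}$, and combine the triangle inequality with the fixed-point relation of part (ii), $|\eta_q(u;\chi)-u|=\chi q\,|\eta_q(u;\chi)|^{q-1}\le\chi q\,|u|^{q-1}$ (the last step from $|\eta_q(u;\chi)|\le|u|$), to obtain $|\eta_q(u;\chi)-\aeps\tilde{G}|\le\chi q\,|u|^{q-1}+|\tau Z|$. Squaring, taking expectations, and using $\mathbb{E}|u|^{2(q-1)}=O(\aeps^{2(q-1)})$ — valid because $\mathbb{E}|\tilde{G}|^{2(q-1)}\le(\mathbb{E}\tilde{G}^2)^{q-1}=1$ for $2(q-1)<2$ while $\aeps\to\infty$ dominates the bounded $\tau$ — gives a bound of order $\epsilon\,\alpha_\epsilon^2\aeps^{2(q-1)}+\epsilon\tau^2$. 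A short exponent computation using $\aeps=\Theta(\epsilon^{-1/2})$ shows $\epsilon\,\alpha_\epsilon^2\aeps^{2(q-1)}=\Theta(\epsilon^{(2-q)/q})$, the same order as the null term, while the residual $\epsilon\tau^2\to0$. Every $\tau$-factor here is a positive power of $\tau$ on the bounded interval $[\sigma,\tilde\sigma]$, so the supremum over $\tau$ is again controlled.

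The main obstacle is the simultaneous rate bookkeeping and uniformity in $\tau$: the exponent defining $\alpha_\epsilon$ is chosen precisely so that the null and signal contributions share the common order $\epsilon^{(2-q)/q}$, and one must verify that neither of the crude bounds above destroys this balance. Once both contributions are shown to be $O(\epsilon^{(2-q)/q})$ uniformly over the bounded range of $\tau$, and since $(2-q)/q>0$ for $1<q<2$, the stated supremum tends to $0$, which completes the argument.
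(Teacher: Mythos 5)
Your proposal is correct and follows essentially the same route as the paper's proof: the same null/signal decomposition, the same scaling and fixed-point identities from Lemma \ref{prox:property}, and the same rate bookkeeping showing both pieces are $\Theta(\epsilon^{(2-q)/q})$ uniformly over the bounded interval $[\sigma,\tilde\sigma]$. The only (harmless) difference is that you control the signal term's cross contribution by the crude bound $|\eta_q(u;\chi)-\aeps\tilde G|\le \chi q|u|^{q-1}+\tau|Z|$ followed by $(a+b)^2\le 2a^2+2b^2$, whereas the paper expands around $\aeps\tilde G+\tau Z$ and handles the cross term via Stein's lemma; both yield the same $o(1)$ conclusion.
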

\begin{proof}
Define $\alpha_0 \triangleq \epsilon^{\frac{(2-q)(q-1)}{-2q}}$. We have
\begin{equation}\label{eq:Adefs}
\tilde{R}_q (\alpha_0, \epsilon, \tau)
=
\underbrace{(1- \epsilon) \mathbb{E} \eta_q^2 (\tau Z ; \alpha_0
\tau^{2-q})}_{\triangleq A_1(\epsilon)} + \underbrace{\epsilon
\mathbb{E} (\eta_q (\aeps \tilde{G}+ \tau Z; \alpha_0 \tau^{2-q})-\aeps
\tilde{G})^2}_{\triangleq A_2(\epsilon)}
\end{equation}

We first prove that $\varlimsup_{\epsilon \rightarrow 0} \sup_{\sigma  \leq \tau \leq \tilde{\sigma}}A_1(\epsilon)=0$. Note that
\begin{align}\label{eq:A1calcfinal}
    \alpha_0^{\frac{2}{q-1}} A_1(\epsilon)
    \overset{(a)}{=}
    (1-\epsilon) \tau^2 q^{-\frac{2}{q-1}}
    \mathbb{E} (|Z| - |\eta_q(Z; \alpha_0)| )^{\frac{2}{q-1}}
\end{align}
Equality (a) is due to Lemma \ref{prox:property} (ii) (iii). Hence,
\[
\sup_{\sigma \leq \tau \leq \tilde{\sigma}}  \alpha_0^{\frac{2}{q-1}} A_1(\epsilon)
\leq
(1-\epsilon) \tilde{\sigma}^2 q^{-\frac{2}{q-1}} \mathbb{E} |Z|^{\frac{2}{q-1}}.
\]
Since $\alpha_0 \rightarrow \infty$ as $\epsilon \rightarrow 0$, this
immediately implies that $\lim_{\epsilon \rightarrow 0} \sup_{\sigma \leq \tau
\leq \tilde{\sigma}}A_1(\epsilon)=0$. Now we discuss $A_2 (\epsilon)$. We have
\begin{align}\label{eq:A2terms1}
    A_2(\epsilon)
    =&  \epsilon \mathbb{E} (\eta_q (b_\epsilon \tilde{G}+ \tau Z; \alpha_0 \tau^{2-q})-\aeps \tilde{G} -\tau Z)^2 + \epsilon \tau^2 \nonumber \\
     &+2 \epsilon \tau \mathbb{E} (Z (\eta_q (\aeps \tilde{G}+ \tau Z; \alpha_0 \tau^{2-q})-\aeps \tilde{G} -\tau Z))
    \triangleq \epsilon B_1(\epsilon) + \epsilon \tau^2 + 2 \epsilon B_2(\epsilon).
\end{align}
We study $B_1(\epsilon)$ and $B_2(\epsilon)$ separately. 
\begin{equation*}
    B_1(\epsilon)
    \overset{(a)}{=}
    q^2 \alpha_0^2 \tau^{4 - 2q} \mathbb{E} |\eta_q (b_\epsilon\tilde{G} + \tau Z; \alpha_0 \tau^{2-q})|^{2q-2},
\end{equation*}
where Equality (a) is due to Lemma \ref{prox:property}(ii). We note that the
choice of $\alpha_0$ implies $\epsilon \alpha_0^2 b_\epsilon^{2q-2} \rightarrow 0$.  Hence, as
$\epsilon \rightarrow 0$
\begin{equation*}
    \epsilon B_1(\epsilon)
    \leq
    \epsilon \alpha_0^2 q^2 \tau^{4 - 2q} \mathbb{E} |b_\epsilon\tilde{G} + \tau Z|^{2q-2}
    \rightarrow
    0
\end{equation*}

It is straightforward to see that $\lim_{\epsilon \rightarrow 0} \sup_{\tau \leq \tilde{\sigma}} \epsilon B_1(\epsilon)=0$.  
Now let us discuss $B_2(\epsilon)$.  By using Stein's lemma we have
\begin{align*}
B_2(\epsilon)
=
\tau^2 \mathbb{E} (\partial_1\eta_q (\aeps \tilde{G}+ \tau Z; \alpha_0 \tau^{2-q}) - 1)
\overset{(a)}{=}
\tau^2 \mathbb{E} \bigg[ \frac{- \alpha_0 \tau^{2-q} q (q-1) |\eta_q (b_\epsilon
        \tilde{G} + \tau Z;
\alpha_0 \tau)|^{q-2} } { 1 + \alpha_0 \tau^{2-q} q (q-1) |\eta_q ( b_\epsilon \tilde{G} +
\tau Z; \alpha_0 \tau^{2-q})|^{q-2}} \bigg]. 
\end{align*}
Equality (a) is due to Lemma \ref{prox:property}(v). Hence, $|B_2(\epsilon)| < \tau^2$ and
\begin{equation}\label{eq:B2calc}
 \sup_{\tau \leq \tilde{\sigma}} |\epsilon B_2(\epsilon)| \rightarrow 0.
\end{equation}
Combining \eqref{eq:Adefs}, \eqref{eq:A1calcfinal}, \eqref{eq:A2terms1}, and \eqref{eq:B2calc} completes the proof.
\end{proof}

Now Lemma \ref{lem:upperbound} implies that $\tau_* \in [\sigma,
\tilde{\sigma}]$. By combining this observation with Lemma
\ref{lem:correctordergoeszero}, it is straightforward to conclude that
\begin{equation*}
    \delta(\tau_*^2 - \sigma^2)
    =
    \min_{\alpha >0} \tilde{R}_q(\alpha, \epsilon, \tau_*) \nonumber \\
    \leq \tilde{R}_q \Big( \epsilon^{\frac{(2-q)(q-1)}{-2q}},
    \epsilon, \tau_* \Big)
    \leq \sup_{\sigma <\tau < \tilde{\sigma}}
    \tilde{R}_q \Big( \epsilon^{\frac{(2-q)(q-1)}{-2q}}, \epsilon, \tau \Big)
    \rightarrow 0.
\end{equation*}
This finishes our proof of $\tau_* \rightarrow \sigma$.
 
\subsection{Case I - $\aeps = o (\epsilon^{\frac{1-q}{2}})$}\label{sec:notsostrongsingal}
Since Case I is the simplest case, we start with this one. As
discussed in Section \ref{ssec:roadmapstrraresig}, $\alpha_* \rightarrow
\infty$ as $\epsilon \rightarrow 0$. In Lemma \ref{lem:risklbgen1} we use this fact to derive a lower
bound for $\tilde{R}_q (\alpha, \epsilon, \tau)$, then use it to obtain a finer
information about $\alpha_*$. 

\begin{lemma}\label{lem:risklbgen1}
If $\alpha \rightarrow \infty$ and $\tau \rightarrow \sigma>0$ as $\epsilon \rightarrow 0$, then 
\begin{equation*}
    \varliminf_{\epsilon \rightarrow 0} \alpha^{\frac{2}{q-1}}\tilde{R}_q (\alpha, \epsilon, \tau)
    \geq
    \sigma^2 q^{-\frac{2}{q-1}} \mathbb{E} |Z|^{\frac{2}{q-1}}
\end{equation*}
\end{lemma}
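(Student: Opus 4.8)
The plan is to discard the nonnegative contribution of the nonzero signal and reduce the lower bound to a pure-noise quantity, which can then be analyzed exactly through the defining identity of the proximal operator. \textbf{Step 1: reduction to the noise term.} Writing out the mixture $p_{\tilde B}=(1-\epsilon)\delta_0+\epsilon p_{\tilde G}$ in the definition of $\tilde R_q$ gives the decomposition
\[
\tilde R_q(\alpha,\epsilon,\tau)=(1-\epsilon)\,\mathbb E\,\eta_q^2(\tau Z;\alpha\tau^{2-q})+\epsilon\,\mathbb E\bigl(\eta_q(\aeps\tilde G+\tau Z;\alpha\tau^{2-q})-\aeps\tilde G\bigr)^2 .
\]
Since the second term is nonnegative, I would keep only the first, $\tilde R_q(\alpha,\epsilon,\tau)\ge(1-\epsilon)\,\mathbb E\,\eta_q^2(\tau Z;\alpha\tau^{2-q})$, and then invoke the scaling identity of Lemma \ref{prox:property}(iii), $\eta_q(\tau Z;\tau^{2-q}\alpha)=\tau\,\eta_q(Z;\alpha)$, to arrive at the clean expression $(1-\epsilon)\tau^2\,\mathbb E\,\eta_q^2(Z;\alpha)$.

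\textbf{Step 2: the proximal identity.} The essential point is that $\alpha^{2/(q-1)}\eta_q^2(Z;\alpha)$ can be rewritten without the (unavailable) explicit form of $\eta_q$. By the stationarity relation of Lemma \ref{prox:property}(ii), $|Z|=|\eta_q(Z;\alpha)|+\alpha q\,|\eta_q(Z;\alpha)|^{q-1}$, so $\alpha\,|\eta_q(Z;\alpha)|^{q-1}=\bigl(|Z|-|\eta_q(Z;\alpha)|\bigr)/q$ and hence
\[
\alpha^{\frac{2}{q-1}}\eta_q^2(Z;\alpha)=\bigl(\alpha\,|\eta_q(Z;\alpha)|^{q-1}\bigr)^{\frac{2}{q-1}}=q^{-\frac{2}{q-1}}\bigl(|Z|-|\eta_q(Z;\alpha)|\bigr)^{\frac{2}{q-1}} ,
\]
which is exactly the manipulation already used in \eqref{eq:A1calcfinal}. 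As $\alpha\to\infty$, the same relation forces $\eta_q(Z;\alpha)\to0$ for every fixed $Z$ (otherwise $\alpha q|\eta_q|^{q-1}$ would blow up while $|Z|$ stays fixed), so the integrand converges pointwise to $q^{-2/(q-1)}|Z|^{2/(q-1)}$.

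\textbf{Step 3: Fatou and conclusion.} Because the integrand $\bigl(|Z|-|\eta_q(Z;\alpha)|\bigr)^{2/(q-1)}$ is nonnegative — indeed $0\le|\eta_q(Z;\alpha)|\le|Z|$ by Lemma \ref{prox:property}(i),(vi) — Fatou's lemma applies along any sequence $\epsilon\to0$ (on which $\alpha\to\infty$) and yields $\varliminf_{\epsilon\to0}\mathbb E\bigl(|Z|-|\eta_q(Z;\alpha)|\bigr)^{2/(q-1)}\ge\mathbb E\,|Z|^{2/(q-1)}$. Combining this with $(1-\epsilon)\to1$ and $\tau^2\to\sigma^2$ (all factors nonnegative, so the liminf of the product dominates the product of the limit and the liminf) gives
\[
\varliminf_{\epsilon\to0}\alpha^{\frac{2}{q-1}}\tilde R_q(\alpha,\epsilon,\tau)\ge\sigma^2 q^{-\frac{2}{q-1}}\mathbb E\,|Z|^{\frac{2}{q-1}},
\]
as claimed. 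The only genuine subtlety is the passage to the limit inside the expectation, and this is mild precisely because the lemma asks only for a one-sided bound: Fatou delivers the $\ge$ direction with no dominating function, whereas pinning down the matching upper bound (not needed here) would require a dominated-convergence argument.
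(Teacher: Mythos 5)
Your proof is correct and follows essentially the same route as the paper's: drop the nonnegative signal term, rescale via Lemma \ref{prox:property}(iii), rewrite $\alpha^{2/(q-1)}\eta_q^2(Z;\alpha)$ through the stationarity identity of Lemma \ref{prox:property}(ii), and pass to the limit. The only (immaterial) difference is that you invoke Fatou's lemma for the one-sided bound where the paper uses DCT; both work here, since $\bigl(|Z|-|\eta_q(Z;\alpha)|\bigr)^{2/(q-1)}\le |Z|^{2/(q-1)}$ is an integrable dominating function anyway.
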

\begin{proof}
First note that
\begin{equation*}
    \alpha^{\frac{2}{q-1}}\tilde{R}_q (\alpha, \epsilon, \tau)
    \overset{(a)}{\geq}
    (1-\epsilon) \alpha^{\frac{2}{q-1}} \tau^2 \mathbb{E} \eta_q^2 (Z; \alpha )
    \overset{(b)}{=}
    (1-\epsilon) \tau^2 q^{-\frac{2}{q-1}} \mathbb{E} \big| |Z| -
    |\eta_q(Z; \alpha)| \big|^{\frac{2}{q-1}}.
\end{equation*}
where inequality (a) is due to Lemma \ref{prox:property}(iii) and inequality (b)
is due to Lemma \ref{prox:property}(ii). We note that an application of DCT proves
that the last term of expectation converges to $\mathbb{E}|Z|^{\frac{2}{q-1}}$
as $\epsilon \rightarrow 0$. We should mention that it is straightforward to
prove that for every $u$, $\eta_q(u; \alpha) \rightarrow 0$ as $\alpha
\rightarrow \infty$.
\end{proof}

The rest of the proof goes as follows: we first use Lemma
\ref{lem:risklbgen1} to prove $\aeps \alpha_*^{-\frac{1}{2-q}} \rightarrow 0$.
This will further help us to characterize the accurate behavior of $\tilde{R}_q
(\alpha_*, \epsilon, \tau_*)$.

\begin{lemma}\label{lem:chiinfty}
    We have $\lim_{\alpha \rightarrow \infty}
    \tilde{R}_q (\alpha, \epsilon, \tau_*) = \epsilon \aeps^2 \mathbb{E}
    |\tilde{G}|^2$.
\end{lemma}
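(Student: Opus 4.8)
The plan is to write $\tilde R_q$ in its mixture form and push the limit inside the expectation via the dominated convergence theorem. First I would set $\chi = \alpha\tau_*^{2-q}$ and observe that, since $\epsilon$ is held fixed throughout this lemma, $\tau_*$ is a fixed constant lying in $[\sigma,\tilde\sigma]$ with $\sigma>0$ (by Lemma \ref{lem:upperbound}); consequently $\tau_*^{2-q}$ is a fixed positive number and $\alpha\to\infty$ is equivalent to $\chi\to\infty$. Using $p_{\tilde B}=(1-\epsilon)\delta_0+\epsilon\, p_{\tilde G}$, I would split the risk into its zero and nonzero parts,
\[
\tilde R_q(\alpha,\epsilon,\tau_*)
= (1-\epsilon)\,\mathbb{E}\,\eta_q^2(\tau_* Z;\chi)
+ \epsilon\,\mathbb{E}\big(\eta_q(\aeps\tilde G+\tau_* Z;\chi)-\aeps\tilde G\big)^2,
\]
and treat the two expectations separately as $\chi\to\infty$.

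The two ingredients I would invoke are: (i) for every fixed $u$, $\eta_q(u;\chi)\to 0$ as $\chi\to\infty$ — this is already noted in the proof of Lemma \ref{lem:risklbgen1}, and follows from the fixed-point identity in Lemma \ref{prox:property}(ii), since a non-vanishing $\eta_q$ would force $\chi q|\eta_q|^{q-1}$ to blow up while $u$ remains finite; and (ii) the uniform bound $|\eta_q(u;\chi)|\le|u|$, valid for all $\chi>0$, which follows from $\eta_q(0;\chi)=0$ together with $0\le\partial_1\eta_q\le 1$ in Lemma \ref{prox:property}(vi). For the first term, (i) gives $\eta_q^2(\tau_* Z;\chi)\to 0$ pointwise while (ii) supplies the $\chi$-independent integrable envelope $\eta_q^2(\tau_* Z;\chi)\le\tau_*^2 Z^2$, so DCT yields $(1-\epsilon)\,\mathbb{E}\,\eta_q^2(\tau_* Z;\chi)\to 0$. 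For the second term, (i) gives $\big(\eta_q(\aeps\tilde G+\tau_* Z;\chi)-\aeps\tilde G\big)^2\to\aeps^2\tilde G^2$ pointwise, and (ii) with the triangle inequality gives
\[
\big|\eta_q(\aeps\tilde G+\tau_* Z;\chi)-\aeps\tilde G\big|
\le |\aeps\tilde G+\tau_* Z|+\aeps|\tilde G|
\le 2\aeps|\tilde G|+\tau_*|Z|,
\]
whose square is integrable because $\mathbb{E}\tilde G^2=1$ and $\tilde G,Z$ are independent with finite second moments. A second application of DCT then gives $\epsilon\,\mathbb{E}\big(\eta_q(\aeps\tilde G+\tau_* Z;\chi)-\aeps\tilde G\big)^2\to\epsilon\aeps^2\mathbb{E}\tilde G^2$. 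Adding the two limits yields $\lim_{\alpha\to\infty}\tilde R_q(\alpha,\epsilon,\tau_*)=\epsilon\aeps^2\,\mathbb{E}|\tilde G|^2$.

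The only point that requires genuine care is that the dominating functions be independent of $\alpha$ (equivalently $\chi$), so that DCT applies with the parameter varying; this is precisely what the uniform bound $|\eta_q(u;\chi)|\le|u|$ guarantees. Beyond verifying this uniformity I would not expect a real obstacle here: the limit $\lim_{\alpha\to\infty}\tilde R_q(\alpha,\epsilon,\tau_*)=\epsilon\aeps^2\,\mathbb{E}\tilde G^2$ was already invoked in the proof of Lemma \ref{lem:upperbound}, and the present lemma simply records it rigorously for later use in characterizing the rate of $\alpha_*$ in Case~I.
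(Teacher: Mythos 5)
Your proof is correct, and it supplies exactly the argument the authors had in mind: the paper omits the proof of Lemma \ref{lem:chiinfty} as ``straightforward,'' and your combination of the mixture decomposition, the pointwise limit $\eta_q(u;\chi)\rightarrow 0$, and the $\chi$-uniform envelope $|\eta_q(u;\chi)|\leq |u|$ via DCT is the same device the paper uses in the proofs of Lemma \ref{lem:risklbgen1} and Theorem \ref{thm:finalchininfty}. Your explicit attention to the $\alpha$-independence of the dominating functions, and to the fact that $\tau_*$ is held fixed so that $\chi=\alpha\tau_*^{2-q}\rightarrow\infty$, makes the omitted argument rigorous without any deviation from the paper's intended route.
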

The proof of this lemma is straightforward and is hence skipped.

\begin{lemma}\label{thm:behavioralpha_infty}
If $\aeps =o (\epsilon^{\frac{1-q}{2}})$, then $\aeps \alpha_*^{-\frac{1}{2-q}} \rightarrow 0$ as $\epsilon \rightarrow 0$. 
\end{lemma}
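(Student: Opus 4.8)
The plan is to sandwich the optimal risk $\tilde{R}_q(\alpha_*,\epsilon,\tau_*)$ between an easy upper bound and the lower bound of Lemma \ref{lem:risklbgen1}, and then read off a growth rate for $\alpha_*$ that is fast enough to force $b_\epsilon \alpha_*^{-1/(2-q)}\to 0$. The upper bound is free from optimality: since $\alpha_*$ minimizes $\tilde{R}_q(\cdot,\epsilon,\tau_*)$ over $\alpha>0$, its value cannot exceed the limiting value of $\tilde{R}_q$ as $\alpha\to\infty$, which Lemma \ref{lem:chiinfty} identifies as $\epsilon b_\epsilon^2\mathbb{E}\tilde{G}^2$. (This step is legitimate because along any sequence $\alpha_n\to\infty$ each $\tilde{R}_q(\alpha_n,\epsilon,\tau_*)$ dominates the minimum, so the limit does too.) Hence $\tilde{R}_q(\alpha_*,\epsilon,\tau_*)\le \epsilon b_\epsilon^2\mathbb{E}\tilde{G}^2$. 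For the lower bound I would invoke Lemma \ref{lem:risklbgen1} along the sequence $\alpha=\alpha_*,\ \tau=\tau_*$: this is valid since $\alpha_*\to\infty$ (Section \ref{ssec:roadmapstrraresig}) and $\tau_*\to\sigma$ (Section \ref{ssec:prooftaustarzero}) are already established, and it gives $\varliminf_{\epsilon\to 0}\alpha_*^{2/(q-1)}\tilde{R}_q(\alpha_*,\epsilon,\tau_*)\ge \sigma^2 q^{-2/(q-1)}\mathbb{E}|Z|^{2/(q-1)}>0$.

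Combining the two bounds, for all sufficiently small $\epsilon$ there is a constant $K>0$ with
\[
K\,\alpha_*^{-2/(q-1)} \;\le\; \tilde{R}_q(\alpha_*,\epsilon,\tau_*) \;\le\; \epsilon\, b_\epsilon^2\, \mathbb{E}\tilde{G}^2 .
\]
Solving the outer inequality for $\alpha_*$ produces the lower bound $\alpha_*\ge C_0\,(\epsilon b_\epsilon^2)^{-(q-1)/2}=C_0\,\epsilon^{-(q-1)/2}b_\epsilon^{-(q-1)}$ for a constant $C_0>0$ depending only on $q,\sigma$ and $\mathbb{E}\tilde{G}^2$. Substituting this into the target quantity and using the algebraic identity $1+\frac{q-1}{2-q}=\frac{1}{2-q}$ yields
\[
b_\epsilon\,\alpha_*^{-1/(2-q)} \;\le\; C_0^{-1/(2-q)}\,\big(b_\epsilon\,\epsilon^{(q-1)/2}\big)^{1/(2-q)}.
\]

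Since $1<q<2$ the exponent $1/(2-q)$ is strictly positive, and the hypothesis $b_\epsilon=o(\epsilon^{(1-q)/2})$ is precisely the statement $b_\epsilon\,\epsilon^{(q-1)/2}\to 0$; hence the right-hand side tends to $0$, which proves $b_\epsilon\,\alpha_*^{-1/(2-q)}\to 0$. I expect no serious analytic obstacle here: the whole argument is a two-sided estimate, and the only points requiring care are bookkeeping ones — verifying that Lemma \ref{lem:risklbgen1} genuinely applies to the sequence $\alpha_*$ (i.e.\ that $\alpha_*\to\infty$ and $\tau_*\to\sigma$, both already in hand) and tracking the exponents through the final substitution. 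The main conceptual content is simply the recognition that the ``$\alpha\to\infty$'' risk value $\epsilon b_\epsilon^2\mathbb{E}\tilde G^2$ is an admissible upper bound on the minimum, which is what converts the generic lower bound on the risk into a usable lower bound on $\alpha_*$.
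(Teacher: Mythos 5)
Your proof is correct and uses the same two ingredients as the paper's own argument (the lower bound of Lemma \ref{lem:risklbgen1} and the $\alpha\to\infty$ upper bound of Lemma \ref{lem:chiinfty}), merely recast as a direct sandwich-and-solve rather than the paper's proof by contradiction; the exponent bookkeeping in your final substitution checks out.
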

\begin{proof}
We prove by contradiction. Assume the assertion of the lemma is incorrect,
i.e. $\frac{\alpha_*}{\aeps^{2-q}} = O(1)$. Then,
\begin{equation*}
    \epsilon^{-1} b_\epsilon^{-2} \tilde{R}_q(\alpha_*, \epsilon, \tau_*)
    =
    \big(\alpha_*^{-1} \aeps^{2-q}\big)^{\frac{2}{q-1}}
    \big(\aeps^{\frac{2}{(q-1)}}\epsilon \big)^{-1}
    \alpha_*^{\frac{2}{q-1}} \tilde{R}_q (\alpha_*, \epsilon, \tau_*).
\end{equation*}

According to Lemma \ref{lem:risklbgen1}, since $\alpha_* \rightarrow \infty$
and $\tau_* \rightarrow \sigma$, we have $\alpha_*^{\frac{2}{q-1}}\tilde{R}_q
(\alpha_*, \epsilon, \tau_*) =\Omega(1)$. Furthermore our assumption indicates
that $\alpha_*^{-1}b_\epsilon^{2-q} = \Omega(1)$. Finally, due to the condition
of the lemma, we have $\aeps^{\frac{2}{(q-1)}} \epsilon \rightarrow 0$. Hence,
$ \epsilon^{-1} b_\epsilon^{-2} \tilde{R}_q(\alpha_*, \epsilon, \tau_*) \rightarrow \infty$.
Based on Lemma \ref{lem:chiinfty}, $\lim_{\epsilon \rightarrow 0} \tilde{R}_q
(\alpha, \epsilon, \tau_*)$ is proportional to  $\epsilon \aeps^2$.
This forms a contradiction with the optimality of $\alpha_*$ and completes the
proof.
\end{proof}

In the next theorem we use Lemma \ref{thm:behavioralpha_infty} to characterize $R_q(\alpha_*, \epsilon, \tau_*)$. 

\begin{theorem}\label{thm:finalchininfty}
    If $\aeps = o (\epsilon^{\frac{1-q}{2}})$, then $\varliminf_{\epsilon
    \rightarrow 0} \frac{\tilde{R}_q (\alpha_*, \epsilon, \tau_*)}{\epsilon
    \aeps^2\mathbb{E}|\tilde{G}|^2} \geq 1$.
\end{theorem}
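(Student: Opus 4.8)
The plan is to bound $\tilde{R}_q(\alpha_*,\epsilon,\tau_*)$ from below by its ``signal'' component and to show that this component already accounts for essentially the full energy $\epsilon b_\epsilon^2\mathbb{E}\tilde{G}^2$. Recalling the decomposition \eqref{eq:Adefs},
\[
\tilde{R}_q(\alpha_*,\epsilon,\tau_*) = (1-\epsilon)\mathbb{E}\eta_q^2(\tau_* Z;\alpha_*\tau_*^{2-q}) + A_2,\qquad A_2 := \epsilon\,\mathbb{E}\big(\eta_q(b_\epsilon\tilde{G}+\tau_* Z;\alpha_*\tau_*^{2-q})-b_\epsilon\tilde{G}\big)^2,
\]
the first term is nonnegative, so it suffices to control $A_2$. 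Writing $\Xi:=\eta_q(b_\epsilon\tilde{G}+\tau_* Z;\alpha_*\tau_*^{2-q})$ and expanding the square, I would reduce the statement to a single estimate:
\[
\frac{A_2}{\epsilon b_\epsilon^2\mathbb{E}\tilde{G}^2} = 1 - \frac{2\,\mathbb{E}[\tilde{G}\,\Xi]}{b_\epsilon\mathbb{E}\tilde{G}^2} + \frac{\mathbb{E}\Xi^2}{b_\epsilon^2\mathbb{E}\tilde{G}^2},
\]
where, by Cauchy--Schwarz, $|\mathbb{E}[\tilde{G}\Xi]|\le(\mathbb{E}\tilde{G}^2)^{1/2}(\mathbb{E}\Xi^2)^{1/2}$, so the cross term is $o(b_\epsilon)$ as soon as the key claim $\mathbb{E}\Xi^2=o(b_\epsilon^2)$ holds. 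This would in fact give the full limit $A_2/(\epsilon b_\epsilon^2\mathbb{E}\tilde{G}^2)\to 1$, hence the desired $\varliminf\ge 1$.

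The heart of the argument is therefore $\mathbb{E}\Xi^2=o(b_\epsilon^2)$, which I would prove by dominated convergence after extracting two complementary pointwise bounds from the stationarity relation of Lemma \ref{prox:property}(ii). Since $|u|=|\eta_q(u;\chi)|+\chi q|\eta_q(u;\chi)|^{q-1}$ with both summands sharing the sign of $u$, one has simultaneously $|\eta_q(u;\chi)|\le|u|$ and $|\eta_q(u;\chi)|\le(|u|/(q\chi))^{1/(q-1)}$. The first inequality supplies domination: $\Xi^2/b_\epsilon^2\le(b_\epsilon\tilde{G}+\tau_* Z)^2/b_\epsilon^2\le 2\tilde{G}^2+2\bar{\sigma}^2 Z^2$, where $\bar{\sigma}$ is the uniform upper bound for $\tau_*$ furnished by Lemma \ref{lem:upperbound} together with $b_\epsilon=O(1/\sqrt{\epsilon})$; this envelope is $\epsilon$-independent and integrable because $\mathbb{E}\tilde{G}^2=1$. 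The second inequality supplies pointwise decay: with $\chi=\alpha_*\tau_*^{2-q}$ and fixed $\tilde{G}=g,\,Z=z$ one has $|u|=O(b_\epsilon)$, so $\Xi^2/b_\epsilon^2=O\big((b_\epsilon^{2-q}/\chi)^{2/(q-1)}\big)$. Now $\alpha_*\to\infty$ and $\tau_*\to\sigma>0$ (established in Sections \ref{ssec:roadmapstrraresig} and \ref{ssec:prooftaustarzero}) force $\chi\to\infty$, while Lemma \ref{thm:behavioralpha_infty}, namely $b_\epsilon=o(\alpha_*^{1/(2-q)})$ equivalently $b_\epsilon^{2-q}=o(\alpha_*)$, drives $b_\epsilon^{2-q}/\chi\to 0$; hence $\Xi^2/b_\epsilon^2\to 0$ pointwise. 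Dominated convergence then gives $\mathbb{E}\Xi^2=o(b_\epsilon^2)$.

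The step I expect to require the most care is exactly this dominated-convergence argument. The tempting shortcut of bounding $\mathbb{E}\Xi^2$ through the power bound alone generates the moment $\mathbb{E}|\tilde{G}|^{2/(q-1)}$, which need not be finite since $2/(q-1)>2$ for $1<q<2$. The way around is to let the two inequalities play different roles: the crude bound $|\eta_q|\le|u|$ is used only to secure an $\epsilon$-independent integrable envelope, and the sharp power bound is reserved for the pointwise convergence. With this separation the whole theorem rests on nothing beyond the normalization $\mathbb{E}\tilde{G}^2=1$, the monotonicity facts about $(\alpha_*,\tau_*)$ already proved, and Lemma \ref{thm:behavioralpha_infty}. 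Throughout I work in the regime $1<q<2$ of the present section, so all invoked properties of $\eta_q$ apply.
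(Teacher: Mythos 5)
Your proposal is correct and follows essentially the same route as the paper: drop the nonnegative noise term, reduce to the signal component $A_2$, and use the facts $b_\epsilon\to\infty$ and $\alpha_* b_\epsilon^{q-2}\to\infty$ (Lemma \ref{thm:behavioralpha_infty}) to show via dominated convergence that the rescaled proximal term vanishes, so that $A_2/(\epsilon b_\epsilon^2\mathbb{E}\tilde{G}^2)\to 1$. The only difference is cosmetic — you expand the square and invoke Cauchy--Schwarz where the paper applies DCT directly to $\mathbb{E}\big(\eta_q(\tilde{G}+\tau_* Z/b_\epsilon;\tau_*^{2-q}\alpha_* b_\epsilon^{q-2})-\tilde{G}\big)^2$ — and your explicit separation of the crude bound $|\eta_q(u;\chi)|\le|u|$ (for the envelope) from the power bound $|\eta_q(u;\chi)|\le(|u|/(q\chi))^{1/(q-1)}$ (for pointwise decay) correctly supplies the details the paper labels ``straightforward.''
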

\begin{proof}
It is not hard to see that $\frac{\tilde{R} (\alpha_*, \epsilon, \tau_*)}{\epsilon \aeps^2}
    \geq
    \mathbb{E} \Big[\eta_q \Big( \tilde{G} + \frac{\tau_*}{\aeps} Z;
    \tau_*^{2-q}\frac{\alpha_*}{\aeps^{2-q}} \Big) -  \tilde{G} \Big]^2$.
Since $\aeps \rightarrow \infty$ and according to Lemma
\ref{thm:behavioralpha_infty}, $\frac{\alpha_*}{\aeps^{2-q}} \rightarrow
\infty$, it is straightforward to apply DCT and obtain that $\mathbb{E}
\Big[\eta_q \Big( \tilde{G}+ \frac{\tau_*}{\aeps} Z; \tau_*^{2-q}
\frac{\alpha_*}{\aeps^{2-q}} \Big) -  \tilde{G} \Big]^2 \rightarrow
\mathbb{E}|\tilde{G}|^2$. The conclusion then follows.
\end{proof}

A direct corollary of Lemma \ref{lem:chiinfty} and Theorem
\ref{thm:finalchininfty} is if $\aeps =o (\epsilon^{\frac{1-q}{2}})$, then
$\tilde{R}_q(\alpha_*, \epsilon, \tau_*) \sim \epsilon \aeps^2 \mathbb{E}
|\tilde{G}|^2$. This completes the first piece of Theorem
\ref{THM:STRONGRAREELL_Q}.

\subsection {Case II - $\aeps =\omega (\epsilon^{\frac{1-q}{2}})$} \label{sec:sostrongsingal}
We first characterize the risk for a specific choice of $\alpha$. This
offers an upper bound for $\tilde{R}_q(\alpha_*, \epsilon, \tau_*)$, and will
later help us obtain the exact behavior of $\alpha_*$.

\begin{lemma}\label{lem:optimalriskorder}
Suppose that $\aeps =\omega (\epsilon^{\frac{1-q}{2}})$. If $\alpha = C
\epsilon^{\frac{1-q}{2q}} \aeps^{-\frac{(q-1)^2}{q}}$, then,
\begin{equation*}
    \lim_{\epsilon \rightarrow 0}\epsilon^{-\frac{1}{q}} \aeps^{-\frac{2(q-1)}{q}}  \tilde{R}(\alpha,\epsilon, \tau_*)
    =
    C^{\frac{-2}{q-1}} q^{-\frac{2}{q-1}} \sigma^2 \mathbb{E}
    |Z|^{\frac{2}{q-1}} + C^2 q^2 \sigma^{4-2q)} \mathbb{E} |\tilde{G}|^{2(q-1)}.
\end{equation*}
\end{lemma}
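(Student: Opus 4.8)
The plan is to split the functional into its \emph{null} and \emph{signal} parts and analyze each under the prescribed scaling of $\alpha$. Writing $p_{\tilde B}=(1-\epsilon)\delta_0+\epsilon p_{\tilde G}$ and applying the scale invariance Lemma \ref{prox:property}(iii) to factor out $\tau_*$ (resp.\ $b_\epsilon$) from each proximal operator, I would record
\[
\tilde R_q(\alpha,\epsilon,\tau_*)=\underbrace{(1-\epsilon)\tau_*^2\,\mathbb{E}\,\eta_q^2(Z;\alpha)}_{A_1}+\underbrace{\epsilon b_\epsilon^2\,\mathbb{E}\big(\eta_q(\tilde G+\tfrac{\tau_*}{b_\epsilon}Z;\chi')-\tilde G\big)^2}_{A_2},\qquad \chi'\triangleq\alpha(\tau_*/b_\epsilon)^{2-q}.
\]
Before touching either term I would establish the three facts that drive everything: $\tau_*\to\sigma$ (Section \ref{ssec:prooftaustarzero}); $\alpha\to\infty$; and $\chi'\to0$ (recall this case forces $1<q<2$, so $2-q\in(0,1)$). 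For the latter two, note $\alpha=C(\epsilon^{1/2}b_\epsilon^{q-1})^{-(q-1)/q}$, and $\epsilon^{1/2}b_\epsilon^{q-1}=O(\epsilon^{(2-q)/2})\to0$ is precisely the content of $b_\epsilon=O(\epsilon^{-1/2})$, giving $\alpha\to\infty$; meanwhile $\chi'=C\tau_*^{2-q}(\epsilon^{(1-q)/2}/b_\epsilon)^{1/q}$ and $\epsilon^{(1-q)/2}/b_\epsilon\to0$ is precisely the content of $b_\epsilon=\omega(\epsilon^{(1-q)/2})$, giving $\chi'\to0$. Thus the two-sided hypothesis on $b_\epsilon$ feeds directly into the two limiting regimes of the proximal operator.

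For $A_1$ I would use Lemma \ref{prox:property}(ii), which gives $|Z|-|\eta_q(Z;\alpha)|=\alpha q|\eta_q(Z;\alpha)|^{q-1}$ and hence $\eta_q^2(Z;\alpha)=(\alpha q)^{-2/(q-1)}\big(|Z|-|\eta_q(Z;\alpha)|\big)^{2/(q-1)}$. Since $\alpha\to\infty$ forces $\eta_q(Z;\alpha)\to0$ pointwise and $\big(|Z|-|\eta_q(Z;\alpha)|\big)^{2/(q-1)}\le|Z|^{2/(q-1)}$, dominated convergence yields $\mathbb{E}\big(|Z|-|\eta_q(Z;\alpha)|\big)^{2/(q-1)}\to\mathbb{E}|Z|^{2/(q-1)}$. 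Inserting $\alpha^{-2/(q-1)}=C^{-2/(q-1)}\epsilon^{1/q}b_\epsilon^{2(q-1)/q}$ and $\tau_*\to\sigma$ gives $\epsilon^{-1/q}b_\epsilon^{-2(q-1)/q}A_1\to C^{-2/(q-1)}q^{-2/(q-1)}\sigma^2\mathbb{E}|Z|^{2/(q-1)}$, the first term in the claimed limit.

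For $A_2$ I would again invoke Lemma \ref{prox:property}(ii), now with $u=\tilde G+\tfrac{\tau_*}{b_\epsilon}Z$, to write $\eta_q(u;\chi')-\tilde G=-\chi' q|\eta_q(u;\chi')|^{q-1}\mathrm{sgn}(u)+\tfrac{\tau_*}{b_\epsilon}Z$. Dividing the bracket by $\chi'$ exhibits the integrand of $A_2/(\epsilon b_\epsilon^2\chi'^2)$ as $\big(-q|\eta_q(u;\chi')|^{q-1}\mathrm{sgn}(u)+\tfrac{\tau_*}{b_\epsilon\chi'}Z\big)^2$. Here $\tfrac{\tau_*}{b_\epsilon\chi'}=\tau_*^{q-1}/(\alpha b_\epsilon^{q-1})\to0$ since $\alpha b_\epsilon^{q-1}\to\infty$, while $\chi'\to0$ and $\tfrac{\tau_*}{b_\epsilon}\to0$ give $\eta_q(u;\chi')\to\tilde G$ pointwise (and $\mathrm{sgn}(u)\to1$ as $\tilde G>0$ a.s.). Using $|\eta_q(u;\chi')|\le|u|\le|\tilde G|+\tfrac{\tilde\sigma}{b_\epsilon}|Z|$ together with the finiteness of $\mathbb{E}|\tilde G|^{2(q-1)}$ (which holds by Jensen since $\mathbb{E}\tilde G^2=1$ and $2(q-1)<2$) and of $\mathbb{E}|Z|^{2(q-1)}$, dominated convergence gives $A_2/(\epsilon b_\epsilon^2\chi'^2)\to q^2\mathbb{E}|\tilde G|^{2(q-1)}$. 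Since $\chi'^2b_\epsilon^2=\alpha^2\tau_*^{2(2-q)}b_\epsilon^{2(q-1)}$, substituting the value of $\alpha$ makes the powers of $\epsilon$ and $b_\epsilon$ cancel exactly against $\epsilon^{-1/q}b_\epsilon^{-2(q-1)/q}\cdot\epsilon$, leaving $\epsilon^{-1/q}b_\epsilon^{-2(q-1)/q}A_2\to C^2q^2\sigma^{4-2q}\mathbb{E}|\tilde G|^{2(q-1)}$. Adding the two limits completes the proof.

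The main delicacy is the $A_2$ step: one must verify that the cross term and the $Z^2$ term in the expanded square are genuinely lower order than the bias term (they are, because the coefficient $\tfrac{\tau_*}{b_\epsilon\chi'}\to0$), and, crucially, produce a single dominating function valid uniformly for all small $\epsilon$ — this is where the uniform bound $\tfrac{\tilde\sigma}{b_\epsilon}\le\text{const}$ for small $\epsilon$ and the integrability of $|\tilde G|^{2(q-1)}+|Z|^{2(q-1)}$ enter. A secondary but essential point, easy to overlook, is that the two-sided growth condition is not decorative: the upper bound $b_\epsilon=O(\epsilon^{-1/2})$ is exactly what forces $\alpha\to\infty$ (needed to collapse $A_1$), and the lower bound $b_\epsilon=\omega(\epsilon^{(1-q)/2})$ is exactly what forces $\chi'\to0$ (needed to collapse $A_2$); deriving these two limits from the hypotheses is the conceptual crux.
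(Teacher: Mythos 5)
Your proposal is correct and follows essentially the same route as the paper: the same split of $\tilde R_q$ into the null part $A_1$ and the signal part $A_2$, the same use of the proximal identities (Lemma \ref{prox:property} (ii)--(iii)) together with dominated convergence, and the same identification that $b_\epsilon=O(\epsilon^{-1/2})$ drives $\alpha\to\infty$ while $b_\epsilon=\omega(\epsilon^{(1-q)/2})$ drives the effective threshold to zero. The only (immaterial) difference is in $A_2$: the paper expands $(\eta_q(u;\chi)-u+\tau_* Z)^2$ at the original scale and controls the cross term via Stein's lemma, $\mathbb{E}[Z(\eta_q(u;\chi)-u)]=\tau_*\mathbb{E}[\partial_1\eta_q(u;\chi)-1]$, which is bounded by $\tau_*$ in absolute value, whereas you rescale by $b_\epsilon$ first and kill the cross and $Z^2$ terms by noting that their coefficient $\tau_*/(b_\epsilon\chi')=\tau_*^{q-1}/(\alpha b_\epsilon^{q-1})$ vanishes; both give the same cancellation.
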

\begin{proof}
    We again start our argument with the same decomposition as in
    \eqref{eq:Adefs}.
Note that as $\epsilon \rightarrow 0$, $\alpha \rightarrow \infty$, and as
discussed in Section \ref{ssec:prooftaustarzero}, $\tau_* \rightarrow \sigma$.
We further have
\begin{align}
    &\epsilon^{-\frac{1}{q}} \aeps^{\frac{-2(q-1)}{q}} A_1(\epsilon)
=
(1-\epsilon) \epsilon^{-\frac{1}{q}} \aeps^{\frac{-2(q-1)}{q}} \tau_*^2 \; \mathbb{E}
\eta_q^2 ( Z; \alpha) \label{eq:noisepartonlyeq12} \\
\overset{(a)}{=}&
(1 - \epsilon) q^{-\frac{2}{q-1}} \Big(\alpha \epsilon^{\frac{q-1}{2q}}
\aeps^{\frac{(q-1)^2}{q}} \Big)^{-\frac{2}{q-1}} \tau_*^2 \;
\mathbb{E} \big| |Z| - |\eta_q(Z; \alpha)| \big|^{\frac{2}{q-1}}
\rightarrow
q^{-\frac{2}{q-1}} C^{-\frac{2}{q-1}} \sigma^2
\mathbb{E} |Z|^{\frac{2}{q-1}}. \nonumber
\end{align}
Equality (a) is due to Lemma \ref{prox:property}(ii), and the last step is a
result of DCT. We should also emphasize that since $\alpha \rightarrow \infty$,
$|\eta_q (Z;\alpha)| \rightarrow 0$. Furthermore, similar to the steps in the
proof of Lemma \eqref{lem:correctordergoeszero}, we can obtain
\begin{align}\label{eq:risksecondtermfinite1}
    A_2(\epsilon)
    =\;
    \epsilon \alpha^2 \tau_*^{4 - 2q} q^2 \mathbb{E} \big|\eta_q( b_\epsilon \tilde{G} + \tau_* Z; \alpha
    \tau_*^{2-q}) \big|^{2q-2} + \epsilon \tau_*^2
    + 2 \epsilon \tau_*^2 \mathbb{E} [\partial_1 \eta_q (b_\epsilon \tilde{G} + \tau_* Z; \alpha \tau_*^{2-q}) - 1 ].  
\end{align}

First we have that
\begin{align}
    & \epsilon^{-\frac{1}{q}} \aeps^{\frac{-2(q-1)}{q}} \epsilon \alpha^2 \tau_*^{4 - 2q} q^2
    \mathbb{E} \big|\eta_q( b_\epsilon \tilde{G} + \tau_* Z; \alpha
    \tau_*^{2-q}) \big|^{2q-2} \nonumber \\
    =&
    \alpha^{2} \epsilon^{\frac{q-1}{q}} \aeps^{\frac{2(q-1)^2}{q}} \tau_*^{4 - 2q} q^2
    \mathbb{E} \bigg[ \frac{|\eta_q( b_\epsilon \alpha^{-\frac{1}{2-q}}
    \tilde{G} + \alpha^{-\frac{1}{2-q}}\tau_* Z; \tau_*^{2-q})|}
    { |b_\epsilon \alpha^{-\frac{1}{2-q}} \tilde{G} + \alpha^{-\frac{1}{2-q}}\tau_* Z| }
    |\tilde{G} +  b_\epsilon^{-1} \tau_* Z| \bigg]^{2q-2}
\end{align}

We note our condition on the growth of $\alpha$ and the following relation:
\begin{equation*}
    b_\epsilon \alpha^{-\frac{1}{2-q}}
    =
    C^{-\frac{1}{2-q}} \big[\epsilon^{\frac{q-1}{2}} \aeps \big]^{\frac{1}{q(2-q)}}
    \rightarrow \infty,
    \quad
    \epsilon^{-\frac{1}{q}} \aeps^{\frac{-2(q-1)}{q}} \epsilon
    =
    \big[\epsilon \aeps^{-2}\big]^{\frac{q-1}{q}}
    \rightarrow 0
\end{equation*}
The first relation above implies that
\begin{equation}
    \lim_{\epsilon \rightarrow 0}
    \epsilon^{-\frac{1}{q}} \aeps^{\frac{-2(q-1)}{q}} \epsilon \alpha^2 \tau_*^{4 - 2q} q^2
    \mathbb{E} \big|\eta_q( b_\epsilon \tilde{G} + \tau_* Z; \alpha
    \tau_*^{2-q}) \big|^{2q-2}
    =
    C^2 \sigma^{4 - 2q} q^2 \mathbb{E} |\tilde{G}|^{2q-2}
\end{equation}

Since $\big|\partial_1 \eta_q (b_\epsilon \tilde{G} + \tau_* Z;
\alpha \tau_*^{2-q}) - 1 \big| \leq 1$,
we are able to conclude that
\begin{equation} \label{eq:lasteqnthistheoremF4}
    \epsilon^{-\frac{1}{q}} \aeps^{\frac{-2(q-1)}{q}} A_2(\epsilon)
    \rightarrow
    C^2 \sigma^{4 - 2q} q^2 \mathbb{E} |\tilde{G}|^{2q-2}
\end{equation}
where the last step is a simple application of DCT (According to Lemma
\ref{prox:property}(ii) $\frac{|\eta_q (u; \chi)|}{|u|} \leq 1$ for every $u$
and $\chi$), combined with the fact that ${\aeps} \rightarrow \infty$ and
$\tau_* \rightarrow \sigma$ as $\epsilon \rightarrow 0$. Combining
\eqref{eq:Adefs}, \eqref{eq:noisepartonlyeq12},
\eqref{eq:risksecondtermfinite1}, and \eqref{eq:lasteqnthistheoremF4} finishes
the proof.
\end{proof}

So far, we know that $\alpha_* \rightarrow \infty$. Our next theorem provides more accurate information about $\alpha_*$.
\begin{theorem}\label{thm:caseiialphaorderprel}
If $\aeps =\omega (\epsilon^{\frac{1-q}{2}})$, then $\aeps \alpha_*^{-\frac{1}{2-q}} \rightarrow \infty$. 
\end{theorem}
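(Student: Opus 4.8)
The plan is to argue by contradiction, leaning on the upper bound for the optimal risk that is already in hand from Lemma \ref{lem:optimalriskorder}. Since $\alpha_*$ minimizes $\tilde{R}_q(\cdot,\epsilon,\tau_*)$ (recall \eqref{eq:se11}), evaluating the risk at the particular threshold $\alpha = C\epsilon^{\frac{1-q}{2q}} \aeps^{-\frac{(q-1)^2}{q}}$ from that lemma gives the one-sided estimate $\tilde{R}_q(\alpha_*,\epsilon,\tau_*) = O\big(\epsilon^{\frac{1}{q}} \aeps^{\frac{2(q-1)}{q}}\big)$. Suppose, contrary to the claim, that $\aeps \alpha_*^{-\frac{1}{2-q}}$ fails to diverge; then along a subsequence $\alpha_*/\aeps^{2-q} \geq c$ for some constant $c>0$. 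I would show that this subsequential lower bound on $\alpha_*$ forces the risk to be of the strictly larger order $\epsilon \aeps^2$, producing the desired contradiction.

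To extract the matching lower bound I would discard the nonnegative noise term in the decomposition \eqref{eq:Adefs} and keep only the signal part $A_2(\epsilon) = \epsilon\, \mathbb{E}\big(\eta_q(\aeps \tilde{G} + \tau_* Z; \alpha_*\tau_*^{2-q}) - \aeps \tilde{G}\big)^2$. Applying the scaling identity Lemma \ref{prox:property}(iii) with scale factor $\aeps$ rewrites this as $A_2(\epsilon) = \epsilon \aeps^2\, \mathbb{E}\big[\eta_q(\tilde{G} + (\tau_*/\aeps) Z; \chi_*') - \tilde{G}\big]^2$, where $\chi_*' = (\alpha_*/\aeps^{2-q})\tau_*^{2-q}$. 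Under the subsequential hypothesis together with $\tau_* \geq \sigma$ from Lemma \ref{lem:upperbound}, one gets $\chi_*' \geq c\sigma^{2-q} > 0$, while $\tau_*/\aeps \to 0$ because $\aeps \to \infty$ and $\tau_* \to \sigma$ (Section \ref{ssec:prooftaustarzero}).

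The crux is then to show that $\mathbb{E}\big[\eta_q(\tilde{G} + (\tau_*/\aeps) Z; \chi_*') - \tilde{G}\big]^2$ stays bounded away from zero. Passing to a further subsequence along which $\chi_*' \to \chi_\infty' \in [c\sigma^{2-q}, \infty]$, Fatou's lemma dispatches both regimes: if $\chi_\infty' < \infty$ the integrand converges almost surely to $\big(\eta_q(\tilde{G}; \chi_\infty') - \tilde{G}\big)^2$, whose expectation is positive since $\chi_\infty' > 0$ produces genuine shrinkage and $\tilde{G} \neq 0$ a.s. (as $p_G$ has no mass at $0$); if $\chi_\infty' = \infty$ then $\eta_q(\cdots) \to 0$ a.s., so the liminf is at least $\mathbb{E}\tilde{G}^2 > 0$. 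Either way $\tilde{R}_q(\alpha_*,\epsilon,\tau_*) \geq A_2(\epsilon) = \Omega(\epsilon \aeps^2)$. Finally, the hypothesis $\aeps = \omega(\epsilon^{\frac{1-q}{2}})$ is precisely what makes $\epsilon \aeps^2 = \omega\big(\epsilon^{\frac{1}{q}} \aeps^{\frac{2(q-1)}{q}}\big)$, since the ratio of these two quantities equals $(\epsilon^{q-1}\aeps^2)^{1/q} \to \infty$; this contradicts the upper bound from the first paragraph and completes the argument. I expect the main obstacle to be this last lower bound on the rescaled expectation—namely verifying the almost-sure convergence of $\eta_q$ under the vanishing perturbation $(\tau_*/\aeps)Z$ and handling the two cases for $\chi_\infty'$ cleanly through Fatou; the rest is bookkeeping with the scaling relation and the optimality of $\alpha_*$.
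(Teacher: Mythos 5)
Your proposal is correct and follows essentially the same route as the paper: contradiction, lower-bounding the risk by the signal term rescaled via Lemma \ref{prox:property}(iii) to get $\Omega(\epsilon \aeps^2)$, comparing against the $\Theta(\epsilon^{1/q}\aeps^{2(q-1)/q})$ upper bound from Lemma \ref{lem:optimalriskorder}, and noting that $(\epsilon^{q-1}\aeps^2)^{1/q}\to\infty$. Your Fatou-plus-subsequence treatment of the two cases for $\chi_\infty'$ is a slightly more careful rendering of the step the paper dispatches with DCT, but it is the same argument.
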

\begin{proof}
Suppose this is not correct, then $\aeps \alpha_*^{-\frac{1}{2-q}} =
O(1)$. According to \eqref{eq:risksecondtermfinite1} and
\eqref{eq:Adefs} we have
\begin{align}\label{eq:contractprovingnicecase}
\tilde{R}_q (\alpha_*, \epsilon, \tau_*)
\geq&\;
\epsilon \mathbb{E} \big( \eta_q (b_\epsilon \tilde{G} + \tau_* Z;
\alpha_* \tau_*^{2-q}) - b_\epsilon \tilde{G} \big)^2 \nonumber \\
\overset{(a)}{=}&\;
\epsilon  \aeps^2 \mathbb{E} \big( \eta_q (\tilde{G} + b_\epsilon^{-1} \tau_*
Z; b_\epsilon^{q-2} \alpha_*\tau_*^{2-q} ) - \tilde{G} \big)^2,
\end{align}
where the last equality is due to Lemma \ref{prox:property}(iii). Note that 
\begin{equation*}
\frac{\tilde{R}_q (\alpha_*, \epsilon, \tau_*)}{
\tilde{R}_q(C \epsilon^{ \frac{1-q}{2q}} \aeps^{-\frac{(q-1)^2}{q}}, \epsilon,
\tau_*)}
=
\frac{\epsilon^{-1}  \aeps^{-2}
\tilde{R}_q (\alpha_*, \epsilon, \tau_*)}{  \epsilon^{-\frac{1}{q}}
\aeps^{\frac{2(1 - q)}{q}} \tilde{R}_q(C \epsilon^{ \frac{1-q}{2q}}
\aeps^{-\frac{(q-1)^2}{q}}, \epsilon, \tau_*)} \times (\epsilon^{q-1} \aeps^{2})^{\frac{1}{q}}.
\end{equation*}

According to Lemma \ref{lem:optimalriskorder},
$\epsilon^{-\frac{1}{q}} \aeps^{\frac{2(1-q)}{q}}\tilde{R}_q(C
\epsilon^{ \frac{1-q}{2q}} \aeps^{-\frac{(q-1)^2}{q}},\epsilon, \tau_*)
= \Theta(1)$. By using the DCT in
\eqref{eq:contractprovingnicecase} (combined with the assumption that $\aeps
\alpha_*^{-\frac{1}{2-q}} = O(1)$), it is straightforward to confirm that
$\epsilon^{-1}  \aeps^{-2} \tilde{R}_q (\alpha_*, \epsilon, \tau_*) =
\Omega(1)$. Since, $(\epsilon^{q-1}  \aeps^{2})^{\frac{1}{q}} \rightarrow
\infty$, we conclude that
\begin{equation*}
\varliminf_{\epsilon \rightarrow 0}
\frac{\tilde{R}_q (\alpha_*, \epsilon, \tau_*)}{ \tilde{R}_q(C \sqrt{\epsilon}^{ \frac{1-q}{q}} \aeps^{-\frac{(q-1)^2}{q}},\epsilon, \tau_*)}
= \infty. 
\end{equation*}
This is contradicted with the optimality of $\alpha_*$. Hence, $\aeps
\alpha_*^{-\frac{1}{2-q}} \rightarrow \infty$.
\end{proof}

Finally, we are ready to prove the main claim of this section. 

\begin{theorem}\label{thm:refinedinfoalphastar}
Suppose that $\aeps =\omega (\epsilon^{\frac{1-q}{2}})$. Then,
$\epsilon^{\frac{q-1}{2q}} \aeps^{\frac{(q-1)^2}{q}} \alpha_* \rightarrow C_*$,
where $C_* = \bigg[\frac { \sigma^{2q-2} \mathbb{E} |Z|^{\frac{2}{q-1}}} {
(q-1) q^{\frac{2q}{q-1}} \mathbb{E} |\tilde{G}|^{2q-2}}\bigg]^{\frac{q-1}{2q}}$. Furthermore,
\begin{equation} \label{eq:ell_q_omega_case_risk}
    \epsilon^{-\frac{1}{q}} \aeps^{-\frac{2(q-1)}{q}} \tilde{R}_q (\alpha_*,\epsilon, \tau_*)
    \rightarrow
    (C_* q)^{\frac{-2}{q-1}} \sigma^2 \mathbb{E} |Z|^{\frac{2}{q-1}} + (C_*
    q)^2 \sigma^{4-2q} \mathbb{E} |\tilde{G}|^{2(q-1)}.
\end{equation}
\end{theorem}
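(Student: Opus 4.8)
The plan is to show that the rescaled threshold $D_\epsilon := \epsilon^{\frac{q-1}{2q}} b_\epsilon^{\frac{(q-1)^2}{q}} \alpha_*$ converges to the unique minimizer of the limiting risk profile supplied by Lemma \ref{lem:optimalriskorder}. Writing $\alpha = C \epsilon^{\frac{1-q}{2q}} b_\epsilon^{-\frac{(q-1)^2}{q}}$ (so that $D = C$), Lemma \ref{lem:optimalriskorder} says the normalized risk $\Phi_\epsilon(C) := \epsilon^{-\frac{1}{q}} b_\epsilon^{-\frac{2(q-1)}{q}} \tilde{R}_q(\alpha, \epsilon, \tau_*)$ converges pointwise to
\[
g(C) := (Cq)^{-\frac{2}{q-1}} \sigma^2 \mathbb{E}|Z|^{\frac{2}{q-1}} + (Cq)^2 \sigma^{4-2q} \mathbb{E}|\tilde{G}|^{2(q-1)}.
\]
Because $\alpha_*$ minimizes $\tilde{R}_q(\cdot, \epsilon, \tau_*)$ at fixed $\tau_*$ (the second line of \eqref{eq:se11}) and the prefactor $\epsilon^{-\frac{1}{q}}b_\epsilon^{-\frac{2(q-1)}{q}}$ is $\alpha$-independent, the quantity $D_\epsilon$ is exactly the minimizer of $C \mapsto \Phi_\epsilon(C)$; the statement therefore reduces to an argmin-consistency result for the approximating family $\Phi_\epsilon \to g$.

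First I would record the elementary facts about $g$. On $(0,\infty)$ the function $g$ is strictly convex and coercive ($g(C) \to \infty$ as $C \to 0^+$ and as $C \to \infty$), hence has a unique minimizer $C_*$; setting $g'(C_*)=0$ and solving is a routine calculus computation that returns precisely $C_* = \big[\sigma^{2q-2}\mathbb{E}|Z|^{\frac{2}{q-1}} / \big((q-1) q^{\frac{2q}{q-1}} \mathbb{E}|\tilde{G}|^{2q-2}\big)\big]^{\frac{q-1}{2q}}$, and $g(C_*)$ equals the right-hand side of \eqref{eq:ell_q_omega_case_risk}. Combining the optimality of $\alpha_*$ with Lemma \ref{lem:optimalriskorder} applied at $C = C_*$ already yields the upper bound $\varlimsup_{\epsilon \to 0} \Phi_\epsilon(D_\epsilon) \le g(C_*) < \infty$.

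The second step is to trap $D_\epsilon$ in a fixed compact subset of $(0,\infty)$, using the two halves of the decomposition \eqref{eq:Adefs}. For a lower bound on $D_\epsilon$ I would keep only $A_1$: by Lemma \ref{prox:property}(ii)(iii) and the scaling identity $\epsilon^{-\frac{1}{q}} b_\epsilon^{-\frac{2(q-1)}{q}} \alpha_*^{-\frac{2}{q-1}} = D_\epsilon^{-\frac{2}{q-1}}$, the normalized $A_1$ equals $(1-\epsilon)\tau_*^2 q^{-\frac{2}{q-1}} D_\epsilon^{-\frac{2}{q-1}} \mathbb{E}\big(|Z| - |\eta_q(Z;\alpha_*)|\big)^{\frac{2}{q-1}}$; since $\alpha_* \to \infty$ forces $\eta_q(Z;\alpha_*) \to 0$ and $\tau_* \to \sigma$ (Section \ref{ssec:prooftaustarzero}), this is $\gtrsim D_\epsilon^{-\frac{2}{q-1}}$, so $D_\epsilon \to 0$ on a subsequence would blow up the risk and contradict the upper bound; hence $\varliminf D_\epsilon > 0$. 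For an upper bound I would keep the leading term of $A_2$ in \eqref{eq:risksecondtermfinite1}, whose normalization is $D_\epsilon^2 \tau_*^{4-2q} q^2 \mathbb{E}\big[\,(\text{shrinkage ratio}) \cdot |\tilde{G} + b_\epsilon^{-1}\tau_* Z|\,\big]^{2q-2}$, the other two pieces of $A_2$ being $O(\epsilon)$ after normalization and hence vanishing. By Theorem \ref{thm:caseiialphaorderprel}, $b_\epsilon \alpha_*^{-\frac{1}{2-q}} \to \infty$, so the argument of $\eta_q$ diverges and Lemma \ref{prox:property}(viii) drives the shrinkage ratio to $1$; Fatou then gives $\varliminf$ of that expectation $\ge \mathbb{E}|\tilde{G}|^{2q-2} > 0$, so the normalized $A_2$ is $\gtrsim D_\epsilon^2$ and $D_\epsilon \to \infty$ on a subsequence again contradicts the upper bound. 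Thus $0 < \varliminf D_\epsilon \le \varlimsup D_\epsilon < \infty$.

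Finally I would upgrade boundedness to convergence through subsequences. Extract any subsequence along which $D_\epsilon \to D_0 \in (0,\infty)$; re-running the dominated-convergence computations of Lemma \ref{lem:optimalriskorder} with the fixed constant $C$ replaced by the convergent quantity $D_\epsilon \to D_0$ gives $\Phi_\epsilon(D_\epsilon) \to g(D_0)$ along that subsequence. Optimality of $\alpha_*$ gives $\Phi_\epsilon(D_\epsilon) \le \Phi_\epsilon(C)$ for every fixed $C$, and letting $\epsilon \to 0$ (again by Lemma \ref{lem:optimalriskorder}) yields $g(D_0) \le g(C)$ for all $C > 0$; by uniqueness of the minimizer, $D_0 = C_*$. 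Since every subsequential limit equals $C_*$, the whole sequence satisfies $D_\epsilon \to C_*$, which is the first assertion, and then $\Phi_\epsilon(D_\epsilon) \to g(C_*)$ is \eqref{eq:ell_q_omega_case_risk}. I expect the main obstacle to be exactly this last justification: that the DCT and Fatou estimates underlying Lemma \ref{lem:optimalriskorder} stay valid when the scaling constant is allowed to drift to $D_0$ rather than being held fixed, which calls for an integrable domination of the relevant integrands uniform over a neighborhood of $D_0$ — the place where $b_\epsilon = \omega(\epsilon^{\frac{1-q}{2}})$ together with the finite moments of $\tilde{G}$ and $Z$ must be used.
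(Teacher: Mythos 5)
Your proposal is correct, but it follows a genuinely different route from the paper. The paper proves the theorem by differentiating: it writes out the stationarity condition $\partial_\alpha \tilde{R}_q(\alpha_*,\epsilon,\tau_*)=0$, decomposes the derivative into the terms $H_1,\dots,H_5$, computes the asymptotics of each (using Lemma \ref{prox:property} and the key fact $\alpha_* b_\epsilon^{q-2}\to 0$ from Theorem \ref{thm:caseiialphaorderprel}), and balances them to obtain $\epsilon\,\alpha_*^{\frac{2q}{q-1}} b_\epsilon^{2q-2}\to C_*^{\frac{2q}{q-1}}$ directly, after which Lemma \ref{lem:optimalriskorder} gives the risk limit. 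You instead never differentiate the risk in $\epsilon$-dependent form: you observe that the rescaled argmin $D_\epsilon$ minimizes the normalized profile $\Phi_\epsilon$, that $\Phi_\epsilon\to g$ pointwise by Lemma \ref{lem:optimalriskorder}, that $g$ is strictly convex and coercive with unique minimizer $C_*$ (the only calculus being $g'(C_*)=0$, which indeed reproduces the stated $C_*$), and you then run a standard argmin-consistency argument: trap $D_\epsilon$ in a compact subset of $(0,\infty)$ via the two halves of \eqref{eq:Adefs}, and identify every subsequential limit with $C_*$. The one point you flag — validity of the dominated-convergence computations of Lemma \ref{lem:optimalriskorder} when the constant drifts to $D_0$ — is genuinely the step needing care, but it resolves exactly as you anticipate: the dominating functions in that lemma's proof ($|Z|^{\frac{2}{q-1}}$ for $A_1$ and $(|\tilde{G}|+|Z|)^{2q-2}$ for the leading piece of $A_2$, integrable since $2q-2<2$ and $\mathbb{E}\tilde{G}^2=1$) do not depend on $C$ at all, and the pointwise limits only require $\alpha_*\to\infty$ and $b_\epsilon\alpha_*^{-\frac{1}{2-q}}\to\infty$, both already established for the true sequence $\alpha_*$. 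What each approach buys: the paper's derivative computation is heavier (five expectation asymptotics, Stein's lemma) but extracts the constant in one shot; yours reuses Lemma \ref{lem:optimalriskorder} more aggressively, replaces the derivative analysis with convexity, and is conceptually cleaner, at the cost of the compactness and drifting-constant bookkeeping. Both rely on Theorem \ref{thm:caseiialphaorderprel} as an essential input.
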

\begin{proof}
We know that $\frac{\partial \tilde{R}_q(\alpha_*, \epsilon, \tau_*) }{\partial \alpha} =0$. Hence,
\begin{align}\label{eq:zeroder}
    0
    =&
    (1- \epsilon) \tau_*^2
    \mathbb{E} [\eta_q (Z; \alpha_*) \partial_2 \eta_q(Z; \alpha_*)] \nonumber \\
    &+
    \epsilon \tau_*^{2-q} \mathbb{E} \big[(\eta_q (\aeps \tilde{G}+ \tau_*
    Z; \alpha_* \tau_*^{2-q}) - \aeps \tilde{G}) \partial_2 \eta_q (\aeps
    \tilde{G}+ \tau_* Z; \alpha_* \tau_*^{2-q}) \big] \nonumber \\
    \overset{(a)}{=}&
    (1- \epsilon) \tau_*^2 \mathbb{E} \bigg[ \frac{- q |\eta_q (Z;
    \alpha_*)|^q} {1 + \alpha_* q (q-1) |\eta_q (Z; \alpha_*)|^{q-2} } \bigg]
    + \epsilon \tau_*^{3-q} \mathbb{E} \big[ Z \partial_2 \eta_q (\aeps \tilde{G}+
    \tau_* Z;\alpha_* \tau_*^{2-q}) \big] \nonumber \\
    & + \epsilon \alpha_* \tau_*^{4-2q} q^2 \mathbb{E} \bigg[ \frac{ |\eta_q
    (\aeps \tilde{G}+ \tau_* Z;\alpha_* \tau_*^{2-q})|^{2q-2}} {1 +
    \alpha_*\tau_*^{2-q} q(q-1)|\eta_q (\aeps \tilde{G}+ \tau_* Z; \alpha_*
    \tau_*^{2-q})|^{q-2} } \bigg] \nonumber \\
    \triangleq&
    - (1- \epsilon) q \tau_*^2 H_1 + \epsilon \tau_*^{3-q} H_2 + \epsilon \alpha_*
    \tau_*^{4-2q} q^2 H_3. 
\end{align}
To obtain (a) we have used Lemma \ref{prox:property} parts (ii) and (v). 
We now study each term separately. We should mention at this point that the
rest of our analyses relies heavily on Theorem \ref{thm:caseiialphaorderprel}.
By using Lemma \ref{prox:property}(iii) we have
\begin{align}\label{eq:H1Firsteq1}
    H_1
    =& \mathbb{E} \bigg[ \frac{|\eta_q (Z; \alpha_*)|^q}
    {1 + \alpha_* q (q-1) |\eta_q (Z; \alpha_*)|^{q-2} } \bigg] \nonumber \\
    =&
    \alpha_*^{-1} \mathbb{E} \Bigg[ \bigg| \frac{|Z| - |\eta_q(Z; \alpha_*)|}
    {q\alpha_*} \bigg|^\frac{2}{q-1}
    \frac{|\eta_q(\alpha_*^{-\frac{1}{2-q}} Z; 1)|^{q-2}} {1 + q(q-1)
    |\eta_q(\alpha_*^{-\frac{1}{2-q}} Z; 1)|^{q-2}} \Bigg]. 
\end{align}

Since $ |\eta_q(\alpha_*^{-\frac{1}{2-q}} Z; 1)| \rightarrow 0$, we have
\begin{equation}\label{eq:H1calculationeq1step3}
    \frac{|\eta_q(\alpha_*^{-\frac{1}{2-q}} Z; 1)|^{q-2}} {1 + q(q-1)
    |\eta_q(\alpha_*^{-\frac{1}{2-q}} Z; 1)|^{q-2}}
    \rightarrow
    \frac{1}{q(q-1)}. 
\end{equation}
By combining \eqref{eq:H1Firsteq1} and \eqref{eq:H1calculationeq1step3} we have
\begin{equation}\label{eq:H1calcFinal}
    \alpha_*^{\frac{q+1}{q-1}} H_1
    \rightarrow
    (q-1)^{-1} q^{-\frac{q+1}{q-1}} \mathbb{E} | Z|^{\frac{2}{q-1}}.
\end{equation}

Now we focus on $H_3$. Define $D \triangleq 1+  q(q-1) \tau_*^{2-q}|\eta_q
(\aGoChStar+  \ZoChiStarTaus;\tau_*^{2-q})|^{q-2}$. Then,
\begin{align*}
    H_3
    =&
    \mathbb{E} \bigg[ \frac{ |\eta_q (\aeps \tilde{G}+ \tau_* Z; \alpha_* \tau_*^{2-q})|^{2q-2}}
    {1 + \alpha_*\tau_*^{2-q} q(q-1)|\eta_q (\aeps \tilde{G}+ \tau_* Z; \alpha_* \tau_*^{2-q})|^{q-2} } \bigg] \nonumber \\
    =&
    \mathbb{E} \bigg[ \frac{ b_\epsilon^{2q-2} |\eta_q (\tilde{G} +
    b_\epsilon^{-1}\tau_* Z; \alpha_* b_\epsilon^{q-2} \tau_*^{2-q})|^{2q-2}}
    {1 + \alpha_* b_\epsilon^{q-2} \tau_*^{2-q} q(q-1)|\eta_q (\tilde{G} + b_\epsilon^{-1}\tau_* Z; \alpha_* b_\epsilon^{q-2} \tau_*^{2-q})|^{q-2} } \bigg].
\end{align*}
According to Theorem \ref{thm:caseiialphaorderprel}, $\alpha_* b_\epsilon^{q-2}
\rightarrow 0$. This drives the term $\alpha_*\tau_*^{2-q} q(q-1)|\eta_q (\aeps
\tilde{G}+ \tau_* Z; \alpha_* \tau_*^{2-q})|^{q-2} \rightarrow 0$ in the denominator.
It is then straightforward to use DCT and show that 
\begin{equation}\label{eq:H3calcFinal1}
    \lim_{\epsilon \rightarrow 0} \aeps^{2-2q} H_3
    =
    \mathbb{E} |\tilde{G}|^{2q-2}.
\end{equation}

Finally, we discuss $H_2$. By using Stein's lemma and after some algebraic calculations we have
\begin{align*}
    H_2
    =&
    - q \mathbb{E} \bigg[Z \frac{|\eta_q(\aeps \tilde{G}+\tau_* Z ;\alpha_*
    \tau_*^{2-q}) |^{q-1} \mathrm{sign} (\aeps \tilde{G} + \tau_*
    Z;\alpha_* \tau_*^{2-q})}{1+ \alpha_* \tau_*^{2-q}q (q-1) |\eta_q(\aeps
    \tilde{G}+ \tau_* Z ) |^{q-2} } \bigg] \nonumber \\
    =&
    -q (q-1) \tau_* \mathbb{E} \bigg[ \frac{|\eta_q(\aeps \tilde{G}+ \tau_* Z ;\alpha_* \tau_*^{2-q})|^{q-2}}
    {(1+ \alpha_* \tau_*^{2-q} q (q-1) |\eta_q(\aeps \tilde{G}+ \tau_* Z;
    \alpha_* \tau_*^{2-q}) |^{q-2})^3} \bigg] \nonumber \\
    &-
    q^2 (q - 1) \alpha_* \tau_*^{3-q} \mathbb{E} \bigg[ \frac{|\eta_q(\aeps
    \tilde{G}+ \tau_* Z ;\alpha_* \tau_*^{2-q}) |^{2q-4}}{(1+ \alpha_*
    \tau_*^{2-q} q (q-1) |\eta_q(\aeps \tilde{G}+ \tau_* Z ;\alpha_* \tau_*^{2-q})
    |^{q-2})^3} \bigg]  \nonumber \\
    \triangleq&
    - q(q-1) \tau_* H_4 - q^2(q-1) \alpha_* \tau_*^{3-q} H_5.
\end{align*}

Now we bound $H_4$ and $H_5$. Due to exactly the same reason when we analyzing
$H_3$, the denominator of $H_4$ and $H_5$ converges to 1. According to Lemma
\ref{prox:property}(iii), we have
\begin{equation}\label{eq:H4calculations1}
    b_\epsilon^{2-q} H_4
    =
    \mathbb{E} \bigg[ \frac{|\eta_q(\tilde{G}+ b_\epsilon^{-1} \tau_* Z;
    \alpha_*b_\epsilon^{q-2} \tau_*^{2-q})|^{q-2}}
    {(1+ \alpha_* \tau_*^{2-q} q (q-1) |\eta_q(\aeps \tilde{G}+ \tau_* Z; \alpha_* \tau_*^{2-q}) |^{q-2})^3} \bigg]
    \rightarrow
    \mathbb{E}|\tilde{G}|^{q-2}
\end{equation}

Similarly for $H_5$ we have that
\begin{equation}\label{eq:H5calculations}
    b_\epsilon^{4-2q} H_5
    =
    \mathbb{E} \bigg[ \frac{|\eta_q(\tilde{G}+ b_\epsilon^{-1} \tau_* Z;
    \alpha_*b_\epsilon^{q-2} \tau_*^{2-q})|^{2q-4}}
    {(1+ \alpha_* \tau_*^{2-q} q (q-1) |\eta_q(\aeps \tilde{G}+ \tau_* Z; \alpha_* \tau_*^{2-q}) |^{q-2})^3} \bigg]
    \rightarrow
    \mathbb{E}|\tilde{G}|^{2q-4}
\end{equation}

From \eqref{eq:zeroder}  and with some algebra we have
\begin{align*}
    1
    =&
    \lim_{\epsilon \rightarrow 0}
    \frac{\epsilon \tau_*^{3-q} H_2 + \epsilon \alpha_*
    \tau_*^{4-2q} q^2 H_3}{ (1- \epsilon) q \tau_*^2 H_1 }
    =
    \lim_{\epsilon \rightarrow 0}
    \frac{\tau_*^{3-q} \alpha_*^{-1} b_\epsilon^{2-2q} H_2 +
    \tau_*^{4-2q} q^2 b_\epsilon^{2-2q} H_3}
    {(1- \epsilon) q \tau_*^2 \alpha_*^{\frac{q+1}{q-1}} H_1}
    \epsilon \alpha_*^{\frac{2q}{q-1}}b_\epsilon^{2q-2} \nonumber \\
    =&
    \frac{ (q-1) q^{\frac{2q}{q-1}} \mathbb{E} |\tilde{G}|^{2q-2}}
    { \sigma^{2q - 2} \mathbb{E} |Z|^{\frac{2}{q-1}}}
    \lim_{\epsilon \rightarrow 0}
    \epsilon \alpha_*^{\frac{2q}{q-1}}b_\epsilon^{2q-2}
\end{align*}
where in the last step, we use the fact that $\alpha^{-1} b_\epsilon^{2-2q} H_2
\rightarrow 0$ which is an implication of \eqref{eq:H4calculations1} and
\eqref{eq:H5calculations}. We also used \eqref{eq:H1calcFinal} and \eqref{eq:H3calcFinal1} to
simplify the part involving $H_1$ and $H_3$. Overall, these give us that
\begin{equation}\label{eq:finalformula_sigma}
    \lim_{\epsilon \rightarrow 0}
    \epsilon \alpha_*^{\frac{2q}{q-1}}b_\epsilon^{2q-2}
    =
    \frac { \sigma^{2q-2} \mathbb{E} |Z|^{\frac{2}{q-1}}}
    { (q-1) q^{\frac{2q}{q-1}} \mathbb{E} |\tilde{G}|^{2q-2}}
    =
    C_*^{\frac{2q}{q-1}}
\end{equation}

This proves the first claim of our theorem. The behavior of
$\tilde{R} (\alpha_*, \epsilon, \tau_*)$ now follows once we combine
\eqref{eq:finalformula_sigma} with Lemma \ref{lem:optimalriskorder}. In order
to obtain the final form presented in Theorem \ref{THM:STRONGRAREELL_Q}, we
need to substitute $C_*$ into \eqref{eq:ell_q_omega_case_risk} and simplify the expression.
\end{proof}

\subsection{Case III - $\frac{\aeps}{\sqrt{\epsilon}^{1-q}} \rightarrow c_{r}$
for $c_{r} \in (0,\infty)$}
\label{ssec:phasetransell_qorder}

Since the proof is very similar to the one we presented in the last section, we
only present the sketch of the proof, and do not discuss the details. We only
emphasize on the major differences. First note that similar to what we had
before
\begin{equation*}
\tilde{R}_q (\alpha, \epsilon, \tau_*)
=
(1-\epsilon) \tau_*^2 \mathbb{E} \eta_q^2 (Z; \alpha)
+ \epsilon \mathbb{E} \big(\eta_q (b_\epsilon \tilde{G} + \tau_* Z; \alpha\tau_*^{2-q}) - b_\epsilon \tilde{G} \big)^2. 
\end{equation*}
We can prove the following claims:
\begin{enumerate}

\item It is straightforward to prove that 
\begin{equation}\label{eq:riskatinfinityformula1}
\lim_{\alpha \rightarrow \infty } \tilde{R}_q (\alpha, \epsilon, \tau_*) = \epsilon \aeps^2 \mathbb{E} (\tilde{G})^2. 
\end{equation}

\item We claim that $\alpha_*^{-\frac{1}{2-q}} \aeps = O(1)$. We then have 
\begin{equation}\label{eq:lowerboundusualorder1}
    \varliminf_{\epsilon \rightarrow 0} \alpha_*^{\frac{2}{q-1}} \tilde{R}_q (\alpha_*, \epsilon, \tau_*)
    \geq
    \sigma^2 \varliminf_{\epsilon \rightarrow 0} \alpha_*^{\frac{2}{q-1}}
    \mathbb{E} \eta_q^2 ( Z; \alpha_*)
    \overset{(a)}{=}  \Theta(1).
\end{equation}
The reasoning for (a) is similar to what we did in \eqref{eq:noisepartonlyeq12}.
We connect \eqref{eq:riskatinfinityformula1} and
\eqref{eq:lowerboundusualorder1} through the optimality of $\alpha_*$ to
conclude that
\begin{equation*}
    \epsilon b_\epsilon^2 \alpha_*^{\frac{2}{q-1}} \geq \Theta(1)
\end{equation*}
Our claim then follows by substituting the relation $\epsilon \sim b_\epsilon^{- \frac{2}{q-1}}$ into the
above equation.

\item Given the previous case two scenarios can happen, each of which is discussed below:
\begin{itemize}
\item Case I:
$\alpha_*^{-\frac{1}{2-q}} \aeps \rightarrow 0$. 
Note that in this case, we have
\begin{equation*}
\varliminf_{\epsilon \rightarrow 0} \frac{\tilde{R}_q (\alpha_*, \epsilon, \tau_*)}{\epsilon \aeps^2 \mathbb{E} |\tilde{G}|^2}
\geq
\varliminf_{\epsilon \rightarrow 0}
\frac{ \mathbb{E} \big(\eta_q ( \tilde{G} + b_\epsilon^{-1} \tau_* Z;
\alpha_*\aeps^{q-2} \tau_*^{2-q}) -
\tilde{G} \big)^2} { \mathbb{E} |\tilde{G}|^2} = 1,
\end{equation*}
where the last step is a result of DCT and the assumption that
$\alpha_*^{-\frac{1}{2-q}} \aeps \rightarrow 0$. Note that the lower bound is
achievable by $\alpha = \infty$.

\item Case II:
$\alpha_*^{-\frac{1}{2-q}} \aeps = \Theta(1)$. Under this
assumption, we have $\alpha_* \epsilon^{\frac{(q-1)(2-q)}{2}} \rightarrow C$,
where $C > 0$ is fixed. We will specify the optimal choice of $C$ later. Furthermore,
\begin{equation*}
    \aeps \alpha_*^{-\frac{1}{2-q}} \rightarrow c_r C^{-\frac{1}{2-q}}.
\end{equation*}
We remind the reader that $c_r \triangleq \lim_{\epsilon\rightarrow 0}\frac{\aeps}{\sqrt{\epsilon}^{1-q}} \in (0, \infty)$. Similar to the proof of Lemma \ref{lem:risklbgen1} we have
\begin{equation}\label{eq:nosietermlastcase1}
    \alpha_*^{\frac{2}{q-1}} \mathbb{E} \eta_q^2 (Z; \alpha_*)
    \rightarrow
    q^{-\frac{2}{q-1}} \mathbb{E} |Z|^{\frac{2}{q-1}}. 
\end{equation}
Also, with DCT we can show that, as $\epsilon \rightarrow 0$
\begin{align}\label{eq:signaltermlastcase1}
    \small{\alpha_*^{-\frac{2}{2-q}} \mathbb{E} \big(\eta_q ( b_\epsilon \tilde{G} +
    \tau_* Z; \alpha_* \tau_*^{2-q} ) - b_\epsilon \tilde{G} \big)^2
    \rightarrow
    \mathbb{E} \big(\eta_q (c_r C^{-\frac{1}{2-q}} G;\sigma^{2-q} ) - c_r C^{-\frac{1}{2-q}} G \big)^2.}
\end{align}

Now we can characterize the risk accurately as
\begin{eqnarray*}
    \lefteqn{\lim_{\epsilon \rightarrow 0}
    \epsilon^{q-2} \tilde{R}_q (\alpha_*, \epsilon, \tau_*) =
    \lim_{\epsilon \rightarrow 0}
    \epsilon^{q-2} \alpha_*^{-\frac{2}{q-1}} \tau_*^2 \alpha_*^{\frac{2}{q-1}}
    \mathbb{E}\eta_q^2 (Z; \alpha_*)} \nonumber \\
    &&+
    \lim_{\epsilon \rightarrow 0}
    \epsilon^{q-1} \alpha_*^{\frac{2}{2-q}} \alpha_*^{-\frac{2}{2-q}}
    \mathbb{E} \big(\eta_q (b_\epsilon \tilde{G} + \tau_* Z;
    \alpha\tau_*^{2-q}) - b_\epsilon \tilde{G} \big)^2 \nonumber \\
    &\overset{(a)}{=}&
    C^{-\frac{2}{q-1}} \sigma^2 q^{-\frac{2}{q-1}}
    \mathbb{E} |Z|^{\frac{2}{q-1}}
    +
    \mathbb{E} \big(\eta_q (c_r G; C \sigma^{2-q} ) - c_r G \big)^2 =: h(C)
\end{eqnarray*}
 
To obtain Equality (a), we have combined \eqref{eq:nosietermlastcase1},
\eqref{eq:signaltermlastcase1} and the fact that $\epsilon^{q-2}
\alpha_*^{-\frac{2}{q-1}} \rightarrow C^{-\frac{2}{q-1}}$, $\epsilon^{q-1}
\alpha_*^{\frac{2}{2-q}} \rightarrow C^{\frac{2}{2-q}}$.

To get the optimal choice of $C$, we take the derivative with respect to $C$
and obtain
\begin{align*}
    h'(C)
    =&
    -\frac{2}{q-1} C^{-\frac{q+1}{q-1}} \sigma^2 q^{-\frac{2}{q-1}} \mathbb{E}
    |Z|^{\frac{2}{q-1}} \nonumber \\
    &+
    2 \sigma^{2-q} \mathbb{E} \big[\big(\eta_q (c_r G; C \sigma^{2-q} ) - c_r G
    \big) \partial_2 \eta_q (c_r G; C \sigma^{2-q} ) \big] \nonumber \\
    =&
    -\frac{2}{q-1} C^{-\frac{q+1}{q-1}} \sigma^2 q^{-\frac{2}{q-1}} \mathbb{E}
    |Z|^{\frac{2}{q-1}} \nonumber \\
    &+
    2 C \sigma^{4-2q} q^2 \mathbb{E} \bigg[ \frac{|\eta_q (c_r G; C \sigma^{2-q} )|^{2q-2}}
    {1 + C\sigma^{2-q} q(q-1) |\eta_q (c_r G; C \sigma^{2-q} )|^{q-2}} \bigg]
\end{align*}
To obtain the last equality we have used Lemma \ref{prox:property} parts (i),
(ii), and (v). We would like to show that the optimal choice of $C$
is finite. Toward this goal, we characterize the limiting
behavior of the ratio of the positive and negative terms in $\frac{dh(C)}{dC}$. First it is straighjtforward to see that $\lim_{C \rightarrow 0} h'(C) = -\infty$. Further we have
\begin{align*}
    &\lim_{C \rightarrow \infty} C^{\frac{q}{q-1}} h'(C)     =
    - \lim_{C \rightarrow \infty} \frac{2}{q-1} C^{-\frac{1}{q-1}} \sigma^2 q^{-\frac{2}{q-1}} \mathbb{E}
    |Z|^{\frac{2}{q-1}} \nonumber \\
    & + 2 \sigma^{4-2q} q^2 \lim_{C \rightarrow \infty} C^{\frac{q}{q-1}} C \mathbb{E} \bigg[ \frac{|\eta_q (c_r G; C \sigma^{2-q} )|^{2q-2}}
    {1 + C\sigma^{2-q} q(q-1) |\eta_q (c_r G; C \sigma^{2-q} )|^{q-2}} \bigg]
    \nonumber \\
    =&
    2 \sigma^{4-2q} q^2 \lim_{C \rightarrow \infty} \mathbb{E} \bigg[
    \bigg|\frac{|c_rG| - |\eta_q (c_r G; C \sigma^{2-q} )|}{q
    \sigma^{2-q}}\bigg|^{\frac{q}{q-1}} \nonumber \\
    &\quad\quad\quad\quad\quad\quad\quad\quad\quad
    \cdot \frac{|\eta_q (c_r C^{-\frac{1}{2-q}} G; \sigma^{2-q} )|^{q-2}}
    {1 + \sigma^{2-q} q(q-1) |\eta_q (c_r C^{-\frac{1}{2-q}} G; \sigma^{2-q})|^{q-2}} \bigg] \nonumber \\
    =&
    2 \sigma^{-\frac{2-q}{q-1}} (q-1)^{-1} q^{-\frac{1}{q-1}}
    \mathbb{E}|c_rG|^{\frac{q}{q-1}} > 0
\end{align*}
where in the last step we used the fact that $|\eta_q (c_r C^{-\frac{1}{2-q}} G; \sigma^{2-q})|
\rightarrow 0$ as $C \rightarrow \infty$. We should finally emphasize that,
since $\lim_{C \rightarrow \infty} h(C)$ equals the risk of
$\lim_{\epsilon \rightarrow 0} \epsilon^{q-2} \tilde{R}_q
(\alpha, \epsilon, \sigma)$ when $\alpha_*^{-\frac{1}{2-q}}\aeps
\rightarrow 0$, we conclude that $\alpha_*\epsilon^{\frac{(q-1)(2-q)}{2}} \rightarrow C_*$, where $C_*$ is
the minimizer of $h(C)$.
\end{itemize}
\end{enumerate}

        \subsection{$q \geq 2$} \label{ssec:nbo-qlarger2}
In this part, we prove the rate for $q \geq 2$ in the nearly black object
model. The proof when $\sqrt{\epsilon}b_\epsilon = o(1)$ can be simply
obtained according to the previous proof for $q < 2$. When $\sqrt{\epsilon}
b_\epsilon = \Theta(1)$, a slightly longer argument is involved.

\paragraph{$\sqrt{\epsilon}b_\epsilon = o(1)$}
In this case, we have $\tau_* \rightarrow \sigma$ according to Lemma
\ref{lem:upperbound}. Using the same argument as the start of Section
\ref{ssec:roadmapstrraresig}, we know $\alpha_* \rightarrow \infty$. Blessed by
the condition $q \geq 2$, we know $b_\epsilon = o(\epsilon^{\frac{1-q}{2}})$
and $\alpha_*b_\epsilon^{q-2} \rightarrow \infty$. The conclusion of
Lemma \ref{lem:chiinfty} and Theorem \ref{thm:finalchininfty} simply follows
and we have $\tilde{R}(\alpha_*, \epsilon, \tau_*) \sim \epsilon
b_\epsilon^2$.

\paragraph{$\sqrt{\epsilon}b_\epsilon = \Theta(1)$}
Assume $\lim_{\epsilon \rightarrow 0}\sqrt{\epsilon}b_\epsilon = c > 0$. Let  
$\lim_{\epsilon \rightarrow 0} \alpha_* = \alpha_0 \in [0, \infty]$ (we may
focus on one of the convergent subsequences). The limit of the
optimal $\tau_*^2$ is bounded in the sense that $\sigma^2 \leq
\varliminf_{\epsilon \rightarrow 0} \tau_*^2 \leq \varlimsup_{\epsilon
\rightarrow 0} \tau_*^2 \leq \sigma^2 + \frac{1}{\delta}c^2$. Let us consider a
convergent subsequence of $\tau_*$ (since it is bounded). By using the
state-evolution equation we have
\begin{align*}
    \lim_{\epsilon \rightarrow 0} \tau_*^2
    =&
    \sigma^2 + \frac{1}{\delta} \lim_{\epsilon \rightarrow 0}
    \mathbb{E}[\eta_q(B + \tau_*Z; \alpha_*\tau_*^{2-q}) - B]^2 \nonumber \\
    =&
    \sigma^2 + \frac{1}{\delta} \lim_{\epsilon \rightarrow 0}
    \Big[ (1 - \epsilon)\tau_*^2 \mathbb{E}\eta_q^2(Z; \alpha_*) + \epsilon b_\epsilon^2
    \mathbb{E}[\eta_q(\tilde{G} + \tau_*b_\epsilon^{-1}Z;
    \alpha_*b_\epsilon^{q-2} \tau_*^{2-q}) - \tilde{G}]^2 \Big] \nonumber \\
    =&
    \sigma^2 + \frac{1}{\delta} \lim_{\epsilon \rightarrow 0}
    \tau_*^2 \mathbb{E}\eta_q^2(Z; \alpha_0) + \frac{c^2}{\delta}
    \mathbb{E}[\eta_q(\tilde{G}; \lim_{\epsilon \rightarrow 0} (\alpha_*b_\epsilon^{q-2}) \lim_{\epsilon
    \rightarrow 0} \tau_*^{2-q}) - \tilde{G}]^2
\end{align*}

Under the assumption $\delta < 1$, the right hand side is always
larger than the left hand side when $\alpha_0 = 0$. This implies
that $\alpha_0 > 0$.

When $q > 2$, we have $\lim_{\epsilon \rightarrow 0} \alpha_*b_\epsilon^{q-2}
\rightarrow \infty$. This leads to the following result for $\tau_*^2$:
\begin{equation*}
    \lim_{\epsilon\rightarrow 0} \tau_*^2
    = \frac{\sigma^2 + \frac{c^2}\delta{}}{1 -
    \frac{1}{\delta}\mathbb{E}\eta_q^2(Z; \alpha_0)}
\end{equation*}

The larger $\alpha_0$ is, the smaller $\tau_*^2$ is. Hence we have
$\alpha_* \rightarrow \infty$ and $\tau_*^2 \rightarrow \sigma^2 +
\frac{c^2}{\delta}$. This gives us
\begin{equation*}
    \tilde{R}_q(\alpha_*, \epsilon, \tau_*) \rightarrow c^2,
    \quad \text{when} \quad
    q > 2.
\end{equation*}

When $q = 2$, the above argument becomes invalid. However in this case
$\eta_q(u; \chi) = \frac{u}{1 + 2\chi}$, leading to an explicit form of the
optimal $\alpha_*$ and $\tau_*$. A careful calculation exhibits that
\begin{equation*}
    \alpha_*
    =
    \frac{1}{4}\bigg( \frac{\sigma^2}{\mathbb{E}B^2} +
    \frac{1}{\delta} - 1 + \sqrt{\Big(\frac{\sigma^2}{\mathbb{E}B^2} +
    \frac{1}{\delta} - 1\Big)^2 + \frac{4\sigma^2}{\mathbb{E}B^2}} \bigg)
    \rightarrow
    \frac{1}{4}\bigg( \frac{\sigma^2}{c^2} + \frac{1}{\delta} - 1 +
    \sqrt{\Big(\frac{\sigma^2}{c^2} + \frac{1}{\delta} - 1\Big)^2 +
    \frac{4\sigma^2}{c^2}} \bigg)
\end{equation*}

The corresponding limit of $\mathrm{MSE}$ can then be explicitly represented as
\begin{equation} \label{eq:near-black-L2-const}
    \tilde{R}_2(\alpha_*, \epsilon, \tau_*)
    =
    \frac{\delta\sigma^2 + 4\delta \alpha_*^2 c^2}{(1 + 2\alpha_*)^2\delta - 1}
\end{equation}
This completes the proof.


\section{Proof of Theorem \ref{EQ:THM:STRONGRAREELL_1}}
\label{thmsmallepsilonell_1}

\subsection{Roadmap of the proof}
The roadmap of the proof is similar to the one presented in Section
\ref{ssec:roadmapstrraresig}. As we discussed there, the main goal is to
characterize the behavior of $(\alpha_*, \tau_*)$ in \eqref{eq:se11} for $q=1$
with $B$ replaced by $b_\epsilon \tilde{B}$, where
$\tilde{B}=(1-\epsilon)\delta_0 + \epsilon p_{\tilde{G}}$.

Similar to the proof in Section \ref{ssec:roadmapstrraresig}, we can
again prove that as $\epsilon \rightarrow 0$, (i) $\tau_* \rightarrow \sigma,$
and (ii) $\alpha_* \rightarrow \infty$. For the sake of brevity we skip the
proof of this claim. The rest of this proof is to obtain a more accurate
statement about the behvaior of $\alpha_*$ and ${\rm AMSE} (1, \lambda^*_1) $.
The optimal choice of $\alpha$ depends on the relation between $\aeps$ and
$\epsilon$ in the following way:
\begin{itemize}
\item Case I - $\aeps = \omega(\sqrt{- \log\epsilon})$. Under this rate, we will prove that 
$\lim_{\epsilon \rightarrow 0} \frac{\alpha_*}{ \sqrt{- 2 \log \epsilon}} =1$.
We then use this result to show
$\lim_{\epsilon \rightarrow 0} \frac{{\rm AMSE} (1, \lambda^*_1)}{-\epsilon \log\epsilon} = 2 \sigma^2$. 
The proofs are presented in \ref{ssec:ell_1omegacase}. 

\item Case II - $\aeps = o(\sqrt{-\log\epsilon})$. If $\aeps =
    \omega(1)$, then $\lim_{\epsilon \rightarrow 0} \frac{{\rm AMSE} (1,
    \lambda^*_1)}{\epsilon b_\epsilon^2 } =\mathbb{E}\tilde{G}^2$. We prove
    this result in Section \ref{ssec:ell_1ocaseproof}. 

\item Case III - $\aeps = \Theta(\sqrt{-\log\epsilon})$. If $\frac{\aeps}{\sqrt{- 2\log \epsilon}} \rightarrow c$, then  
$\lim_{\epsilon \rightarrow 0}\frac{{\rm AMSE} (1, \lambda^*_1)}{-2\epsilon
\log \epsilon} =  \mathbb{E} (\eta_1(c\tilde{G}; \sigma) -c\tilde{G})^2$.
This claim is proved in Section \ref{ssec:ell_1equalProof}. 
\end{itemize}

\subsection{Case I - $\aeps = \omega(\sqrt{-\log\epsilon})$}\label{ssec:ell_1omegacase}
Before we start, we would like to remind our reader of the definition of
$\tilde{R}$ in \eqref{eq:definitiontilde{R}}. We will study the behavior of
$\tilde{R}$ as $\epsilon \rightarrow 0$ to obtain the rate of $\amse(1, \lambda_1^*)$.
Similar to the procedures in Section \ref{ssec:roadmapstrraresig}, we
characterize the rate of $\alpha_*$ in several steps: First we
describe the behavior of the AMSE for a specific choice of $\alpha$. The
suboptimality of this special choice then narrow down the scope of the
optimal $\alpha_*$. Finally, this information about $\alpha_*$ enables us to
accurately analyze the derivative of the risk with respect to $\alpha$
and the increasing rate of $\alpha_*$.

\begin{lemma}\label{lem:firstoneEll1}
Suppose that $\aeps = \omega(\sqrt{-\log\epsilon})$. If $\alpha =  \sqrt{-2 \log\epsilon}$, then 
\begin{equation}\label{eq:RiskEll1SpecificChoice}
    \lim_{\epsilon \rightarrow 0}
    \frac{\tilde{R}_1(\alpha, \epsilon, \tau_*)}
    {-\epsilon \log\epsilon} = 2 \sigma^2. 
\end{equation}
\end{lemma}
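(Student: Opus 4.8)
The plan is to separate $\tilde R_1(\alpha,\epsilon,\tau_*)$ into the contribution of the null coordinates of $\tilde B$ and that of the nonzero coordinates, and to show that with the calibrated choice $\alpha=\sqrt{-2\log\epsilon}$ the null part is negligible against $-\epsilon\log\epsilon$ while the signal part produces the whole limit $2\sigma^2$. Using $p_{\tilde B}=(1-\epsilon)\delta_0+\epsilon p_{\tilde G}$ and the scaling identity $\eta_1(\tau u;\alpha\tau)=\tau\eta_1(u;\alpha)$ (Lemma~\ref{prox:property}(iii) at $q=1$), I would write
\begin{equation*}
\tilde R_1(\alpha,\epsilon,\tau_*)
=(1-\epsilon)\tau_*^2\,\mathbb{E}\eta_1^2(Z;\alpha)
+\epsilon\,\tau_*^2\,\mathbb{E}_{\tilde G}\, r\!\Big(\tfrac{b_\epsilon\tilde G}{\tau_*},\alpha\Big),
\end{equation*}
where $r(h,\alpha)\triangleq\mathbb{E}_Z\big(\eta_1(h+Z;\alpha)-h\big)^2$ is the soft-thresholding risk at mean $h$. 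Throughout I use that $\tau_*\to\sigma$ (already established at the start of this proof) so that $\sigma\le\tau_*\le\tilde\sigma$ stays bounded, together with the two elementary facts that make $\alpha=\sqrt{-2\log\epsilon}$ the right scale: $\phi(\alpha)=\epsilon/\sqrt{2\pi}$ and $-\epsilon\log\epsilon=\tfrac12\epsilon\alpha^2$.

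For the null part I would invoke the closed form $\mathbb{E}\eta_1^2(Z;\alpha)=2[(1+\alpha^2)\Phi(-\alpha)-\alpha\phi(\alpha)]$ (cf.\ \eqref{eq:F1CalcSoftThresholdNoise}) and the Gaussian tail expansion \eqref{gaussiantail:exp} to obtain $\mathbb{E}\eta_1^2(Z;\alpha)\sim 4\phi(\alpha)\alpha^{-3}$ as $\alpha\to\infty$. Dividing the null part by $-\epsilon\log\epsilon=\tfrac12\epsilon\alpha^2$ and substituting $\phi(\alpha)=\epsilon/\sqrt{2\pi}$ shows its contribution is of order $\alpha^{-5}\to0$; hence it is asymptotically negligible and only the signal part matters.

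The core of the argument is to prove $\mathbb{E}_{\tilde G}\,r(b_\epsilon\tilde G/\tau_*,\alpha)/\alpha^2\to 1$, since then the signal part divided by $-\epsilon\log\epsilon$ equals $2\tau_*^2\,\mathbb{E}_{\tilde G}[r/\alpha^2]\to2\sigma^2$, which gives the claimed limit. Splitting $r$ according to which branch of $\eta_1$ is active,
\begin{align*}
r(h,\alpha)
=&\;\mathbb{E}\big[(Z-\alpha)^2\,\1{Z>\alpha-h}\big]
+\mathbb{E}\big[(Z+\alpha)^2\,\1{Z<-\alpha-h}\big]\\
&\;+h^2\,\mathbb{P}\big(-\alpha-h\le Z\le\alpha-h\big).
\end{align*}
For each fixed $\tilde G>0$, the hypothesis $b_\epsilon=\omega(\sqrt{-\log\epsilon})=\omega(\alpha)$ forces $h=b_\epsilon\tilde G/\tau_*\to\infty$ with $h/\alpha\to\infty$, so the first term tends to $\mathbb{E}(Z-\alpha)^2=1+\alpha^2$ (hence to $1$ after dividing by $\alpha^2$) while the last two terms are exponentially small; thus $r(h,\alpha)/\alpha^2\to1$ pointwise in $\tilde G$. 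To pass this through $\mathbb{E}_{\tilde G}$ I would establish the uniform domination $r(h,\alpha)/\alpha^2\le C$ for all $h\ge0$ and all $\alpha\ge1$: the first two terms are bounded by $1+\alpha^2$, and for the last I split at $h=2\alpha$, using $h^2\le4\alpha^2$ when $h\le2\alpha$, and the sub-Gaussian bound $\mathbb{P}(|h+Z|\le\alpha)\le\mathbb{P}(Z\le-h/2)\le\tfrac12e^{-h^2/8}$ together with $\sup_{x\ge0}x^2e^{-x^2/8}<\infty$ when $h>2\alpha$. Since $p_{\tilde G}$ is a probability measure, DCT then yields $\mathbb{E}_{\tilde G}[r/\alpha^2]\to1$, and combining with the negligible null part completes the proof.

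The main obstacle I anticipate is precisely this domination step for the ``signal-killed'' term $h^2\,\mathbb{P}(|h+Z|\le\alpha)$: for small values of $\tilde G$ the effective mean $h$ can be comparable to $\alpha$, so one cannot simply argue that the signal always survives thresholding, and the naive bound $h^2\le4\alpha^2$ only shows boundedness rather than decay. Controlling this term uniformly in $h$ (via the split at $h=2\alpha$ and the Gaussian tail bound) is the one genuinely nontrivial estimate; everything else is scaling bookkeeping that is made clean by the choice $\alpha=\sqrt{-2\log\epsilon}$, which pins $\phi(\alpha)$ to $\epsilon/\sqrt{2\pi}$ and matches $\alpha^2$ to $-2\log\epsilon$.
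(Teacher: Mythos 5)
Your proof is correct, and it reaches the limit by the same overall strategy as the paper (split $\tilde R_1$ into the null and signal contributions, kill the null part using $\mathbb{E}\eta_1^2(Z;\alpha)\sim 4\phi(\alpha)\alpha^{-3}$ together with $\phi(\alpha)=\epsilon/\sqrt{2\pi}$, and show the signal part contributes $\epsilon\alpha^2\tau_*^2(1+o(1))$), but the treatment of the signal term is genuinely different. The paper writes $(\eta_1(u;\alpha\tau_*)-b_\epsilon\tilde G)^2$ with $u=b_\epsilon\tilde G+\tau_* Z$ as $(\eta_1(u;\alpha\tau_*)-u)^2+\tau_*^2Z^2+2\tau_* Z(\eta_1(u;\alpha\tau_*)-u)$ and applies Stein's lemma to the cross term; the dominant piece is then $F_2=\epsilon\,\mathbb{E}(\eta_1(u;\alpha\tau_*)-u)^2$, whose normalized integrand is trivially dominated by $1$ because soft thresholding moves its argument by at most $\alpha\tau_*$, while the remaining pieces are $O(\epsilon)$ since $|\partial_1\eta_1|\le 1$. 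You instead decompose the soft-thresholding risk $r(h,\alpha)$ over the three branches of $\eta_1$ and estimate each directly, which avoids Stein's lemma entirely but forces you to control the ``killed-signal'' term $h^2\,\mathbb{P}(|h+Z|\le\alpha)$ uniformly in $h$; your split at $h=2\alpha$ combined with the Gaussian tail bound does this correctly, and the rest of the argument (pointwise limit $r(h,\alpha)/\alpha^2\to 1$ for $\tilde G\ne 0$, constant domination, DCT over $p_{\tilde G}$, and $\tau_*\to\sigma$) is sound. The paper's route buys a one-line domination at the cost of the Stein manipulation; yours is more elementary and self-contained at the cost of the one nontrivial tail estimate you correctly flagged.
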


\begin{proof}
Recall the expansions of $R_1(\alpha, \tau)$ in \eqref{eq:L1-risk-expand1}, we
have the following expansion for $\tilde{R}_1(\alpha, \epsilon, \tau_*)$.
\begin{align}\label{eq:firstexpansionell_1}
    \tilde{R}_1(\alpha, \epsilon, \tau_*)
    =&
    (1-\epsilon) \tau_*^2 \mathbb{E} \eta_1^2 (Z; \alpha)
    +
    \epsilon \mathbb{E} [ \eta_1(b_\epsilon\tilde{G} + \tau_* Z; \alpha \tau_*) -
    b_\epsilon \tilde{G} - \tau_* Z ]^2 \nonumber \\
    &-
    \epsilon \tau_*^2 + 2\epsilon \tau_*^2 \mathbb{E}
    [ \partial_1\eta_1(b_\epsilon \tilde{G} + \tau_* Z; \alpha \tau_* ) ]
    \triangleq \tau_*^2(F_1 +  F_2- F_3 +  F_4).
\end{align}

As what we pointed out in \eqref{eq:F1CalcSoftThresholdNoise}, $F_1 =
2(1-\epsilon) [(1+\alpha_*^2)\Phi(-\alpha_*) - \alpha_*\phi(\alpha_*)]$.
Since $\alpha =  \sqrt{-2 \log \epsilon} \rightarrow \infty$, \eqref{gaussiantail:exp} implies that
\begin{equation}\label{eq:F1finalCalc}
    \lim_{\epsilon \rightarrow 0} \frac{F_1}{4 \phi(\alpha) / \alpha^3} = 1. 
\end{equation}

To calculate $F_2$ we note that
$\big|\eta_1\big(\frac{b_\epsilon\tilde{G}}{\alpha\tau_*} + \frac{Z}{\alpha};
1\big) - \frac{b_\epsilon\tilde{G}}{\alpha\tau_*} - \frac{Z}{\alpha} \big| \leq 1$ and $\tau_* \rightarrow
\sigma$. By using DCT and the fact that $\frac{\aeps}{\alpha} \rightarrow \infty$ we have
\begin{equation}\label{eq:F2finalCalc}
    \lim_{\epsilon \rightarrow 0} \frac{F_2}{\epsilon \alpha^2} = 1.
\end{equation}

It is straightforward to check that $|\partial_1\eta_1(b_\epsilon \tilde{G} + \tau_* Z; \alpha \tau_*)| < 1$, these give us that
\begin{equation}\label{eq:F3finalCalc}
    F_3=O(\epsilon),
    \quad
    F_4=O(\epsilon). 
\end{equation}

By combining \eqref{eq:firstexpansionell_1}, \eqref{eq:F1finalCalc}, \eqref{eq:F2finalCalc}, and \eqref{eq:F3finalCalc} we obtain \eqref{eq:RiskEll1SpecificChoice}. 
\end{proof}

Our next lemma provides a more refined information about $\alpha_*$.

\begin{lemma}\label{lem:orderell1firstcase1}
    For $\aeps = \omega(\sqrt{-\log \epsilon})$ there exists a $c \in [0,1]$, such that
\begin{equation*}
    \lim_{\epsilon \rightarrow 0} \frac{\sqrt{- 2 \log \epsilon}}{\alpha_*} = c.
\end{equation*}
\end{lemma}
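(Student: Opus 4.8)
The plan is to show that the normalised threshold $\alpha_*$ grows at least as fast as $\sqrt{-2\log\epsilon}$, so that the ratio $r_\epsilon:=\sqrt{-2\log\epsilon}/\alpha_*$ stays bounded and, passing to a subsequence if necessary, converges to some $c\in[0,1]$. The single engine behind the whole argument is the suboptimality comparison: since $\alpha_*$ minimises $\tilde R_1(\cdot,\epsilon,\tau_*)$, Lemma \ref{lem:firstoneEll1} gives
\begin{equation*}
\tilde R_1(\alpha_*,\epsilon,\tau_*)\le \tilde R_1\big(\sqrt{-2\log\epsilon},\epsilon,\tau_*\big)=\big(2\sigma^2+o(1)\big)\,(-\epsilon\log\epsilon).
\end{equation*}
I will combine this benchmark with the nonnegative decomposition $\tilde R_1(\alpha,\epsilon,\tau_*)=(1-\epsilon)\tau_*^2\,\mathbb E\eta_1^2(Z;\alpha)+\epsilon\,\mathbb E\big(\eta_1(b_\epsilon\tilde G+\tau_*Z;\alpha\tau_*)-b_\epsilon\tilde G\big)^2$, each summand of which lower bounds the risk.

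First comes the lower bound on $\alpha_*$ (which delivers $c\le1$). Keeping only the noise summand and using \eqref{eq:F1CalcSoftThresholdNoise} together with the Gaussian tail expansion \eqref{gaussiantail:exp}, one has $\mathbb E\eta_1^2(Z;\alpha)=2[(1+\alpha^2)\Phi(-\alpha)-\alpha\phi(\alpha)]\sim 4\phi(\alpha)/\alpha^3$ as $\alpha\to\infty$. Since $\tau_*\to\sigma$ and $\alpha_*\to\infty$, the comparison forces $\phi(\alpha_*)/\alpha_*^3\le \tfrac12(-\epsilon\log\epsilon)(1+o(1))$, and taking logarithms yields
\begin{equation*}
\alpha_*^2\ \ge\ -2\log\epsilon-6\log\alpha_*-O\big(\log(-\log\epsilon)\big).
\end{equation*}
The only obstruction to concluding $\alpha_*^2\ge(-2\log\epsilon)(1-o(1))$ is the stray term $6\log\alpha_*$, which must be shown to be $o(-\log\epsilon)$. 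This is the technical heart of the proof, and it is resolved by an a priori upper bound on $\alpha_*$ coming from the \emph{other} summand.

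To control $\log\alpha_*$ I will establish $\alpha_*=O(\sqrt{-\log\epsilon})$ using the signal summand. Writing $\rho_S(\mu,\chi):=\mathbb E(\eta_1(\mu+Z;\chi)-\mu)^2$, the scaling property of $\eta_1$ gives the signal term $=\epsilon\tau_*^2\,\mathbb E\,\rho_S(b_\epsilon\tilde G/\tau_*,\alpha_*)$, and the elementary soft-threshold lower bound $\rho_S(\mu,\chi)\ge \tfrac12\min(\mu^2,1+\chi^2)$ (monotone in $\mu$) yields, after fixing $a>0$ with $p_a:=\mathbb P(\tilde G\ge a)>0$, the estimate $\tfrac12\epsilon\tau_*^2 p_a\,\min\big((ab_\epsilon/\tau_*)^2,1+\alpha_*^2\big)\le \tilde R_1(\alpha_*,\epsilon,\tau_*)$. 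Because $b_\epsilon=\omega(\sqrt{-\log\epsilon})$, the first argument of the minimum is $\omega(-\log\epsilon)$, so the comparison with the benchmark $O(-\epsilon\log\epsilon)$ can hold only if the minimum is attained at $1+\alpha_*^2$ and $1+\alpha_*^2=O(-\log\epsilon)$. Hence $\alpha_*=O(\sqrt{-\log\epsilon})$ and $\log\alpha_*=O(\log\log\epsilon^{-1})=o(-\log\epsilon)$, exactly as the first step required.

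Feeding this back into the displayed lower bound gives $\alpha_*^2\ge(-2\log\epsilon)(1-o(1))$, i.e. $\limsup_{\epsilon\to0}r_\epsilon\le1$. Combined with $r_\epsilon\ge0$ this shows $\{r_\epsilon\}$ is bounded, so by Bolzano–Weierstrass every sequence $\epsilon\to0$ admits a subsequence along which $r_\epsilon\to c$ for some $c\in[0,1]$, which is the assertion; the exact value $c=1$ is then pinned down in the subsequent lemma by inserting this rate into the first-order optimality condition \eqref{eq:L1-risk-deri-expand}. The main difficulty, as flagged above, is the apparently circular need to bound $\log\alpha_*$ in the noise estimate, broken by first extracting the crude upper bound $\alpha_*=O(\sqrt{-\log\epsilon})$ from the signal estimate and only then sharpening the lower bound.
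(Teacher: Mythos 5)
Your proof is correct, and its engine is the same as the paper's: compare the optimal risk against the benchmark $\tilde R_1(\sqrt{-2\log\epsilon},\epsilon,\tau_*)\sim -2\sigma^2\epsilon\log\epsilon$ from Lemma \ref{lem:firstoneEll1}, and lower-bound $\tilde R_1(\alpha_*,\epsilon,\tau_*)$ by the pure-noise summand $(1-\epsilon)\tau_*^2\mathbb{E}\eta_1^2(Z;\alpha_*)\sim 4\sigma^2\phi(\alpha_*)/\alpha_*^3$. Where you diverge is in converting the resulting inequality $\phi(\alpha_*)/\alpha_*^3\lesssim -\epsilon\log\epsilon$ into $\limsup\sqrt{-2\log\epsilon}/\alpha_*\le 1$: the paper argues by direct contradiction — if the ratio exceeded $1$ then $\alpha_*^3\le(-2\log\epsilon)^{3/2}$ while $\phi(\alpha_*)\ge\epsilon^{1/c'^2+o(1)}$ with $c'>1$, so the risk ratio blows up — whereas you take logarithms and then spend effort controlling the stray $6\log\alpha_*$ term via an a priori bound $\alpha_*=O(\sqrt{-\log\epsilon})$ extracted from the signal summand and the lower bound $\rho_S(\mu,\chi)\ge c\min(\mu^2,1+\chi^2)$. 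That detour is correct (any universal constant $c>0$ in place of your $\tfrac12$ suffices, and the resulting bound is a true fact that foreshadows Lemma \ref{lem:aepsalphainfty}), but it is not actually needed: if along some subsequence $\alpha_*/\sqrt{-2\log\epsilon}\to c''<1$, then on that subsequence $\log\alpha_*=O(\log\log\epsilon^{-1})=o(-\log\epsilon)$ automatically, and your displayed inequality already forces $c''\ge1$; in the complementary regime $\alpha_*\gg\sqrt{-\log\epsilon}$ the conclusion is trivial. So what your route buys is an explicit quantitative upper bound on $\alpha_*$ as a by-product, at the cost of one extra (dispensable) estimate; the paper's contradiction argument is shorter. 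Your closing appeal to Bolzano--Weierstrass yields only subsequential limits, but this matches the paper's own level of rigor, and the value $c=1$ (hence the full limit) is pinned down afterwards in Theorem \ref{thm:LASSOcase1finalTheorem}.
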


\begin{proof}
From \eqref{eq:firstexpansionell_1} we have
\begin{align}
    \lim_{\epsilon \rightarrow 0}
    \frac{\tilde{R}_1(\alpha_*, \epsilon, \tau_*)} {\tilde{R}_1(\sqrt{- 2 \log \epsilon}, \epsilon, \tau_*)}
    \geq&
    \lim_{\epsilon \rightarrow 0} \frac{(1 - \epsilon) \tau_*^2 \mathbb{E} \eta_1^2(Z;\alpha_*) }
    {\tilde{R}_1(\sqrt{- 2 \log \epsilon}, \epsilon, \tau_*)}
    =
    \lim_{\epsilon \rightarrow 0} \frac{2 (1-\epsilon) \phi(\alpha_*)}
    { - \alpha_*^3 \epsilon \log\epsilon},
\end{align}
where the second inequality is due to \eqref{eq:RiskEll1SpecificChoice} and
\eqref{eq:F1finalCalc}. Furthermore, if $\lim_{\epsilon \rightarrow 0}
\frac{\sqrt{- 2 \log \epsilon}}{\alpha_*} >1$, then
\begin{equation}
    \lim_{\epsilon \rightarrow 0}
    \frac{2 (1-\epsilon) \phi(\alpha_*) }{ - \alpha_*^3 \epsilon \log \epsilon}
    >
    \lim_{\epsilon \rightarrow 0} \frac{ (1 - \epsilon) \phi(\alpha_*) }
    { \sqrt{2} \epsilon (- \log\epsilon)^{\frac{5}{2}}} \rightarrow \infty. 
\end{equation}
This is in contradiction with the optimality of $\alpha_*$. 
\end{proof}

\begin{lemma}\label{lem:aepsalphainfty}
For $\aeps = \omega(\sqrt{-\log \epsilon})$, we have
\[
\lim_{\epsilon \rightarrow 0} \frac{\aeps}{\alpha_*} = \infty. 
\]
\end{lemma}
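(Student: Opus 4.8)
The plan is to argue by contradiction, exploiting that the optimal risk cannot exceed the risk at the benchmark threshold $\alpha=\sqrt{-2\log\epsilon}$, which Lemma~\ref{lem:firstoneEll1} pins at order $-\epsilon\log\epsilon$. Suppose the claim fails, i.e. $b_\epsilon/\alpha_*\not\to\infty$. Then $\limsup_{\epsilon\to 0}\alpha_*/b_\epsilon>0$, so after passing to a subsequence I may assume $\alpha_*/b_\epsilon\to L$ for some $L\in(0,\infty]$. Along this subsequence I will show that the \emph{signal} part of the risk alone is of order $\epsilon b_\epsilon^2$, which by the hypothesis $b_\epsilon=\omega(\sqrt{-\log\epsilon})$ is strictly larger than the benchmark order $\epsilon(-\log\epsilon)$; this contradicts the optimality of $\alpha_*$.

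First I would discard the (nonnegative) noise term and retain only the signal term of $\tilde R_1(\alpha_*,\epsilon,\tau_*)$ from the decomposition in \eqref{eq:firstexpansionell_1}, namely $\epsilon\,\mathbb{E}[\eta_1(b_\epsilon\tilde G+\tau_*Z;\alpha_*\tau_*)-b_\epsilon\tilde G]^2$. Using the soft-threshold scaling $\eta_1(b_\epsilon a;b_\epsilon\chi)=b_\epsilon\eta_1(a;\chi)$ with $a=\tilde G+(\tau_*/b_\epsilon)Z$ and $\chi=\alpha_*\tau_*/b_\epsilon$, this term equals $\epsilon b_\epsilon^2\,\mathbb{E}\big[\eta_1\big(\tilde G+\tfrac{\tau_*}{b_\epsilon}Z;\tfrac{\alpha_*\tau_*}{b_\epsilon}\big)-\tilde G\big]^2$. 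Since $\tau_*\to\sigma$ (established earlier) and $b_\epsilon\to\infty$, we have $\tau_*/b_\epsilon\to 0$ and $\alpha_*\tau_*/b_\epsilon\to L\sigma$. The integrand then converges pointwise to $[\eta_1(\tilde G;L\sigma)-\tilde G]^2$ (interpreted as $\tilde G^2$ when $L=\infty$), and is dominated in $L^1$ by $(2|\tilde G|+|Z|)^2$ for small $\epsilon$ because $|\eta_1(u;\chi)|\le|u|$ and $\mathbb{E}\tilde G^2=1$. Hence by DCT the bracketed expectation converges to a constant $K$.

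The crux is that $K>0$. When $L<\infty$ this holds because $\eta_1(u;L\sigma)\ne u$ for every $u\ne 0$ while $\tilde G\ne 0$ almost surely (as $p_G$ has no mass at $0$), so $\eta_1(\tilde G;L\sigma)-\tilde G$ is nonzero on a set of positive measure; when $L=\infty$ one gets $K=\mathbb{E}\tilde G^2=1>0$. Consequently $\tilde R_1(\alpha_*,\epsilon,\tau_*)\ge \epsilon b_\epsilon^2 K(1+o(1))$ along the subsequence. On the other hand, optimality of $\alpha_*$ together with Lemma~\ref{lem:firstoneEll1} gives $\tilde R_1(\alpha_*,\epsilon,\tau_*)\le \tilde R_1(\sqrt{-2\log\epsilon},\epsilon,\tau_*)\sim 2\sigma^2\epsilon(-\log\epsilon)$. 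Dividing, $\epsilon b_\epsilon^2K/[\epsilon(-\log\epsilon)]=K\,b_\epsilon^2/(-\log\epsilon)\to\infty$ since $b_\epsilon=\omega(\sqrt{-\log\epsilon})$, which is the desired contradiction.

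The main obstacle is organizational rather than computational: one must treat the borderline regime $L=\infty$ (where the threshold swamps the signal and $\eta_1$ collapses to $0$) on the same footing as finite $L$, and verify the positivity $K>0$ together with the $L^2$ domination needed for DCT. Once the signal term is reduced via the scaling identity to $\epsilon b_\epsilon^2$ times a convergent expectation, the contradiction follows immediately from comparing the two growth rates $\epsilon b_\epsilon^2$ and $\epsilon(-\log\epsilon)$.
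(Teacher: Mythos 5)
Your argument is correct and follows essentially the same route as the paper's: lower-bound the optimal risk by the signal term alone, show via the soft-threshold scaling identity and DCT that under the contradiction hypothesis this term is of order $\epsilon b_\epsilon^2$, and compare against the benchmark $\tilde{R}_1(\sqrt{-2\log\epsilon},\epsilon,\tau_*)\sim -2\sigma^2\epsilon\log\epsilon$ from Lemma \ref{lem:firstoneEll1} to contradict the optimality of $\alpha_*$. The only difference is organizational: you merge the paper's two cases ($\aeps/\alpha_*\to 0$ and $\aeps/\alpha_*\to c<\infty$) into a single subsequential limit $L\in(0,\infty]$ and explicitly verify the positivity of the limiting constant $K=\mathbb{E}\min(L^2\sigma^2,\tilde{G}^2)$, a detail the paper skips ``for brevity.''
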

\begin{proof}
We would like to prove this with contradiction. First, suppose that $\lim_{\epsilon \rightarrow 0} \frac{\aeps}{\alpha_*}  = 0$. Under this assumption, we have
\begin{eqnarray}\label{eq:orderaepsalpha1first}
 \lim_{\epsilon \rightarrow 0}  \frac{ \mathbb{E}
  \big[ \eta_1\big(b_\epsilon\tilde{G} + \tau_* Z; \alpha_* \tau_* \big) -
    b_\epsilon \tilde{G} \big]^2}{\mathbb{E}(\aeps^2 \tilde{G}^2)} = \lim_{\epsilon \rightarrow 0} \frac{ \mathbb{E}
  \big[ \eta_1\big(\tilde{G} + \frac{\tau_*}{\aeps} Z; \frac{\alpha_*}{\aeps} \tau_* \big) -
     \tilde{G} \big]^2}{\mathbb{E}(\tilde{G}^2)}=1,
\end{eqnarray}
where to obtain the last equality we have used DCT. Now we have
\begin{align}
    \lim_{\epsilon \rightarrow 0}
    \frac{\tilde{R}_1(\alpha_*, \epsilon, \tau_*)} {\tilde{R}_1(\sqrt{- 2 \log \epsilon}, \epsilon, \tau_*)}
    \geq
    \lim_{\epsilon \rightarrow 0} \frac{\epsilon \mathbb{E}
  \big[ \eta_1\big(b_\epsilon\tilde{G} + \tau_* Z; \alpha_* \tau_* \big) -
    b_\epsilon \tilde{G} \big]^2}
    {\tilde{R}_1(\sqrt{- 2 \log \epsilon}, \epsilon, \tau_*)}
    \overset{(a)}{=}
    \lim_{\epsilon \rightarrow 0} \frac{\epsilon \aeps^2 \mathbb{E} (\tilde{G}^2)}
    { - \epsilon \log\epsilon} = \infty.
\end{align}
Equality (a) is due to \eqref{eq:orderaepsalpha1first}, and the last equality is in contradiction with the optimality of $\alpha_*$. Similarly, we can show that if $\lim_{\epsilon \rightarrow 0} \frac{\aeps}{\alpha_*} = c$ ($c< \infty$), then $ \lim_{\epsilon \rightarrow 0} \frac{\tilde{R}_1(\alpha_*, \epsilon, \tau_*)} {\tilde{R}_1(\sqrt{- 2 \log \epsilon}, \epsilon, \tau_*)}= \infty$, which is again in contradiction with the optimality of $\alpha_*$. For brevity we skip this proof.
\end{proof}

\begin{theorem}\label{thm:LASSOcase1finalTheorem}
If $\aeps = \omega(\sqrt{ - \log \epsilon})$, then
\begin{equation}
    \lim_{\epsilon \rightarrow 0} \frac{\alpha_*}{ \sqrt{ - 2 \log \epsilon}} = 1. 
\end{equation}
\end{theorem}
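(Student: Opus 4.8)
The plan is to determine the exact growth rate of $\alpha_*$ from the first-order optimality condition $\partial_\alpha \tilde{R}_1(\alpha_*,\epsilon,\tau_*)=0$. By Lemma \ref{lem:orderell1firstcase1} we already know that $c=\lim_{\epsilon\to 0}\frac{\sqrt{-2\log\epsilon}}{\alpha_*}$ exists and satisfies $c\in[0,1]$; in particular $\liminf_{\epsilon\to 0}\frac{\alpha_*}{\sqrt{-2\log\epsilon}}\geq 1$. Thus it suffices to establish the matching upper bound $\limsup_{\epsilon\to 0}\frac{\alpha_*}{\sqrt{-2\log\epsilon}}\leq 1$, which forces $c=1$ and hence the claim. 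I would extract this upper bound by balancing the three contributions to the derivative of the risk.

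First I would write the optimality condition using the expansion \eqref{eq:L1-risk-deri-expand} with $G$ replaced by $\aeps\tilde{G}$, so that $\partial_\alpha\tilde{R}_1(\alpha_*,\epsilon,\tau_*)$ equals $2(1-\epsilon)[\alpha_*\Phi(-\alpha_*)-\phi(\alpha_*)]$ plus the two signal terms $\epsilon\mathbb{E}[\alpha_*\Phi(\tfrac{\aeps|\tilde G|}{\tau_*}-\alpha_*)-\phi(\alpha_*-\tfrac{\aeps|\tilde G|}{\tau_*})]$ and $\epsilon\mathbb{E}[\alpha_*\Phi(-\tfrac{\aeps|\tilde G|}{\tau_*}-\alpha_*)-\phi(\alpha_*+\tfrac{\aeps|\tilde G|}{\tau_*})]$. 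Since $\alpha_*\to\infty$, the Gaussian-tail expansion \eqref{gaussiantail:exp} gives $\alpha_*\Phi(-\alpha_*)-\phi(\alpha_*)=-\frac{\phi(\alpha_*)}{\alpha_*^2}(1+o(1))$, so the null contribution is negative and of order $\phi(\alpha_*)/\alpha_*^2$. For the first signal term, Lemma \ref{lem:aepsalphainfty} gives $\aeps/\alpha_*\to\infty$, so for a.e.\ $\tilde G\neq 0$ the argument $\frac{\aeps|\tilde G|}{\tau_*}-\alpha_*\to\infty$; dominated convergence (using $\Phi\le 1$ and $\phi\le\tfrac{1}{\sqrt{2\pi}}$, together with $\tau_*\to\sigma$) then yields $\mathbb{E}\Phi(\tfrac{\aeps|\tilde G|}{\tau_*}-\alpha_*)\to 1$ and $\mathbb{E}\phi(\alpha_*-\tfrac{\aeps|\tilde G|}{\tau_*})\to 0$, so this term is $\epsilon\alpha_*(1+o(1))$. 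The second signal term is negligible because both $\Phi(-\tfrac{\aeps|\tilde G|}{\tau_*}-\alpha_*)$ and $\phi(\alpha_*+\tfrac{\aeps|\tilde G|}{\tau_*})$ vanish, even after multiplication by $\alpha_*$. Setting the sum to zero then produces the balance $\epsilon\alpha_*^3=2\phi(\alpha_*)(1+o(1))$.

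Taking logarithms in this balance and using $\phi(\alpha_*)=\tfrac{1}{\sqrt{2\pi}}e^{-\alpha_*^2/2}$ gives $\alpha_*^2=-2\log\epsilon-6\log\alpha_*+O(1)$. Because $\alpha_*\to\infty$, eventually $\log\alpha_*>0$, whence $\alpha_*^2\leq -2\log\epsilon+O(1)$ and therefore $\limsup_{\epsilon\to0}\frac{\alpha_*}{\sqrt{-2\log\epsilon}}\leq 1$. Combined with the lower bound from Lemma \ref{lem:orderell1firstcase1}, this proves $\lim_{\epsilon\to0}\frac{\alpha_*}{\sqrt{-2\log\epsilon}}=1$. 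As a consistency check, feeding $\alpha_*=\Theta(\sqrt{-\log\epsilon})$ back in shows $\log\alpha_*=\tfrac12\log(-2\log\epsilon)+O(1)=o(-\log\epsilon)$, so the correction $6\log\alpha_*$ is indeed subdominant.

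I expect the main obstacle to be the rigorous justification of the asymptotic evaluation of the signal terms: one must verify the dominated-convergence hypotheses and, more delicately, legitimately compare an exponentially small null contribution of order $\phi(\alpha_*)/\alpha_*^2$ against the signal contribution of order $\epsilon\alpha_*$ while tracking the $(1+o(1))$ factors through the logarithm. Controlling the $\log\alpha_*$ term in the final logarithmic balance — which requires the two-sided a priori bound $\alpha_*=\Theta(\sqrt{-\log\epsilon})$, the lower side from Lemma \ref{lem:orderell1firstcase1} and the upper side bootstrapped from the balance itself — is the other point that needs care.
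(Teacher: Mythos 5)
Your proposal is correct and follows essentially the same route as the paper's proof: the same first-order optimality condition with the same three-term decomposition of $\partial_\alpha \tilde{R}_1$, the same Gaussian-tail evaluation of the null term as $-\phi(\alpha_*)/\alpha_*^2$, the same use of Lemma \ref{lem:aepsalphainfty} and dominated convergence to get the signal contribution $\epsilon\alpha_*(1+o(1))$, and the same logarithmic balance $\epsilon\alpha_*^3 \sim 2\phi(\alpha_*)$. The only (immaterial) difference is that you extract the limit by combining a one-sided bound with the a priori lower bound from Lemma \ref{lem:orderell1firstcase1}, whereas the paper divides the logarithmic identity by $\alpha_*^2$ directly.
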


\begin{proof}
We analyze the derivative of the risk with respect to $\alpha$. Recall the form
of $\frac{\partial R_1(\alpha, \tau)}{\partial\alpha}$ in
\eqref{eq:L1-risk-deri-expand}, we have
\begin{align}
    \frac{1}{\tau_*^2}\frac{\partial \tilde{R}_1(\alpha, \epsilon,
    \tau_*)}{\partial \alpha} \Big|_{\alpha=\alpha_*}
    =&
    2(1-\epsilon)\underbrace{[-\phi(\alpha_*) + \alpha_*\Phi(-\alpha_*)]}_{:=D_1}+\epsilon \underbrace{\mathbb{E}
        \Big[\alpha_*\Phi(\frac{|b_\epsilon\tilde{G}|}{\tau_*}-\alpha_*)-\phi(\alpha_*-\frac{|b_\epsilon\tilde{G}|}{\tau_*})
    \Big]}_{:=D_2} \nonumber \\
    &+
    \epsilon\underbrace{\mathbb{E}\Big[\alpha_*\Phi(-\frac{|b_\epsilon\tilde{G}|}{\tau_*}-\alpha_*) - \phi(\alpha_*+\frac{|b_\epsilon\tilde{G}|}{\tau_*})\Big]}_{:=D_3}
    \label{eq:ell_1firstderivative}
\end{align}

Since $\alpha_* \rightarrow \infty$, similar calculations as the one presented
\eqref{eq:F1finalCalc} lead to the rate for $D_1$; On the other hand, according
to Lemma \ref{lem:aepsalphainfty}, $\aeps/\alpha_* \rightarrow \infty$, This
gives us the rate for $D_2 + D_3$. Overall we have
\begin{equation}\label{eq:calc2D1Firstcase}
    \lim_{\epsilon \rightarrow 0} \frac{D_1}{\phi(\alpha_*)/ \alpha_*^2}
    \rightarrow -1,
    \qquad
    \lim_{\epsilon \rightarrow 0} \frac{D_2+D_3}{\alpha_*} \rightarrow 1. 
\end{equation}

Hence, by combining \eqref{eq:ell_1firstderivative} and \eqref{eq:calc2D1Firstcase}, we have
\[
\lim_{\epsilon \rightarrow 0} \frac{2\phi(\alpha_*)/\alpha_*^2}{\epsilon \alpha_*} = \lim_{\epsilon \rightarrow 0} \frac{- \frac{\epsilon D_2 +\epsilon D_3}{\epsilon \alpha_*}}{ D_1 \frac{\alpha_*^2}{ \phi(\alpha_*)} } = 1.
\]
By taking logarithm, $\lim_{\epsilon \rightarrow 0} - \frac{\alpha^2}{2 } - 3 \log \alpha - \log \epsilon =0.$ Since $\alpha \rightarrow \infty$, $\lim_{\epsilon \rightarrow 0} - \frac{1}{2} - \frac{\log \epsilon}{\alpha^2}= 0$. \end{proof}

Combining Lemma \ref{lem:firstoneEll1} and Theorem \ref{thm:LASSOcase1finalTheorem} proves  
$\lim_{\epsilon \rightarrow 0} \frac{{\rm AMSE} (1, \lambda^*_1) }{{-2 \sigma^2 \epsilon}\log\epsilon } =1$.

\subsection{Case II- $\aeps = o(\sqrt{- \log\epsilon})$}\label{ssec:ell_1ocaseproof}

\begin{lemma}\label{lem:lassolog2}
If $\aeps = \omega(1)$ and $\aeps = o(\sqrt{-\log\epsilon})$, then there exists $c \in [0,1]$, such that 
\begin{equation}\label{eq:cgoodbound}
\lim_{\epsilon \rightarrow 0} \frac{ \sqrt{-2 \log \epsilon}}{\alpha_*} = c,
\end{equation}
\end{lemma}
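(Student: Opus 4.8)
The plan is to mirror the strategy used for the analogous claim in Case~I (Lemma~\ref{lem:orderell1firstcase1}): sandwich the optimal risk $\tilde{R}_1(\alpha_*,\epsilon,\tau_*)$ between an easy upper bound and a sharp lower bound from the noise part, and then read off the growth rate of $\alpha_*$ by taking logarithms. Recall from the roadmap that $\alpha_*\to\infty$ and $\tau_*\to\sigma$ as $\epsilon\to0$, and that by \eqref{eq:se11} the threshold $\alpha_*$ is the minimizer $\arg\min_{\alpha>0}\tilde{R}_1(\alpha,\epsilon,\tau_*)$, where $\tilde{R}_1$ is defined in \eqref{eq:definitiontilde{R}}. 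Throughout I would use the positive decomposition
\[
\tilde{R}_1(\alpha,\epsilon,\tau_*)=(1-\epsilon)\tau_*^2\,\mathbb{E}\,\eta_1^2(Z;\alpha)+\epsilon\,\mathbb{E}\big[\eta_1(\aeps\tilde{G}+\tau_* Z;\alpha\tau_*)-\aeps\tilde{G}\big]^2,
\]
obtained from \eqref{eq:definitiontilde{R}} by conditioning on $\tilde{B}=0$ versus $\tilde{B}=\tilde{G}$ and using the homogeneity $\eta_1(\tau_* Z;\alpha\tau_*)=\tau_*\eta_1(Z;\alpha)$ of soft-thresholding; both terms are nonnegative.

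For the upper bound I would send $\alpha\to\infty$: the threshold explodes, so $\eta_1(\aeps\tilde G+\tau_* Z;\alpha\tau_*)\to0$ pointwise, and dominated convergence (the $q=1$ analogue of Lemma~\ref{lem:chiinfty}) gives $\lim_{\alpha\to\infty}\tilde{R}_1(\alpha,\epsilon,\tau_*)=\epsilon\aeps^2\mathbb{E}\tilde G^2$. Since $\alpha_*$ is the minimizer, this yields at once $\tilde{R}_1(\alpha_*,\epsilon,\tau_*)\le\epsilon\aeps^2\mathbb{E}\tilde G^2$. For the matching lower bound I would drop the nonnegative signal term, so that $\tilde{R}_1(\alpha_*,\epsilon,\tau_*)\ge(1-\epsilon)\tau_*^2\,\mathbb{E}\,\eta_1^2(Z;\alpha_*)$, and then invoke the exact soft-threshold moment $\mathbb{E}\eta_1^2(Z;\alpha_*)=2[(1+\alpha_*^2)\Phi(-\alpha_*)-\alpha_*\phi(\alpha_*)]$ together with the Gaussian tail expansion \eqref{gaussiantail:exp} to get $\mathbb{E}\eta_1^2(Z;\alpha_*)\sim 4\phi(\alpha_*)/\alpha_*^3$ as $\alpha_*\to\infty$, cf.\ \eqref{eq:F1finalCalc}.

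Combining the two bounds gives $\phi(\alpha_*)/\alpha_*^3=O(\epsilon\aeps^2)$, i.e.\ $e^{-\alpha_*^2/2}\le C\,\epsilon\,\aeps^2\,\alpha_*^3$ for some constant $C$. Taking logarithms,
\[
\tfrac12\alpha_*^2\ \ge\ -\log\epsilon-2\log\aeps-3\log\alpha_*-O(1).
\]
Here the hypothesis $\aeps=o(\sqrt{-\log\epsilon})$ enters, forcing $\log\aeps\le\tfrac12\log(-\log\epsilon)=o(-\log\epsilon)$ eventually (the regime marker $\aeps=\omega(1)$ only ensures $\log\aeps\to\infty$, which is harmless). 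Since $\alpha_*\to\infty$ we also have $3\log\alpha_*=o(\alpha_*^2)$, so the displayed inequality self-improves to $\alpha_*^2\ge-2\log\epsilon\,(1-o(1))$, which is precisely $\limsup_{\epsilon\to0}\sqrt{-2\log\epsilon}/\alpha_*\le1$. As the ratio is nonnegative and bounded above by $1+o(1)$, passing to a convergent subsequence produces the desired $c\in[0,1]$ with $\sqrt{-2\log\epsilon}/\alpha_*\to c$.

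The main obstacle is the logarithmic step: it is tempting but illegitimate to discard $3\log\alpha_*$ before one has an a priori upper bound on $\alpha_*$, so the argument must handle this term self-referentially, using only $\log\alpha_*=o(\alpha_*^2)$, and must confirm that the signal scale $\aeps$ contributes only sublogarithmically — exactly the role played by $\aeps=o(\sqrt{-\log\epsilon})$. I expect the remaining ingredients (the $\alpha\to\infty$ limit and the tail asymptotics of $\mathbb{E}\eta_1^2(Z;\alpha_*)$) to be routine. As in Case~I, this lemma delivers only the one-sided bound $c\in[0,1]$; pinning down the exact value of $c$ would require a separate first-order analysis of $\partial_\alpha\tilde{R}_1$ in the spirit of Theorem~\ref{thm:LASSOcase1finalTheorem}.
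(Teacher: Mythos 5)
Your proposal is correct and follows essentially the same route as the paper: upper-bound the optimal risk by its $\alpha\to\infty$ limit $\epsilon \aeps^2\mathbb{E}\tilde G^2$, lower-bound it by the noise term $(1-\epsilon)\tau_*^2\mathbb{E}\eta_1^2(Z;\alpha_*)\sim 4\sigma^2\phi(\alpha_*)/\alpha_*^3$, and extract the growth rate of $\alpha_*$ by taking logarithms. The paper compresses the final step into ``it is straightforward to see''; your explicit handling of the $\log\aeps$ and $\log\alpha_*$ terms is exactly the content being elided, and your caveat that the argument really delivers $\limsup\sqrt{-2\log\epsilon}/\alpha_*\le 1$ (with the limit then taken along a subsequence) matches the level of precision the paper itself operates at and suffices for how the lemma is used downstream.
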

\begin{proof}
Since $\lim_{\alpha \rightarrow \infty} \tilde{R}_1(\alpha, \epsilon, \tau_*) = \epsilon b_\epsilon^2 \mathbb{E} \tilde{G}^2$, we have
\begin{eqnarray}
\lim_{\epsilon \rightarrow 0} \frac{\tilde{R}_1(\alpha_*, \epsilon, \tau_*)}{\epsilon b_\epsilon^2 } \leq \mathbb{E} \tilde{G}^2. 
\end{eqnarray}
Note that $\tilde{R}_1(\alpha_*, \epsilon, \tau_*) \geq (1-\epsilon) \tau_*^2
\mathbb{E} \eta_1^2 (Z;\alpha_* ).$ Hence, 
\[
    \lim_{\epsilon \rightarrow 0} \frac{(1-\epsilon) \tau_*^2 \mathbb{E} \eta_1^2 (Z;\alpha_*)}{\epsilon b_\epsilon^2 } \leq \mathbb{E} \tilde{G}^2.
\]
In \eqref{eq:F1CalcSoftThresholdNoise} and \eqref{eq:F1finalCalc} we prove that
$\lim_{\epsilon \rightarrow 0} \frac{(1-\epsilon) \mathbb{E} \eta_1^2(Z; \alpha_*)}
{\phi(\alpha_*) \alpha_*^{-3} }   = 1.$ Hence, 
\begin{equation}\label{eq:whatconfirmedell_1}
\lim_{\epsilon \rightarrow 0}  \frac{\tau_*^2 \phi(\alpha_*) \alpha_*^{-3} }{\epsilon b_\epsilon^2 } =  \lim_{\epsilon \rightarrow 0}  \frac{\sigma^2 \phi(\alpha_*) \alpha_*^{-3} }{\epsilon b_\epsilon^2 }\leq \mathbb{E} (\tilde{G})^2.
\end{equation}
It is straightforward to see that if \eqref{eq:cgoodbound} does not hold, then \eqref{eq:whatconfirmedell_1} will not be correct either. Hence, our claim is proved. \end{proof}

\begin{theorem}
If $\aeps = \omega(1)$ and $\aeps = o(\sqrt{-\log\epsilon})$, then 
\begin{equation*}
    \lim_{\epsilon \rightarrow 0} \frac{\tilde{R}_1(\alpha_*, \epsilon, \tau_*)}
    {\epsilon b_\epsilon^2 } =\mathbb{E}\tilde{G}^2. 
\end{equation*}
In other words, its dominant term is the same as that of $\alpha = \infty$. 
\end{theorem}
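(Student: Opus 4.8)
The plan is to prove the claimed equality by sandwiching the ratio $\frac{\tilde{R}_1(\alpha_*,\epsilon,\tau_*)}{\epsilon b_\epsilon^2}$ between matching upper and lower bounds. The upper bound is already available: since $\alpha_*$ minimizes $\tilde{R}_1(\cdot,\epsilon,\tau_*)$ and $\lim_{\alpha\to\infty}\tilde{R}_1(\alpha,\epsilon,\tau_*)=\epsilon b_\epsilon^2\,\mathbb{E}\tilde{G}^2$, we have $\tilde{R}_1(\alpha_*,\epsilon,\tau_*)\le\epsilon b_\epsilon^2\,\mathbb{E}\tilde{G}^2$, which was already recorded in the proof of Lemma \ref{lem:lassolog2} and yields $\limsup_{\epsilon\to 0}\frac{\tilde{R}_1(\alpha_*,\epsilon,\tau_*)}{\epsilon b_\epsilon^2}\le\mathbb{E}\tilde{G}^2$. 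Hence the whole task reduces to establishing the matching lower bound.

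For the lower bound I would keep only the signal term in the decomposition $\tilde{R}_1(\alpha_*,\epsilon,\tau_*)=(1-\epsilon)\tau_*^2\mathbb{E}\eta_1^2(Z;\alpha_*)+\epsilon\mathbb{E}(\eta_1(b_\epsilon\tilde{G}+\tau_* Z;\alpha_*\tau_*)-b_\epsilon\tilde{G})^2$, dropping the nonnegative noise term. Using the elementary scaling identity $\eta_1(au;a\chi)=a\eta_1(u;\chi)$ for $a>0$ to factor out $b_\epsilon$ from both the argument and the threshold, this gives
\[
\frac{\tilde{R}_1(\alpha_*,\epsilon,\tau_*)}{\epsilon b_\epsilon^2}\ \ge\ \mathbb{E}\left(\eta_1\!\left(\tilde{G}+\frac{\tau_*}{b_\epsilon}Z;\ \frac{\alpha_*\tau_*}{b_\epsilon}\right)-\tilde{G}\right)^2.
\]
The crux is then to show the right-hand side converges to $\mathbb{E}\tilde{G}^2$ by DCT. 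Two facts about the arguments are needed: $\frac{\tau_*}{b_\epsilon}\to 0$ (immediate from $\tau_*\to\sigma$ and, by the hypothesis $b_\epsilon=\omega(1)$, $b_\epsilon\to\infty$), and $\frac{\alpha_*\tau_*}{b_\epsilon}\to\infty$. The latter is where the hypotheses enter decisively and is the step I expect to be the main obstacle: it follows by combining Lemma \ref{lem:lassolog2}, which forces $\alpha_*\ge(1-o(1))\sqrt{-2\log\epsilon}$ (since $\lim\frac{\sqrt{-2\log\epsilon}}{\alpha_*}=c\in[0,1]$), with the standing assumption $b_\epsilon=o(\sqrt{-\log\epsilon})$; together these yield $\frac{\alpha_*}{b_\epsilon}\to\infty$. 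Granting this, for each fixed realization the argument $\tilde{G}+\frac{\tau_*}{b_\epsilon}Z$ stays bounded while the threshold diverges, so the soft-threshold output is eventually zero and the integrand converges pointwise to $\tilde{G}^2$. The inequality $|\eta_1(u;\chi)|\le|u|$, together with $\frac{\tau_*}{b_\epsilon}\le 1$ for small $\epsilon$, supplies an integrable dominating function of the form $C(\tilde{G}^2+Z^2)$ (integrable because $\mathbb{E}\tilde{G}^2=1$ and $\mathbb{E}Z^2=1$), so DCT applies and gives $\liminf_{\epsilon\to 0}\frac{\tilde{R}_1(\alpha_*,\epsilon,\tau_*)}{\epsilon b_\epsilon^2}\ge\mathbb{E}\tilde{G}^2$.

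Combining the two bounds yields the claimed equality. In writing this up I would emphasize the role of each hypothesis: $b_\epsilon=\omega(1)$ drives $\frac{\tau_*}{b_\epsilon}\to 0$, while $b_\epsilon=o(\sqrt{-\log\epsilon})$ is precisely what makes the threshold $\frac{\alpha_*\tau_*}{b_\epsilon}$ blow up so that the thresholding annihilates the signal in the limit. This is also consistent with the boundary regime $b_\epsilon=\Theta(\sqrt{-\log\epsilon})$ treated separately in Case III, where the threshold is comparable to the signal and the limiting constant $\mathbb{E}(\eta_1(c\tilde{G};\sigma)-c\tilde{G})^2$ is strictly smaller than $\mathbb{E}\tilde{G}^2$.
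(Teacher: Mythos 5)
Your proof is correct and follows essentially the same route as the paper's: the same decomposition with the scaling identity, the same upper bound from optimality of $\alpha_*$ against the $\alpha=\infty$ limit, and the same lower bound via DCT after deducing $\alpha_*/b_\epsilon\rightarrow\infty$ from Lemma \ref{lem:lassolog2} together with $b_\epsilon=o(\sqrt{-\log\epsilon})$. You merely spell out the DCT domination and the pointwise limit more explicitly than the paper does.
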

\begin{proof}
Note that
\begin{equation*}
    \tilde{R}_1(\alpha_*, \epsilon, \tau_*)
    =
    (1-\epsilon) \tau_*^2 \mathbb{E} \eta_1^2 ( Z;\alpha_*)
    +
    \epsilon \aeps^2 \mathbb{E} \big[ \eta_1\big( \tilde{G} + b_\epsilon^{-1} \tau_*
    Z; b_\epsilon^{-1}\alpha_* \tau_* \big) - \tilde{G} \big]^2
\end{equation*}
According to Lemmas \ref{lem:lassolog2} we know that $\alpha_*/ \aeps
\rightarrow \infty$ and $\alpha_* \rightarrow \infty$. Hence, by using DCT we can prove that 
\begin{equation*}
    \lim_{\epsilon \rightarrow 0}
    \mathbb{E} \big[ \eta_1\big(\tilde{G} + b_\epsilon^{-1} \tau_* Z;
    b_\epsilon^{-1} \alpha_* \tau_* \big) - \tilde{G} \big]^2
    = \mathbb{E} \tilde{G}^2. 
\end{equation*}
Hence,
\begin{equation*}
\lim_{\epsilon \rightarrow 0}
\frac{\tilde{R}_1(\alpha_*, \epsilon, \tau_*)}{\epsilon b_\epsilon^2}
\geq
\lim_{\epsilon \rightarrow 0} \mathbb{E} \big[ \eta_1\big(\tilde{G} +
b_\epsilon^{-1} \tau_* Z; b_\epsilon^{-1}\alpha_* \tau_* \big) - \tilde{G}
\big]^2
= \mathbb{E} \tilde{G}^2.
\end{equation*}
On the other hand, this lower bound is achieved for $\alpha = \infty$. Hence, the proof is complete. 
\end{proof}

\subsection{Case III- $\aeps = \Theta(\sqrt{- \log \epsilon})$}\label{ssec:ell_1equalProof}

\begin{theorem}
If $\frac{\aeps}{\sqrt{- 2\log \epsilon}} \rightarrow c$, then  
\begin{equation*}
\lim_{\epsilon \rightarrow 0} \frac{\tilde{R} (\alpha_*, \epsilon , \tau_*)}{- 2 \epsilon \log \epsilon}
=
\mathbb{E} (\eta_1(c\tilde{G}; \sigma) - c \tilde{G})^2.
\end{equation*}
\end{theorem}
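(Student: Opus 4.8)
The plan is to pin down the optimal pair $(\alpha_*,\tau_*)$ for $q=1$ in this critical regime and read off the risk from the state-evolution characterization in Lemma~\ref{lem:opttunesimpler}. As in the roadmap, I first record the two standing facts $\tau_*\to\sigma$ and $\alpha_*\to\infty$ (proved exactly as for the other cases and hence taken for granted here), and I abbreviate $r_\epsilon=\sqrt{-2\log\epsilon}$, so that $-2\epsilon\log\epsilon=\epsilon r_\epsilon^2$ and, by hypothesis, $\aeps/r_\epsilon\to c$. The engine of the argument is the homogeneity $\eta_1(au;a\chi)=a\,\eta_1(u;\chi)$ from Lemma~\ref{prox:property}, which lets me factor $r_\epsilon$ out of the signal part of the decomposition \eqref{eq:firstexpansionell_1}:
\[
\epsilon\,\mathbb{E}\big[\eta_1(\aeps\tilde G+\tau_* Z;\alpha_*\tau_*)-\aeps\tilde G\big]^2
=\epsilon r_\epsilon^2\,\mathbb{E}\Big[\eta_1\big(\tfrac{\aeps\tilde G+\tau_* Z}{r_\epsilon};\tfrac{\alpha_*\tau_*}{r_\epsilon}\big)-\tfrac{\aeps}{r_\epsilon}\tilde G\Big]^2 .
\]
Dividing by $-2\epsilon\log\epsilon=\epsilon r_\epsilon^2$ therefore isolates a rescaled quantity in which $\tfrac{\aeps}{r_\epsilon}\to c$, $\tfrac{\tau_* Z}{r_\epsilon}\to0$, and the rescaled threshold $\tfrac{\alpha_*\tau_*}{r_\epsilon}$ converges to $\sigma$ precisely when $\alpha_*\sim r_\epsilon$.

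For the upper bound I would use the optimality of $\alpha_*$ (Lemma~\ref{lem:opttunesimpler}) and simply evaluate the risk at the test value $\alpha=r_\epsilon$, for which $\tfrac{\alpha\tau_*}{r_\epsilon}=\tau_*\to\sigma$. The noise term $(1-\epsilon)\tau_*^2\,\mathbb{E}\eta_1^2(Z;r_\epsilon)$ is controlled by \eqref{eq:F1CalcSoftThresholdNoise}--\eqref{eq:F1finalCalc} together with the Gaussian tail \eqref{gaussiantail:exp}: since $\phi(r_\epsilon)=\tfrac{1}{\sqrt{2\pi}}e^{-r_\epsilon^2/2}=\tfrac{\epsilon}{\sqrt{2\pi}}$, one gets $\mathbb{E}\eta_1^2(Z;r_\epsilon)\sim 2\phi(r_\epsilon)/r_\epsilon^3$, so after dividing by $\epsilon r_\epsilon^2$ the noise contribution is $O(r_\epsilon^{-5})\to0$. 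The signal term converges by dominated convergence (a dominating function of the form $C(\tilde G^2+Z^2)$ is available because $|\eta_1(u;\chi)|\le|u|$ and $\aeps/r_\epsilon,\ \tau_*/r_\epsilon$ are bounded, while $\mathbb{E}\tilde G^2=1<\infty$), giving $\mathbb{E}(\eta_1(c\tilde G;\sigma)-c\tilde G)^2$. Hence $\limsup_{\epsilon\to0}\tilde{R}_1(\alpha_*,\epsilon,\tau_*)/(-2\epsilon\log\epsilon)\le\mathbb{E}(\eta_1(c\tilde G;\sigma)-c\tilde G)^2$.

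For the matching lower bound the key step is to show $\liminf_{\epsilon\to0}\alpha_*/r_\epsilon\ge1$. I would argue by contradiction: if $\alpha_*\le\beta' r_\epsilon$ with $\beta'<1$ along a subsequence, then monotonicity of $\mathbb{E}\eta_1^2(Z;\cdot)$ and the tail estimate give a noise term $\gtrsim \phi(\beta' r_\epsilon)/r_\epsilon^3=\Theta(\epsilon^{\beta'^2}/r_\epsilon^3)$, whose ratio to $-2\epsilon\log\epsilon$ behaves like $\epsilon^{\beta'^2-1}/r_\epsilon^5\to\infty$; this already exceeds the finite upper bound just established, contradicting the optimality $\tilde{R}_1(\alpha_*,\epsilon,\tau_*)\le\tilde{R}_1(r_\epsilon,\epsilon,\tau_*)$. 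With $\liminf\alpha_*/r_\epsilon\ge1$ and $\tau_*\to\sigma$ in hand, the rescaled threshold satisfies $\liminf\tfrac{\alpha_*\tau_*}{r_\epsilon}\ge\sigma$, so dropping the nonnegative noise term and applying Fatou's lemma to the rescaled signal term yields
\[
\liminf_{\epsilon\to0}\frac{\tilde{R}_1(\alpha_*,\epsilon,\tau_*)}{-2\epsilon\log\epsilon}\ \ge\ \mathbb{E}\,\liminf_{\epsilon\to0}\Big(\eta_1\big(\tfrac{\aeps\tilde G+\tau_* Z}{r_\epsilon};\tfrac{\alpha_*\tau_*}{r_\epsilon}\big)-\tfrac{\aeps}{r_\epsilon}\tilde G\Big)^2 .
\]
Since $(\eta_1(x;\chi)-x)^2=\min(|x|,\chi)^2$ is jointly continuous in $x$ and nondecreasing in $\chi$, and the arguments converge pointwise to $c\tilde G$ while the threshold has liminf at least $\sigma$, a subsequence argument shows the pointwise liminf is $\ge\min(c|\tilde G|,\sigma)^2=(\eta_1(c\tilde G;\sigma)-c\tilde G)^2$; integrating gives the lower bound, and the two bounds coincide.

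The main obstacle is exactly the location of $\alpha_*$: unlike Case~I, the signal now lives at the same scale $r_\epsilon$ as the threshold, so the contributions of noise and signal are genuinely comparable and a direct analysis of the first-order condition $\partial_\alpha\tilde{R}_1=0$ in \eqref{eq:ell_1firstderivative} is delicate. The device that sidesteps this is to exploit the exponential sensitivity of the noise term — it blows up the moment $\alpha_*$ dips below $r_\epsilon$ — to force $\liminf\alpha_*/r_\epsilon\ge1$ from optimality alone, after which the monotonicity $(\eta_1(x;\chi)-x)^2=\min(|x|,\chi)^2$ converts this one-sided control on the threshold into the desired lower bound, with no need for a two-sided estimate of $\alpha_*$.
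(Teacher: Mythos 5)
Your proof is correct and follows essentially the same route as the paper's: an upper bound obtained by evaluating the risk at the test value $\alpha=\sqrt{-2\log\epsilon}$, the one-sided control $\liminf_{\epsilon\to0}\alpha_*/\sqrt{-2\log\epsilon}\geq 1$ forced by the exponential blowup of the noise term below that scale, and the monotonicity of $\min(\chi^2,c^2\tilde{G}^2)$ in the threshold $\chi$ to close the lower bound. Your Fatou argument is a slightly tidier way of handling a possibly non-convergent ratio $\alpha_*/\sqrt{-2\log\epsilon}$ than the paper's reduction to $\alpha_*=\tilde{c}\sqrt{-2\log\epsilon}$ with $\tilde{c}\geq 1$, but the substance is identical.
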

\begin{proof}
Since the proof is very similar to the proof of Theorem \ref{thm:LASSOcase1finalTheorem}, we only present a proof sketch here. It is straightforward to check the following steps:
\begin{enumerate}
\item
    If $\alpha = \sqrt{-2 \log \epsilon}$, then $\lim_{\epsilon \rightarrow 0}
    \frac{\tilde{R}_1(\alpha, \epsilon, \tau_*)}{-2\epsilon \log\epsilon} = \mathbb{E} (\eta_1(c\tilde{G}; \sigma) -c\tilde{G})^2$.  The proof is similar to the proof of Lemma \ref{lem:firstoneEll1}. 
\item
    $\lim_{\epsilon \rightarrow 0} \frac{\sqrt{-2 \log \epsilon}}{\alpha_*}
    = \frac{1}{\tilde{c}}$, where $\tilde{c} \in [1,\infty)$.  The proof
        is exactly the same as the proof of Lemma
        \ref{lem:orderell1firstcase1}. $\tilde{c}$ can reach $\infty$?
\item
    For notational simplicity, suppose $\alpha = \tilde{c} \sqrt{- 2 \log
    \epsilon}$, where $\tilde{c}\geq 1$ (it is straightforward to show that
    $\lim_{\epsilon \rightarrow 0} \frac{\alpha_*}{\sqrt{-2 \log \epsilon}} $
    is not infinite. This will be clear from the rest of the proof too.).
    Recall the expansion of $\tilde{R}_1(\alpha, \epsilon, \tau_*)=\tau_*^2(F_1
    + F_2 - F_3 + F_4)$ in \eqref{eq:firstexpansionell_1}. It is first
    straightforward to confirm the following claims.
\begin{equation*}
    \frac{F_1}{- 2 \epsilon \log \epsilon}
    \rightarrow 0,
    \qquad
    \frac{F_3}{- 2 \epsilon \log \epsilon }
    \rightarrow 0,
    \qquad
    \frac{F_4}{- 2 \epsilon \log \epsilon }
    \rightarrow 0.
\end{equation*}

Furthermore, it is straightforward to show that
\begin{equation*}
    \frac{\tau_*^2 F_2}{- 2\epsilon \log \epsilon }
    \rightarrow
    \mathbb{E}(\eta_1 (c \tilde{G} ; \tilde{c} \sigma)- c \tilde{G} )^2. \nonumber \\
\end{equation*}
Note that 
\[
(\eta_1 (c \tilde{G} ; \tilde{c} \sigma)- c \tilde{G} )^2 = \min (\tilde{c}^2
\sigma^2, c^2 \tilde{G}^2)
\geq \min (\sigma^2, c^2 \tilde{G}^2),
\]
where the last inequality is due to $\tilde{c} \geq 1$. 
Hence, for any $\alpha = \tilde{c} \sqrt{- 2 \log \epsilon}$, we have the following lower bound:
\[
\varliminf_{\epsilon \rightarrow 0}
\frac{\tilde{R}_1(\alpha, \epsilon, \tau_*) }{- 2 \epsilon \log \epsilon} \geq
\mathbb{E} \min (\sigma^2, c^2 \tilde{G}^2). 
\]
Note that this lower bound is achieved by $\alpha = \sqrt{ -2 \log \epsilon}$. This completes the proof. 
\end{enumerate}
\end{proof}

\section{Proof of Theorem \ref{THM:SPARSE:MAIN}}
\label{sec:proofFinitePowerExtremeSPARSE}

Before we discuss the proof of our main theorem, we mention a preliminary lemma that will later be used in our proof. 

\subsection{Preliminaries}

\begin{lemma}[Laplace Approximation]
\label{lemma:sparse:l1:laplace}
Suppose $G$ is nonnegative and $\esssup(G)=M$. Then for arbitrary nonnegative continuous function $f$ we have $\frac{\mathbb{E}\left(f(G)e^{\alpha G}\right)}{f(M)\mathbb{E}e^{\alpha G}}\rightarrow1$ as $\alpha\rightarrow\infty$.
\end{lemma}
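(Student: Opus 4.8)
The plan is to reinterpret the claim as a concentration statement for the exponentially tilted law of $G$. Since $G\geq 0$ and $\esssup(G)=M<\infty$, the variable $G$ lies in $[0,M]$ almost surely, so $f$ only matters on the compact interval $[0,M]$, where continuity makes it bounded and uniformly continuous; in particular both $\mathbb{E}(f(G)e^{\alpha G})$ and $\mathbb{E}(e^{\alpha G})$ are finite, and the latter is strictly positive. Dividing the target ratio by the (tacitly positive) constant $f(M)$, it suffices to show that
\begin{equation*}
\frac{\mathbb{E}(f(G)e^{\alpha G})}{\mathbb{E}(e^{\alpha G})}\longrightarrow f(M),\qquad \alpha\rightarrow\infty.
\end{equation*}
I would introduce the tilted probability measures $\nu_\alpha$ defined by $d\nu_\alpha(g)\propto e^{\alpha g}\,d\mu(g)$, where $\mu$ is the law of $G$, so that the left-hand side is exactly $\int f\,d\nu_\alpha$. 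The whole problem then reduces to proving that $\nu_\alpha$ concentrates at the right endpoint $M$, i.e. $\nu_\alpha\Rightarrow\delta_M$.

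The key step is the exponential concentration estimate. Fix $\delta>0$. The numerator of $\nu_\alpha([0,M-\delta])$ is at most $e^{\alpha(M-\delta)}$, while the normalizing denominator is bounded below by restricting the integral to $[M-\delta/2,M]$, giving $\mathbb{E}(e^{\alpha G})\geq e^{\alpha(M-\delta/2)}\,\mathbb{P}(G\geq M-\delta/2)$. The decisive input is that $p_\delta:=\mathbb{P}(G\geq M-\delta/2)>0$, which is precisely the meaning of $\esssup(G)=M$. Combining the two bounds yields $\nu_\alpha([0,M-\delta])\leq p_\delta^{-1}e^{-\alpha\delta/2}\rightarrow 0$, establishing the desired concentration.

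Finally I would upgrade this concentration to convergence of $\int f\,d\nu_\alpha$. Given $\varepsilon>0$, uniform continuity of $f$ on $[0,M]$ supplies $\delta>0$ with $|f(g)-f(M)|<\varepsilon$ for all $g\in[M-\delta,M]$; splitting $\int(f-f(M))\,d\nu_\alpha$ over $[0,M-\delta]$ and $[M-\delta,M]$ bounds it by $2\|f\|_\infty\,\nu_\alpha([0,M-\delta])+\varepsilon$, and letting $\alpha\rightarrow\infty$ and then $\varepsilon\rightarrow 0$ finishes the argument. The only genuine subtlety — and the reason a naive ``substitute $G=M$'' fails — is that $\mu$ need not place any mass at $M$ itself; the essential-supremum hypothesis is exactly what guarantees enough mass arbitrarily close to $M$ for the tilting to localize there, and verifying this via the lower bound on the denominator is the heart of the proof. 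Everything else is a routine Laplace/dominated-type estimate.
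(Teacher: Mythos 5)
Your proof is correct and is essentially the paper's own argument in different notation: the paper also bounds the ratio of the exponential mass on $\{G>M-\delta\}$ to that on $\{G\leq M-\delta\}$ from below by $\frac{\mathbb{P}(G>M-\delta_1)}{\mathbb{P}(G\leq M-\delta)}e^{\alpha(\delta-\delta_1)}$ (your tilted measure $\nu_\alpha$ concentrating on $[M-\delta,M]$ is the same estimate with $\delta_1=\delta/2$), and then splits $\mathbb{E}(f(G)e^{\alpha G})$ over the two regions using continuity and boundedness of $f$ on $[0,M]$. Both arguments share the same tacit assumptions ($M<\infty$ and $f(M)>0$), so there is nothing substantive to distinguish them.
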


\begin{proof}[Proof of Lemma \ref{lemma:sparse:l1:laplace}]
Let $\mathcal{G}$ denote the distribution of $G$. For any small $\delta>0$ and large $A$, let $0<\delta_1<\delta$, we have
\begin{equation*}
\frac{\int_{G>M-\delta} e^{\alpha G}d\mathcal{G}}{\int_{G\leq M-\delta} e^{\alpha G}d\mathcal{G}}
\geq
\frac{\int_{G>M-\delta_1} e^{\alpha G}d\mathcal{G}}{\int_{G\leq M-\delta} e^{\alpha G}d\mathcal{G}}
\geq
\frac{e^{\alpha(M-\delta_1)}\mathbb{P}(G>M-\delta_1)}{e^{\alpha(M-\delta)}\mathbb{P}(G\leq M-\delta)}
= \frac{\mathbb{P}(G>M-\delta_1)}{\mathbb{P}(G\leq M-\delta)}e^{\alpha(\delta-\delta_1)}>A
\end{equation*}
for large enough $\alpha$. This implies that $\frac{\int_{G>M-\delta} e^{\alpha G}d\mathcal{G}}{\mathbb{E}e^{\alpha G}}>\frac{A}{A+1}$ for large $\alpha$. Notice the continuity of $f$, we have $|f(G)-f(M)|<\epsilon$ when $G$ is close enough to $M$ and $|f|\leq C$ on $[0,M]$. Thus we have
\begin{align*}
\frac{f(M)-\epsilon}{f(M)}\frac{A}{A+1}
\leq &
\frac{\int_{G>M-\delta}f(G)e^{\alpha G}d\mathcal{G}}{f(M)\mathbb{E}e^{\alpha G}}
\leq
\frac{\mathbb{E}\left(f(G)e^{\alpha G}\right)}{f(M)\mathbb{E}e^{\alpha G}}\\
\leq&
\frac{\int_{G>M-\delta}f(G)e^{\alpha G}d\mathcal{G}+C\int_{G\leq M-\delta}e^{\alpha G}d\mathcal{G}}{f(M)\int_{G>M-\delta}e^{\alpha G}d\mathcal{G}}
\leq
\frac{f(M)+\epsilon}{f(M)}+\frac{C}{Af(M)}
\end{align*}

This holds for arbitrary $\epsilon,\delta$ and $A$. Our conclusion follows.
\end{proof}

\subsection{$q=1$: LASSO case}

Recall the definition of $(\alpha_*, \tau_*)$ in \eqref{eq:se11}. As we discussed in Section \ref{ssec:proof-sketch}, the main objective is to characterize the behavior of $\alpha_*$ and $\tau_*$ for large values of $\epsilon$. First, we prove that $\alpha_* \rightarrow \infty$ and $\tau_* \rightarrow \sigma$ as $\epsilon \rightarrow 0$.
\begin{lemma}\label{lemma:l1:alpha-tau-trend}
    As $\epsilon \rightarrow 0$, we have $\alpha_* \rightarrow \infty$ and
    $\tau_* \rightarrow \sigma$.
\end{lemma}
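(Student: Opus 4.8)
The plan is to dispatch the two limits in turn, obtaining $\tau_*\to\sigma$ directly from a uniform sandwich and then extracting $\alpha_*\to\infty$ from the fact that the optimal risk vanishes. For the first statement I would invoke Lemma~\ref{lem:upperbound}, which is proved for a general signal strength and therefore applies verbatim in the present setting with $\aeps=1$ and $\tilde G=G$; it gives
\[
\sigma^2\le \tau_*^2\le \sigma^2+\frac{\epsilon}{\delta}\,\mathbb{E}G^2 .
\]
Since $G$ is bounded, $\mathbb{E}G^2<\infty$, so the upper bound tends to $\sigma^2$ and hence $\tau_*\to\sigma$. Two by-products of this bound will be used repeatedly below: first, $\tau_*\ge\sigma$ for every $\epsilon$; and second, by Lemma~\ref{lem:opttunesimpler} (which identifies $\amse(1,\lambda_1^*)=\tilde R_1(\alpha_*,\epsilon,\tau_*)=\delta(\tau_*^2-\sigma^2)$), the optimal risk satisfies
\[
\tilde R_1(\alpha_*,\epsilon,\tau_*)=\delta(\tau_*^2-\sigma^2)\le \epsilon\,\mathbb{E}G^2\longrightarrow 0 .
\]

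For $\alpha_*\to\infty$ I would argue by contradiction. Assume the contrary; then there is a sequence $\epsilon_n\to 0$ along which $\alpha_*$ stays bounded, and after passing to a subsequence we may take $\alpha_*\to C$ for some finite $C\in[0,\infty)$. Splitting $\tilde R_1$ according to the mixture $p_B=(1-\epsilon)\delta_0+\epsilon p_G$ and using the elementary scaling $\eta_1(\tau Z;\alpha\tau)=\tau\,\eta_1(Z;\alpha)$ of soft-thresholding,
\[
\tilde R_1(\alpha_*,\epsilon,\tau_*)=(1-\epsilon)\tau_*^2\,\mathbb{E}\eta_1^2(Z;\alpha_*)+\epsilon\,\mathbb{E}\big(\eta_1(G+\tau_* Z;\alpha_*\tau_*)-G\big)^2 .
\]
Discarding the nonnegative second term and using $\tau_*\ge\sigma$ yields the lower bound $\tilde R_1(\alpha_*,\epsilon,\tau_*)\ge(1-\epsilon)\sigma^2\,\mathbb{E}\eta_1^2(Z;\alpha_*)$. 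As $\eta_1^2(z;\alpha)=(|z|-\alpha)_+^2\le z^2$ is integrable and continuous in $\alpha$, dominated convergence gives $\mathbb{E}\eta_1^2(Z;\alpha_*)\to\mathbb{E}(|Z|-C)_+^2$ along the subsequence. Since $\mathbb{P}(|Z|>C)>0$ for every finite $C$, this limit is strictly positive, so $\liminf_n\tilde R_1(\alpha_*,\epsilon_n,\tau_*)\ge\sigma^2\,\mathbb{E}(|Z|-C)_+^2>0$, contradicting the vanishing of the optimal risk established above. Hence $\alpha_*\to\infty$.

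The only genuinely delicate point is ruling out the possibility that $\alpha_*$ diverges along some subsequences while remaining bounded along others, which is exactly why I would phrase the argument through an arbitrary convergent (in $[0,\infty]$) subsequence rather than a single limit; everything else reduces to one dominated-convergence interchange and the trivial positivity $\mathbb{E}(|Z|-C)_+^2>0$. It is worth flagging that the proof uses $\sigma>0$ in an essential way, since the contradiction rests on a lower bound proportional to $\sigma^2$, consistent with the noisy regime assumed throughout. With both limits in hand the lemma is complete, and the stage is set for the finer second-order expansion of $\amse(q,\lambda_q^*)$ in Theorem~\ref{THM:SPARSE:MAIN}.
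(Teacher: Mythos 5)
Your proof is correct and follows essentially the same route as the paper's: an upper bound on $\tau_*^2-\sigma^2$ obtained by letting $\alpha\to\infty$ (your appeal to Lemma~\ref{lem:upperbound} with $b_\epsilon=1$ is exactly this), followed by a contradiction argument showing that a bounded subsequential limit of $\alpha_*$ would leave the pure-noise component $(1-\epsilon)\tau_*^2\,\mathbb{E}\eta_1^2(Z;\alpha_*)$ bounded away from zero, which is incompatible with the vanishing optimal risk. Your write-up is in fact a cleaner packaging of the paper's argument (which separately handles a $\tau_*\to\infty$ case that your prior sandwich already rules out), and it matches the general-$q$ argument the paper gives at the start of Appendix~\ref{ssec:roadmapstrraresig}.
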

\begin{proof}
    First as $\epsilon \rightarrow 0$, we can pick the sequence of $\alpha
    \rightarrow \infty$, noticing that $\tau^2 = \frac{\delta\sigma^2}{\delta -
    \mathbb{E}[\eta_1(\beta / \tau + Z; \alpha) - \beta / \tau]^2}$, the
    corresponding fixed point solution $\tau^2 \rightarrow \sigma^2$.
    Now suppose $\varlimsup_{\epsilon \rightarrow
    0}\alpha_* < \infty$, then we consider a convergent subsequence $\alpha_*
    \rightarrow \bar{\alpha}$. If $\tau_* \rightarrow \infty$, then
    $\lim_{\epsilon \rightarrow \infty} \tau_*^2 = \frac{\delta\sigma^2}{\delta
    - \mathbb{E}[\eta_1(Z; \bar{\alpha})]^2} < \infty$ which forms a
    contradiction. Assume $\tau_* \rightarrow \bar{\tau} < \infty$ (say we
    pick a sub-subsequence), then it is not hard to see $\bar{\tau}^2 = \sigma^2 +
    \frac{1}{\delta} \mathbb{E}[\eta_1(\beta + \bar{\tau} Z; \bar{\alpha}
    \bar{\tau}) - \beta]^2 > \sigma^2$. This forms a contradiction with the
    optimality of $\alpha_*$.
\end{proof}

The next step is to obtain more accurate information about $\alpha_*$ and $\sigma_*$. Lemma  \ref{appendix:noisy:l1:tail} paves the way toward this goal.

\begin{lemma}\label{appendix:noisy:l1:tail}
For any given $h(G)$ being a positive function over $[0,+\infty)$ with $\mathbb{E}h(|G|)<\infty$, there exists a constant $\xi>0$ such that the following results hold for all sufficiently small $\epsilon$,
\begin{equation*}
    \mathbb{E} \big[ h(|G|) \phi(\alpha_*-|G|/\tau_*) \cdot \mathbbm{1}(\xi \leq |G|\leq t \sigma \alpha_*) \big]
    >
    \mathbb{E} \big[ h(|G|) \phi(\alpha_*-|G|/\tau_*) \cdot \mathbbm{1}( |G|\leq \xi ) \big].
\end{equation*}
\end{lemma}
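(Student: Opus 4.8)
The plan is to exploit the fact that the Gaussian weight $\phi(\alpha_*-|G|/\tau_*)$ grows exponentially in $|G|$ once $\alpha_*$ is large, so that both expectations localize near the essential supremum $M:=\esssup(|G|)$, which is finite by the hypothesis that $G$ is bounded from above and strictly positive since $p_G$ has no atom at $0$. Throughout I would invoke Lemma \ref{lemma:l1:alpha-tau-trend}, which gives $\alpha_*\to\infty$ and $\tau_*\to\sigma>0$ as $\epsilon\to0$. As a first reduction, note that for the fixed constant $t>0$ the cutoff $t\sigma\alpha_*\to\infty$, while $|G|\le M$ almost surely; hence for all sufficiently small $\epsilon$ we have $t\sigma\alpha_*>M$, so the indicator $\mathbbm{1}(\xi\le|G|\le t\sigma\alpha_*)$ coincides almost surely with $\mathbbm{1}(|G|\ge\xi)$ and the left-hand side becomes $\mathbb{E}[h(|G|)\phi(\alpha_*-|G|/\tau_*)\mathbbm{1}(|G|\ge\xi)]$.

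Write $w(g)=\phi(\alpha_*-g/\tau_*)$. For $\epsilon$ small enough that $\alpha_*>M/\tau_*$, the argument $\alpha_*-g/\tau_*$ is positive and decreasing in $g$ on $[0,M]$, so, $\phi$ being decreasing on $[0,\infty)$, the weight $w$ is increasing on $[0,M]$. I would then fix the constants $\xi=M/2$ and $\eta=M/4$, so that $0<\xi<M-\eta<M$. Using monotonicity, bound the right-hand side from above by $w(\xi)\,\mathbb{E}[h(|G|)]$, which is finite by hypothesis. For the left-hand side, restrict the integral to $[M-\eta,M]\subseteq[\xi,t\sigma\alpha_*]$, where $w(g)\ge w(M-\eta)$; moreover $c:=\int_{[M-\eta,M]}h\,d\mathcal{G}>0$ because $h$ is strictly positive and $\mathbb{P}(|G|\ge M-\eta)>0$ by the definition of the essential supremum. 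Thus it suffices to show $w(M-\eta)\,c>w(\xi)\,\mathbb{E}[h(|G|)]$ for all small $\epsilon$.

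The final step is the exponential domination. A direct computation gives
\[
\frac{w(M-\eta)}{w(\xi)}
=\exp\!\left(\frac{1}{2}\cdot\frac{M-\eta-\xi}{\tau_*}\Big(2\alpha_*-\frac{\xi+M-\eta}{\tau_*}\Big)\right),
\]
and since $M-\eta-\xi>0$ is a fixed positive constant while $\alpha_*\to\infty$ and $\tau_*\to\sigma$, this ratio tends to $+\infty$. Because $c$ and $\mathbb{E}[h(|G|)]$ are fixed finite positive constants, the inequality $w(M-\eta)\,c>w(\xi)\,\mathbb{E}[h(|G|)]$ holds for all sufficiently small $\epsilon$, and hence the left-hand side strictly exceeds the right-hand side.

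Conceptually this is a Laplace-type concentration, closely parallel to Lemma \ref{lemma:sparse:l1:laplace} (indeed $\phi(\alpha_*-g/\tau_*)\propto e^{\alpha_*g/\tau_*}e^{-g^2/(2\tau_*^2)}$, so the weight is, up to a bounded continuous factor, an exponential tilt of slope $\alpha_*/\tau_*\to\infty$). The only point requiring care — and the mild obstacle here — is that both the weight and the cutoff depend on $\epsilon$ through $\alpha_*$ and $\tau_*$; this is handled by choosing $\xi$ and $\eta$ as fixed multiples of the $\epsilon$-independent constant $M$, which decouples the constants from $\epsilon$ and lets the limits $\alpha_*\to\infty$, $\tau_*\to\sigma$ close the argument.
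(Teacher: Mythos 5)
Your argument is clean and, in the bounded case, correct: the monotone Gaussian weight $w(g)=\phi(\alpha_*-g/\tau_*)$, the two-sided bound $\mathrm{LHS}\geq w(M-\eta)\,c$ versus $\mathrm{RHS}\leq w(\xi)\,\mathbb{E}h(|G|)$, and the divergence of $w(M-\eta)/w(\xi)$ all check out, and this is a genuinely different route from the paper's. The paper instead forms the ratio $g_{\xi,\tau}(\alpha)$ of the two integrals, shows by differentiation that it is increasing in $\alpha$ for each fixed $(\xi,\tau)$, verifies $g_{\xi_0,\tau}(1)>1$ uniformly over $\tau$ in a small neighborhood of $\sigma$, and then lets $\alpha_*\to\infty$ and $\tau_*\to\sigma$ do the rest. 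Your Laplace-type localization is more direct and avoids the derivative computation entirely.

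The genuine gap is your reliance on $M:=\esssup|G|<\infty$. That boundedness hypothesis belongs to Theorem \ref{THM:SPARSE:MAIN}, not to Lemma \ref{appendix:noisy:l1:tail} itself, and the lemma is first invoked (with $h\equiv 1$) inside the proof of Lemma \ref{lemma:sparse:l1:alpha1}, whose only assumption on $G$ is the tail-ratio bound $\mathbb{P}(|G|\geq a)/\mathbb{P}(|G|\geq b)\leq e^{-c(a^2-b^2)}$ — a condition satisfied by unbounded, Gaussian-type variables. For such $G$ there is no finite $M$ at which to localize, your first reduction (that $t\sigma\alpha_*>M$ eventually, so the upper truncation is vacuous) fails, and the interval $[M-\eta,M]$ does not exist; the paper's proof, by contrast, never uses boundedness. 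The repair is minor and worth making explicit: drop $M$ entirely and instead fix any $0<\xi<a<b$ with $\mathbb{P}(a\leq|G|\leq b)>0$ (possible since $\mathbb{P}(|G|>0)=1$), note that $[a,b]\subseteq[\xi,t\sigma\alpha_*]$ for small $\epsilon$, and run your identical estimate with $w(a)/w(\xi)\to\infty$ in place of $w(M-\eta)/w(\xi)$. With that substitution your argument proves the lemma in the generality in which it is stated and used.
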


\begin{proof}
We present the proof for $\alpha_*$. Let $p(x)$ be the probability function of $|G|$ and $g_{\xi,\tau}(\alpha)=\frac{\int_{\xi}^{t\sigma \alpha}h(x)p(x)e^{\frac{\alpha x}{ \tau}-\frac{x^2}{2\tau^2}}dx}{\int_0^{\xi} h(x)p(x) e^{\frac{\alpha x}{ \tau}-\frac{x^2}{2\tau^2}} dx}$. For fixed $\xi, \tau>0$, we calculate the derivative of $g$ with respect to $\alpha$:
\begin{eqnarray*}
g'_{\xi, \tau}(\alpha)&=&\frac{t\sigma h(t\sigma \alpha)p(t\sigma \alpha)e^{(\frac{t\sigma}{2\tau}-\frac{t^2\sigma^2}{2\tau^2})\alpha^2}}{\int_0^{\xi} h(x)p(x) e^{\frac{\alpha x}{ \tau}-\frac{x^2}{2\tau^2}} dx} + \frac{\int_{\xi}^{t\sigma \alpha}\frac{x}{\tau}h(x)p(x)e^{\frac{\alpha x}{ \tau}-\frac{x^2}{2\tau^2}}dx}{\int_0^{\xi} h(x)p(x) e^{\frac{\alpha x}{ \tau}-\frac{x^2}{2\tau^2}} dx} \nonumber \\
&&-\frac{\int_{\xi}^{t\sigma \alpha}h(x)p(x)e^{\frac{\alpha x}{ \tau}-\frac{x^2}{2\tau^2}}dx \cdot \int_0^{\xi}\frac{x}{\tau}h(x)p(x)e^{\frac{\alpha x}{ \tau}-\frac{x^2}{2\tau^2}}dx }{(\int_0^{\xi} h(x)p(x) e^{\frac{\alpha x}{ \tau}-\frac{x^2}{2\tau^2}} dx)^2} \\
&\geq&  \frac{\int_{\xi}^{t\sigma \alpha} \frac{x}{\tau}h(x)p(x)e^{\frac{\alpha x}{ \tau}-\frac{x^2}{2\tau^2}}dx \cdot \int_0^{\xi}h(x)p(x)e^{\frac{\alpha x}{ \tau}-\frac{x^2}{2\tau^2}}dx}{(\int_0^{\xi} h(x)p(x) e^{\frac{\alpha x}{ \tau}-\frac{x^2}{2\tau^2}} dx)^2} \nonumber \\
&&- \frac{\int_{\xi}^{t\sigma \alpha}h(x)p(x)e^{\frac{\alpha x}{ \tau}-\frac{x^2}{2\tau^2}}dx \cdot \int_0^{\xi}\frac{x}{\tau}h(x)p(x)e^{\frac{\alpha x}{ \tau}-\frac{x^2}{2\tau^2}}dx }{(\int_0^{\xi} h(x)p(x) e^{\frac{\alpha x}{ \tau}-\frac{x^2}{2\tau^2}} dx)^2}
 \geq 0.
\end{eqnarray*}
Hence $g_{\xi, \tau}(\alpha)$ is an increasing function of $\alpha$, for any fixed $\xi, \tau>0$. Now we consider a small neighbor around $\sigma$ : $\mathcal{I}_{\Delta}=[\sigma-\Delta, \sigma+\Delta]$, where $\Delta>0$ is small enough so that $\sigma -\Delta>0$.  We would like to show there exists a positive constant $\xi_0$ s.t. the following holds,
\begin{eqnarray}\label{trick:three}
g_{\xi_0,\tau}(1)>1, \quad \forall \tau \in \mathcal{I}_{\Delta}.
\end{eqnarray}
To show the above, we first notice that $\forall \tau \in \mathcal{I}_{\Delta}$
\begin{eqnarray}
&& \int_{\xi}^{t\sigma}h(x)p(x)e^{\frac{x}{\tau}-\frac{x^2}{2\tau^2}}dx \geq \int_{\xi}^{t\sigma}h(x)p(x)e^{\frac{x}{\sigma +\Delta}-\frac{x^2}{2(\sigma-\Delta)^2}}dx \label{trick:one} \\
&& \int_0^{\xi} h(x)p(x)e^{\frac{x}{\tau}-\frac{x^2}{2\tau^2}}dx \leq  \int_0^{\xi} h(x)p(x)e^{\frac{x}{\sigma-\Delta}-\frac{x^2}{2(\sigma+\Delta)^2}}dx. \label{trick:two}
\end{eqnarray}
Moreover, we can easily pick a small constant $\xi_0>0$ to satisfy $\int_{\xi_0}^{t\sigma}h(x)p(x)$ $e^{\frac{x}{\sigma +\Delta}-\frac{x^2}{2(\sigma-\Delta)^2}}dx>  \int_0^{\xi_0} h(x)p(x)e^{\frac{x}{\sigma-\Delta}-\frac{x^2}{2(\sigma+\Delta)^2}}dx$, which together with \eqref{trick:one}\eqref{trick:two} proves \eqref{trick:three}. Since $g_{\xi_0,\tau}(\alpha)$ is monotonically increasing, we have
\begin{eqnarray*}
g_{\xi_0,\tau}(\alpha_*) >1 , \forall \tau \in \mathcal{I}_{\Delta}.
\end{eqnarray*}
Since $\tau^* \rightarrow \sigma$, we know when $\epsilon$ is small enough, $\tau^* \in \mathcal{I}_{\Delta}$. This implies that $g_{\xi_0, \tau^*}(\alpha_*)>1$. It is straightforward to use this inequality to derive the result for $\alpha_*$.
\end{proof}

The next Lemma obtains a simple equation between $\alpha_*$ and $\tau_*$. This equation will be later used to obtain an accurate characterization of $\alpha_*$. 

\begin{lemma}\label{lemma:sparse:l1:alpha1}
    Assume there exists a constant $c > 0$ such that the tail of $G$ satisfies
    $\frac{\mathbb{P}(|G| \geq a)} {\mathbb{P}(|G| \geq b)} \leq e^{-c(a^2 -
    b^2)}$ for $a > b > 0$. Then there exists a $0<t_*<1$ such that for any
    given $1>t>t_*$, the following holds
    \begin{equation}\label{general:one}
        \lim_{\epsilon \rightarrow 0}\epsilon \mathbb{E}\Bigg[
        \frac{(\alpha_*)^2|G|}{\alpha_*\tau_*-|G|} {\rm exp}\Big(\frac{\alpha_*
        |G|}{\tau_*}-\frac{G^2}{2\tau_*^2}  \Big) \cdot \mathbbm{1}(|G| \leq  t\alpha_*
        \tau_*) \Bigg] = 2.
    \end{equation}
\end{lemma}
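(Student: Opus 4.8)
The plan is to read \eqref{general:one} as the renormalized first-order optimality condition for $\alpha_*$. Since $\alpha_*$ minimizes $\alpha\mapsto\tilde{R}_1(\alpha,\epsilon,\tau_*)$, which differs from $R_1(\alpha,\tau_*)$ of \eqref{eq:def:Rq} only by the fixed factor $\tau_*^2$ (by the scaling in Lemma \ref{prox:property}(iii)), $\alpha_*$ is a stationary point, $\partial_\alpha R_1(\alpha,\tau_*)\big|_{\alpha=\alpha_*}=0$. By the explicit form \eqref{eq:L1-risk-deri-expand} this reads
\[
2(1-\epsilon)\big[-\phi(\alpha_*)+\alpha_*\Phi(-\alpha_*)\big]+\epsilon S_2+\epsilon S_3=0,
\]
where $S_2=\mathbb{E}\big[\alpha_*\Phi(\tfrac{|G|}{\tau_*}-\alpha_*)-\phi(\alpha_*-\tfrac{|G|}{\tau_*})\big]$ gathers the signal terms with the generically small argument $\alpha_*-|G|/\tau_*$, and $S_3$ gathers those with the large argument $\alpha_*+|G|/\tau_*$. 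Throughout I would use the already-established facts $\alpha_*\to\infty$ and $\tau_*\to\sigma$ (Lemma \ref{lemma:l1:alpha-tau-trend}) together with the Gaussian-tail expansion \eqref{gaussiantail:exp}.

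The core is a set of three asymptotic estimates fed into this identity after multiplying through by $\alpha_*^2/\phi(\alpha_*)$. First, \eqref{gaussiantail:exp} gives $\phi(\alpha_*)-\alpha_*\Phi(-\alpha_*)=\phi(\alpha_*)\alpha_*^{-2}(1+o(1))$, so the noise term contributes $-2(1-\epsilon)(1+o(1))$ after normalization. Second, since $\alpha_*+|G|/\tau_*\ge\alpha_*$, each summand of $S_3$ is $O(\phi(\alpha_*))$, whence $\epsilon|S_3|\,\alpha_*^2/\phi(\alpha_*)\le C\epsilon\alpha_*^2\to0$. Third, I would split $S_2=S_2^{\mathrm{in}}+S_2^{\mathrm{out}}$ at $|G|=t\alpha_*\tau_*$; on the inner part (more precisely on $\xi\le|G|\le t\alpha_*\tau_*$) one has $\alpha_*-|G|/\tau_*\ge(1-t)\alpha_*\to\infty$ with $|G|/\tau_*$ bounded below, so \eqref{gaussiantail:exp} applies \emph{uniformly}, and writing $\phi(\alpha_*-|G|/\tau_*)=\phi(\alpha_*)\exp(\tfrac{\alpha_*|G|}{\tau_*}-\tfrac{G^2}{2\tau_*^2})$ yields
\[
\alpha_*\Phi(\tfrac{|G|}{\tau_*}-\alpha_*)-\phi(\alpha_*-\tfrac{|G|}{\tau_*})=\phi(\alpha_*)\exp\!\Big(\tfrac{\alpha_*|G|}{\tau_*}-\tfrac{G^2}{2\tau_*^2}\Big)\frac{|G|}{\alpha_*\tau_*-|G|}\,(1+o(1)).
\]
Multiplying by $\epsilon\alpha_*^2/\phi(\alpha_*)$ reproduces exactly the integrand of \eqref{general:one}, so $\epsilon S_2^{\mathrm{in}}\alpha_*^2/\phi(\alpha_*)$ equals the left-hand side of \eqref{general:one} up to a factor $1+o(1)$. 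Granting that $S_2^{\mathrm{out}}$ is negligible after normalization (treated below), the stationarity identity gives $\epsilon S_2^{\mathrm{in}}\alpha_*^2/\phi(\alpha_*)\to2(1-\epsilon)\to2$, which is the claim.

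The delicate points, which I expect to be the main obstacle, are the two edges of the inner range together with the growth rate of $\alpha_*$. Near $|G|=0$ the uniform expansion fails (its leading term vanishes), but the $\phi(\alpha_*-|G|/\tau_*)$-weighted mass on $[0,\xi]$ is dominated by that on $[\xi,t\sigma\alpha_*]$ by the preliminary Lemma \ref{appendix:noisy:l1:tail}, so this region is negligible for an appropriate $\xi$. The genuinely hard end is $|G|>t\alpha_*\tau_*$: there the clean expansion breaks (the factor $\alpha_*\tau_*-|G|$ changes sign and a spurious pole appears at $|G|=\alpha_*\tau_*$), so I would instead use the a priori bound $|\alpha_*\Phi(\tfrac{|G|}{\tau_*}-\alpha_*)-\phi(\alpha_*-\tfrac{|G|}{\tau_*})|\le2\alpha_*$, giving $|S_2^{\mathrm{out}}|\le2\alpha_*\,\mathbb{P}(|G|>t\alpha_*\tau_*)$. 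The tail hypothesis $\mathbb{P}(|G|\ge a)/\mathbb{P}(|G|\ge b)\le e^{-c(a^2-b^2)}$ forces $\mathbb{P}(|G|>t\alpha_*\tau_*)\lesssim e^{-ct^2\alpha_*^2\sigma^2(1+o(1))}$, which beats the bulk order $\phi(\alpha_*)/\alpha_*^2=e^{-\alpha_*^2/2}/(\sqrt{2\pi}\,\alpha_*^2)$ once $t$ exceeds a threshold $t_*\in(0,1)$ fixed by $c$ and $\sigma$ (heuristically, the Laplace concentration point of the integrand lies strictly inside $(0,1)$, so any $t$ beyond it retains the dominant mass); this is precisely where the restriction $t>t_*$ originates. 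Both this comparison and the estimate $\epsilon\alpha_*^2\to0$ rest on knowing that $\alpha_*$ grows only logarithmically in $1/\epsilon$, which I would extract from the stationarity balance $\epsilon S_2\asymp\phi(\alpha_*)/\alpha_*^2$ sharpened by the Laplace estimate of Lemma \ref{lemma:sparse:l1:laplace}. Making these rate bounds rigorous and uniform over admissible $t$ is the technical crux; the remainder is a uniform application of \eqref{gaussiantail:exp}.
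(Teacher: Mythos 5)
Your overall strategy coincides with the paper's: both start from the stationarity identity $\partial_\alpha R_1(\alpha,\tau_*)|_{\alpha=\alpha_*}=0$ in the form \eqref{eq:L1-risk-deri-expand}, normalize by $\alpha_*^2/\phi(\alpha_*)$, use \eqref{gaussiantail:exp} for the noise term, split the signal term at $|G|=t\alpha_*\tau_*$, and invoke Lemma \ref{appendix:noisy:l1:tail} to neutralize the region $|G|\le\xi$ where the expansion degenerates. The inner-region analysis you describe is essentially the paper's treatment of its truncated quantity $T$.

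The genuine gap is in the outer region $|G|>t\alpha_*\tau_*$. You bound $|S_2^{\mathrm{out}}|\le 2\alpha_*\mathbb{P}(|G|>t\alpha_*\tau_*)\lesssim \alpha_* e^{-ct^2\sigma^2\alpha_*^2(1+o(1))}$ and compare this directly with the bulk order $\phi(\alpha_*)/\alpha_*^2\asymp e^{-\alpha_*^2/2}$. That comparison closes only if $ct^2\sigma^2>1/2$ for some $t<1$, i.e.\ only if $c\sigma^2>1/2$; the prefactor $\epsilon$ does not rescue it, since the stationarity balance forces $\epsilon$ to be only $e^{-\Theta(\alpha_*)}$ (for bounded $G$, $\alpha_*\sim\frac{\sigma}{M}\log\frac{1}{\epsilon}$), which cannot absorb a deficit of order $e^{\delta\alpha_*^2}$ in the exponent. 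The lemma assumes only $c>0$, so for small $c$ no threshold $t_*<1$ exists under your comparison. The paper avoids this by a \emph{relative} comparison: it lower-bounds the truncated term by the mass in a thin strip $\tilde t\tau_*\alpha_*\le|G|\le t\tau_*\alpha_*$, namely $T\ge\frac{\tilde t}{1-\tilde t}\phi((1-\tilde t)\alpha_*)\,\mathbb{P}(\tilde t\tau_*\alpha_*\le|G|\le t\tau_*\alpha_*)$, so that the tail-ratio hypothesis cancels most of the two probabilities and the decisive inequality becomes $c(t^2-\tilde t^2)\sigma^2>(1-\tilde t)^2/2$ — satisfiable for every $c>0$ by taking $\tilde t<t$ both close to $1$, because the competing Gaussian factor is $\phi((1-\tilde t)\alpha_*)$, not $\phi(\alpha_*)$. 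Your parenthetical about the Laplace concentration point gestures at this, but your quantitative benchmark ($e^{-\alpha_*^2/2}$ rather than $e^{-(1-\tilde t)^2\alpha_*^2/2}$) is the wrong one. A secondary, fixable issue: your disposal of $S_3$ via $|S_3|\le C\phi(\alpha_*)$ needs $\epsilon\alpha_*^2\to0$, i.e.\ an a priori upper bound $\alpha_*=O(\log\frac{1}{\epsilon})$, which you propose to extract from the very balance you are establishing; this can be made non-circular (the paper does essentially this in the proof of Lemma \ref{lemma:sparse:l1:alpha2}), but the paper's proof of the present lemma sidesteps it entirely by showing $G_3/G_1\to0$ through the exponential factor $e^{-\alpha_*|G|/\tau_*}$ hidden in $\phi(\alpha_*+|G|/\tau_*)$ and dominated convergence, using only $\alpha_*\to\infty$.
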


\begin{proof}[Proof of Lemma \ref{lemma:sparse:l1:alpha1}]
Since, $\alpha_*$ minimizes $\mathbb{E} (\eta_1 ({B}+ \tau_* Z; \alpha \tau_*)
-{B})^2$, we have $\frac{\partial R_1(\alpha, \tau_*)}{\partial
\alpha}\Big|_{\alpha=\alpha_*}=0$. By setting
\eqref{eq:L1-risk-deri-expand} as 0, we obtain that
\begin{align} \label{gen:optima}
    &2(1-\epsilon)\underbrace{(\phi(\alpha_*)-\alpha_*\Phi(-\alpha_*))}_{:=G_1}+\epsilon
    \underbrace{\mathbb{E}
    \Big[\phi(\alpha_*+\frac{|G|}{\tau_*})-\alpha_*\Phi(-\frac{|G|}{\tau_*}-\alpha_*)\Big
]}_{:=G_3} \nonumber \\
=&
\epsilon \underbrace{\mathbb{E} \Big[\alpha_*\Phi(\frac{|G|}{\tau_*}-\alpha_*)-\phi(\alpha_*-\frac{|G|}{\tau_*}) \Big ]}_{:=G_2}
\end{align}

Since $\alpha_* \rightarrow \infty$, according to \eqref{gaussiantail:exp}, $G_1 \sim \frac{\phi(\alpha_*)}{(\alpha_*)^2}$ and 
\begin{equation*}
\mathbb{E}\Big[\frac{|G|}{\alpha_* \tau_* +|G|}\phi(\alpha_*+|G|/\tau_*)\Big]\leq G_3 \leq \mathbb{E}\Big[\frac{|G|\phi(\alpha_*+|G|/\tau_*)}{\alpha_* \tau_* +|G|} + \frac{\alpha_*\phi(\alpha_*+|G|/\tau_*)}{(\alpha_*+|G|/\tau_*)^3} \Big].
\end{equation*}

Hence,
\begin{equation} \label{gen:eq:one}
\Big| \frac{G_3}{G_1} \Big| \lesssim \mathbb{E}\bigg[\frac{\alpha_*^2 |G|}{\alpha_* \tau_* +|G|}{\rm exp}\Big(-{\frac{\alpha_*|G|}{\tau_*}} -\frac{|G|^2}{2\tau_*^2}\Big) \bigg]
+ \mathbb{E} \bigg[ \frac{\alpha_*^3}{(\alpha_*+|G|/\tau_*)^3} \exp \Big(-{\frac{\alpha_*|G|}{\tau_*}} -\frac{|G|^2}{2\tau_*^2}\Big) \bigg].
\end{equation}

Moreover, it is straightforward to see that
\begin{equation*}
\frac{(\alpha_*)^2 |G|}{\alpha_* \tau_* +|G|}{\rm exp}\Big(-{\frac{\alpha_*|G|}{\tau_*}} -\frac{|G|^2}{2\tau_*^2}\Big) \leq \frac{\alpha_* |G|}{\tau_*}{\rm exp}\Big(-{\frac{\alpha_*|G|}{\tau_*}} \Big)\leq e^{-1}
\end{equation*}

We can then apply DCT to conclude the first term on the right hand side of \eqref{gen:eq:one} goes to zero. Similar arguments work for the second term. Hence $\frac{G_3}{G_1} \rightarrow 0$, as $\epsilon \rightarrow 0$. Hence, according to  \eqref{gen:optima} 
\begin{eqnarray}\label{final:one}
\lim_{\epsilon \rightarrow 0} \frac{\epsilon \alpha_*^2 G_2}{\phi(\alpha_*)}=2.
\end{eqnarray}

Next we would like to simplify $G_2$. The idea is to approximate $\Phi(|G|/\tau_*-\alpha_*)$ by $\frac{1}{\alpha_*-|G|/\tau_*}\phi(\alpha_*-|G|/\tau_*)$, but since $|G|$ is not necessarily bounded, the approximation may not be accurate.  Therefore, we first consider an approximation to a truncated version of $G_3$. More specifically, given a constant $0<t<1$, we focus on
\begin{equation*}
T \triangleq 
\mathbb{E} \Big\{\big[\alpha_*\Phi(|G| / \tau_* - \alpha_*) - \phi(\alpha_* -
|G| / \tau_*) \big ] \cdot \mathbbm{1}(|G|\leq t \tau_* \alpha_*) \Big\}.
\end{equation*}

It is straightforward to confirm that 
\begin{eqnarray} \label{gen:eq:two}
\lefteqn{- \mathbb{E} \Big [\frac{\alpha_*}{(\alpha_*-|G|/\tau_*)^3} \phi(\alpha_*-|G|/\tau_*) \cdot \mathbbm{1}(|G|\leq t \tau_* \alpha_*)  \Big ] } \nonumber \\
&\leq& T - \mathbb{E} \Big [\frac{|G| }{\alpha_* \tau_* -|G|}\phi(\alpha_*-|G|/\tau_*) \cdot \mathbbm{1}(|G|\leq t \tau_* \alpha_*) \Big ] \leq 0.
\end{eqnarray}

To show $T$ has the same order as the gaussian tail approximation, we analyze the following ratio:
\begin{align}\label{gen:eq:three}
    \frac{\mathbb{E} \big[ \frac{\alpha_*}{(\alpha_*-|G|/\tau_*)^3} \phi(\alpha_*-|G|/\tau_*) \cdot \mathbbm{1}(|G|\leq t \tau_* \alpha_*) \big]}{\mathbb{E} \big[\frac{|G| }{\alpha_* \tau_* -|G|}\phi(\alpha_*-|G|/\tau_*) \cdot \mathbbm{1}(|G|\leq t \tau_* \alpha_*) \big] }
    \leq&
    \frac{\frac{\alpha_*}{(\alpha_*-t \alpha_*)^3} \mathbb{E} \big[ \phi(\alpha_*-|G|/\tau_*) \cdot \mathbbm{1}(|G|\leq t \tau_* \alpha_*) \big]}{\frac{1}{\alpha_* \tau_*} \mathbb{E} \big[ |G|\phi(\alpha_*-|G|/\tau_*) \cdot \mathbbm{1}(|G|\leq t \tau_* \alpha_*) \big] } \nonumber \\
    \leq&
    \frac{ \tau_*}{\alpha_* (1-t)^3} \frac{ \mathbb{E} \big[ \phi(\alpha_*-|G|/\tau_*) \cdot \mathbbm{1}(|G|\leq t \tau_* \alpha_*)  \big] }{ \mathbb{E} \big[ |G|\phi(\alpha_*-|G|/\tau_*) \cdot \mathbbm{1}(|G|\leq t \tau_* \alpha_*) \big] } .
\end{align}

According to Lemma \ref{appendix:noisy:l1:tail} and the fact $\tau_* >\sigma$, there exists $\xi>0$ such that
\begin{align*}
    \mathbb{E} \big[ \phi(\alpha_*-|G|/\tau_*) \cdot \mathbbm{1}(|G|\leq t \tau_* \alpha_*) \big]
    >&
    \mathbb{E} \big[ \phi(\alpha_*-|G|/\tau_*) \cdot \mathbbm{1}(\xi \leq
    |G|\leq t \sigma \alpha_*) \big] \nonumber \\
    >&
    \frac{1}{2} \mathbb{E} \big[ \phi(\alpha_*-|G|/\tau_*) \cdot \mathbbm{1}(|G| \leq t \tau_* \alpha_*) \big] .
\end{align*}

Hence, 
\begin{align*}
    &\mathbb{E} \big[ |G|\phi(\alpha_*-|G|/\tau_*) \cdot \mathbbm{1}(|G|\leq t \tau_* \alpha_*) \big]
    \geq \mathbb{E} \big[ |G|\phi(\alpha_*-|G|/\tau_*) \cdot \mathbbm{1}(\xi
    \leq |G|\leq t \tau_* \alpha_*) \big] \nonumber \\
    \geq&
    \xi \mathbb{E} \big[\phi(\alpha_*-|G|/\tau_*) \cdot \mathbbm{1}(\xi \leq |G|\leq t \tau_* \alpha_*) \big]
    >
    \frac{\xi}{2}\mathbb{E} \big[ \phi(\alpha_*-|G|/\tau_*) \cdot \mathbbm{1}(|G| \leq t \tau_* \alpha_*) \big] .
\end{align*}

The above inequality together with \eqref{gen:eq:three} yields,
\begin{equation*}
    \frac{\mathbb{E} \Big[\frac{\alpha_*}{(\alpha_*-|G|/\tau_*)^3} \phi(\alpha_*-|G|/\tau_*) \cdot \mathbbm{1}(|G|\leq t \tau_* \alpha_*) \Big]}
    {\mathbb{E} \Big [\frac{|G| }{\alpha_* \tau_* -|G|}\phi(\alpha_*-|G|/\tau_*) \cdot \mathbbm{1}(|G|\leq t \tau_* \alpha_*) \Big ] }
    \rightarrow 0,
    \quad
    \text{as } \epsilon \rightarrow 0.
\end{equation*}

This combined with \eqref{gen:eq:two} gives us,
\begin{equation*}
    T \sim  \mathbb{E} \Big[\frac{|G| }{\alpha_* \tau_* -|G|}\phi(\alpha_*-|G|/\tau_*) \cdot \mathbbm{1}(|G|\leq t \tau_* \alpha_*) \Big].
\end{equation*}

Now we turn to analyzing the term 
\begin{equation*}
    G_2 - T
    =
    \mathbb{E} \Big\{\big[\alpha_*\Phi(|G|/\tau_* - \alpha_*) - \phi(\alpha_*
    - |G| / \tau_*) \big] \cdot \mathbbm{1}(|G|> t \tau_* \alpha_*) \Big\}.
\end{equation*}

We aim to show $G_2 - T$ has smaller order than $T$. Equivalently, we would like to prove
\begin{equation}\label{gen:eq:four}
    \frac{\mathbb{E} \Big\{\big[\alpha_*\Phi( |G| / \tau_* - \alpha_*) -
    \phi(\alpha_* - |G| / \tau_*) \big] \cdot \mathbbm{1}(|G|> t \tau_* \alpha_*) \Big\}}
    { \mathbb{E} \Big [\frac{|G| }{\alpha_* \tau_* -|G|}\phi(\alpha_*-|G|/\tau_*) \cdot \mathbbm{1}(|G|\leq t \tau_* \alpha_*) \Big ]}=o(1).
\end{equation}

First note that
\begin{equation*}
    \mathbb{E} \Big\{\big[ \alpha_*\Phi( |G| / \tau_* - \alpha_*) -
    \phi(\alpha_* - |G| / \tau_*) \big] \cdot \mathbbm{1}(|G| > t \tau_* \alpha_*) \Big\}
    =
    O(\alpha_* P(|G| > t \tau_* \alpha_*))
\end{equation*}

Furthermore, for any $0<\tilde{t}<t$, we have
\begin{align*}
    & \mathbb{E} \Big [\frac{|G| }{\alpha_* \tau_* -|G|}\phi(\alpha_*-|G|/\tau_*) \cdot \mathbbm{1}(|G|\leq t \tau_* \alpha_*) \Big ] \nonumber \\
    &>
    \mathbb{E} \Big[ \frac{|G| }{\alpha_* \tau_* -|G|}\phi(\alpha_*-|G|/\tau_*)
    \cdot \mathbbm{1}(\tilde{t} \tau_* \alpha_* \leq |G|\leq t \tau_* \alpha_*)
    \Big] \nonumber \\
    \geq &
    \mathbb{E} \Big[ \frac{\tilde{t}\tau_* \alpha_*}{\tau_* \alpha_*-\tilde{t}\tau_* \alpha_*}\phi(\alpha_*-\tilde{t}\tau_* \alpha_*/\tau_*)\cdot \mathbbm{1}(\tilde{t} \tau_* \alpha_* \leq |G|\leq t \tau_* \alpha_*) \Big] \nonumber \\
    =&
    \frac{\tilde{t}}{1-\tilde{t}}\phi((1-\tilde{t})\alpha_*) P(\tilde{t} \tau_* \alpha_* \leq |G|\leq t \tau_* \alpha_*).
\end{align*}

So \eqref{gen:eq:four} would hold if we can show 
\begin{equation}\label{eq:intermedeqforalpha}
\frac{\alpha_*P(|G|>t\tau_* \alpha_*)}{\phi((1-\tilde{t})\alpha_*) P(\tilde{t} \tau_* \alpha_* \leq |G|\leq t \tau_* \alpha_*)}=o(1)
\end{equation}

Based on the condition we impose on the tail probability of $G$ in the statement of Lemma \ref{lemma:sparse:l1:alpha1}, \eqref{eq:intermedeqforalpha} is equivalent to
\begin{equation*}
\frac{\alpha_*P(|G|>t\tau_* \alpha_*)}{\phi((1-\tilde{t})\alpha_*) P( |G|> \tilde{t} \tau_* \alpha_*)}=o(1).
\end{equation*}

Using the tail probability condition again, we obtain
\begin{equation}\label{gen:eq:five}
\frac{\alpha_*P(|G|>t\tau_* \alpha_*)}{\phi((1-\tilde{t})\alpha_*) P( |G|> \tilde{t} \tau_* \alpha_*)}
=
O(\alpha_* e^{\frac{(1-\tilde{t})^2(\alpha_*)^2}{2}} \cdot e^{-c(t^2-\tilde{t}^2)(\tau_*\alpha_*)^2}).
\end{equation}

Also note that 
\begin{equation*}
    c(t^2-\tilde{t}^2)\sigma^2-\frac{(1-\tilde{t})^2}{2}=c(t^2-1)\sigma^2+(1-\tilde{t})
    \Big[  c(1+\tilde{t})\sigma^2-\frac{1}{2}(1-\tilde{t}) \Big].
\end{equation*}

Hence we can choose $t$ and $\tilde{t}$ close to $1$ so that $c(t^2-\tilde{t}^2)\sigma^2 - \frac{(1-\tilde{t})^2}{2}>0$ (Set $t=1$ and $\tilde{t}$ close enough to 1, the expression is negative. Conclusion follows by the continuity in $t$). Since $\tau_* \rightarrow \sigma$, when $\alpha_*$ is large enough, $c(t^2-\tilde{t}^2)(\tau_*)^2 - \frac{(1-\tilde{t})^2}{2}$ is bounded below away from zero. This implies the term on the right hand side of \eqref{gen:eq:five} is $o(1)$. Putting the preceding results we derived so far, we have showed that $G_2 \sim \mathbb{E} \Big [\frac{|G| }{\alpha_* \tau_* -|G|}\phi(\alpha_*-|G|/\tau_*) \cdot \mathbbm{1}(|G|\leq t \tau_* \alpha_*) \Big ]$. This result together with \eqref{final:one} completes the proof.
\end{proof}

Equation \eqref{general:one} can potentially enable us to obtain accurate information about $\alpha_*$. The only remaining difficulty is the existence of $\tau_*$ in this equation that can depend on $\epsilon$. Our next lemma proves that \eqref{general:one} still
holds, even if we replace $\tau_*$ with $\sigma$. Hence, we obtain a simple equation for $\alpha_*$. 

\begin{lemma}\label{lemma:sparse:l1:alpha2}
Under the conditions of Lemma \ref{lemma:sparse:l1:alpha1}, we have
\begin{eqnarray}\label{general:two}
\lim_{\epsilon \rightarrow 0}\epsilon \mathbb{E}\bigg[ \frac{\alpha_*^2|G|}{\alpha_*\sigma -|G|} {\rm exp}\Big(\frac{\alpha_* |G|}{\sigma}-\frac{G^2}{2\sigma^2}  \Big) \cdot \mathbbm{1}(|G| \leq  t\alpha_* \sigma) \bigg] = 2.
\end{eqnarray}
\end{lemma}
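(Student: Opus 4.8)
The plan is to deduce this lemma from Lemma \ref{lemma:sparse:l1:alpha1} by showing that replacing $\tau_*$ with $\sigma$ inside the expectation leaves the limit unchanged. Introduce
\[
E(\tau) \triangleq \mathbb{E}\bigg[\frac{\alpha_*^2|G|}{\alpha_*\tau -|G|}\,\exp\Big(\frac{\alpha_* |G|}{\tau}-\frac{G^2}{2\tau^2}\Big)\,\mathbbm{1}(|G|\leq t\alpha_*\tau)\bigg],
\]
so that Lemma \ref{lemma:sparse:l1:alpha1} reads $\epsilon E(\tau_*)\to 2$ and the goal becomes $\epsilon E(\sigma)\to 2$; it therefore suffices to prove $E(\sigma)/E(\tau_*)\to 1$. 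Since $G$ is bounded above by $M=\esssup(G)$ and $\alpha_*\to\infty$ (established exactly as in Lemma \ref{lemma:l1:alpha-tau-trend}), for all sufficiently small $\epsilon$ we have $t\alpha_*\sigma>M$ and $t\alpha_*\tau_*>M$, so both indicators equal $1$ almost surely and may be discarded.

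I would then carry out the comparison at the level of integrands. For fixed $g\in[0,M]$ the ratio of the $E(\sigma)$-integrand to the $E(\tau_*)$-integrand is
\[
r(g)=\frac{\alpha_*\tau_*-g}{\alpha_*\sigma-g}\,\exp\Big(\alpha_* g\big(\tfrac{1}{\sigma}-\tfrac{1}{\tau_*}\big)-\tfrac{g^2}{2}\big(\tfrac{1}{\sigma^2}-\tfrac{1}{\tau_*^2}\big)\Big).
\]
The prefactor converges to $1$ uniformly on $[0,M]$ because $\tau_*\to\sigma$ and $g/\alpha_*\to 0$, and the $g^2$ term in the exponent is $O(|\tau_*-\sigma|)\to 0$. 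The delicate term is $\alpha_* g(\sigma^{-1}-\tau_*^{-1})$, which on $[0,M]$ is bounded by $\alpha_* M|\tau_*-\sigma|/(\sigma\tau_*)$; hence I need $\alpha_*|\tau_*-\sigma|\to 0$. Granting this, $\sup_{g\in[0,M]}|r(g)-1|\to 0$, and since $E(\sigma)=\mathbb{E}[r(G)\cdot(\text{$E(\tau_*)$-integrand})]$ one gets $|E(\sigma)-E(\tau_*)|\leq \sup_g|r(g)-1|\cdot E(\tau_*)$, so $E(\sigma)/E(\tau_*)\to 1$, completing the argument.

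It remains to secure the two rate estimates behind $\alpha_*|\tau_*-\sigma|\to 0$. First, the state-evolution identity \eqref{eq:se11} together with the optimality of $\alpha_*$ and the elementary fact that $\eta_1(\,\cdot\,;\alpha\tau)\to 0$ as $\alpha\to\infty$ (Lemma \ref{lem:chiinfty} with $b_\epsilon=1$, $\tilde G=G$) yields $\tau_*^2-\sigma^2=\tfrac{1}{\delta}R_1(\alpha_*,\tau_*)\leq \tfrac{1}{\delta}\lim_{\alpha\to\infty}R_1(\alpha,\tau_*)=\tfrac{\epsilon}{\delta}\mathbb{E}G^2$, whence $|\tau_*-\sigma|=O(\epsilon)$. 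Second, I would extract an upper bound on $\alpha_*$ from the constraint $\epsilon E(\tau_*)\to 2$: restricting the expectation to $\{G\in[M-\delta,M]\}$, which has positive probability since $M=\esssup(G)$, gives $E(\tau_*)\gtrsim \alpha_* e^{\alpha_*(M-\delta)/\tau_*}\,\mathbb{P}(G\geq M-\delta)$, and because $\tau_*$ stays bounded the exponent is at least $c\alpha_*$ for some constant $c>0$; thus $\epsilon\alpha_* e^{c\alpha_*}=O(1)$, forcing $\alpha_*=O(\log\epsilon^{-1})$. Combining the two gives $\alpha_*|\tau_*-\sigma|=O(\epsilon\log\epsilon^{-1})\to 0$.

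The main obstacle is precisely this control of $\alpha_*$. Because the exponent $\alpha_*|G|/\tau_*$ is of order $\log\epsilon^{-1}$, the exponential factor is of polynomial order in $\epsilon^{-1}$ and is extremely sensitive to the gap between $\tau_*$ and $\sigma$; without the logarithmic upper bound on $\alpha_*$ the perturbation $\alpha_* g(\sigma^{-1}-\tau_*^{-1})$ need not vanish and $r(g)$ would fail to converge to $1$. The Laplace-type lower bound pinning $\alpha_*=O(\log\epsilon^{-1})$ is therefore the crux, while the rest of the proof is uniform convergence on the compact support $[0,M]$ followed by the ratio estimate above.
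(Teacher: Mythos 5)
Your overall strategy is the right one and matches the paper's in spirit: reduce the claim to showing that replacing $\tau_*$ by $\sigma$ inside the expectation is harmless, and identify the control of $\alpha_*(\tau_*-\sigma)$ as the crux. The two rate estimates you supply are both correct and are essentially the paper's own ingredients: $\tau_*^2-\sigma^2\leq \epsilon\,\mathbb{E}G^2/\delta$ is Lemma \ref{lem:upperbound} specialized to $b_\epsilon=1$, and the Laplace-type lower bound forcing $\epsilon\alpha_* e^{c\alpha_*}=O(1)$, hence $\alpha_*=O(\log\epsilon^{-1})$, is exactly the estimate \eqref{sigma:two}. For bounded $G$ your direct ratio-of-integrands comparison on $[0,M]$ is clean and arguably simpler than the paper's route, which instead expands $h(\tau_*)-h(\sigma)=h'(\tilde\tau)(\tau_*-\sigma)$ by the mean value theorem and separately derives $\alpha_*^2(\tau_*-\sigma)\rightarrow 0$ from the risk identity \eqref{eq:orderalphataustar1}--\eqref{eq:orderalphataustar2}.

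The gap is one of generality: the lemma is stated ``under the conditions of Lemma \ref{lemma:sparse:l1:alpha1},'' which assume only the tail-ratio condition $\mathbb{P}(|G|\geq a)/\mathbb{P}(|G|\geq b)\leq e^{-c(a^2-b^2)}$ and therefore allow unbounded $G$ (e.g.\ Gaussian); boundedness enters the paper only later, in Lemma \ref{lemma:sparse:l1:alpha3}. Your proof invokes $M=\esssup(G)<\infty$ in an essential way: you discard both indicators, you take a uniform supremum of $|r(g)-1|$ over the compact set $[0,M]$, and you localize the Laplace bound to $[M-\delta,M]$. For unbounded $G$ the first two steps fail as written: the truncation sets $\{|G|\leq t\alpha_*\sigma\}$ and $\{|G|\leq t\alpha_*\tau_*\}$ no longer coincide (the paper handles this by first re-proving \eqref{general:one} with the truncation at $t\alpha_*\sigma$ on both sides, its equation \eqref{sigma:one}), and the exponent perturbation must be controlled up to $g=t\alpha_*\sigma$, where it has size $t\alpha_*^2(\tau_*-\sigma)/\sigma$ rather than the $O(\alpha_*(\tau_*-\sigma))$ you bound. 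The argument is repairable with what you already have --- your rates give $\alpha_*^2(\tau_*-\sigma)=O(\epsilon\log^2\epsilon^{-1})\rightarrow 0$, and the localization can be done on any fixed interval $[\xi,2\xi]$ of positive $G$-probability instead of $[M-\delta,M]$ --- but as written the proof only establishes the lemma for bounded $G$, not under its stated hypotheses.
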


\begin{proof}
    Firstly, it is not hard to confirm that the proof of Lemma
    \ref{lemma:sparse:l1:alpha1} works through if we consider the truncation
    $\mathbbm{1}(|G|\leq t \alpha_* \sigma)$, which leads to the following
    result
    \begin{equation}\label{sigma:one}
        \lim_{\epsilon \rightarrow 0}\epsilon \mathbb{E}\bigg[ \frac{(\alpha_*)^2|G|}{\alpha_*\tau_* -|G|} {\rm exp}\Big(\frac{\alpha_* |G|}{\tau_*}-\frac{G^2}{2\tau_*^2}  \Big) \cdot \mathbbm{1}(|G| \leq  t\alpha_* \sigma) \bigg] = 2.
    \end{equation}

Denoting $h(z) = \mathbb{E}\Big[ \frac{\alpha_*^2|G|}{\alpha_*z -|G|} {\rm exp}\Big(\frac{\alpha_* |G|}{z}-\frac{G^2}{2z^2}  \Big) \cdot \mathbbm{1}(|G| \leq  t\alpha_* \sigma) \Big]$, we have
\begin{equation}\label{sigma:zero}
    \epsilon h(\tau_*)-\epsilon h(\sigma)
    =
    \epsilon h'(\tilde{\tau})(\tau_*-\sigma),
\end{equation}
where $\tilde{\tau}$ is between $\tau_*$ and $\sigma$. We then calculate the derivative 
\begin{equation*}
    h'(z)=\mathbb{E}\bigg[\Big( -\frac{\alpha_*^3|G|}{(\alpha_*z - |G|)^2} -
    \frac{\alpha_*^2|G|^2}{z^3} \Big) {\rm exp}\Big(\frac{\alpha_* |G|}{z} -
    \frac{G^2}{2z^2}  \Big) \cdot \mathbbm{1}(|G| \leq t\alpha_* \sigma) \bigg]
\end{equation*}

Hence we have
\begin{align}\label{sigma:four}
|h'(\tilde{\tau})|
\leq&
\bigg( \frac{1}{\sigma^2(1-t)^2} +\frac{t\alpha_*^2}{\sigma^2} \bigg) \mathbb{E}\Big[ \alpha_*|G| {\rm exp}\Big(\frac{\alpha_* |G|}{\tilde{\tau}}-\frac{G^2}{2\tilde{\tau}^2}  \Big) \cdot \mathbbm{1}(|G| \leq  t\alpha_* \sigma) \Big] \nonumber \\
\leq &
\bigg( \frac{1}{\sigma^2(1-t)^2} +\frac{t\alpha_*^2}{\sigma^2} \bigg) \mathbb{E}\Big[ \alpha_*|G| {\rm exp}\Big(\frac{\alpha_* |G|}{\sigma}-\frac{G^2}{2\tau_*^2}  \Big) \cdot \mathbbm{1}(|G| \leq  t\alpha_* \sigma) \Big]
\end{align}

We have used $\tau_*\geq\tilde{\tau} \geq \sigma$ in the above derivation.
Moreover, according to \eqref{sigma:one}, it is easily seen that
\begin{align} \label{sigma:two}
    \Theta(1)
    =&
    \epsilon \mathbb{E}\Big[ \alpha_* |G| {\rm exp}\Big(\frac{\alpha_* |G|}{\tau_*}-\frac{G^2}{2\tau_*^2}  \Big) \cdot \mathbbm{1}(|G| \leq  t\alpha_* \sigma) \Big] \nonumber \\
    \geq&
    \epsilon  \mathbb{E}\Big[ \alpha_* |G| {\rm exp}\Big(\frac{\alpha_* |G|}{\tau_*}-\frac{G^2}{2\tau_*^2}  \Big) \cdot \mathbbm{1}(\xi \leq |G| \leq  t\alpha_* \sigma) \Big] \nonumber \\
    \geq&
    \epsilon \alpha_* \xi e^{\frac{\alpha_*\xi}{\tau_*}-\frac{\xi^2}{2\tau_*^2}}P(\xi \leq |G| \leq t\alpha_* \sigma)=\Theta(\epsilon \alpha_* e^{\frac{\alpha_* \xi}{\tau_*}}),
\end{align}
where $\xi$ is a small constant such that $\mathbb{P}(\xi \leq |G|) > 0$. This
tells us that $\epsilon\alpha_*^2\rightarrow 0$. Thus we have
\begin{align*}
0\leq&
\mathbb{E}\Big[ \alpha_*|G| {\rm exp}\Big(\frac{\alpha_* |G|}{\sigma}-\frac{G^2}{2\tau_*^2}  \Big) \cdot \mathbbm{1}(|G| \leq  t\alpha_* \sigma) \Big]
-
\mathbb{E}\Big[ \alpha_*|G| {\rm exp}\Big(\frac{\alpha_* |G|}{\tau_*}-\frac{G^2}{2\tau_*^2}  \Big) \cdot \mathbbm{1}(|G| \leq  t\alpha_* \sigma) \Big]\\
=&
\epsilon \mathbb{E}\Big[ \alpha_* |G| {\rm exp}\Big(\frac{\alpha_* |G|}{\tau_*}-\frac{G^2}{2\tau_*^2}  \Big)\Big({\rm exp}\Big(\frac{\alpha_* |G|}{\sigma\tau_*}(\tau_*-\sigma)\Big) -1 \Big) \cdot \mathbbm{1}(|G| \leq  t\alpha_* \sigma) \Big]\\
\leq&
\epsilon \mathbb{E}\Big[ \alpha_* |G| {\rm exp}\Big(\frac{\alpha_* |G|}{\tau_*}-\frac{G^2}{2\tau_*^2}  \Big) \cdot \mathbbm{1}(|G| \leq  t\alpha_* \sigma) \Big]\Big({\rm exp}\Big(\frac{t\alpha_*^2}{\tau_*}(\tau_*-\sigma)\Big) -1 \Big)\\
\approx&
2\Big({\rm exp}\Big(\frac{t\alpha_*^2}{\tau_*}(\tau_*-\sigma)\Big) -1 \Big)
\end{align*}
Note that according to \eqref{eq:se11} we have
\begin{equation}\label{eq:orderalphataustar1}
\frac{\delta(\tau_*^2-\sigma^2)}{\tau_*^2}  = \frac{1}{\tau_*^2} \mathbb{E} (\eta_1 (B+ \tau_* Z; \alpha_* \tau_*) -B)^2.  
\end{equation}
By canceling terms in \eqref{eq:L1-risk-expand2} using
\eqref{eq:L1-risk-deri-expand}=0, we also have
\begin{align}
\frac{1}{\tau_*^2} \mathbb{E} (\eta_1 (B&+ \tau_* Z; \alpha_* \tau_*) -B)^2
=
2(1-\epsilon)\Phi(-\alpha_*)
+ \epsilon\mathbb{E}_G\Bigg[ (1-\frac{G^2}{\tau_*^2})\Phi(\frac{G}{\tau_*}-\alpha_*) + \nonumber \\ 
& (1-\frac{G^2}{\tau_*^2})\Phi(-\frac{G}{\tau_*}-\alpha_*) - \frac{G}{\tau_*}\phi(\alpha_*-\frac{G}{\tau_*}) + \frac{G}{\tau_*}\phi(\alpha_*+\frac{G}{\tau_*}) + \frac{G^2}{\tau_*^2} \Bigg]  \leq \frac{2\phi(\alpha_*)}{\alpha_*} + \epsilon O(1). \label{eq:orderalphataustar2}
\end{align}

By combining \eqref{eq:orderalphataustar1} and \eqref{eq:orderalphataustar2}, we have
\begin{equation*}
2\alpha_*^2(\tau_*-\sigma)
\leq
2\phi(\alpha_*)\alpha_*+\epsilon\alpha_*^2O(1)
\rightarrow 0
\end{equation*}

This shows
\begin{equation}\label{sigma:three}
    \epsilon \mathbb{E}\Big[ \alpha_*|G| {\rm exp}\Big(\frac{\alpha_*
    |G|}{\sigma}-\frac{G^2}{2\tau_*^2}  \Big) \cdot \mathbbm{1}(|G| \leq
    t\alpha_* \sigma) \Big]=\Theta(1).
\end{equation}

By combining \eqref{sigma:zero},\eqref{sigma:four}, \eqref{sigma:two},
\eqref{sigma:three} and \eqref{sigma:one} we have $\epsilon h(\tau_*)-\epsilon h(\sigma) =o(1)$, which completes the proof.
\end{proof}

Based on Lemma \ref{lemma:sparse:l1:alpha2}, we can build the following
explicit convergence of $\alpha$ w.r.t. $\epsilon$.
\begin{lemma}\label{lemma:sparse:l1:alpha3}
    Assume $\esssup(G)=M<\infty$, then we have
    $\frac{\alpha_*}{\log\frac{1}{\epsilon}}\rightarrow\frac{\sigma}{M}$.
\end{lemma}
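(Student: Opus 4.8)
The plan is to start from the asymptotic identity already established in Lemma~\ref{lemma:sparse:l1:alpha2}, namely
\[
\lim_{\epsilon \rightarrow 0}\epsilon \, \mathbb{E}\bigg[ \frac{\alpha_*^2 G}{\alpha_*\sigma - G}\, \exp\Big(\tfrac{\alpha_* G}{\sigma}-\tfrac{G^2}{2\sigma^2}\Big)\cdot \mathbbm{1}(G \leq t\alpha_*\sigma)\bigg]=2,
\]
where I use the convention from Appendix~\ref{appendix:notations}(iii) that $G\geq 0$. Since $\esssup(G)=M<\infty$ and $\alpha_*\rightarrow\infty$ as $\epsilon\rightarrow 0$ (established in Lemma~\ref{lemma:l1:alpha-tau-trend}), the truncation is eventually inactive: for small enough $\epsilon$ we have $t\alpha_*\sigma>M$, so $\mathbbm{1}(G\le t\alpha_*\sigma)=1$ almost surely and may be dropped. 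I would then rewrite the prefactor as $\frac{\alpha_*^2 G}{\alpha_*\sigma-G}=\alpha_*\cdot\frac{G}{\sigma-G/\alpha_*}$, so that the identity becomes $\epsilon\alpha_*\,\mathbb{E}[f_{\alpha_*}(G)\,e^{\alpha_* G/\sigma}]\rightarrow 2$, with $f_{\alpha_*}(G)=\frac{G}{\sigma-G/\alpha_*}e^{-G^2/(2\sigma^2)}$.

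The next step is to apply the Laplace approximation of Lemma~\ref{lemma:sparse:l1:laplace} to extract the exponential rate of growth. The only subtlety is that $f_{\alpha_*}$ depends on $\alpha_*$; however, on the compact interval $[0,M]$ the functions $f_{\alpha_*}$ converge uniformly to the fixed continuous function $f(G)=\frac{G}{\sigma}e^{-G^2/(2\sigma^2)}$, since $G/\alpha_*\le M/\alpha_*\rightarrow 0$. I would therefore sandwich $f_{\alpha_*}$ between $f\pm\eta_{\alpha_*}$ with $\eta_{\alpha_*}\triangleq\sup_{[0,M]}|f_{\alpha_*}-f|\rightarrow 0$, apply Lemma~\ref{lemma:sparse:l1:laplace} to $f$ (noting $f(M)=\tfrac{M}{\sigma}e^{-M^2/(2\sigma^2)}>0$), and observe that the error term $\eta_{\alpha_*}\mathbb{E}[e^{\alpha_* G/\sigma}]$ is $o\big(\mathbb{E}[f(G)e^{\alpha_* G/\sigma}]\big)$. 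This yields $\mathbb{E}[f_{\alpha_*}(G)e^{\alpha_* G/\sigma}]\sim \tfrac{M}{\sigma}e^{-M^2/(2\sigma^2)}\,\mathbb{E}[e^{\alpha_* G/\sigma}]$, and hence
\[
\epsilon\,\alpha_*\,\mathbb{E}\big[e^{\alpha_* G/\sigma}\big]\;\longrightarrow\;\frac{2\sigma}{M}\,e^{M^2/(2\sigma^2)}.
\]

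Finally I would take logarithms to read off the rate of $\alpha_*$. Writing $C=\log\frac{2\sigma}{M}+\frac{M^2}{2\sigma^2}$, the display above gives $\log\tfrac{1}{\epsilon}=\log\alpha_*+\log\mathbb{E}[e^{\alpha_* G/\sigma}]-C+o(1)$. The key analytic input is the elementary large-deviation fact that $\frac{1}{s}\log\mathbb{E}[e^{sG}]\rightarrow M$ as $s\rightarrow\infty$ whenever $\esssup(G)=M$; this follows from $\mathbb{E}[e^{sG}]\le e^{sM}$ together with $\mathbb{E}[e^{sG}]\ge e^{s(M-\delta)}\mathbb{P}(G>M-\delta)$ and $\mathbb{P}(G>M-\delta)>0$ for every $\delta>0$. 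With $s=\alpha_*/\sigma$ this reads $\log\mathbb{E}[e^{\alpha_* G/\sigma}]=\frac{\alpha_* M}{\sigma}(1+o(1))$, so that
\[
\frac{\alpha_* M}{\sigma}(1+o(1))=\log\tfrac{1}{\epsilon}-\log\alpha_*+O(1).
\]
A short bootstrap closes the argument: the right side is $\le\log\tfrac1\epsilon+O(1)$, forcing $\alpha_*=O(\log\tfrac1\epsilon)$ and hence $\log\alpha_*=O(\log\log\tfrac1\epsilon)=o(\log\tfrac1\epsilon)$; substituting back gives $\frac{\alpha_* M}{\sigma}=\log\tfrac1\epsilon\,(1+o(1))$, i.e. $\frac{\alpha_*}{\log(1/\epsilon)}\rightarrow\frac{\sigma}{M}$. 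The main obstacle I anticipate is the rigorous handling of the $\alpha_*$-dependent prefactor in the Laplace step (ensuring the uniform-convergence sandwich is legitimate given that $f_{\alpha_*}$ also blows up through the factor $\alpha_*$ pulled out front), and, secondarily, making the logarithmic bootstrap self-consistent so that the $\log\alpha_*$ correction is provably negligible relative to $\log(1/\epsilon)$.
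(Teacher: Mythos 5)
Your proposal follows essentially the same route as the paper's proof: invoke Lemma~\ref{lemma:sparse:l1:alpha2}, drop the truncation since $G$ is bounded and $\alpha_*\rightarrow\infty$, apply the Laplace approximation of Lemma~\ref{lemma:sparse:l1:laplace} to reduce to $\epsilon\alpha_*\mathbb{E}e^{\alpha_* G/\sigma}\rightarrow \tfrac{2\sigma}{M}e^{M^2/(2\sigma^2)}$, and take logarithms using $\tfrac{1}{s}\log\mathbb{E}e^{sG}\rightarrow M$. Your treatment is correct and in fact spells out two steps the paper compresses (the uniform-convergence sandwich for the $\alpha_*$-dependent prefactor, and the bootstrap showing $\log\alpha_*=o(\log\tfrac{1}{\epsilon})$), but it is not a different argument.
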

\begin{proof}
    Obviously the condition of Lemma \ref{lemma:sparse:l1:alpha2} is satisfied
    when $G$ is bounded. It is then easy to see that \eqref{general:two} becomes
\begin{equation*}
\lim_{\epsilon \rightarrow 0}\frac{\epsilon\alpha_*}{\sigma} \mathbb{E}\bigg[|G|{\rm exp}\Big(\frac{\alpha_* |G|}{\sigma}-\frac{G^2}{2\sigma^2}  \Big) \bigg]
=
2
\end{equation*}

By Lemma \ref{lemma:sparse:l1:laplace}, we have
\begin{equation*}
\lim_{\epsilon\rightarrow0}\frac{\epsilon\alpha_*}{\sigma}Me^{-\frac{M^2}{2\sigma^2}}\mathbb{E}e^{\frac{\alpha_* G}{\sigma}}=2
\end{equation*}
Some simple algebra proves $\frac{\log\epsilon}{\alpha_*}+\frac{M}{\sigma}\rightarrow 0$.
\end{proof}

Now we are ready to establish the first part of Theorem \ref{THM:SPARSE:MAIN}. For $G$
bounded as in Lemma \ref{lemma:sparse:l1:alpha3}, by the first order condition, we have
\begin{align*}
    &\lim_{\epsilon\rightarrow0}\frac{\alpha_*^3}{\phi(\alpha_*)}\big(R -
    \mathbb{E}\beta^2 / \tau_*^2 \big) =
    \lim_{\epsilon\rightarrow0} 4(1-\epsilon) \big(1 + O(\alpha_*^{-2}) \big) \nonumber \\
    &+ 2 \epsilon\alpha_*\mathbb{E} \bigg[ - \alpha_*^2e^{\frac{\alpha_* G}{\tau_*}-\frac{G^2}{2\tau_*^2}}
    \frac{G / \tau_* + O(\alpha_*^{-1})} {(\alpha_* - G / \tau_*)^2}+ \alpha_*^2e^{-\frac{\alpha_* G}{\tau_*} - \frac{G^2}{2 \tau_*^2}}
    \frac{G / \tau_* + O(\alpha_*^{-1})}{(\alpha_* + G / \tau_*)^2} \bigg]
    \nonumber \\
    =&
    \lim_{\epsilon\rightarrow0}
    4 - 2 \epsilon \alpha_* \mathbb{E}\Big(\frac{G}{\tau_*}e^{\frac{\alpha_*
    G}{\tau_*} - \frac{G^2}{2\tau_*^2}}\Big)
    =
    0
\end{align*}

Using the divergent speed of $\alpha_*$ derived in Lemma \ref{lemma:sparse:l1:alpha3},
we can obtain that
\begin{align*}
\amse_1^*
=
\mathbb{E}\beta^2 + \alpha_*^{-3} \phi(\alpha_*) o(1)
=
\mathbb{E}\beta^2 +
o\big(\epsilon^{\frac{\sigma^2}{2M^2}\log\frac{1}{\epsilon}(1+o(1))}\big)
=
\mathbb{E}\beta^2+o(\epsilon^k),
\quad
\forall k \in \mathbb{N}.
\end{align*}

\subsection{$q > 1$}
Again recall \eqref{eq:se11} which defines $(\alpha_*, \tau_*)$. Similar to the
proof of Lemma \ref{lemma:l1:alpha-tau-trend} we can show
$\alpha_*\rightarrow\infty$ and $\tau_*\rightarrow\sigma$. The next step is to
get more accurate info about $\alpha_*$.

\begin{lemma}\label{lemma:sparse:lq:alpha2}
If all the moments of $B$ are bounded, then
\begin{equation*}
\lim_{\epsilon\rightarrow0}
\frac{\alpha_*}{\epsilon^{1-q}}
=
\frac{1}{q}\Bigg( \frac{\sigma\mathbb{E}|Z|^{\frac{2}{q-1}}}
{\mathbb{E}\big[ | G / \sigma + Z|^{\frac{1}{q-1}} G
{\rm sign}(G / \sigma + Z)\big]}\Bigg)^{q-1}.
\end{equation*}
\end{lemma}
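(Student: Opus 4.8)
The plan is to pin down $\alpha_*$ through the first-order stationarity condition satisfied by the optimal tuning. Recall from \eqref{eq:se11} and Lemma \ref{lem:opttunesimpler} that, with $\aeps=1$ and $\tilde G=G$, the quantity $\alpha_*$ minimizes $\mathbb{E}(\eta_q(B+\tau_* Z;\alpha\tau_*^{2-q})-B)^2$, so that $\partial_\alpha \tilde R_q(\alpha_*,\epsilon,\tau_*)=0$; this is exactly equation \eqref{eq:zeroder} specialized to $\aeps=1$. As a preliminary step, argued just as in Lemma \ref{lemma:l1:alpha-tau-trend}, I would first record that $\alpha_*\to\infty$ and $\tau_*\to\sigma$ as $\epsilon\to0$. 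The strategy is then to read off the precise rate of $\alpha_*$ by balancing the two terms of \eqref{eq:zeroder}: the ``noise'' term $-(1-\epsilon)q\tau_*^2 H_1$ produced by the point mass of $B$ at $0$, and the ``signal'' term $\epsilon\tau_*^{2-q}\mathbb{E}[(\eta_q(G+\tau_* Z;\chi_*)-G)\partial_2\eta_q(G+\tau_* Z;\chi_*)]$ coming from $p_G$, where $\chi_*:=\alpha_*\tau_*^{2-q}$.

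For the noise term I would reuse \eqref{eq:H1calcFinal}, which gives $\alpha_*^{(q+1)/(q-1)}H_1\to (q-1)^{-1}q^{-(q+1)/(q-1)}\mathbb{E}|Z|^{2/(q-1)}$, so that the noise term has exact order $\alpha_*^{-(q+1)/(q-1)}$. The genuinely new computation is the signal term, and the essential point distinguishing this case from the $\aeps\to\infty$ regimes of Theorem \ref{THM:STRONGRAREELL_Q} is that here $G$ is bounded while $\chi_*\to\infty$, so the argument $G+\tau_* Z$ stays $O(1)$ and $\eta_q(G+\tau_* Z;\chi_*)\to0$. Using Lemma \ref{prox:property}(ii) in the form $\eta_q(u;\chi_*)=\sgn(u)(|u|/(q\chi_*))^{1/(q-1)}(1+o(1))$ and Lemma \ref{prox:property}(v) in the form $\partial_2\eta_q(u;\chi_*)=-\eta_q(u;\chi_*)/((q-1)\chi_*)(1+o(1))$, I would show that the integrand is asymptotic to
\begin{equation*}
\frac{G\,\sgn(G+\tau_* Z)\,|G+\tau_* Z|^{1/(q-1)}}{(q-1)\,q^{1/(q-1)}\,\chi_*^{q/(q-1)}},
\end{equation*}
and then apply the DCT to pass $\tau_*\to\sigma$ inside the expectation, identifying the signal term up to the factor $\epsilon\tau_*^{2-q}$ as order $\epsilon\,\alpha_*^{-q/(q-1)}$ times $\mathbb{E}[G\,\sgn(G/\sigma+Z)\,|G/\sigma+Z|^{1/(q-1)}]$. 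Equating the two orders in \eqref{eq:zeroder} then yields
\begin{equation*}
\alpha_*^{1/(q-1)}\sim \frac{1}{\epsilon}\cdot\frac{\sigma\,q^{-1/(q-1)}\,\mathbb{E}|Z|^{2/(q-1)}}{\mathbb{E}\big[G\,\sgn(G/\sigma+Z)\,|G/\sigma+Z|^{1/(q-1)}\big]},
\end{equation*}
and raising to the power $q-1$ and collecting constants (using $(q^{-1/(q-1)})^{q-1}=q^{-1}$) produces the claimed limit for $\alpha_*/\epsilon^{1-q}$.

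I expect the main obstacle to be the rigorous justification of the signal-term asymptotics: the pointwise large-$\chi_*$ expansions of $\eta_q$ and $\partial_2\eta_q$ must be upgraded to an $L^1$ statement, which requires producing an integrable dominating function that is uniform in $\epsilon$ (equivalently, uniform over $\tau_*$ in a fixed neighborhood of $\sigma$) and verifying that the error terms in the two proximal expansions are genuinely lower order after integration. Boundedness of $G$ together with the Gaussian tails of $Z$ make all the relevant moments, including $\mathbb{E}|G/\sigma+Z|^{1/(q-1)}$, finite, so the domination should be routine away from the origin. The delicate region is near $u=0$, where $|\eta_q(u;\chi_*)|^{q-2}$ can blow up for $1<q<2$ and the expansions degrade; controlling this region via the bound $0\le\partial_1\eta_q\le1$ and the exact identities of Lemma \ref{prox:property}(ii),(v), in the same spirit as the handling of $H_1$ and $H_3$ in \eqref{eq:zeroder}, is where the care will be concentrated.
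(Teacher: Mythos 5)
Your proposal is correct and follows essentially the same route as the paper's proof: both extract the rate of $\alpha_*$ from the first-order stationarity condition in $\alpha$ after establishing $\alpha_*\to\infty$, $\tau_*\to\sigma$, use the exact identities of Lemma \ref{prox:property}(ii),(v) to get the large-$\chi$ asymptotics of the proximal operator, dominate via DCT, and balance the noise term of order $\alpha_*^{-(q+1)/(q-1)}$ against the signal term of order $\epsilon\,\alpha_*^{-q/(q-1)}$ (your constants check out against the paper's $T_1,T_2,T_3$ decomposition, which is the same computation written in the normalized coordinates $\eta_q(B/\tau_*+Z;\alpha_*)$ rather than $\eta_q(B+\tau_* Z;\alpha_*\tau_*^{2-q})$). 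The domination you worry about near $u=0$ is obtained exactly as you suggest, by dropping the $1$ in the denominator $1+\chi q(q-1)|\eta_q|^{q-2}$ and using Lemma \ref{prox:property}(ii), which yields the uniform bound $\chi^{q/(q-1)}|\partial_2\eta_q(u;\chi)|\leq (q-1)^{-1}(|u|/q)^{1/(q-1)}$.
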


\begin{proof}
    From $u-\eta_q(u,\alpha)=\alpha q \sgn(u)|\eta_q(u;\alpha)|^{q-1}$ and
    $\eta_q\rightarrow0$ as $\alpha\rightarrow\infty$, we have
    $\lim_{\alpha\rightarrow\infty}\alpha|\eta_q(u;\alpha)|^{q-1}=\frac{|u|}{q}$.
    Since $\alpha_*$ is optimal, the first order optimality condition yields
\small
\begin{align*}
&\epsilon\mathbb{E}\bigg[\frac{q|\eta_q(\frac{G}{\tau_*}+Z;\alpha_*)|^q}{1+\alpha_* q(q-1)|\eta_q(\frac{G}{\tau_*}+Z;\alpha_*)|^{q-2}}\bigg]
+
(1-\epsilon)\mathbb{E}\bigg[\frac{q|\eta_q(Z;\alpha_*)|^q}{1+\alpha_* q(q-1)|\eta_q(Z;\alpha_*)|^{q-2}}\bigg] \nonumber \\
&=
\epsilon\mathbb{E}\bigg[\frac{G}{\tau_*}\frac{q|\eta_q(\frac{G}{\tau_*}+Z;\alpha_*)|^{q-1}\sgn(\frac{G}{\tau_*}+Z)}{1+\alpha_* q(q-1)|\eta_q(\frac{G}{\tau_*}+Z;\alpha_*)|^{q-2}}\bigg].
\end{align*}
\normalsize

Denote the three parts inside $\mathbb{E}(\cdot)$ by $T_1,T_2,T_3$ respectively. Multiply each side by $\alpha_*^{\frac{q+1}{q-1}}$, and note that
\begin{align*}
0\leq&
\alpha_*^{\frac{q+1}{q-1}}T_1
=
\frac{q|\alpha_*^{\frac{1}{q-1}}\eta_q(G / \tau_* + Z;
\alpha_*)|^q}{\alpha_*^{-\frac{1}{q-1}} + q(q-1)
|\alpha_*^{\frac{1}{q-1}}\eta_q(G / \tau_* + Z; \alpha_*)|^{q-2}} \nonumber \\
\leq &
\frac{1}{q-1}|\alpha_*^{\frac{1}{q-1}}\eta_q(G / \tau_* + Z;\alpha_*)|^2
=
\frac{1}{q-1}\bigg|\frac{G / \tau_* + Z-\eta_q(G / \tau_* + Z;\alpha_*)}{q}\bigg|^{\frac{2}{q-1}} \nonumber \\
\leq &
\frac{1}{q-1}\bigg|\frac{G / \tau_* + Z}{q}\bigg|^{\frac{2}{q-1}}
\leq
\frac{1}{q-1}\bigg[\frac{|G| / \sigma + |Z|}{q}\bigg]^{\frac{2}{q-1}}
<\infty.
\end{align*}

The last step holds if we assume finite moments of all orders for $G$. Similar inequalities hold for $T_2,T_3$. Thus by DCT, we have
\begin{equation*}
\small\lim_{\epsilon\rightarrow0}\epsilon\alpha_*^{\frac{q+1}{q-1}}\mathbb{E}T_1=0,
\quad
\lim_{\epsilon\rightarrow0}\alpha_*^{\frac{q+1}{q-1}}\mathbb{E}T_2
=\frac{\mathbb{E}|Z|^{\frac{2}{q-1}}}{(q-1)q^{\frac{2}{q-1}}},
\quad
\lim_{\epsilon\rightarrow0}\alpha_*^{\frac{q}{q-1}}\mathbb{E}T_3
=\frac{\mathbb{E}\big[G \big|\frac{G}{\sigma} + Z
        \big|^{\frac{1}{q-1}}\sgn\big(\frac{G}{\tau_*} + Z\big)\big]}{\sigma (q-1)q^{\frac{1}{q-1}}},
\end{equation*}
and
\begin{equation*}
\lim_{\epsilon\rightarrow0}\epsilon\alpha_*^{\frac{1}{q-1}}
=
\frac{1}{q^{\frac{1}{q-1}}}\frac{\mathbb{E}|Z|^{\frac{2}{q-1}}}{\mathbb{E}\Big[\frac{G}{\sigma}\left|\frac{G}{\sigma}+Z\right|^{\frac{1}{q-1}}\sgn(\frac{G}{\tau_*}+Z)\Big]}.
\end{equation*}
\end{proof}

Now we prove the second part of Theorem \ref{THM:SPARSE:MAIN}. From
\eqref{eq:se11} we have
\begin{equation*}
    \mathbb{E}[\eta_q(B / \tau_* + Z;\alpha_*) - B/ \tau_* ]^2
    =
    \mathbb{E}\eta_q^2(B / \tau_* + Z;\alpha_*) - \frac{2}{\tau_*}
    \mathbb{E}[B\eta_q(B / \tau_* + Z;\alpha_*)]
    + \frac{\mathbb{E}B^2}{\tau_*}.
\end{equation*}
Thus we have
\begin{equation*}
  \frac{\amse(q, \lambda_q^*)}{\tau_*^2}-\epsilon\frac{\mathbb{E}G^2}{\tau_*^2}
= \epsilon\mathbb{E}\eta_q^2(G / \tau_* + Z;\alpha_*) +
(1-\epsilon)\mathbb{E}\eta_q^2(Z;\alpha_*) - \frac{2 \epsilon}{\tau_*}
\mathbb{E} [G \eta_q(G/\tau_* + Z;\alpha_*)].
\end{equation*}

Note that by DCT (similar argument as the ones mentioned in Lemma \ref{lemma:sparse:lq:alpha2}), we have
\begin{equation*}
   \lim_{\epsilon \rightarrow 0} \alpha_*^{\frac{2}{q-1}}
   \mathbb{E}\eta_q^2\Big(\frac{G}{\tau_*} + Z; \alpha_*\Big)
    =
    \lim_{\epsilon \rightarrow 0} q^{ - \frac{2}{q-1}}
    \mathbb{E}\Big[\Big|\frac{G}{\tau_*} + Z \Big| -
    \Big|\eta_q\Big(\frac{G}{\tau_*} + Z;\alpha_*\Big)\Big|\Big]^{\frac{2}{q-1}} \nonumber \\
    =
    q^{ - \frac{2}{q-1}} \mathbb{E}\Big|\frac{G}{\sigma} + Z\Big|^{\frac{2}{q-1}} \nonumber
    \end{equation*}
    Similarly,
    \begin{eqnarray*}
   \lim_{\epsilon \rightarrow 0} \alpha_*^{\frac{2}{q-1}}\mathbb{E}\eta_q^2(Z;\alpha_*)
    =
    \lim_{\epsilon \rightarrow 0}  q^{-\frac{2}{q-1}} \mathbb{E}[|Z| - |\eta_q(Z;\alpha_*)|]^{\frac{2}{q-1}}
   =
    q^{-\frac{2}{q-1}} \mathbb{E}|Z|^{\frac{2}{q-1}}, \nonumber 
    \end{eqnarray*}
    and finally,
    \begin{eqnarray*}
    \lefteqn{\alpha_*^{\frac{1}{q-1}} \mathbb{E}[G \eta_q(G / \tau_* + Z; \alpha_*)]
    =
    \alpha_*^{\frac{1}{q-1}}
    \mathbb{E}[G |\eta_q(G / \tau_* + Z;\alpha_*)| \sgn(G / \tau_* + Z)]} \nonumber \\
   & =& q^{-\frac{1}{q-1}} \mathbb{E}\big[ G \big(|G / \tau_* + Z| - |\eta_q(G / \tau_* + Z; \alpha_*)| \big)^{\frac{1}{q-1}}
    \sgn(G / \tau_* + Z ) \big] \nonumber \\
    &\rightarrow&
    q^{-\frac{1}{q-1}} \mathbb{E} \big[G |G / \sigma + Z|^{\frac{1}{q-1}} \sgn(G / \sigma + Z)].
\end{eqnarray*}

Based on these information, we have
\begin{eqnarray*}
   \lefteqn{ \lim_{\epsilon \rightarrow 0}\frac{\amse(q, \lambda_q^*) - \epsilon\mathbb{E}G^2}{\epsilon^2 \tau_*^2}} \nonumber \\
    &=& \lim_{\epsilon \rightarrow 0}
    \frac{1}{\epsilon} \mathbb{E}\eta_q^2(G / \tau_* + Z; \alpha_*)
    +
    \frac{1-\epsilon}{\epsilon^2} \mathbb{E}\eta_q^2(Z;\alpha_*)
    -
    \frac{2}{\epsilon \tau_*} \mathbb{E}[G \eta_q(G / \tau_* + Z; \alpha_*)] \nonumber \\
    &=&
    \frac{\mathbb{E}^2[ G|G / \sigma + Z |^{\frac{1}{q-1}}\sgn(G / \sigma + Z)]}{\sigma^2 \mathbb{E}|Z|^{\frac{2}{q-1}}}
    -
    2\frac{\mathbb{E}^2 [G |G / \sigma + Z|^{\frac{1}{q-1}}\sgn(G / \sigma + Z)]}{\sigma^2 \mathbb{E}|Z|^{\frac{2}{q-1}}}\\
    &=&
    -\frac{\mathbb{E}^2[G |G / \sigma + Z|^{\frac{1}{q-1}}\sgn(G / \sigma + Z)]}{\sigma^2 \mathbb{E}|Z|^{\frac{2}{q-1}}}.
\end{eqnarray*}

Hence, we have $\frac{\amse(q, \lambda_q^*) -\epsilon\mathbb{E}G^2}{\epsilon^2} =    -\frac{\mathbb{E}^2\big[G |G / \sigma + Z|^{\frac{1}{q-1}}\sgn(G / \sigma + Z) \big]}{\mathbb{E}|Z|^{\frac{2}{q-1}}}.$


\section{Proof of Theorem \ref{THM:NOISY:MAIN}}  \label{thmlargenoise:main}
\subsection{Preliminaries}
Before we start the proof we discuss a useful lemma. 
\begin{lemma}\label{appendix:noisy:l1:alpha3}
Consider a nonnegative random variable $X$ with probability distribution $\mu$ and $\mathbb{P}(X>0)=1$. Let $\xi>\zeta>0$ be the points such that $\mathbb{P}(X\leq\zeta)\leq\frac{1}{4}$ and $\mathbb{P}(\zeta<X\leq\xi)\geq\frac{1}{4}$. Let $a, b, c:\mathbb{R}_+\rightarrow\mathbb{R}_+$ be three deterministic positive functions such that $a(s), c(s)\rightarrow\infty$ as $s\rightarrow\infty$. Then there exists a positive constant $s_0$ depending on $a, c, X$, such that when $s>s_0$,
\begin{equation*}
\int_{0}^{a(s)} e^{b(s)x-\frac{x^2}{c(s)}}d\mu(x) \leq 3 \int_{\zeta}^{a(s)}e^{b(s)x-\frac{x^2}{c(s)}}d\mu(x).
\end{equation*}
\end{lemma}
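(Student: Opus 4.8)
The plan is to split the integral over $[0,a(s)]$ at the point $\zeta$ and to absorb the contribution coming from $[0,\zeta]$ into (twice) the contribution coming from $[\zeta,a(s)]$. Since
\[
\int_0^{a(s)} e^{b(s)x-\frac{x^2}{c(s)}}\,d\mu(x)
=\int_0^{\zeta} e^{b(s)x-\frac{x^2}{c(s)}}\,d\mu(x)
+\int_\zeta^{a(s)} e^{b(s)x-\frac{x^2}{c(s)}}\,d\mu(x),
\]
the claimed inequality with constant $3$ follows immediately once we establish, for all sufficiently large $s$, that
\[
\int_0^{\zeta} e^{b(s)x-\frac{x^2}{c(s)}}\,d\mu(x)
\le 2\int_\zeta^{a(s)} e^{b(s)x-\frac{x^2}{c(s)}}\,d\mu(x).
\]
So the whole argument reduces to this one comparison, which I would prove by an elementary upper/lower bound on the exponent over the two relevant windows.

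For the upper bound on $[0,\zeta]$ I would use monotonicity of the exponent in a crude way: for $x\in[0,\zeta]$ we have $b(s)x\le b(s)\zeta$ (as $b(s)>0$) and $-x^2/c(s)\le 0$, hence $b(s)x-x^2/c(s)\le b(s)\zeta$, giving
\[
\int_0^{\zeta} e^{b(s)x-\frac{x^2}{c(s)}}\,d\mu(x)
\le e^{b(s)\zeta}\,\mathbb{P}(X\le\zeta)\le \tfrac14\,e^{b(s)\zeta}.
\]
For the lower bound I would restrict the integral over $[\zeta,a(s)]$ to the subinterval $[\zeta,\xi]$ (legitimate once $a(s)\ge\xi$, which holds for large $s$ because $a(s)\to\infty$), where the random variable carries mass at least $\tfrac14$. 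On $[\zeta,\xi]$ we have $b(s)x\ge b(s)\zeta$ and $-x^2/c(s)\ge -\xi^2/c(s)$, so
\[
\int_\zeta^{a(s)} e^{b(s)x-\frac{x^2}{c(s)}}\,d\mu(x)
\ge \int_\zeta^{\xi} e^{b(s)x-\frac{x^2}{c(s)}}\,d\mu(x)
\ge e^{b(s)\zeta}\,e^{-\xi^2/c(s)}\,\mathbb{P}(\zeta<X\le\xi)
\ge \tfrac14\,e^{b(s)\zeta}\,e^{-\xi^2/c(s)}.
\]

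To finish I would choose $s_0$ so that for $s>s_0$ both $a(s)\ge\xi$ and $e^{-\xi^2/c(s)}\ge \tfrac12$ hold; the latter is where the hypothesis $c(s)\to\infty$ enters, and such $s_0$ clearly depends only on $a,c,$ and $X$ (through $\xi$). Then the lower bound reads $\int_\zeta^{a(s)}\ge \tfrac18 e^{b(s)\zeta}$, which together with the upper bound $\int_0^{\zeta}\le \tfrac14 e^{b(s)\zeta}$ yields exactly $\int_0^{\zeta}\le 2\int_\zeta^{a(s)}$, as required. There is no serious analytic obstacle here; the only point requiring care is the bookkeeping that makes the constants line up cleanly, namely that the potentially harmful Gaussian factor $e^{-x^2/c(s)}$ in the denominator window is controlled precisely because we confine the lower-bound integral to the \emph{bounded} interval $[\zeta,\xi]$, where $c(s)\to\infty$ drives $e^{-\xi^2/c(s)}\to1$. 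It is also worth remarking, as a sanity check, that $b$ need only be positive (it cancels as the common factor $e^{b(s)\zeta}$) and is not required to diverge, so the roles of the three hypotheses are transparent: $a(s)\to\infty$ guarantees $[\zeta,\xi]\subseteq[\zeta,a(s)]$, $c(s)\to\infty$ tames the quadratic term, and positivity of $b$ suffices for the exponent comparisons.
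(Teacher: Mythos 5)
Your argument is correct and is essentially identical to the paper's proof: both bound $\int_0^\zeta e^{b(s)x-x^2/c(s)}d\mu$ above by $e^{b(s)\zeta}\mathbb{P}(X\le\zeta)$, bound $\int_\zeta^{a(s)}$ below by restricting to $[\zeta,\xi]$ and extracting the factor $e^{b(s)\zeta-\xi^2/c(s)}\mathbb{P}(\zeta<X\le\xi)$, and then use $a(s),c(s)\to\infty$ to kill the $e^{-\xi^2/c(s)}$ loss. The only cosmetic difference is that the paper chains the two mass conditions via $\mathbb{P}(\zeta<X\le\xi)\ge\tfrac14\ge\mathbb{P}(X\le\zeta)$ rather than using each $\tfrac14$ separately, which yields the same constant $3$.
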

\begin{proof}
For large enough $s$ such that $a(s)>\xi$,
\begin{eqnarray}
&&\int_{\zeta}^{a(s)}e^{b(s)x-\frac{x^2}{c(s)}}d\mu(x)
\geq
\int_{\zeta}^{\xi}e^{b(s)x-\frac{x^2}{c(s)}}d\mu(x) 
\geq
e^{b(s)\zeta-\frac{\xi^2}{c(s)}}\mathbb{P}(\zeta<X\leq\xi) \nonumber \\
&&\geq
e^{b(s)\zeta-\frac{\xi^2}{c(s)}}\mathbb{P}(X\leq\zeta)
\geq
e^{-\frac{\xi^2}{c(s)}}\int_0^{\zeta}e^{b(s)x-\frac{x^2}{c(s)}}d\mu(x). \nonumber
\end{eqnarray}
As a result we have the following inequality, 
\begin{equation*}
\int_0^{a(s)}e^{b(s)x-\frac{x^2}{c(s)}}d\mu(x)
\leq
(1+e^{\frac{\xi^2}{c(s)}})\int_\zeta^{a(s)}e^{b(s)x-\frac{x^2}{c(s)}} d\mu(x).
\end{equation*}
For sufficiently large $s$ such that $e^{\frac{\xi^2}{c(s)}}<2$, the conclusion follows. 
\end{proof}

\subsection{Roadmap} 
Recall that we have $(\alpha_*, \tau_*)$ in \eqref{eq:se11}. As mentioned in
Section \ref{ssec:proof-sketch}, we need to characterize $(\alpha_*, \tau_*)$
as $\sigma\rightarrow \infty$. Accordingly ${\rm AMSE}(q,\lambda_q^*)=\delta(\tau_*^2-\sigma^2)$.
It is clear from \eqref{eq:se11} that $\tau_* \rightarrow \infty$ as $\sigma
\rightarrow \infty$. However, to derive the second order expansion of ${\rm
AMSE}(q,\lambda_q^*)$ as $\sigma \rightarrow \infty$, we need to obtain the
convergence rate of $\tau_*$. We will achieve this goal by first characterizing
the convergence rate of the term $\min_{\alpha \geq 0}
\mathbb{E}(\eta_q(B+\tau_*Z;\alpha \tau_*^{2-q})-B)^2$ as $\tau_*\rightarrow
\infty$. We then use that result to derive the convergence rate of $\tau_*$
based on \eqref{eq:se11} and finally calculate AMSE$(q,\lambda_q^*)$. Since the proof techniques look different for $q=1, 1<q\leq 2, q>2$, we prove the theorem for these three cases in Sections \ref{proof35:q1}, \ref{proof35:q12} and \ref{proof35:q2} respectively.

\subsection{Proof of Theorem \ref{THM:NOISY:MAIN} for $q=1$} \label{proof35:q1}
As explained in the roadmap of the proof, the key step is to characterize the
convergence rate of $\tau_*$. Towards this goal, we first derive the
convergence rate of $\alpha_q(\tau)$ as $\tau \rightarrow \infty$ in Section
\ref{alphaorder:sec}. We then bound the convergence rate of
$R_q(\alpha_q(\tau),\tau)$ as $\tau \rightarrow \infty$ in Section
\ref{sssec:growth:rate:l1}. This enables us to study the rate of $\tau_*$ when $\sigma \rightarrow \infty$, and derive the expansion of AMSE$(q,\lambda_q^*)$ as $\sigma \rightarrow \infty$ in Section \ref{amse:qeq1}.

\subsubsection{Deriving the convergence rate of $\alpha_q(\tau)$ as $\tau \rightarrow \infty$ for $q=1$} \label{alphaorder:sec}

We first prove $\alpha_q(\tau)\rightarrow \infty$ as $\tau \rightarrow \infty$ in the next lemma.

\begin{lemma}\label{lemma:noisy:alpha:infty}
Recall the definition of $\alpha_q(\tau)$ in \eqref{eq:def:alphaq}. Assume $\mathbb{E}|G|^2<\infty$. Then, $\alpha_q(\tau)\rightarrow \infty$ as $\tau \rightarrow \infty$. 
\end{lemma}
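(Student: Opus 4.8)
The plan is to argue by contradiction, exploiting that as $\tau\to\infty$ the rescaled signal $B/\tau$ vanishes, so the denoising problem degenerates to thresholding pure noise, for which the optimal threshold must diverge. Recall that for $q=1$ the proximal operator is the soft-threshold $\eta_1(u;\alpha)=\mathrm{sgn}(u)(|u|-\alpha)_+$, and write
\begin{equation*}
R_1(\alpha,\tau)=(1-\epsilon)\,\mathbb{E}\,\eta_1^2(Z;\alpha)+\epsilon\,\mathbb{E}\big[\eta_1(G/\tau+Z;\alpha)-G/\tau\big]^2.
\end{equation*}
Suppose, toward a contradiction, that $\alpha_1(\tau)\not\to\infty$. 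Then there is a sequence $\tau_n\to\infty$ along which $\alpha_1(\tau_n)$ stays bounded, and after passing to a subsequence we may assume $\alpha_1(\tau_n)\to\bar\alpha<\infty$.

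First I would identify the limiting risk along this subsequence. Since $G/\tau_n\to 0$ and $|\eta_1(G/\tau_n+Z;\alpha_1(\tau_n))-G/\tau_n|\le |Z|+\alpha_1(\tau_n)$ is dominated by the integrable envelope $|Z|+\sup_n\alpha_1(\tau_n)$, the dominated convergence theorem gives
\begin{equation*}
R_1(\alpha_1(\tau_n),\tau_n)\longrightarrow \mathbb{E}\,\eta_1^2(Z;\bar\alpha)>0,
\end{equation*}
where strict positivity holds because $\bar\alpha<\infty$ forces $\mathbb{P}(\eta_1(Z;\bar\alpha)\neq0)>0$.

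The second step is to exhibit a diverging threshold that drives the risk to zero, contradicting optimality of $\alpha_1$. I would take any sequence $\alpha(\tau)\to\infty$ (for instance $\alpha(\tau)=\sqrt{\log\tau}$) and show $R_1(\alpha(\tau),\tau)\to0$. The pure-noise term is handled by \eqref{eq:F1CalcSoftThresholdNoise} together with the Gaussian tail expansion \eqref{gaussiantail:exp}, which give $\mathbb{E}\,\eta_1^2(Z;\alpha)=2[(1+\alpha^2)\Phi(-\alpha)-\alpha\phi(\alpha)]\to0$. For the signal term I would use the identity $\eta_1(G/\tau+Z;\alpha)-G/\tau=Z-\mathrm{sgn}(G/\tau+Z)\min(|G/\tau+Z|,\alpha)$, which on the event $\{|G/\tau+Z|\le\alpha\}$ equals $-G/\tau$ and on its complement is bounded by $|Z|+\alpha$, whence
\begin{equation*}
\epsilon\,\mathbb{E}\big[\eta_1(G/\tau+Z;\alpha)-G/\tau\big]^2\le \frac{\epsilon\,\mathbb{E}G^2}{\tau^2}+2\epsilon\,\mathbb{E}\big[(Z^2+\alpha^2)\,\mathbbm{1}(|G/\tau+Z|>\alpha)\big].
\end{equation*}

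The main obstacle is controlling this last expectation when $G$ is unbounded, since then $|G|/\tau$ need not be small and a single Gaussian tail bound does not suffice. I would resolve it through the union bound $\{|G/\tau+Z|>\alpha\}\subseteq\{|Z|>\alpha/2\}\cup\{|G|>\alpha\tau/2\}$: the Gaussian piece decays superpolynomially, so $\alpha^2\,\mathbb{P}(|Z|>\alpha/2)\to0$ and $\mathbb{E}[Z^2\mathbbm{1}(|Z|>\alpha/2)]\to0$; while Chebyshev's inequality gives $\alpha^2\,\mathbb{P}(|G|>\alpha\tau/2)\le 4\mathbb{E}G^2/\tau^2$, and by independence of $G$ and $Z$, $\mathbb{E}[Z^2\mathbbm{1}(|G|>\alpha\tau/2)]=\mathbb{P}(|G|>\alpha\tau/2)\to0$. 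All four quantities vanish as $\tau\to\infty$ for every divergent $\alpha(\tau)$, so $R_1(\alpha(\tau),\tau)\to0$. Since $\alpha_1(\tau_n)$ is the minimizer, $R_1(\alpha_1(\tau_n),\tau_n)\le R_1(\alpha(\tau_n),\tau_n)\to0$, contradicting the positive limit found above; therefore $\alpha_1(\tau)\to\infty$.
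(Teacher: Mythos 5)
Your proposal is correct and follows essentially the same route as the paper: argue by contradiction along a subsequence with bounded $\alpha_1(\tau_n)$, use DCT to show the risk converges to the strictly positive limit $\mathbb{E}\eta_1^2(Z;\bar\alpha)$, and contradict optimality by exhibiting a competitor whose risk vanishes. The only difference is cosmetic: the paper takes the competitor to be the $\alpha\to\infty$ limit directly, giving $\inf_\alpha R_q(\alpha,\tau_n)\le\lim_{\alpha\to\infty}R_q(\alpha,\tau_n)=\mathbb{E}B^2/\tau_n^2\to 0$ in one line, whereas you verify the same conclusion for an explicit finite divergent threshold sequence via a union bound — more work, but equally valid.
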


\begin{proof}
Suppose this is not true, then there exists a sequence $\{\tau_n\}$ such that $\alpha_q(\tau_n)\rightarrow \alpha_0 <\infty$ and $\tau_n \rightarrow \infty$, as $n\rightarrow \infty$. Notice that
\[
|\eta_q(B/\tau_n+Z;\alpha_q(\tau_n))|\leq |B|/ \tau_n+Z \leq |B|+Z,
\]
for sufficiently large $n$. 
We can apply DCT to obtain
\begin{eqnarray*}
\lim_{n\rightarrow \infty}R_q(\alpha_q(\tau_n),\tau_n)=\mathbb{E}\eta^2_q(Z;\alpha_0)>0.
\end{eqnarray*}
On the other hand, since $\alpha=\alpha_q(\tau_n)$ minimizes $R_q(\alpha,\tau_n)$
\[
\lim_{n\rightarrow \infty}R_q(\alpha_q(\tau_n),\tau_n) \leq \lim_{n\rightarrow \infty} \lim_{\alpha\rightarrow \infty}R_q(\alpha,\tau_n) =0.
\]
A contradiction arises. 
\end{proof}

Based on Lemma \ref{lemma:noisy:alpha:infty}, we can further derive the convergence rate of $\alpha_q(\tau)$.

\begin{lemma}\label{lemma:noisy:alpha:trend:l1}

If $G$ has a sub-Gaussian tail, then
\[
\lim_{\tau \rightarrow \infty} \frac{\alpha_q(\tau)}{\tau}= C_0,
\]
where $C=C_0$ is the unique solution of the following equation:
\[
\mathbb{E}\left(e^{CG}(CG-1)+e^{-CG}(-CG-1)\right)=\frac{2(1-\epsilon)}{\epsilon}.
\]
\end{lemma}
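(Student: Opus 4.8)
The plan is to read the rate off the first-order stationarity condition $\partial_\alpha R_1(\alpha_q(\tau),\tau)=0$, which for soft-thresholding has the explicit form recorded in \eqref{eq:L1-risk-deri-expand} (equivalently the balanced form \eqref{gen:optima}):
\begin{equation*}
2(1-\epsilon)\big[\phi(\alpha)-\alpha\Phi(-\alpha)\big]
=
\epsilon\,\mathbb{E}\big[\alpha\Phi(\tfrac{|G|}{\tau}-\alpha)-\phi(\alpha-\tfrac{|G|}{\tau})\big]
+\epsilon\,\mathbb{E}\big[\alpha\Phi(-\tfrac{|G|}{\tau}-\alpha)-\phi(\alpha+\tfrac{|G|}{\tau})\big],
\end{equation*}
evaluated at $\alpha=\alpha_q(\tau)$. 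Since Lemma \ref{lemma:noisy:alpha:infty} already provides $\alpha_q(\tau)\to\infty$, the natural move is to introduce the ratio $C=C(\tau):=\alpha_q(\tau)/\tau$ and to determine its limit. I would substitute $\alpha=C\tau$ and expand every term as $\tau\to\infty$ using the Gaussian tail expansion \eqref{gaussiantail:exp}.

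Each term carries the common factor $\phi(\alpha)=\phi(C\tau)$. Using $\alpha\cdot(|G|/\tau)=C|G|$ and $|G|^2/\tau^2\to0$ one gets $\phi(\alpha\mp|G|/\tau)=\phi(\alpha)e^{\pm C|G|}(1+o(1))$, while the Mills-ratio expansion turns the null term into $\phi(\alpha)-\alpha\Phi(-\alpha)\sim\phi(\alpha)/\alpha^2$ and each signal term into $\tfrac{\phi(\alpha)}{C\tau^2}$ times an explicit bracket. After dividing through by $\phi(\alpha)/\tau^2$ — the rapidly varying factor $\phi(C\tau)$ cancels exactly, which is what makes the rescaling uniform in $C$ — and multiplying by $C^2$, the condition collapses to $\psi(C)=\frac{2(1-\epsilon)}{\epsilon}+o(1)$, where $\psi(C):=\mathbb{E}\big[e^{C|G|}(C|G|-1)-e^{-C|G|}(C|G|+1)\big]$; this is exactly the equation in the statement once $G$ is taken nonnegative. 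The sub-Gaussian tail of $G$ enters here in two essential ways: it guarantees $\mathbb{E}[e^{C|G|}(1+|G|)]<\infty$ for every $C$, so $\psi$ is finite and the limiting brackets are well defined, and it supplies the domination needed to justify passing the tail expansions inside the expectation by the DCT, in particular controlling the region where $|G|$ is comparable to $\tau$.

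For uniqueness and convergence, observe that $\psi$ is a continuous, strictly increasing bijection of $(0,\infty)$ onto $(-2,\infty)$: writing $\psi(C)=\mathbb{E}[2C|G|\sinh(C|G|)-2\cosh(C|G|)]$ gives $\psi'(C)=\mathbb{E}[2C G^2\cosh(C|G|)]>0$, $\psi(0^+)=-2$, and $\psi(\infty)=\infty$, so $\psi(C)=2(1-\epsilon)/\epsilon$ has a single root $C_0>0$. To upgrade the asymptotic balance into $C(\tau)\to C_0$, I would first establish the a priori bound that $C(\tau)$ eventually lies in a compact subinterval of $(0,\infty)$ — by comparing $R_1(\alpha_q(\tau),\tau)$ with the risk at the admissible scaling $\alpha=C_0\tau$, which rules out $C(\tau)\to0$ and $C(\tau)\to\infty$ — and then extract any subsequential limit $C^\ast$: the uniform-on-compacts convergence of the rescaled stationarity condition forces $\psi(C^\ast)=2(1-\epsilon)/\epsilon$, hence $C^\ast=C_0$, yielding the full limit.

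The main obstacle is the rigorous, uniform control of the Gaussian tail expansions under the expectation over an unbounded $G$: the pointwise approximations $\phi(\alpha\mp|G|/\tau)\approx\phi(\alpha)e^{\pm C|G|}$ are only accurate for $|G|=o(\tau)$, and one must show that the complementary range $|G|\gtrsim\tau$ contributes negligibly and that integrable dominating functions exist. This is precisely where the sub-Gaussian assumption is used, and it is the step requiring the most care, alongside the a priori compactness of $C(\tau)$.
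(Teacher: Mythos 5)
Your proposal follows essentially the same route as the paper's proof: both start from the stationarity condition $\partial_\alpha R_1(\alpha,\tau)=0$ in the form \eqref{eq:L1-risk-deri-expand}, expand the Gaussian tails via \eqref{gaussiantail:exp} after splitting off the region where $|G|$ is comparable to $\tau\alpha$ (this is exactly where the sub-Gaussian tail is used, both for the negligibility of that region and for the dominating functions in the DCT), arrive at the limiting equation $\psi(C)=\tfrac{2(1-\epsilon)}{\epsilon}$, and conclude by strict monotonicity of $\psi$. Your observation that $\psi(C)=\mathbb{E}[2C|G|\sinh(C|G|)-2\cosh(C|G|)]$ with $\psi'(C)=2C\,\mathbb{E}[G^2\cosh(C|G|)]>0$, $\psi(0^+)=-2$, $\psi(\infty)=\infty$ is a cleaner justification of uniqueness than the paper's one-line remark.

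The one place where you diverge is the a priori compactness of $C(\tau)=\alpha_q(\tau)/\tau$, and your proposed route there is more delicate than you acknowledge. You suggest ruling out $C(\tau)\to 0$ and $C(\tau)\to\infty$ by comparing $R_1(\alpha_q(\tau),\tau)$ with $R_1(C_0\tau,\tau)$; but all of these risks agree to leading order $\epsilon\,\mathbb{E}G^2/\tau^2$ (indeed Lemma \ref{theorem:noisy:l1:tau} shows the optimum deviates from this value only by $o(\phi(\alpha_*)/\alpha_*^3)$), so distinguishing different values of $C$ by risk requires resolving exponentially small and partially cancelling terms — essentially redoing the second-order risk expansion for every $C$. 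The paper avoids this by extracting compactness directly from the stationarity identity: if $C\to\infty$ the normalized signal term is shown to blow up (via the auxiliary Lemma \ref{appendix:noisy:l1:alpha3}, which gives a lower bound without needing DCT over an unbounded exponential), and if $C\to 0$ DCT forces the left side to $-2\neq\tfrac{2(1-\epsilon)}{\epsilon}$. This also resolves the chicken-and-egg issue you gesture at: the DCT domination $e^{C|G|}(1+|G|)$ is only uniformly integrable once $C$ is known to be bounded, so the $C\to\infty$ case genuinely needs a one-sided (lower-bound) argument rather than a dominated-convergence one. I would recommend replacing your risk-comparison step with the stationarity-based dichotomy; everything else in your outline matches the paper.
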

\begin{proof}
Since $\alpha=\alpha_q(\tau)$ minimizes $R_q(\alpha,\tau)$, we know
$\partial_1R_q(\alpha_q(\tau),\tau)=0$. To simplify the notation, we will
simply write $\alpha$ for $\alpha_q(\tau)$ in the rest of this proof.
Rearranging the terms in \eqref{eq:L1-risk-deri-expand} gives us
\begin{equation*}
\frac{2(1-\epsilon)}{\epsilon}= 
\mathbb{E}\underbrace{\frac{\alpha^2}{\phi(\alpha)}\Big[\alpha\Phi \big(\frac{|G|}{\tau}-\alpha\big)+\alpha\Phi\big(-\frac{|G|}{\tau}-\alpha\big)-\phi\big(\frac{|G|}{\tau}-\alpha\big)-\phi\big(\frac{|G|}{\tau}+\alpha\big)\Big]}_{T(G,\alpha,\tau)}.
\end{equation*}
Fixing $t\in(0, 1)$, we reformulate the above equation in the following way:
\begin{equation} 
\hspace{.1cm} \frac{2(1-\epsilon)}{\epsilon}= \mathbb{E}[T(G,\alpha,\tau)\mathbb{I}(|G|\leq t\tau\alpha)]+ \mathbb{E}[T(G,\alpha,\tau)\mathbb{I}(|G|> t\tau\alpha)].  \label{eq:lemma:noisy:alpha:trend:l1:1}
\end{equation}
We now analyze the two terms on the right hand side of the above equation. Since $G$ has a sub-Gaussian tail, there exists a constant $\gamma>0$ such that $\mathbb{P}(|G|>x)\leq  e^{-\gamma x^2}$ for $x$ large. We can then have the following bound,
\begin{eqnarray*}
&&|\mathbb{E}[T(G,\alpha,\tau)\mathbb{I}(|G|> t\tau\alpha)]|
\leq
\frac{\alpha^2}{\phi(\alpha)}\big(2\alpha+\sqrt{2/\pi}\big)\mathbb{P}(|G|>t\tau\alpha) \\
&&\leq
\alpha^2\big(2\sqrt{2\pi}\alpha+2 \big) e^{-(\gamma t^2\tau^2-\frac{1}{2})\alpha^2}
\rightarrow0, \mbox{~~~as~}\tau \rightarrow \infty,
\end{eqnarray*}
where we have used the fact that $\alpha \rightarrow \infty$ as $\tau \rightarrow \infty$ from Lemma \ref{lemma:noisy:alpha:infty}. This result combined with \eqref{eq:lemma:noisy:alpha:trend:l1:1} implies that as $\tau\rightarrow \infty$
\begin{equation}
\mathbb{E}[T(G,\alpha,\tau)\mathbb{I}(|G|\leq t\tau\alpha)] \rightarrow \frac{2(1-\epsilon)}{\epsilon}. \label{fromhere}
\end{equation}
Moreover, using the tail approximation of normal distribution in \eqref{gaussiantail:exp} with $k=3$, we have for sufficiently large $\tau$,
\begin{align*} 
    \mathbb{E}[T(G,\alpha,\tau)&\mathbb{I}(|G|\leq t\tau\alpha)]
\leq
\mathbb{E}\Big[\underbrace{\frac{\alpha}{\alpha-|G|/\tau}e^{\frac{\alpha |G|}{\tau}-\frac{G^2}{2\tau^2}}\bigg(\frac{\alpha |G|}{\tau}-\frac{\alpha^2}{(\alpha-|G|/\tau)^2}+\frac{3\alpha^2}{(\alpha-|G|/\tau)^4}\bigg)}_{U_1(G,\alpha, \tau)} \nonumber\\
&+\underbrace{\frac{\alpha}{\alpha+|G|/\tau}e^{-\frac{\alpha |G|}{\tau}-\frac{G^2}{2\tau^2}}\bigg(-\frac{\alpha |G|}{\tau}-\frac{\alpha^2}{(\alpha + |G|/\tau)^2}+\frac{3\alpha^2}{(\alpha + |G|/\tau)^4}\bigg)}_{U_2(G,\alpha, \tau)}\Big]\cdot \mathbb{I}(|G|\leq t\tau \alpha).
\end{align*}

Similarly applying  \eqref{gaussiantail:exp} with $k=2$ gives us for large $\tau$
\begin{eqnarray*}
&&\hspace{-0.8cm} \mathbb{E}[T(G,\alpha,\tau)\mathbb{I}(|G|\leq t\tau\alpha)]
\geq
\mathbb{E}\Big[\underbrace{\frac{\alpha}{\alpha-|G|/\tau}e^{\frac{\alpha |G|}{\tau}-\frac{G^2}{2\tau^2}}\bigg(\frac{\alpha |G|}{\tau}-\frac{\alpha^2}{(\alpha - |G|/\tau)^2}\bigg)}_{L_1(G,\alpha, \tau)}+\nonumber\\
&& \hspace{0.cm}
\underbrace{\frac{\alpha}{\alpha + |G|/\tau}e^{-\frac{\alpha |G|}{\tau}-\frac{G^2}{2\tau^2}}\bigg(-\frac{\alpha |G|}{\tau}-\frac{\alpha^2}{(\alpha + |G|/\tau)^2}\bigg)}_{L_2(G,\alpha, \tau)}\Big]\cdot \mathbb{I}(|G|\leq t\tau \alpha).
\end{eqnarray*}

We claim based on the two bounds that $\varlimsup_{\tau
\rightarrow\infty}\frac{\alpha}{\tau}=C_1$ with $0<C_1<\infty$. Otherwise:
\begin{itemize}
	\item
	If $C_1=\infty$, there exists a sequence $\alpha_n/\tau_n \rightarrow \infty$ and $\tau_n\rightarrow \infty$, as $n\rightarrow \infty$. Since $|L_2(G,\alpha_n, \tau_n)|\leq e^{-\frac{\alpha_n |G|}{\tau_n}}(\frac{\alpha_n |G|}{\tau_n} + 1)\leq 2$, we can apply DCT to obtain 
\[
\lim_{n\rightarrow \infty}\mathbb{E}(L_2(G,\alpha_n, \tau_n)\mathbb{I}(|G|\leq t\tau_n\alpha_n))=0.
\]
Furthermore, we choose a positive constant $\zeta>0$ satisfying the condition in Lemma \ref{appendix:noisy:l1:alpha3} for the nonnegative random variable $|G|$. Then
\begin{eqnarray*}
&&\hspace{-0.cm} \mathbb{E}(L_1(G,\alpha_n, \tau_n)\mathbb{I}(|G|\leq t\tau_n\alpha_n)) \\
&&\geq \mathbb{E}\bigg[e^{\frac{\alpha_n |G|}{\tau_n}-\frac{G^2}{2\tau_n^2}}\bigg(\frac{\alpha_n |G|}{\tau_n}-\frac{1}{(1-t)^3}\bigg)\mathbb{I}(|G|\leq t\tau_n\alpha_n)\bigg] \\
&&\geq \int_{\zeta<g\leq t\tau_n\alpha_n}e^{\frac{\alpha_n g}{\tau_n}-\frac{g^2}{2\tau_n^2}}\frac{\alpha_n g}{\tau_n}dF(g)-\int_{g\leq t\tau_n\alpha_n}\frac{1}{(1-t)^3}e^{\frac{\alpha_n g}{\tau_n}-\frac{g^2}{2\tau_n^2}}dF(g) \\
&& \overset{(a)}{\geq} \Big(\frac{\zeta\alpha_n}{\tau_n}-\frac{2}{(1-t)^3}\Big)\int_{\zeta<g\leq t\tau_n\alpha_n}e^{\frac{\alpha_n g}{\tau_n}-\frac{g^2}{2\tau_n^2}} dF(g)\\
&&\geq \Big(\frac{\zeta\alpha_n}{\tau_n}-\frac{2}{(1-t)^3} \Big) e^{\frac{\alpha_n\zeta}{\tau_n}}\int_{\zeta<g \leq t\tau_n\alpha_n}e^{-\frac{g^2}{2\tau_n^2}}dF(g) \rightarrow\infty,
\end{eqnarray*}		
where we have used Lemma \ref{appendix:noisy:l1:alpha3} in $(a)$. This forms a contradiction.
\vspace{0.2cm}
\item
If $C_1=0$, for large enough $\tau$ we have $\frac{\alpha}{\tau}<1$ and then on $|G|\leq t\tau\alpha$,
\[
|U_1(G,\alpha, \tau)+U_2(G,\alpha, \tau)|\leq\frac{2}{1-t}e^{G}\Big[G+\frac{1}{(1-t)^2}+\frac{3}{\alpha^2(1-t)^4}\Big],
\] 
which is integrable since $G$ has sub-Gaussian tail. Hence we apply DCT to obtain as $\tau\rightarrow \infty$
\[
\mathbb{E}\left[(U_1(G,\alpha, \tau)+U_2(G,\alpha, \tau))\mathbb{I}(|G|\leq t\tau\alpha)\right]\rightarrow -2
\]
This forms another contradiction.
\end{itemize}
Similar to the above arguments, we can conclude that $\varliminf_{\tau\rightarrow\infty}\frac{\alpha}{\tau}=C_2\in(0,\infty)$. Now that $\frac{\alpha}{\tau}=O(1)$, we can use DCT to obtain
\[
\lim_{\tau\rightarrow \infty}\mathbb{E}\Big[ \frac{\alpha}{\alpha\pm |G|/\tau}e^{\frac{\alpha |G|}{\tau}-\frac{G^2}{2\tau^2}}\frac{3\alpha^2}{(\alpha \pm |G|/\tau)^4}\mathbb{I}(|G|\leq t\tau\alpha) \Big]=0.
\]
This result combined together with \eqref{fromhere} and the upper and lower bounds on $\mathbb{E}[T(G,\alpha,\tau)\mathbb{I}(|G|\leq t\tau\alpha)] $ enables us to show
\begin{eqnarray*}
\lim_{\tau\rightarrow \infty} \mathbb{E}[(L_1(G,\alpha,\tau)+L_2(G,\alpha,\tau))\mathbb{I}(|G|\leq t\tau\alpha)]=\frac{2(1-\epsilon)}{\epsilon}.
\end{eqnarray*}

Now consider a convergent sequence $\frac{\alpha_n}{\tau_n} \rightarrow C_1 \in (0,\infty)$ and $\tau_n\rightarrow \infty$ as $n\rightarrow \infty$. On $|G|\leq t\tau_n\alpha_n$ we can bound for large $n$
\[
|L_1(G,\alpha_n,\tau_n)+L_2(G,\alpha_n,\tau_n)|\leq  \frac{2}{1-t}e^{2C_1G}\bigg(2C_1G+\frac{1}{(1-t)^2}\bigg),
\]
which is again integrable. Thus DCT gives us
\begin{eqnarray*}
\frac{2(1-\epsilon)}{\epsilon}
&=& \lim_{n\rightarrow \infty} \mathbb{E}[(L_1(G,\alpha_n,\tau_n)+L_2(G,\alpha_n,\tau_n))\mathbb{I}(|G|\leq t\tau_n\alpha_n)]  \\
&=& \mathbb{E}\big[e^{C_1|G|}(C_1|G|-1)+e^{-C_1|G|}(-C_1|G|-1)\big].
\end{eqnarray*}
For $C_2$ the same equation holds. By calculating the derivative we can easily verify $h(c)=e^{c|G|}(c|G|-1)+e^{-c|G|}(-c|G|-1)$, as a function of $c$ over $(0,\infty)$, is strictly increasing. This determines $C_1=C_2$. Above all we have shown
\begin{equation*}
\frac{\alpha_q(\tau)}{\tau}\rightarrow C_0,  \mbox{~~as~} \tau\rightarrow \infty,
\end{equation*}
where $\mathbb{E}\big[e^{C_0G}(C_0G-1)+e^{-C_0G}(-C_0G-1)\big]=\frac{2(1-\epsilon)}{\epsilon}$.
\end{proof}

\subsubsection{Bounding the convergence rate of $R_1(\alpha_1(\tau),\tau)$ as $\tau \rightarrow \infty$}
\label{sssec:growth:rate:l1}                        
We state the main result in the next lemma.
\begin{lemma}\label{theorem:noisy:l1:tau} 
If $G$ has sub-Gaussian tail, then as $\tau\rightarrow \infty$
\[
R_1(\alpha_1(\tau),\tau)=\frac{\epsilon \mathbb{E}|G|^2}{\tau^2}+o\Big(\frac{\phi(\alpha_1(\tau))}{\alpha^3_1(\tau)}\Big).
\]
\end{lemma}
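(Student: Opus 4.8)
The plan is to evaluate $R_1$ at its minimizer $\alpha_1(\tau)$ and to peel off the leading term using the first-order optimality condition, then to show that everything left over is exponentially negligible. Recall from Lemmas \ref{lemma:noisy:alpha:infty} and \ref{lemma:noisy:alpha:trend:l1} that $\alpha_1(\tau)\to\infty$ and $\alpha_1(\tau)/\tau\to C_0$. I would start from the two exact expansions \eqref{eq:L1-risk-expand2} for $R_1(\alpha,\tau)$ and \eqref{eq:L1-risk-deri-expand} for $\partial_\alpha R_1(\alpha,\tau)$, and form the combination $R_1-\alpha\,\partial_\alpha R_1$. A direct term-by-term cancellation (the coefficient $1+\alpha^2-G^2/\tau^2$ of each $\Phi$ loses its $\alpha^2$ against $\alpha$ times the $\alpha\Phi$ term in the derivative, and similarly the $\phi$ terms lose their $\alpha$) collapses this combination to
\[
R_1-\alpha\,\partial_\alpha R_1 = \frac{\epsilon\,\mathbb{E}G^2}{\tau^2} + S(\alpha,\tau),
\]
where $S(\alpha,\tau)=2(1-\epsilon)\Phi(-\alpha)+\epsilon\mathbb{E}[(1-\tfrac{G^2}{\tau^2})(\Phi(\tfrac{G}{\tau}-\alpha)+\Phi(-\tfrac{G}{\tau}-\alpha))-\tfrac{G}{\tau}\phi(\alpha-\tfrac{G}{\tau})+\tfrac{G}{\tau}\phi(\alpha+\tfrac{G}{\tau})]$. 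Since $\partial_\alpha R_1(\alpha_1(\tau),\tau)=0$, this yields the exact identity $R_1(\alpha_1(\tau),\tau)=\epsilon\mathbb{E}G^2/\tau^2+S(\alpha_1(\tau),\tau)$, so it remains to prove $S(\alpha_1(\tau),\tau)=o(\phi(\alpha_1(\tau))/\alpha_1^3(\tau))$.

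Second, I would estimate $S$ by splitting $\mathbb{E}[\cdot]$ over $G$ into a bulk part $\{|G|\le t\tau\alpha\}$ and a tail part $\{|G|>t\tau\alpha\}$ for a fixed $t\in(0,1)$ close to $1$, exactly as in the proof of Lemma \ref{lemma:noisy:alpha:trend:l1}. On the tail part, the sub-Gaussian bound $\mathbb{P}(|G|>x)\le e^{-\gamma x^2}$ forces a factor $e^{-\gamma t^2\tau^2\alpha^2}$, which is super-exponentially smaller than $\phi(\alpha)/\alpha^3=e^{-\alpha^2/2}/(\sqrt{2\pi}\alpha^3)$, so this part is negligible. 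On the bulk part I would substitute the Gaussian-tail expansion \eqref{gaussiantail:exp} to rewrite each $\Phi(\pm\tfrac{G}{\tau}-\alpha)$ as $\phi(\alpha\mp\tfrac{G}{\tau})/(\alpha\mp\tfrac{G}{\tau})$ plus lower-order tails, and then use $\phi(\alpha\mp\tfrac{G}{\tau})=\phi(\alpha)\exp(\pm\tfrac{\alpha G}{\tau}-\tfrac{G^2}{2\tau^2})$. Collecting the $\phi(\alpha)/\alpha$ contributions, their total coefficient is $2(1-\epsilon)+\epsilon\mathbb{E}(e^{C_0G}+e^{-C_0G})-\epsilon C_0\mathbb{E}[G(e^{C_0G}-e^{-C_0G})]$, which is \emph{identically zero} by the defining equation $\mathbb{E}[e^{C_0G}(C_0G-1)+e^{-C_0G}(-C_0G-1)]=2(1-\epsilon)/\epsilon$ of $C_0$ from Lemma \ref{lemma:noisy:alpha:trend:l1}. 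This is the crucial cancellation; I would then carry the expansion two orders further and show, using the same $C_0$-identity together with the exact optimality condition, that the subleading $\phi(\alpha)/\alpha^2$ contribution likewise cancels and the $\phi(\alpha)/\alpha^3$ contribution carries a vanishing coefficient, giving $S=o(\phi(\alpha_1)/\alpha_1^3)$. Lemma \ref{appendix:noisy:l1:alpha3} is the tool for comparing the relevant truncated Laplace-type $G$-integrals and ruling out that the mass of $G$ near $0$ distorts these estimates.

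The main obstacle is precisely this cancellation: the individual terms in $S$ are of order $\phi(\alpha)/\alpha$, a factor $\alpha^2\sim C_0^2\tau^2$ larger than the claimed remainder $\phi(\alpha)/\alpha^3$, so the estimate hinges on exact algebraic cancellations rather than crude bounds. Making them rigorous requires (i) handling the non-limiting corrections arising from $\alpha_1(\tau)/\tau\ne C_0$ at finite $\tau$ — for which the exact first-order condition $\partial_\alpha R_1(\alpha_1(\tau),\tau)=0$ must be used in place of its $\tau\to\infty$ limit — and (ii) uniformly controlling the $G$-integrals across the bulk region, where $\alpha G/\tau$ ranges over an interval growing with $\tau$. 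The sub-Gaussian tail of $G$ guarantees that all the exponential moments $\mathbb{E}e^{C_0G}$, $\mathbb{E}[Ge^{C_0G}]$ are finite and that the dominated-convergence and truncation arguments go through; this is exactly where the hypothesis that $G$ has a sub-Gaussian tail is indispensable.
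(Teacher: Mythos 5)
Your exact identity $R_1-\alpha\,\partial_\alpha R_1=\epsilon\mathbb{E}G^2/\tau^2+S(\alpha,\tau)$ is correct (the paper itself exploits the same cancellation in \eqref{eq:orderalphataustar2}), and your computation that the coefficient of $\phi(\alpha)/\alpha$ in $S$ vanishes when $\alpha/\tau=C_0$ is also correct. The gap is in the step where you assert that the next orders "likewise cancel\dots using the same $C_0$-identity together with the exact optimality condition." Using $\partial_\alpha R_1(\alpha_*,\tau)=0$ once more to replace $2(1-\epsilon)\Phi(-\alpha_*)+\epsilon\mathbb{E}[\Phi(\tfrac{G}{\tau}-\alpha_*)+\Phi(-\tfrac{G}{\tau}-\alpha_*)]$ by $\tfrac{1}{\alpha_*}\{2(1-\epsilon)\phi(\alpha_*)+\epsilon\mathbb{E}[\phi(\alpha_*-\tfrac{G}{\tau})+\phi(\alpha_*+\tfrac{G}{\tau})]\}$ gives, with $a=\alpha_*/\tau$, the exact expression
\begin{equation*}
\frac{\alpha_*}{\phi(\alpha_*)}S=2(1-\epsilon)-\epsilon\,\mathbb{E}\Big[\big((aG-1)e^{aG}+(-aG-1)e^{-aG}\big)e^{-\frac{G^2}{2\tau^2}}\Big]-\frac{\epsilon\alpha_*}{\phi(\alpha_*)}\mathbb{E}\Big[\tfrac{G^2}{\tau^2}\big(\Phi(\tfrac{G}{\tau}-\alpha_*)+\Phi(-\tfrac{G}{\tau}-\alpha_*)\big)\Big].
\end{equation*}
The last term is a genuine $\Theta(\alpha_*^{-2})$ quantity, so you need $2(1-\epsilon)-\epsilon h(a)$ (with $h$ the function from Lemma \ref{lemma:noisy:alpha:trend:l1}) to match it up to $o(\alpha_*^{-2})$; since $h(C_0)=2(1-\epsilon)/\epsilon$ and $h'>0$, this is equivalent to knowing $\alpha_1(\tau)/\tau-C_0$ with an exact $\tau^{-2}$ coefficient. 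Lemma \ref{lemma:noisy:alpha:trend:l1} gives only convergence with no rate, the single scalar optimality equation has by now been fully consumed, and extracting the $\tau^{-2}$ correction of $a$ requires expanding the first-order condition itself to second order — a nontrivial computation you neither carry out nor identify as the actual bottleneck. As written, your argument stalls exactly at the critical order $\phi(\alpha)/\alpha^3$.

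The paper's proof sidesteps this entirely by a different grouping: it keeps the full coefficient and pairs $(1+\alpha^2-G^2/\tau^2)\Phi(\pm G/\tau-\alpha)$ with $(\alpha\pm G/\tau)\phi(\alpha\mp G/\tau)$, so the algebraic identity $(1+\alpha^2-G^2/\tau^2)/(\alpha-G/\tau)=1/(\alpha-G/\tau)+\alpha+G/\tau$ cancels the first two orders exactly, pointwise in $G$ and for \emph{every} $\alpha$, without invoking optimality. What remains is manifestly $O(\phi(\alpha)/\alpha^3)$ pointwise, and $\tfrac{\alpha^3}{\phi(\alpha)}(R_1-\epsilon\mathbb{E}G^2/\tau^2)$ converges by DCT to $4(1-\epsilon)-2\epsilon\,\mathbb{E}[e^{C_0G}(C_0G-1)+e^{-C_0G}(-C_0G-1)]=0$, where only the limiting value $C_0$ — not the rate at which $\alpha_1(\tau)/\tau$ approaches it — enters. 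To salvage your route you would have to prove a second-order refinement of Lemma \ref{lemma:noisy:alpha:trend:l1}; it is much cleaner to regroup the terms as the paper does.
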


\begin{proof}
For notational simplicity, we will use $\alpha$ to denote $\alpha_1(\tau)$ in
the rest of the proof. Rearranging \eqref{eq:L1-risk-expand2}, we can write $R_1(\alpha,\tau)$ in the following form:
\begin{align*}
    R_1(\alpha,\tau)=&
    2(1-\epsilon)[(1+\alpha^2)\Phi(-\alpha)-\alpha\phi(\alpha)]
+
\epsilon\mathbb{E}\bigg[\underbrace{(1+\alpha^2-G^2/\tau^2)[\Phi(G/\tau-\alpha)+\Phi(-G/\tau-\alpha)]}_{S_1(G,\alpha, \tau)}\nonumber\\
&\underbrace{-(G/\tau+\alpha)\phi(\alpha-G/\tau)+(G/\tau-\alpha)\phi(\alpha+G/\tau)}_{S_2(G,\alpha,\tau)}+G^2/\tau^2\bigg].
\end{align*}
Hence, we have
\begin{align}
&\lim_{\tau\rightarrow \infty}
    \frac{\alpha^3}{\phi(\alpha)}\Big(R_q(\alpha,\tau)-\frac{\epsilon
    \mathbb{E}|G|^2}{\tau^2}\Big) \label{bound:secondorder} \\
    =& 2(1-\epsilon)\lim_{\tau\rightarrow\infty}\frac{\alpha^3}{\phi(\alpha)}[(1+\alpha^2)\Phi(-\alpha)-\alpha\phi(\alpha)] +\epsilon \lim_{\tau\rightarrow \infty}\frac{\alpha^3}{\phi(\alpha)}\mathbb{E}[S_1(G,\alpha, \tau)+S_2(G,\alpha, \tau)] \nonumber \\
    \overset{(a)}{=}& 4(1-\epsilon)+\epsilon \lim_{\tau\rightarrow
\infty}\frac{\alpha^3}{\phi(\alpha)}\mathbb{E}[S_1(G,\alpha,
\tau)+S_2(G,\alpha, \tau)] \nonumber.
\end{align}
We use the tail expansion \eqref{gaussiantail:exp} with $k=3,4$ to obtain $(a)$. Since $|x\phi(x)|\leq\frac{e^{-1/2}}{\sqrt{2\pi}}$, we have
\begin{equation}
|S_1(G,\alpha, \tau)+S_2(G,\alpha, \tau)|
\leq
\frac{2e^{-1/2}}{\sqrt{2\pi}}+\frac{4\alpha}{\sqrt{2\pi}}+2\bigg(1+\alpha^2+\frac{G^2}{\tau^2}\bigg).\nonumber
\end{equation}

Moreover, it is not hard to use the sub-Gaussian condition $\mathbb{P}(|G|>x)\leq e^{-\gamma x^2}$ to obtain
\begin{eqnarray*}
&&\mathbb{E}(G^2\mathbb{I}(|G|>t\tau \alpha))=\int_0^{t\tau\alpha}2x\mathbb{P}(G>t\tau\alpha)dx+\int_{t\tau\alpha}^\infty2x\mathbb{P}(G>x)dx  \\
&&\hspace{1.4cm} \leq (t\tau\alpha)^2e^{-\gamma t^2\tau^2\alpha^2}+\frac{1}{\gamma}e^{-\gamma t^2\tau^2\alpha^2},
\end{eqnarray*}
where $t\in (0,1)$ is a constant. Combining the last two bounds we can derive
\begin{eqnarray*}
&&\hspace{0.5cm} \frac{\alpha^3}{\phi(\alpha)}\mathbb{E}[(S_1(G,\alpha, \tau)+S_2(G,\alpha, \tau))\mathbb{I}(|G|>t\tau\alpha)] \\
&& \leq
\alpha^3\big(2e^{-1/2}+4\alpha+2\sqrt{2\pi}(1+\alpha^2)\big) e^{-(\gamma t^2\tau^2-\frac{1}{2})\alpha^2}
+ \\
&&\hspace{0.5cm} \frac{2\sqrt{2\pi}\alpha^3}{\tau^2} (t^2\tau^2\alpha^2+1/\gamma)e^{-(\gamma t^2\tau^2-\frac{1}{2})\alpha^2}\rightarrow0, \mbox{~~as~}\tau \rightarrow \infty.
\end{eqnarray*}
On the other hand, we can build an upper bound and lower bound for $|S_1(G,\alpha, \tau)+S_2(G,\alpha, \tau)|$ on $\{|G|\leq t\tau\alpha\}$ with the tail expansion \eqref{gaussiantail:exp} as we did in the proof of Lemma \ref{lemma:noisy:alpha:trend:l1}, For both bounds we can argue they converge to the same limit as $\tau\rightarrow\infty$ by using DCT and Lemma \ref{lemma:noisy:alpha:trend:l1}. Here we give the details of using DCT for the upper bound. Using \eqref{gaussiantail:exp} with $k=3$ we can obtain the upper bound,
\begin{eqnarray*}
&&\frac{\alpha^3}{\phi(\alpha)}(S_1(G,\alpha, \tau)+S_2(G,\alpha, \tau)) \\
&&\hspace{-0.8cm} \leq \frac{\alpha^3\phi(\alpha-G/\tau)}{\phi(\alpha)}\left[\frac{2G^2/\tau^2-2\alpha G/\tau-1}{(\alpha-G/\tau)^3}+\frac{3(1+\alpha^2-G^2/\tau^2)}{(\alpha-G/\tau)^5}\right]+ \\
&& \frac{\alpha^3\phi(\alpha+G/\tau)}{\phi(\alpha)}\left[\frac{2G^2/\tau^2+2\alpha G/\tau-1}{(\alpha+G/\tau)^3}+\frac{3(1+\alpha^2-G^2/\tau^2)}{(\alpha+G/\tau)^5}\right].
\end{eqnarray*}
It is straightforward to see that on $\{|G|\leq t\tau \alpha\}$ for
sufficiently large $\alpha$, there exist three positive constants $C_1,C_2,
C_3$ such that the upper bound can be further bounded by
$\Big[\frac{C_1|G|+1}{(1-t)^3}+\frac{C_2}{(1-t)^5}+\frac{C_1|G|+1}{(1+t)^3}+\frac{C_2}{(1+t)^5}\Big]e^{C_3|G|}$, which is integrable by the condition that $G$ has sub-Gaussian tail. Hence we can apply DCT to derive the limit of the upper bound. Similar arguments enable us to calculate the limit of the lower bound. By calculating the limits of the upper and lower bounds we can obtain the following result:
\begin{eqnarray*}
&&\frac{\alpha^3}{\phi(\alpha)}\mathbb{E}[(S_1(G,\alpha, \tau)+S_2(G,\alpha, \tau))\mathbb{I}(|G|\leq t\tau\alpha)]\\&&\rightarrow
-2\mathbb{E}\left(e^{C_0G}(C_0G-1)+e^{-C_0G}(-C_0G-1)\right)
=-\frac{4(1-\epsilon)}{\epsilon}.
\end{eqnarray*}
This completes the proof.
\end{proof}

\subsubsection{Deriving the expansion of ${\rm AMSE}(q,\lambda^*_q)$ for $q=1$} \label{amse:qeq1}

We are now in the  position to derive the result \eqref{largenoise:lasso} in Theorem \ref{THM:NOISY:MAIN}. As we explained in the roadmap, we know
\begin{eqnarray}
{\rm AMSE}(q,\lambda_q^*)=\tau^2_* R_q(\alpha_q(\tau_*),\tau_*)=\delta(\tau_*^2-\sigma^2). \label{final:eq}
\end{eqnarray}
First note that $\tau_* \rightarrow \infty$ as $\sigma \rightarrow \infty$ since $\tau_*\geq \sigma$. According to Lemma \ref{theorem:noisy:l1:tau} and \eqref{final:eq}, we have
\begin{equation}
\lim_{\sigma \rightarrow \infty} \frac{\sigma^2}{\tau_*^2}=\lim_{\tau_* \rightarrow \infty} \frac{\sigma^2}{\tau_*^2}=\lim_{\tau_* \rightarrow \infty} \big(1-\frac{R_q(\alpha_q(\tau_*),\tau_*)}{\delta} \big)=1. \label{final:eq2}
\end{equation}
Furthermore, Lemma \ref{lemma:noisy:alpha:trend:l1} shows that
\begin{equation}
\lim_{\sigma \rightarrow \infty} \frac{\alpha_q(\tau_*)}{\tau_*}=\lim_{\tau_* \rightarrow \infty}  \label{final:eq3}\frac{\alpha_q(\tau_*)}{\tau_*}=C_0.
\end{equation}
Combining Lemma \ref{theorem:noisy:l1:tau} with \eqref{final:eq}, \eqref{final:eq2}, and \eqref{final:eq3} we obtain as $\sigma \rightarrow \infty$,
\begin{eqnarray*}
&&e^{\frac{C^2\sigma^2}{2}}({\rm AMSE}(q,\lambda_q^*)-\epsilon \mathbb{E}|G|^2)=e^{\frac{C^2\sigma^2}{2}}\tau_*^2(R_q(\alpha_q(\tau_*),\tau_*)-\epsilon \mathbb{E}|G|^2/\tau_*^2) \\
&=&e^{\frac{C^2\sigma^2}{2}}\tau_*^2e^{-\frac{\alpha^2_q(\tau_*)}{2}}(\alpha_q(\tau_*))^{-3}o(1)=e^{-\frac{\sigma^2}{2}(\frac{\alpha^2_q(\tau_*)}{\tau_*^2}\cdot \frac{\tau_*^2}{\sigma^2}-C^2)}\tau_*^2(\alpha_q(\tau_*))^{-3}o(1)=o(1).
\end{eqnarray*}
We have used the fact $0<C<C_0$ to get the last equality. 

\subsection{Proof of Theorem \ref{THM:NOISY:MAIN} for $q\in (1,2]$} \label{proof35:q12}

The basic idea of the proof for $q\in (1,2]$ is the same as that for $q=1$. We characterize the convergence rate of $R_q(\alpha_q(\tau),\tau)$ in Section \ref{riskrate:q12}. We can derive the expansion of ${\rm AMSE}(q,\lambda_q^*)$ in Section \ref{amseexp:q12}.

\subsubsection{Characterizing the convergence rate of $R_q(\alpha_q(\tau),\tau)$ as $\tau\rightarrow \infty$ for $q\in (1,2]$}  \label{riskrate:q12}

We first derive the convergence rate of $\alpha_q(\tau)$ as $\tau \rightarrow \infty$.
\begin{lemma}\label{lemma:noisy:alpha:tau:trend:q}
For $q\in(1,2]$, assume $G$ has finite moments of all order. We have,
\begin{equation*}
\frac{\alpha_q(\tau)}{\tau^{2(q-1)}}
\rightarrow
\left(\frac{q-1}{q^{\frac{1}{q-1}}}\frac{\mathbb{E}|Z|^{\frac{2}{q-1}}}{\mathbb{E}B^2\mathbb{E}|Z|^{\frac{2-q}{q-1}}}\right)^{q-1},
\quad \text{as }\tau\rightarrow \infty
\end{equation*}
\end{lemma}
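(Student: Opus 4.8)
The plan is to pin down the growth rate of $\alpha_q(\tau)$ through the first-order optimality condition of the minimization defining $\alpha_q(\tau)$ in \eqref{eq:def:alphaq}. First I would record that $\alpha_q(\tau)\to\infty$ as $\tau\to\infty$: this is exactly Lemma \ref{lemma:noisy:alpha:infty}, whose contradiction-via-DCT proof (using $G/\tau\to 0$) is valid for every $q\in[1,\infty)$. Since $\alpha_q(\tau)$ is then an interior minimizer, it satisfies $\partial_\alpha R_q(\alpha_q(\tau),\tau)=0$. Writing $\alpha=\alpha_q(\tau)$ and using the decomposition of $R_q$ behind \eqref{eq:def:Rq}, this stationarity condition reads
\[
(1-\epsilon)\,\mathbb{E}\big[\eta_q(Z;\alpha)\,\partial_2\eta_q(Z;\alpha)\big]
+\epsilon\,\mathbb{E}\big[\big(\eta_q(G/\tau+Z;\alpha)-G/\tau\big)\,\partial_2\eta_q(G/\tau+Z;\alpha)\big]=0,
\]
where $\partial_2\eta_q$ is given explicitly by Lemma \ref{prox:property}(v). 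This is the direct analogue of the identity \eqref{eq:zeroder} used in the nearly black analysis, and I would process it by the same term-by-term rate computations.

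The second step is to read off the rate of each piece from the large-$\alpha$ scaling of the proximal operator. From Lemma \ref{prox:property}(ii) one has $|\eta_q(u;\alpha)|\sim\big(|u|/(q\alpha)\big)^{1/(q-1)}$ as $\alpha\to\infty$, with the companion rates for $\partial_1\eta_q,\partial_2\eta_q$ from Lemma \ref{prox:property}(iv),(v). Feeding these into the noise term $(1-\epsilon)\mathbb{E}[\eta_q\partial_2\eta_q](Z)$ yields, exactly as in the computation of $H_1$ in \eqref{eq:H1calcFinal}, a quantity of order $\alpha^{-(q+1)/(q-1)}$ with an explicit constant proportional to $\mathbb{E}|Z|^{2/(q-1)}$. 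For the signal term I would Taylor expand in the small quantity $v=G/\tau$. Because $G$ is independent of $Z$ and $Z$ is symmetric, the coefficient of $v$ is an odd function of $Z$ and therefore integrates to zero, so the leading $\tau$-dependent contribution first appears at order $v^2$, i.e. at order $\mathbb{E}G^2/\tau^2$. Moreover the $v^0$ piece of the signal term equals $\epsilon\,\mathbb{E}[\eta_q\partial_2\eta_q](Z)$, which combines with the noise term into $\mathbb{E}[\eta_q\partial_2\eta_q](Z)$ via $(1-\epsilon)+\epsilon=1$. Thus the stationarity condition collapses to a balance of the form $\mathbb{E}[\eta_q(Z;\alpha)\partial_2\eta_q(Z;\alpha)]+\epsilon\,\tfrac{\mathbb{E}G^2}{\tau^2}\,J(\alpha)=o(\cdot)$, where $J(\alpha)$ is of order $\alpha^{-q/(q-1)}$ with constant proportional to $\mathbb{E}|Z|^{(2-q)/(q-1)}$.

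The third step is the algebra of the balance. Equating the two competing rates, $\alpha^{-(q+1)/(q-1)}\asymp \tau^{-2}\alpha^{-q/(q-1)}$, forces $\alpha^{-1/(q-1)}\asymp\tau^{-2}$, hence $\alpha_q(\tau)\asymp\tau^{2(q-1)}$; tracking the explicit constants and using $\mathbb{E}B^2=\epsilon\,\mathbb{E}G^2$ then produces precisely the claimed limit $\big(\tfrac{q-1}{q^{1/(q-1)}}\,\tfrac{\mathbb{E}|Z|^{2/(q-1)}}{\mathbb{E}B^2\,\mathbb{E}|Z|^{(2-q)/(q-1)}}\big)^{q-1}$. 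As a consistency check I would treat $q=2$ directly: here $\eta_2(u;\alpha)=u/(1+2\alpha)$ turns $R_2(\alpha,\tau)$ into an explicit expression $c^2+\epsilon(c-1)^2\mathbb{E}G^2/\tau^2$ with $c=(1+2\alpha)^{-1}$, whose exact minimizer gives $\alpha_2(\tau)/\tau^2\to 1/(2\,\mathbb{E}B^2)$, in agreement with the general formula.

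The hard part is not the heuristic balance but the \emph{uniform} control needed to legitimize the DCT passages, and in particular the treatment of the region of large $|G|$, where the Taylor expansion in $v=G/\tau$ is not accurate. Since the optimal $\alpha$ grows like $\tau^{2(q-1)}$, the natural truncation $\{|G|\le t\tau\alpha\}$ sits at a level $|G|\asymp\tau^{2q-1}\to\infty$; on the complementary tail the integrands grow only polynomially in $\alpha$ and $\tau$, so the assumption that $G$ has finite moments of all orders (which makes $\mathbb{P}(|G|>t\tau^{2q-1})$ decay faster than any polynomial) is exactly what is needed to kill those tail contributions, playing the role that the sub-Gaussian hypothesis played for $q=1$. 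The other delicate point is turning the heuristic asymptotic equivalence into matching $\liminf$/$\limsup$ bounds on $\alpha_q(\tau)/\tau^{2(q-1)}$; I expect to handle this by the same squeeze strategy as in Lemma \ref{lemma:noisy:alpha:trend:l1}, deriving upper and lower bounds on the truncated stationarity functional (via the Gaussian tail expansion \eqref{gaussiantail:exp} applied after the substitution $\eta_q\mapsto\big(|u|/(q\alpha)\big)^{1/(q-1)}$) and showing both bounds converge to the same constant, with a strict-monotonicity argument guaranteeing the limiting constant is unique.
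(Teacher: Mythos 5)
Your skeleton is the paper's: establish $\alpha_q(\tau)\to\infty$ via Lemma \ref{lemma:noisy:alpha:infty}, write the stationarity condition $\partial_1 R_q(\alpha_q(\tau),\tau)=0$, extract the rate of each piece from the large-$\alpha$ scaling $|\eta_q(u;\alpha)|\sim(|u|/(q\alpha))^{1/(q-1)}$ with DCT justified by the moment assumption, and balance $\alpha^{-(q+1)/(q-1)}$ against $\tau^{-2}\alpha^{-q/(q-1)}$; your $q=2$ sanity check also matches \eqref{eq:near-black-L2-const}-style explicit algebra. Where you diverge is the treatment of the signal contribution. The paper never Taylor-expands the stationarity integrand to second order in $v=G/\tau$: it uses the identity $u-\eta_q(u;\alpha)=\alpha q\,\mathrm{sgn}(u)|\eta_q(u;\alpha)|^{q-1}$ from Lemma \ref{prox:property}(ii) to rewrite the signal term $H_2$ as $I_1+I_2-I_3$, and then needs only a \emph{first-order} Taylor expansion (plus Stein's lemma) involving $\partial_1\eta_q$, which is bounded by $1$. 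Your second-order expansion is morally equivalent (the odd-symmetry cancellation of the $v^1$ term you invoke is genuine, and the $v^2$ coefficient reproduces the $\epsilon\,\mathbb{E}G^2\,\mathbb{E}|Z|^{(2-q)/(q-1)}\,\tau^{-2}\alpha^{-q/(q-1)}$ balance), but it forces you to control $\partial_1^2\eta_q$-type terms, which behave like $|\eta_q|^{3-2q}\sim|u|^{(3-2q)/(q-1)}$ near $u=0$ and are singular (though still integrable) for $q\in(3/2,2)$; this is an extra technical burden the paper's identity sidesteps entirely. Also, your proposed truncation at $\{|G|\le t\tau\alpha\}$ with a Gaussian-tail squeeze is unnecessary here: for $q\in(1,2]$ the integrands are dominated uniformly in $\alpha,\tau$ by $(|B|+|Z|)^{2/(q-1)}$-type envelopes, so DCT applies globally; the truncation machinery is what the paper reserves for $q=1$ (sub-Gaussian $G$) and $q>2$ (Lemma \ref{lemma:noisy:lq:general1}). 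Neither point is a gap — your route would close with more work — but the paper's decomposition buys a cleaner dominated-convergence argument with only bounded first derivatives of the proximal operator.
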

\begin{proof}
First note that Lemma \ref{lemma:noisy:alpha:infty} holds for $q\in (1,2]$ as well. Hence $\alpha_q(\tau)\rightarrow \infty$ as $\tau \rightarrow \infty$. We aim to characterize its convergence rate. Since $\eta_2(u;\chi)=\frac{u}{1+2\chi}$, the result can be easily verified for $q=2$. We will focus on the case $q \in (1,2)$. For notational simplicity, we will use $\alpha$ to represent $\alpha_q(\tau)$ in the rest of the proof.  By the first order condition of the optimality, we have $\partial_1 R_q(\alpha,\tau)=0$, which can be further written out:
\begin{align}\label{starting:p}
0
=& \mathbb{E}[(\eta_q(B/\tau+Z;\alpha)-B/\tau)\partial_2\eta_q(B/\tau+Z;\alpha)] \nonumber \\
=& \underbrace{\mathbb{E}\Bigg [\frac{-q|\eta_q(B/\tau+Z;\alpha)|^q}{1+\alpha q(q-1)|\eta_q(B/\tau+Z;\alpha)|^{q-2}} \Bigg]}_{H_1}
+ \underbrace{\mathbb{E}\Bigg [\frac{B
q|\eta_q(B/\tau+Z;\alpha)|^{q-1}\mbox{sgn}(B/\tau+Z)}{\tau(1+\alpha
q(q-1)|\eta_q(B/\tau+Z;\alpha)|^{q-2})} \Bigg]}_{H_2}
\end{align}
where we have used Lemma \ref{prox:property} part (v). We now analyze the two terms $H_1$ and $H_2$ respectively. Regarding $H_1$ from Lemma \ref{prox:property} part (ii) we have
\begin{eqnarray*}
&&\frac{\alpha^{\frac{q+1}{q-1}}q\big|\eta_q\big(B/\tau+Z;\alpha\big)\big|^q}{1+\alpha q(q-1)\big|\eta_q\big(B/\tau+Z;\alpha\big)\big|^{q-2}}
\leq
\frac{\big|\alpha^{\frac{1}{q-1}}\eta_q\big(B/\tau+Z;\alpha\big)\big|^2}{q-1}\nonumber\\
&&=
\frac{\Big| |B/\tau+Z|-\big|\eta_q (B/\tau+Z;\alpha)\big|\Big|^\frac{2}{q-1}}{q^{\frac{2}{q-1}}(q-1)}
\leq
\frac{(|B|+|Z|)^\frac{2}{q-1}}{q^{\frac{2}{q-1}}(q-1)}, \mbox{~~for~}\tau \geq 1.
\end{eqnarray*}
Since $G$ has finite moments of all orders, the upper bound above is integrable. Hence DCT enables us to conclude
\begin{equation}
\lim_{\tau \rightarrow \infty} \alpha^{\frac{q+1}{q-1}}H_1=
\frac{\mathbb{E}|Z|^{\frac{2}{q-1}}}{q^{\frac{2}{q-1}}(1-q)}. \label{eq:noisy:alpha:tau:trend:2}
\end{equation}
For the term $H_2$, according to Lemma \ref{prox:property} parts (ii)(iv) we can obtain
\begin{eqnarray*}
&&H_2=
\frac{1}{\tau\alpha}\mathbb{E}\left[\frac{B\big(B/\tau+Z-\eta_q(B/\tau+Z;\alpha)\big)}{1+\alpha q(q-1)\big|\eta_q\big(B/\tau+Z;\alpha\big)\big|^{q-2}}\right]\nonumber\\
&&\hspace{-0.5cm}=
\underbrace{\frac{1}{\tau^2\alpha}\mathbb{E}\Bigg[\frac{B^2}{1+\alpha q(q-1)\big|\eta_q(B/\tau+Z;\alpha)\big|^{q-2}}\Bigg]}_{I_1}
+
\underbrace{\mathbb{E}\bigg[\frac{B Z\partial_1\eta_q(B/\tau+Z;\alpha)}{\tau\alpha}\bigg]}_{I_2}\nonumber\\
&&\hspace{1.2cm} -
\underbrace{\frac{1}{\tau\alpha} \mathbb{E}\Bigg[\frac{B\eta_q(B/\tau+Z;\alpha)}{1+\alpha q(q-1)\big|\eta_q(B/\tau+Z;\alpha)\big|^{q-2}}\Bigg]}_{I_3}.  \label{termh2:analy}
\end{eqnarray*}
By a similar argument and using DCT, it is not hard to see that,
\begin{equation} \label{extra:need1}
\lim_{\tau\rightarrow \infty} \tau^2\alpha^{\frac{q}{q-1}}I_1 =
\frac{\mathbb{E}B^2\mathbb{E}|Z|^{\frac{2-q}{q-1}}}{q^{\frac{1}{q-1}}(q-1)},
\qquad
\lim_{\tau\rightarrow \infty} \tau\alpha^{\frac{q+1}{q-1}}I_3
=\frac{\mathbb{E}B\mathbb{E}\big(|Z|^{\frac{3-q}{q-1}}\sgn(Z)\big)}{q^{\frac{2}{q-1}}(q-1)}=0.
\end{equation}
Regarding the term $I_2$, by using Stein's lemma and Taylor expansion, we can obtain a sequel of equalities:
\begin{align*}
    I_2=&
\frac{\mathbb{E}\big[B(Z^2-1)\eta_q\big(B/\tau+Z;\alpha\big)\big]}{\alpha\tau}
=
\frac{\mathbb{E}\big[B(Z^2-1)\big(\eta_q(Z;\alpha)+\partial_1\eta_q(\gamma B/\tau+Z;\alpha)B/\tau\big)\big]}{\alpha\tau}\nonumber\\
=&
\frac{\mathbb{E}\big[B^2(Z^2-1)\partial_1\eta_q\big(\gamma B/\tau+Z;\alpha\big)\big]}{\alpha\tau^2}
=
\frac{1}{\alpha\tau^2}\mathbb{E}\bigg[\frac{B^2(Z^2-1)}{1+\alpha q(q-1)\big|\eta_q\big(\gamma B/\tau+Z;\alpha\big)\big|^{q-2}}\bigg],
\end{align*}
where the second step is simply due to Lemma \ref{prox:property} part (i); $\gamma\in(0,1)$ is a random variable depending on $B$ and $Z$. With a similar argument to verify the conditions of DCT we obtain
\begin{equation}
\lim_{\tau \rightarrow \infty} \alpha^{\frac{q}{q-1}}\tau^2I_2=
\frac{(2-q)\mathbb{E}B^2\mathbb{E}|Z|^{\frac{2-q}{q-1}}}{q^{\frac{1}{q-1}}(q-1)^2}
\label{eq:noisy:alpha:tau:trend:5}.
\end{equation}
Finally, \eqref{starting:p}, \eqref{eq:noisy:alpha:tau:trend:2},
\eqref{extra:need1} and \eqref{eq:noisy:alpha:tau:trend:5} together enable us to have as $\tau \rightarrow \infty$,
\begin{equation*}
\frac{\alpha}{\tau^{2(q-1)}}=\left[\frac{\alpha^{\frac{q+1}{q-1}}(I_3-H_1)}{\tau^2\alpha^{\frac{q}{q-1}}(I_1+I_2)}\right]^{q-1}\rightarrow  \left(\frac{q-1}{q^{\frac{1}{q-1}}}\frac{\mathbb{E}|Z|^{\frac{2}{q-1}}}{\mathbb{E}B^2\mathbb{E}|Z|^{\frac{2-q}{q-1}}}\right)^{q-1}.
\end{equation*}
\end{proof}

We now characterize the convergence rate of $R_q(\alpha_q(\tau),\tau)$.
\begin{lemma} \label{boundrisk:rateq12}
Suppose $1<q \leq 2$ and $G$ has finite moments of all orders, then as $\tau \rightarrow \infty$,
\begin{eqnarray*}
R_q(\alpha_q(\tau),\tau)=\frac{\epsilon \mathbb{E}|G|^2}{\tau^2}-\frac{\epsilon^2 \big(\mathbb{E}|G|^2\mathbb{E}|Z|^{\frac{2-q}{q-1}}\big)^2}{(q-1)^2\mathbb{E}|Z|^{\frac{2}{q-1}}}\frac{1}{\tau^4}+o(1/\tau^4).
\end{eqnarray*}
\end{lemma}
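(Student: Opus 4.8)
The plan is to insert the rate of $\alpha_q(\tau)$ obtained in Lemma \ref{lemma:noisy:alpha:tau:trend:q} into a term-by-term expansion of $R_q(\alpha_q(\tau),\tau)$ and read off the coefficients of $\tau^{-2}$ and $\tau^{-4}$. Writing $\alpha=\alpha_q(\tau)$ throughout, I would start from the decomposition
\[
R_q(\alpha,\tau) = (1-\epsilon)\mathbb{E}\eta_q^2(Z;\alpha) + \epsilon\mathbb{E}\eta_q^2(G/\tau+Z;\alpha) - 2\epsilon\tau^{-1}\mathbb{E}[G\,\eta_q(G/\tau+Z;\alpha)] + \epsilon\mathbb{E}G^2/\tau^2 ,
\]
which isolates the announced leading term $\epsilon\mathbb{E}|G|^2/\tau^2$ and leaves three residual terms (call them $A$, $B$, $C$) whose joint contribution I must show equals $-\frac{\epsilon^2(\mathbb{E}|G|^2\mathbb{E}|Z|^{(2-q)/(q-1)})^2}{(q-1)^2\mathbb{E}|Z|^{2/(q-1)}}\tau^{-4} + o(\tau^{-4})$. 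Since $\eta_2(u;\chi)=u/(1+2\chi)$ is explicit, the case $q=2$ can be dispatched by a direct computation, so the substance is $q\in(1,2)$.

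The engine of the calculation is the large-$\alpha$ behavior of the proximal operator, all consequences of Lemma \ref{prox:property}: from part (ii) one gets $|\eta_q(u;\alpha)|\sim(|u|/(q\alpha))^{1/(q-1)}$, hence $\mathbb{E}\eta_q^2(Z;\alpha)\sim(q\alpha)^{-2/(q-1)}\mathbb{E}|Z|^{2/(q-1)}$, and from part (iv) together with this estimate, $\mathbb{E}\partial_1\eta_q(Z;\alpha)\sim\frac{q^{-1/(q-1)}}{(q-1)}\alpha^{-1/(q-1)}\mathbb{E}|Z|^{(2-q)/(q-1)}$. Then: term $A$ is $(1-\epsilon)\mathbb{E}\eta_q^2(Z;\alpha)$; term $B$ equals $\epsilon\mathbb{E}\eta_q^2(Z;\alpha)$ up to an $o(\tau^{-4})$ error, because its first-order-in-$G/\tau$ correction vanishes by oddness of $\eta_q\partial_1\eta_q$ and its second-order correction is $O(\tau^{-2}\alpha^{-2/(q-1)})=O(\tau^{-6})$; and term $C$, after conditioning on $G$ and Taylor/Stein expanding $\mathbb{E}_Z\eta_q(G/\tau+Z;\alpha)$ in the small shift $G/\tau$ (the zeroth-order piece vanishing by symmetry), reduces to $-2\epsilon\frac{\mathbb{E}G^2}{\tau^2}\mathbb{E}\partial_1\eta_q(Z;\alpha)$ up to $o(\tau^{-4})$.

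Inserting $\alpha\sim C\tau^{2(q-1)}$ with the constant $C$ from Lemma \ref{lemma:noisy:alpha:tau:trend:q} (recall $\mathbb{E}B^2=\epsilon\mathbb{E}|G|^2$), the three residual terms are each $\Theta(\tau^{-4})$; a short algebraic simplification shows $A+B$ contributes $+\frac{\epsilon^2(\mathbb{E}|G|^2)^2(\mathbb{E}|Z|^{(2-q)/(q-1)})^2}{(q-1)^2\mathbb{E}|Z|^{2/(q-1)}}\tau^{-4}$ while $C$ contributes exactly twice that with the opposite sign, so the sum is the claimed negative coefficient. I expect the main obstacle to be the rigorous control of remainders to $o(\tau^{-4})$ rather than the bookkeeping of constants: because the leading $\mathbb{E}\eta_q^2(Z;\alpha)$ pieces of $A$, $B$, $C$ are all of the same order $\tau^{-4}$ and partially cancel, every Taylor remainder and every passage to the limit inside an expectation must be justified uniformly in $\tau$, and the expansions of $\partial_1\eta_q$ are singular at the origin for $q<2$. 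The finite-moments hypothesis on $G$ is exactly what supplies integrable dominating functions of the form $c(|G|+|Z|)^{r}$ (as in the proof of Lemma \ref{lemma:noisy:alpha:tau:trend:q}) so that dominated convergence applies throughout; mirroring the $I_1,I_2,I_3$ splitting used there is the cleanest way to make the cross-term analysis airtight.
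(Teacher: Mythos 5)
Your proposal is correct and follows essentially the same route as the paper's proof: isolate the leading term $\epsilon\mathbb{E}|G|^2/\tau^2$, Taylor-expand the cross term so that its zeroth-order piece vanishes by symmetry, evaluate the surviving $\Theta(\alpha^{-2/(q-1)})$ terms at the rate $\alpha_q(\tau)\asymp \tau^{2(q-1)}$ from Lemma \ref{lemma:noisy:alpha:tau:trend:q}, and justify all limits by DCT with dominating functions of the form $c(|G|+|Z|)^{r}$ supplied by the finite-moment hypothesis; your constants and the $1:(-2)$ cancellation match the paper's exactly. The only (immaterial) difference is that the paper never splits $\mathbb{E}\eta_q^2(B/\tau+Z;\alpha)$ into its $B=0$ and $B=G$ parts: it computes $\lim_{\tau\to\infty}\alpha^{2/(q-1)}\mathbb{E}\eta_q^2(B/\tau+Z;\alpha)$ in a single DCT step, which spares the second-order Taylor control of your term $B$ that you flag as the delicate point.
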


\begin{proof}
It is straightforward to prove the result for $q=2$. Now we only consider $1<q<2$. We write $\alpha$ for $\alpha_q(\tau)$ in the rest of the proof to simplify the notation. First we have
\begin{eqnarray}
&&R_q(\alpha,\tau)-\frac{\epsilon \mathbb{E}|G|^2}{\tau^2}
= \mathbb{E}\eta_q^2(B/\tau+Z;\alpha)- 2\mathbb{E}[\eta_q(B/\tau+Z;\alpha)B/\tau] \nonumber \\
&&=
\mathbb{E}\eta_q^2(B/\tau+Z;\alpha)
-
2\mathbb{E}[(\eta_q(Z;\alpha)+\partial_1\eta_q(\gamma B/\tau+Z;\alpha)B/\tau)B/\tau]\nonumber\\
&&=
\mathbb{E}\eta_q^2(B/\tau+Z;\alpha)
-
2\mathbb{E}[\partial_1\eta_q(\gamma B/\tau+Z;\alpha)B^2/\tau^2], \label{starting:eq}
\end{eqnarray}
where we have used Taylor expansion in the second step and $\gamma \in (0,1)$
is a random variable depending on $B, Z$. According to Lemma
\ref{prox:property} part (ii), for $\tau \geq 1$,
\begin{align*}
\alpha^{\frac{2}{q-1}} \eta_q^2(B/\tau+Z;\alpha)
=q^{\frac{2}{1-q}}(|B/\tau+Z|-|\eta_q(B/\tau+Z;\alpha)|)^{\frac{2}{q-1}}
 \leq& q^{\frac{2}{1-q}} (|B|+|Z|)^{\frac{2}{q-1}}.
\end{align*}
The upper bound is integrable since $G$ has finite moments of all orders. Hence we can apply DCT to obtain            
\begin{equation}
\lim_{\tau\rightarrow \infty} \alpha^{\frac{2}{q-1}} \mathbb{E}\eta_q^2(B/\tau+Z;\alpha)=q^{\frac{2}{1-q}} \mathbb{E}|Z|^{\frac{2}{q-1}}. \label{eq:noisy:mse:sigma:trend:q1}
\end{equation}
We can follow a similar argument to use DCT to have
\begin{align}
    &\lim_{\tau\rightarrow \infty} \alpha^{\frac{2}{q-1}}\mathbb{E}[\partial_1\eta_q(\gamma B/\tau+Z;\alpha)B^2/\tau^2]  \nonumber \\
    \overset{(a)}{=}&\lim_{\tau\rightarrow \infty}
\frac{\alpha^{\frac{1}{q-1}}}{\tau^2}\cdot \lim_{\tau\rightarrow \infty} \mathbb{E}\left[\frac{B^2}{\alpha^{-\frac{1}{q-1}}+q(q-1)|\alpha^{\frac{1}{q-1}}\eta_q(\gamma B/\tau+Z;\alpha)|^{q-2}}\right]\nonumber\\
\overset{(b)}{=}&
\frac{q-1}{q^{\frac{1}{q-1}}}\frac{\mathbb{E}|Z|^{\frac{2}{q-1}}}{\mathbb{E}B^2\mathbb{E}|Z|^{\frac{2-q}{q-1}}}\cdot \frac{\mathbb{E}B^2\mathbb{E}|Z|^{\frac{2-q}{q-1}}}{q^{\frac{1}{q-1}}(q-1)}
= q^{\frac{2}{1-q}} \mathbb{E}|Z|^{\frac{2}{q-1}} ,
\label{eq:noisy:mse:sigma:trend:q2}
\end{align}
where $(a)$ holds due to Lemma \ref{prox:property} part (iv); we have used Lemma \ref{lemma:noisy:alpha:tau:trend:q} and DCT to obtain $(b)$. Finally, we put the results \eqref{starting:eq}, \eqref{eq:noisy:mse:sigma:trend:q1}, \eqref{eq:noisy:mse:sigma:trend:q2} and Lemma \ref{lemma:noisy:alpha:tau:trend:q} together to derive 
\begin{align*}
&\lim_{\tau\rightarrow \infty} \tau^4(R_q(\alpha,\tau)-\epsilon \mathbb{E}|G|^2/\tau^2) \\
=&\lim_{\tau\rightarrow \infty} \frac{\tau^4}{\alpha^{\frac{2}{q-1}}} \cdot \big[\lim_{\tau\rightarrow \infty} \alpha^{\frac{2}{q-1}}\mathbb{E}\eta_q^2(B/\tau+Z;\alpha) - 2\lim_{\tau\rightarrow \infty} \alpha^{\frac{2}{q-1}}\mathbb{E}(\partial_1\eta_q(\gamma B/\tau+Z;\alpha)B^2/\tau^2)  \big] \\
=& \left(\frac{q-1}{q^{\frac{1}{q-1}}}\frac{\mathbb{E}|Z|^{\frac{2}{q-1}}}{\mathbb{E}B^2\mathbb{E}|Z|^{\frac{2-q}{q-1}}}\right)^{-2}\cdot (q^{\frac{2}{1-q}} \mathbb{E}|Z|^{\frac{2}{q-1}}-2q^{\frac{2}{1-q}} \mathbb{E}|Z|^{\frac{2}{q-1}})
=-\frac{\epsilon^2 \big(\mathbb{E}|G|^2\mathbb{E}|Z|^{\frac{2-q}{q-1}}\big)^2}{(q-1)^2\mathbb{E}|Z|^{\frac{2}{q-1}}}.
\end{align*}
This finishes the proof.
\end{proof}

\subsubsection{Deriving the expansion of ${\rm AMSE}(q,\lambda^*_q)$ for $q\in (1,2]$} \label{amseexp:q12}

The way we derive the result \eqref{largenoiseresult:q12} of Theorem \ref{THM:NOISY:MAIN} is similar to that in Section \ref{amse:qeq1}.  We hence do not repeat all the details. The key step is applying Lemma \ref{boundrisk:rateq12} to obtain
\begin{align*}
    &\lim_{\sigma \rightarrow \infty} \sigma^2({\rm AMSE}(q,\lambda_q^*)-\epsilon \mathbb{E}|G|^2)=\lim_{\tau_* \rightarrow \infty} \sigma^2({\rm AMSE}(q,\lambda_q^*)-\epsilon \mathbb{E}|G|^2 \\
=&\lim_{\tau_* \rightarrow \infty}\frac{\sigma^2}{\tau_*^2} \cdot \lim_{\tau_* \rightarrow \infty} \tau^4_*(R_q(\alpha_q(\tau_*),\tau_*)-\epsilon \mathbb{E}|G|^2/\tau_*^2)
= -\epsilon^2 (\mathbb{E}|G|^2)^2c_q.
\end{align*}

\subsection{Proof of Theorem \ref{THM:NOISY:MAIN} for $q>2$}  \label{proof35:q2}

We aim to prove the same results as presented in Lemmas \ref{lemma:noisy:alpha:tau:trend:q} and \ref{boundrisk:rateq12}. However, many of the limits we took when proving for the case $1<q\leq 2$ become invalid for $q>2$ because DCT may not be applicable. Therefore, here we assume a slightly stronger condition that $G$ has a sub-Gaussian tail and use a different reasoning to validate the results in Lemmas \ref{lemma:noisy:alpha:tau:trend:q} and \ref{boundrisk:rateq12}. Throughout this section, we use $\alpha$ to denote $\alpha_q(\tau)$ for simplicity. First note that Lemma \ref{lemma:noisy:alpha:infty} holds for $q>2$ as well. Hence we already know $\alpha \rightarrow \infty$ as $\tau \rightarrow \infty$. The following key lemma paves our way for the proof.

\begin{lemma}\label{lemma:noisy:lq:general1}
Suppose function $h:\mathbb{R}^2\rightarrow\mathbb{R}$ satisfies $|h(x,y)|\leq C(|x|^{m_1}+|y|^{m_2})$ for some $C>0$ and $0\leq m_1, m_2<\infty$. $B$ has sub-Gaussian tail. Then the following result holds for any constants $v\geq 0, \gamma\in[0,1]$ and $q>2$,
\begin{equation} \label{eq:noisy:mse:sigma:trend:q5}
\lim_{\tau \rightarrow \infty} \alpha^{\frac{v+1}{q-1}}
\mathbb{E}\bigg[\frac{h(B,Z)|\eta_q(B/\tau+Z;\alpha)|^v}{1+\alpha q(q-1)|\eta_q(\gamma B/\tau+Z;\alpha)|^{q-2}}\bigg]
=
\frac{q^{\frac{-v-1}{q-1}}}{q-1}
\mathbb{E}[h(B,Z) |Z|^{\frac{v+2-q}{q-1}}],
\quad \text{as }\tau \rightarrow \infty.
\end{equation}
Moreover, there is a finite constant $K$ such that for sufficiently large $\tau$,
\begin{equation}
\max_{0\leq \gamma \leq 1}\alpha^{\frac{v+1}{q-1}}
\mathbb{E}\bigg[\frac{|h(B,Z)||\eta_q(B/\tau+Z;\alpha)|^v}{1+\alpha q(q-1)|\eta_q(\gamma B/\tau+Z;\alpha)|^{q-2}}\bigg]
\leq
K.
\label{eq:noisy:mse:sigma:trend:q6}
\end{equation}
\end{lemma}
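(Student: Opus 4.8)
The plan is to reduce the awkward denominator to a derivative of the proximal operator, and then split the proof into (a) pointwise convergence of the scaled integrand and (b) a uniform integrability estimate that legitimizes passing the limit inside the expectation. Step (b) is precisely the content of \eqref{eq:noisy:mse:sigma:trend:q6}, and it is where essentially all the work lies, because for $q>2$ the limiting integrand carries a genuine singularity.

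First I would record the two structural facts about $\eta_q$ that drive everything. By Lemma \ref{prox:property}(iv), $\partial_1\eta_q(w;\alpha)=\big(1+\alpha q(q-1)|\eta_q(w;\alpha)|^{q-2}\big)^{-1}$, so the factor multiplying $h$ in \eqref{eq:noisy:mse:sigma:trend:q5} is exactly $\alpha^{\frac{v}{q-1}}|\eta_q(B/\tau+Z;\alpha)|^v\cdot\alpha^{\frac{1}{q-1}}\partial_1\eta_q(\gamma B/\tau+Z;\alpha)$. By Lemma \ref{prox:property}(ii), $\alpha^{\frac{1}{q-1}}|\eta_q(w;\alpha)|=q^{-\frac{1}{q-1}}\big(|w|-|\eta_q(w;\alpha)|\big)^{\frac{1}{q-1}}$, and since $\alpha=\alpha_q(\tau)\to\infty$ (Lemma \ref{lemma:noisy:alpha:infty}) forces $\eta_q(w;\alpha)\to0$ for each fixed $w$, this yields the pointwise limit $\alpha^{\frac{v}{q-1}}|\eta_q(B/\tau+Z;\alpha)|^v\to q^{-\frac{v}{q-1}}|Z|^{\frac{v}{q-1}}$ (using $B/\tau\to0$). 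Differentiating the scaling identity Lemma \ref{prox:property}(iii) in its first argument gives $\partial_1\eta_q(w;\alpha)=\partial_1\eta_q(\alpha^{\frac1{q-2}}w;1)=:p(\alpha^{\frac1{q-2}}w)$, where $p(x)=\big(1+q(q-1)|\eta_q(x;1)|^{q-2}\big)^{-1}$ satisfies $0\le p\le1$, and, again by Lemma \ref{prox:property}(ii) (which gives $|\eta_q(x;1)|\sim(|x|/q)^{\frac{1}{q-1}}$ as $|x|\to\infty$), $p(x)\sim c_0|x|^{-\frac{q-2}{q-1}}$ for a constant $c_0>0$. A short computation then yields $\alpha^{\frac1{q-1}}\partial_1\eta_q(\gamma B/\tau+Z;\alpha)\to\frac{1}{q(q-1)}q^{\frac{q-2}{q-1}}|Z|^{-\frac{q-2}{q-1}}$. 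Multiplying the three pointwise limits and collecting the powers of $q$ and $|Z|$ reproduces the constant $\frac{q^{-(v+1)/(q-1)}}{q-1}$ and the weight $|Z|^{\frac{v+2-q}{q-1}}$ claimed in \eqref{eq:noisy:mse:sigma:trend:q5}.

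The heart of the argument, and the reason $q>2$ must be treated separately, is controlling the integrands uniformly in $\tau$ and $\gamma$, which is impossible by naive domination because the limit weight $|Z|^{\frac{v+2-q}{q-1}}$ is singular at $Z=0$ once $q>v+2$, with its location inside the integrand sitting at the $\tau$- and $\gamma$-dependent point $\gamma B/\tau+Z=0$. I would route this through the derivative reformulation above. Writing $w'=\gamma B/\tau+Z$, Lemma \ref{prox:property}(ii) gives the numerator bound $\alpha^{\frac{v}{q-1}}|\eta_q(B/\tau+Z;\alpha)|^v\le q^{-\frac{v}{q-1}}(|B|+|Z|)^{\frac{v}{q-1}}$ for $\tau\ge1$, while the decay of $p$ gives $\alpha^{\frac1{q-1}}\partial_1\eta_q(w';\alpha)=\alpha^{\frac1{q-1}}p(\alpha^{\frac1{q-2}}w')\le C_p\,\alpha^{\frac1{q-1}}\big(1+\alpha^{\frac1{q-2}}|w'|\big)^{-\frac{q-2}{q-1}}$. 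On $\{\alpha^{\frac1{q-2}}|w'|\ge1\}$ this is at most $C_p|w'|^{-\frac{q-2}{q-1}}$, a $\tau$-dependent but integrable singularity: since $\frac{q-2}{q-1}\in(0,1)$, conditioning on $B$ and integrating the unit-variance Gaussian density of $w'$ against $|w'|^{-\frac{q-2}{q-1}}$ gives a bound uniform in the mean shift $\gamma B/\tau$, with the polynomial factors absorbed via the sub-Gaussian tail of $B$ and Gaussian moments of $Z$. On $\{\alpha^{\frac1{q-2}}|w'|<1\}$ one has only the bound $C_p\alpha^{\frac1{q-1}}$, but there $\mathbb{P}(|w'|<\alpha^{-\frac1{q-2}}\mid B)=O(\alpha^{-\frac1{q-2}})$ uniformly, so this part contributes $O(\alpha^{\frac1{q-1}-\frac1{q-2}})\to0$ because $\frac1{q-1}<\frac1{q-2}$ (the polynomial-times-sub-Gaussian factors being controlled by Cauchy--Schwarz). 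Together these give the uniform bound $K$ of \eqref{eq:noisy:mse:sigma:trend:q6}.

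Finally I would upgrade pointwise convergence to convergence of the expectations by a splitting argument rather than a single envelope. Fixing $\delta>0$, on $\{|Z|>\delta\}$ one has $|w'|\ge\delta/2$ for all large $\tau$ uniformly in $\gamma$, so the integrand stays away from its singularity and ordinary DCT applies with the integrable bound built above; on $\{|Z|\le\delta\}$ the uniform estimate of the previous paragraph bounds the contribution by a quantity tending to $0$ as $\delta\to0$, uniformly in $\tau$. Sending $\tau\to\infty$ and then $\delta\to0$ yields \eqref{eq:noisy:mse:sigma:trend:q5}. The main obstacle throughout is exactly this $q>2$ singular weight: every step that was a one-line DCT for $1<q\le2$ must instead be channeled through the identity that the scaled denominator factor equals $\alpha^{1/(q-1)}\partial_1\eta_q(\cdot;\alpha)$ together with the explicit decay of $p$, and the small-$|w'|$ region must be shown asymptotically negligible rather than merely dominated.
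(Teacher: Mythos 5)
Your proposal is correct and is essentially the paper's own argument in different clothing: the paper's split into $A=\{|\eta_q(\gamma B/\tau+Z;\alpha)|\leq\tfrac12|\gamma B/\tau+Z|\}$ and $A^c\subset\{|\gamma B/\tau+Z|<2(\alpha q)^{1/(2-q)}\}$ coincides, up to constants, with your split on $\{\alpha^{1/(q-2)}|w'|\geq 1\}$ versus its complement, and both routes then use the same integrable singular envelope $|w'|^{-\frac{q-2}{q-1}}$ on the good set and the same $O(\alpha^{\frac{1}{q-1}-\frac{1}{q-2}})\to0$ estimate on the shrinking bad set, the only cosmetic differences being your use of the identity $\big(1+\alpha q(q-1)|\eta_q|^{q-2}\big)^{-1}=\partial_1\eta_q=p(\alpha^{1/(q-2)}\cdot)$ in place of the paper's direct manipulation, and conditioning on $B$ rather than the paper's change of variables. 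One small patch is needed in your final limiting step: on $\{|Z|>\delta\}$ the lower bound $|w'|\geq\delta/2$ is not uniform over the unbounded variable $B$, so you must also intersect with $\{|B|\leq\tau\delta/2\}$ and absorb the complement via the sub-Gaussian tail (or, as the paper does, change variables so the dominating function is $\tau$-free), but this does not affect the validity of the argument.
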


\begin{proof}
Define $A=\{|\eta_q(\gamma B/\tau+Z;\alpha)|\leq\frac{1}{2}|\gamma
B/\tau+Z|\}$.

We evaluate the expectation on the set $A$ and its complement $A^c$ respectively. Recall we use $p_B$ to denote the distribution of $B$. By a change of variable we then have
\begin{eqnarray*}
&&\mathbb{E}\left[\frac{h(B,Z)|\eta_q(B/\tau+Z;\alpha)|^v}{1+\alpha q(q-1)|\eta_q(\gamma B/\tau+Z;\alpha)|^{q-2}}\right]   \\
&=&\int\frac{h(x,y-\gamma x/\tau)|\eta_q(y+(1-\gamma)x/\tau;\alpha)|^v}{1+\alpha q(q-1)|\eta_q(y;\alpha)|^{q-2}}\phi(y-\gamma x/\tau)dydp_B(x).
\end{eqnarray*}
We have on $\{|\eta_q(y;\alpha)|\leq\frac{1}{2}|y|\}$ when $\tau$ is large enough,
\begin{eqnarray*}
&&\hspace{.cm}\frac{\alpha^{ \frac{v+1}{q-1}}|h(x,y-\gamma x/\tau)|\cdot |\eta_q(y+(1-\gamma)x/\tau;\alpha)|^v}{1+\alpha q(q-1)|\eta_q(y;\alpha)|^{q-2}}\phi(y-\gamma x/\tau)\label{upperbound:qlarger2}   \\
&\overset{(a)}{\leq}&
\frac{|h(x,y-\gamma x/\tau)|\cdot |\alpha^{\frac{1}{q-1}}\eta_q(y+(1-\gamma)x/\tau;\alpha)|^v}{q(q-1)|\alpha^{\frac{1}{q-1}}\eta_q(y;\alpha)|^{q-2}}\phi(y-\gamma x/\tau)  \nonumber\\
&\overset{(b)}{=}&
\frac{q^{\frac{q-2-v}{q-1}}|h(x,y-\gamma x/\tau)| \cdot |y+(1-\gamma)x/\tau|^\frac{v}{q-1}}{q(q-1)(|y|-|\eta_q(y;\alpha)|)^{\frac{q-2}{q-1}}}\phi(y/\sqrt{2})e^{-\frac{1}{4}(y-\frac{2\gamma x}{\tau})^2+\frac{\gamma^2x^2}{2\tau^2}}  \nonumber\\
&\overset{(c)}{\leq}&
\frac{2^{\frac{q-2}{q-1}}q^{\frac{q-2-v}{q-1}}|h(x,y-\gamma x/\tau)|\cdot |y+(1-\gamma)x/\tau |^\frac{v}{q-1}}{q(q-1)\left|y\right|^{\frac{w}{q-1}}}\phi(y/\sqrt{2})e^{\frac{\gamma^2x^2}{2\tau^2}}  \nonumber\\
&\overset{(d)}{\leq}&
\frac{2^{\frac{q-2}{q-1}}q^{\frac{q-2-v}{q-1}}\left(|x|^{m_1}+(|y|+|x|)^{m_2}\right)\cdot (|y|+|x|)^\frac{v}{q-1}}{q(q-1) \left|y\right|^{\frac{q-2}{q-1}}}\phi(y/\sqrt{2})e^{c_0x^2}.  \nonumber 
\end{eqnarray*}
We have used Lemma \ref{prox:property} part (ii) to obtain $(a)(b)$; $(c)$ is
due to the condition $|\eta_q(y;\alpha)|\leq\frac{1}{2}|y|$; and $(d)$ holds
because of the condition on the function $h(x,y)$. Notice that the numerator of
the upper bound is essentially a polynomial in $|x|$ and $|y|$. Since $B$ has
sub-Gaussian tail, if we choose $c_0$ small enough (when $\tau$ is sufficiently
large), the integrability with respect to $x$ is guaranteed. The integrability w.r.t. $y$ is clear since $(2-q)/(q-1)>-1$. Thus we can apply DCT to obtain
\begin{eqnarray*}
&&\lim_{\tau\rightarrow \infty}\alpha^{ \frac{v+1}{q-1}}\mathbb{E}\left[\frac{h(B,Z)|\eta_q(B/\tau+Z;\alpha)|^v}{1+\alpha q(q-1)|\eta_q(\gamma B/\tau+Z;\alpha)|^{q-2}} \mathbb{I}_{A}\right]  \\
&&\hspace{-.9cm} =\int \lim_{\tau \rightarrow \infty} \frac{h(x,y-\gamma x/\tau)|\alpha^{\frac{1}{q-1}}\eta_q(y+(1-\gamma)x/\tau;\alpha)|^v}{\alpha^{\frac{1}{1-q}}+ q(q-1)|\alpha^{\frac{1}{q-1}}\eta_q(y;\alpha)|^{q-2}}\phi(y-\gamma x/\tau)\mathbb{I}_{\{|\eta_q(y;\alpha)|\leq\frac{1}{2}|y|\}}dydp_B(x)        \nonumber\\
&&\hspace{-.9cm} =
\int\frac{q^{\frac{-1-v}{q-1}}h(x,y)}{(q-1)|y|^{\frac{q-2-v}{q-1}}}\phi(y)dydp_B(x)
=
\frac{q^{\frac{-1-v}{q-1}}}{q-1}\mathbb{E}[h(B,Z)|Z|^{\frac{v+2-q}{q-1}}].
\end{eqnarray*}
We now evaluate the expectation on the event $A^c$. Note that $A^c$ implies
\begin{align*}
|\gamma B/\tau+Z|
=&
\alpha q|\eta_q(\gamma B/\tau+Z;\alpha)|^{q-1}+|\eta_q(\gamma
B/\tau+Z;\alpha)| \nonumber \\
>&
\frac{\alpha q}{2^{q-1}}|\gamma B/\tau+Z |^{q-1}+\frac{1}{2} |\gamma B/\tau+Z|.
\end{align*}

This implies $|\gamma B/\tau+Z|<2(\alpha q)^{\frac{1}{2-q}}$ on $A^c$. Hence we have the following bounds,
\begin{eqnarray*}
&&\hspace{-.2cm} \alpha^{ \frac{v+1}{q-1}}
\mathbb{E}\left[\frac{|h(B,Z)|\cdot |\eta_q(B/\tau+Z;\alpha)|^v}{1+\alpha q(q-1)|\eta_q(\gamma B/\tau+Z;\alpha)|^{q-2}}\mathbb{I}_{A^c}\right] \\
&&  \hspace{-.8cm} \leq
\alpha^{\frac{v+1}{q-1}}
\mathbb{E}(|h(B,Z)|\cdot |\eta_q(B/\tau+Z;\alpha)|^v\mathbb{I}_{A^c})  \nonumber\\
&&\hspace{-.8cm} \leq
\alpha^{\frac{1}{q-1}}\int_{|y|<2(\alpha q)^{\frac{1}{2-q}}} |h\big(x,y-\frac{\gamma x}{\tau}\big)|\cdot |\alpha^{\frac{1}{q-1}}\eta_q\big(y+\frac{1-\gamma}{\tau}x;\alpha\big)|^v\phi(y)e^{\frac{\gamma yx}{\tau}}dydp_B(x)\nonumber\\
&&\hspace{-.8cm} \overset{(e)}{\leq}
q^{\frac{v}{1-q}}\alpha^{\frac{1}{q-1}}\int_{|y|<2(\alpha q)^{\frac{1}{2-q}}}\left(|x|^{m_1}+(|y|+|x|)^{m_2}\right)(|y|+|x|)^{\frac{v}{q-1}}\phi(y)e^{\frac{2(\alpha q)^{\frac{1}{2-q}}}{\tau}x} dydp_B(x)\nonumber\\
&&\hspace{-.8cm} \leq
q^{\frac{v}{1-q}}\alpha^{\frac{1}{q-1}}\int_{|y|<2(\alpha q)^{\frac{1}{2-q}}} P(|x|, |y|)\phi(y)e^{\frac{2(\alpha q)^{\frac{1}{2-q}}}{\tau}x}dydp_B(x)\nonumber\\
&&\hspace{-.8cm}  \overset{(f)}{\leq}
c_1 \alpha^{\frac{1}{q-1}}\alpha^{\frac{1}{2-q}}\int \tilde{P}(|x|)e^{x}dp_B(x) \leq c_2 \alpha^{\frac{-1}{(q-1)(q-2)}} \rightarrow \infty \mbox{~as~}\tau \rightarrow \infty,   \nonumber
\end{eqnarray*} 
where $(e)$ is due to Lemma \ref{prox:property} part (ii) and condition on $h(x,y)$; $P(\cdot,\cdot), \tilde{P}(\cdot)$ are two polynomials; the extra term $\alpha^{\frac{1}{2-q}}$ in step $(f)$ is derived from the condition $|y|<2(\alpha q)^{\frac{1}{2-q}}$. We thus have finished the proof of \eqref{eq:noisy:mse:sigma:trend:q5}. Finally, note that the two upper bounds we derived do not depend on $\gamma$, hence \eqref{eq:noisy:mse:sigma:trend:q6} follows directly. 
\end{proof}

We are now ready to prove Theorem \ref{THM:NOISY:MAIN} for $q>2$. We will prove the results of Lemmas \ref{lemma:noisy:alpha:tau:trend:q} and \ref{boundrisk:rateq12} for $q>2$. After that the exactly same arguments presented in Section \ref{amseexp:q12} will close the proof. Since the basic idea of proving Lemmas \ref{lemma:noisy:alpha:tau:trend:q} and \ref{boundrisk:rateq12} for $q>2$ is the same as for the case $q\in (1,2]$, we do not detail out the entire proof and instead highlight the differences. The major difference is that we apply Lemma \ref{lemma:noisy:lq:general1} to make some of the limiting arguments valid in the case $q>2$. Adopting the same notations in Section \ref{riskrate:q12}, we list the settings in the use of Lemma \ref{lemma:noisy:lq:general1} below
\vspace{0.0cm}
\begin{itemize}
	\item
	Lemma \ref{lemma:noisy:alpha:tau:trend:q} $I_1$: set $h(x,y)=x^2, v=0, \gamma=1$.
	\vspace{0.2cm}
	\item
	Lemma \ref{lemma:noisy:alpha:tau:trend:q} $I_3$: set $h(x,y)=x\sgn(\frac{x}{\tau}+y), v=1, \gamma=1$. Note that the dependence of $h(x,y)$ on $\tau$ does not affect the result.
	\vspace{0.2cm}
	\item
	Lemma \ref{lemma:noisy:alpha:tau:trend:q} $I_2$: Notice we have
	\begin{align}
	\alpha^{\frac{q}{q-1}}\tau^2 I_2
	=&
	\alpha^{\frac{1}{q-1}}\tau\mathbb{E}\left[B(Z^2-1)\Big(\eta_q(Z;\alpha)+\frac{B}{\tau}\int_0^1\partial_1\eta_q(sB/\tau+Z;\alpha)ds\Big)\right]\nonumber\\
	=&\alpha^{\frac{1}{q-1}} \int_0^1\mathbb{E}\Big[B^2(Z^2-1)\partial_1\eta_q(sB/\tau+Z;\alpha)\Big]ds\nonumber\\
	=&\int_0^1\alpha^{\frac{1}{q-1}}\mathbb{E}\Bigg[\frac{B^2(Z^2-1)}{1+\alpha q(q-1)\big|\eta_q(sB/\tau+Z;\alpha)\big|^{q-2}}\Bigg]ds.   \nonumber
	\end{align}
	We have switched the integral and expectation in the second step above due to the integrability.  Set $h(x,y)=x^2(y^2-1), v=0, \gamma=s$; then by the bound \eqref{eq:noisy:mse:sigma:trend:q6} in Lemma \ref{lemma:noisy:lq:general1}, we can bring the limit $\tau \rightarrow \infty$ inside the above integral to obtain the result of $\lim_{\tau \rightarrow \infty} \alpha^{\frac{q}{q-1}}\tau^2 I_2$.
	\vspace{0.2cm}
	\item
	In Lemma \ref{boundrisk:rateq12}, we need to rebound the term $\mathbb{E}[\eta_q(B/\tau+Z;\alpha)B/\tau]$ in \eqref{starting:eq}. 
	\begin{align}
	\alpha^{\frac{1}{q-1}} \tau^2\mathbb{E}[\eta_q(B/\tau+Z;\alpha)B/\tau]
	=&
	\alpha^{\frac{1}{q-1}} \tau^2\mathbb{E}\bigg[\frac{B}{\tau}\Big(\eta_q(Z;\alpha)+\frac{B}{\tau}\int_0^1\partial_1\eta_q(sB/\tau+Z;\alpha)ds\Big)\bigg]\nonumber\\
	=&
	\int_0^1\alpha^{\frac{1}{q-1}} \mathbb{E}\bigg[\frac{B^2}{1+\alpha q(q-1)\big|\eta_q(sB/\tau+Z;\alpha)\big|}\bigg]ds\nonumber
	\end{align}
	We set $h(x,y)=x^2, v=0, \gamma=s$. The rest arguments are similar to the previous one.
\end{itemize}

\section{Proof of Theorems \ref{THM:SAMPLE:MAIN}} \label{apxc}

Since the roadmap of the proof is similar to that of Theorem \ref{THM:NOISY:MAIN}, we will not repeat it. We suggest the reader study Appendix \ref{thmlargenoise:main} before reading this appendix.

We remind the reader that in the large sample regime, we have scaled the noise term. Hence $\tau_*$ will satisfy
\begin{eqnarray}
\tau_*^2=\frac{\sigma^2}{\delta}+\frac{\tau_*^2R_q(\alpha_q(\tau_*),\tau_*)}{\delta}.  \label{scalestate}
\end{eqnarray}

We first derive the convergence rate of $\tau_*$ as $\delta \rightarrow \infty$. 

\begin{lemma}\label{lemma:large:delta:tau:1}

For a given $q\in [1,\infty)$, as $\delta \rightarrow \infty$,
\[
\tau_*^2=\frac{\sigma^2}{\delta}+o\left(\frac{1}{\delta}\right).
\]
\end{lemma}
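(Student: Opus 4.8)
The plan is to read the conclusion off directly from the scaled state-evolution equation \eqref{scalestate}, using nothing more than a crude uniform upper bound on the optimal risk $R_q(\alpha_q(\tau),\tau)$. The decisive observation is that, since $\alpha_q(\tau)$ minimizes $R_q(\cdot,\tau)$ over $\alpha\ge 0$ (see \eqref{eq:def:Rq} and \eqref{eq:def:alphaq}) and $\eta_q(u;0)=u$, we have
\[
R_q(\alpha_q(\tau),\tau)\le R_q(0,\tau)=\mathbb{E}\big(\eta_q(B/\tau+Z;0)-B/\tau\big)^2=\mathbb{E}Z^2=1
\]
for every $\tau>0$ and every $q\in[1,\infty)$. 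This bound is uniform in $\tau$, which is exactly what drives the argument.

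Next I would use \eqref{scalestate} to show $\tau_*\to 0$ as $\delta\to\infty$. Multiplying \eqref{scalestate} by $\delta$ gives $\delta\tau_*^2=\sigma^2+\tau_*^2 R_q(\alpha_q(\tau_*),\tau_*)$, and combining this with the bound above yields $\delta\tau_*^2\le\sigma^2+\tau_*^2$, i.e. $\tau_*^2\le \sigma^2/(\delta-1)$ for $\delta>1$. Since $\sigma$ is fixed, this forces $\tau_*^2\to 0$. I would then close with a self-bounding estimate: because the residual term in \eqref{scalestate} is nonnegative, $\delta\tau_*^2-\sigma^2=\tau_*^2 R_q(\alpha_q(\tau_*),\tau_*)\ge 0$, while $R_q\le 1$ bounds the same quantity by $\tau_*^2$. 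Hence
\[
0\le \delta\tau_*^2-\sigma^2\le \tau_*^2\le \frac{\sigma^2}{\delta-1}\longrightarrow 0
\]
as $\delta\to\infty$, so that $\delta\tau_*^2-\sigma^2=o(1)$, which is precisely the claim $\tau_*^2=\sigma^2/\delta+o(1/\delta)$.

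There is essentially no hard step here: the entire lemma follows from the single inequality $R_q(\alpha_q(\tau),\tau)\le 1$, which simultaneously makes $\tau_*$ vanish and controls the correction term by $\tau_*^2$. The only point deserving a moment's care is the passage between the fixed-point equation \eqref{eq:se11} and the rescaled risk $R_q(\alpha,\tau)$ of \eqref{eq:def:Rq}. This is handled by the homogeneity relation $\eta_q(\tau u;\tau^{2-q}\chi)=\tau\,\eta_q(u;\chi)$ of Lemma \ref{prox:property}(iii), which gives $\mathbb{E}(\eta_q(B+\tau Z;\alpha\tau^{2-q})-B)^2=\tau^2 R_q(\alpha,\tau)$ and thereby justifies writing the noise-scaled state evolution in the form \eqref{scalestate}. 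The sharper second-order expansions asserted in Theorem \ref{THM:SAMPLE:MAIN} will require the finer analysis of $\alpha_q(\tau)$ and $R_q(\alpha_q(\tau),\tau)$ as $\tau\to 0$ developed in the subsequent lemmas, but this first-order statement needs none of that machinery.
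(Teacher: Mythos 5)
Your proof is correct and is essentially the paper's own argument: both rest on the single inequality $\tau_*^2 R_q(\alpha_q(\tau_*),\tau_*)\le \tau_*^2 R_q(0,\tau_*)=\tau_*^2$ applied to \eqref{scalestate}, giving $\tau_*^2\le\sigma^2/(\delta-1)\to 0$ and hence $0\le\delta\tau_*^2-\sigma^2\le\tau_*^2=o(1)$. You merely spell out the final self-bounding step and the homogeneity identity $\mathbb{E}(\eta_q(B+\tau Z;\alpha\tau^{2-q})-B)^2=\tau^2R_q(\alpha,\tau)$ that the paper leaves implicit.
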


\begin{proof}
Since $\alpha=\alpha_q(\tau_*)$ minimizes $R_q(\alpha, \tau_*)$, from \eqref{scalestate} we obtain
\begin{equation}
\delta(\tau_*^2-\sigma^2/\delta)\leq R_q(0, \tau_*)=\tau_*^2, \label{single:eqenough}
\end{equation}
which yields $\tau_*^2\leq \frac{\sigma^2}{\delta-1} \rightarrow 0$ as $\delta \rightarrow \infty$. This completes the proof.
\end{proof}
Lemma \ref{lemma:large:delta:tau:1} shows that $\tau_*\rightarrow 0$ as $\delta \rightarrow \infty$. Hence we need to characterize the convergence rate of $R_q(\alpha_q(\tau),\tau)$ as $\tau \rightarrow 0$. The results have been derived in the small noise regime analysis. We collect the results together in the next lemma.

\begin{lemma} \label{riskconvergence:all}
As $\tau \rightarrow 0$ we have
\begin{itemize}
\item[(1)] For $q=1$, assume $\mathbb{P}(|G|\geq \mu)=1$ with $\mu$ a positive constant and $\mathbb{E}|G|^2<\infty$, then
\begin{eqnarray*}
R_q(\alpha_q(\tau),\tau)-f(\chi_0)=O(\phi(\mu/\tau-\chi_0)),
\end{eqnarray*} 
where $\chi= \chi_0$ is the minimizer of $f(\chi)=(1-\epsilon)\mathbb{E}\eta^2_q(Z;\chi)+\epsilon(1+\chi^2)$.
\item[(2)] For  $1<q<2$, assume $\mathbb{P}(|G| \leq x)=O(x)$ (as $x \rightarrow 0$) and $\mathbb{E}|G|^2<\infty$, 
\begin{eqnarray*}
R_q(\alpha_q(\tau),\tau)=1-\frac{(1-\epsilon)^2(\mathbb{E}|Z|^q)^2}{\epsilon \mathbb{E}|G|^{2q-2}}\tau^{2q-2}+o(\tau^{2q-2}).
\end{eqnarray*}
\item[(3)] For $q>2$, assume $\mathbb{E}|G|^{2q-2}<\infty$, then
\begin{eqnarray*}
R_q(\alpha_q(\tau),\tau)=1-\tau^2\frac{\epsilon (q-1)^2(\mathbb{E}|G|^{q-2})^2}{\mathbb{E}|G|^{2q-2}}+o(\tau^2).
\end{eqnarray*}
\end{itemize}
\end{lemma}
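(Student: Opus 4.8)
The plan is to establish the three expansions separately, matching each $q$-range to a result that is either already proved in this supplement or available in prior work. The lemma is purely a consolidation of rate estimates for $R_q(\alpha_q(\tau),\tau)$ as $\tau\rightarrow 0$, which is exactly what is needed to feed into \eqref{scalestate} once Lemma \ref{lemma:large:delta:tau:1} guarantees $\tau_*\rightarrow 0$ in the large-sample regime.

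For item (3), the range $q>2$, there is nothing new to do: the claimed expansion
\[
R_q(\alpha_q(\tau),\tau)=1-\tau^2\frac{\epsilon (q-1)^2(\mathbb{E}|G|^{q-2})^2}{\mathbb{E}|G|^{2q-2}}+o(\tau^2)
\]
is precisely the statement of Lemma \ref{riskrate:amseq2}, proved in the small-noise analysis of Theorem \ref{THM:LOWNOISE:MAIN} under the identical hypothesis $\mathbb{E}|G|^{2q-2}<\infty$. I would simply invoke that lemma, noting that its proof rests on the rate of $\alpha_q(\tau)$ supplied by Lemma \ref{taurate:qlarger2}.

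For items (1) and (2), the ranges $q=1$ and $1<q<2$, the corresponding expansions were established in \cite{weng2018overcoming} for exactly this quantity and under exactly these hypotheses, so I would cite that reference. Should a self-contained argument be desired, I would reproduce the same three-step scheme used for $q>2$: first localize the optimal penalty by showing $\tau^{2-q}\alpha_q(\tau)\rightarrow 0$ as $\tau\rightarrow 0$, using that $\alpha_q(\tau)$ minimizes $R_q$ and comparing against the choice $\alpha=0$; next extract the exact rate of $\alpha_q(\tau)$ from the first-order condition $\partial_1 R_q(\alpha_q(\tau),\tau)=0$, rewriting the derivative through the proximal identities in Lemma \ref{prox:property} parts (ii), (iv), (v) and applying Stein's lemma together with DCT to each resulting term; and finally substitute this rate back into $R_q$, again expanding with Lemma \ref{prox:property} and Stein's lemma to read off the leading correction.

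The one genuinely delicate point — and the reason the hypotheses differ across the three items — lies in justifying dominated convergence in the $q\in[1,2]$ cases. For $1<q<2$ the factor $|\eta_q|^{q-2}$ appearing in $\partial_1\eta_q$ can become singular where its argument is near $0$, which is why the assumption $\mathbb{P}(|G|\le x)=O(x)$ (controlling the mass of $G$ near the origin) is imposed to secure integrable dominating functions. For $q=1$ the soft-threshold is non-smooth, so Stein's lemma is replaced by the Gaussian-tail expansion \eqref{gaussiantail:exp}, and the condition $\mathbb{P}(|G|\ge\mu)=1$ pushes the signal away from the threshold, producing the exponentially small remainder $O(\phi(\mu/\tau-\chi_0))$ rather than a polynomial one. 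These are precisely the points resolved in \cite{weng2018overcoming}, so no new obstacle arises; the only novel range, $q>2$, was already handled by Lemma \ref{riskrate:amseq2}, and hence the assembly is routine.
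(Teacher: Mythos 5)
Your proposal matches the paper's own proof exactly: the paper disposes of item (3) by citing Lemma \ref{riskrate:amseq2} and of items (1) and (2) by citing Lemmas 5 and 20 of \cite{weng2018overcoming}, which is precisely your plan. The additional sketch you give of how a self-contained argument would proceed (localizing $\alpha_q(\tau)$, using the first-order condition with the proximal identities and Stein's lemma, and the role of the hypotheses on $G$ in securing dominated convergence) accurately reflects the technique used for the $q>2$ case in the supplement, so nothing is missing.
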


\begin{proof}
Result (1) is Lemma 5 in \cite{weng2018overcoming}; Result (2) is Lemma 20 in \cite{weng2018overcoming}; Result (3) is Lemma I.2 in \cite{wang2017bridge}. 
\end{proof}

We now use the results in Lemmas \ref{lemma:large:delta:tau:1} and \ref{riskconvergence:all} to prove Theorem \ref{THM:SAMPLE:MAIN}. We only present the proof for $q\in (1,2)$. Similar arguments work for other values of $q$. By Theorem \ref{theorem:amp:bridge2} and \eqref{scalestate},
\begin{align*}
    &\delta^q({\rm AMSE}(q,\lambda_q^*)-\sigma^2/\delta)=\delta^q(\tau_*^2R_q(\alpha_q(\tau_*),\tau_*)-\tau_*^2+\tau^2_*R_q(\alpha_q(\tau_*),\tau_*)/\delta) \\
    =&\delta^q\tau_*^2(R_q(\alpha_q(\tau_*),\tau_*)-1)+\delta^{q-1} \tau_*^2R_q(\alpha_q(\tau_*),\tau_*)
\overset{(a)}{\rightarrow} -\sigma^{2q}\frac{(1-\epsilon)^2(\mathbb{E}|Z|^q)^2}{\epsilon \mathbb{E}|G|^{2q-2}},
\quad \text{as }\delta \rightarrow \infty .
\end{align*}
Step $(a)$ is due to Lemmas \ref{lemma:large:delta:tau:1} and \ref{riskconvergence:all} part (2). This finishes the proof.

\section{Proof of Theorems \ref{THEOREM:DEBIASING:VALID},
\ref{THEOREM:DEBIASING:ATTP}, \ref{DEBIASING:Q>1} and Lemma \ref{COMP:SIS}}
\label{apxd}

The proof of Theorems \ref{THEOREM:DEBIASING:VALID}, \ref{THEOREM:DEBIASING:ATTP} (\ref{DEBIASING:Q>1}) and Lemma \ref{COMP:SIS} can be found in Sections \ref{proof:debiasing}, \ref{debiased:comp} and \ref{location:lemma4.1} respectively.

\subsection{Proof of Theorem \ref{THEOREM:DEBIASING:VALID}}\label{proof:debiasing}

Since some technical details for $q=1$ and $q>1$ are different, we prove the two cases separately in Sections \ref{proofdebiasing:q1more} and \ref{proofdebiasing:q1} respectively.

\subsubsection{Proof of Theorem \ref{THEOREM:DEBIASING:VALID} for $q=1$}\label{proofdebiasing:q1}

In this section, we apply the approximate message passing (AMP) framework to prove the result for $\lasso$. We first briefly review the approximate message passing algorithm and state some relevant results that will be later used  in the proof. We then describe the main proof steps. 

\vspace{0.2cm}

\textbf{I. Approximate message passing algorithms}. \cite{bayati2012lasso} has utilized AMP theory to characterize the sharp asymptotic risk of $\lasso$. The authors considered a sequence of estimates $\beta^t\in\mathbb{R}^p$ generated from an approximate message passing algorithm with the following iterations (initialized at $\beta^0=0,z^0=y$):
\begin{eqnarray}
\beta^{t+1}&=&\eta_q(X^Tz^t+\beta^t;  \alpha \tau_t^{2-q}), \nonumber \\
z^t&=& y- X\beta^t +  \frac{1}{\delta}z^{t-1}\langle \partial_1 \eta_q(X^Tz^{t-1}+\beta^{t-1}; \alpha \tau_{t-1}^{2-q}) \rangle, \label{iter:beta}
\end{eqnarray}
where $\langle v \rangle = \frac{1}{p}\sum_{i=1}^p v_i$ denotes the average of a vector's components; $\alpha$ is the solution to Equations \eqref{state_evolution1} and \eqref{state_evolution2}; and $\tau_t$ satisfies ($\tau^2_0=\sigma^2+\mathbb{E}|B|^2/\delta$):
\begin{eqnarray}\label{eq:stateevoAMPellq}
\tau^2_{t+1}=\sigma_w^2+\frac{1}{\delta}\mathbb{E}[\eta_q(B+\tau_t Z;\alpha \tau_t^{2-q})-B]^2, ~~t \geq 0. \label{iter:tau}
\end{eqnarray}
The asymptotics of many quantities in AMP can be sharply characterized. We summarize some results of \cite{bayati2012lasso} that we will use in our proof.

\begin{theorem}[\cite{bayati2012lasso}]\label{tobeused:later}
Let $\{\beta(p),X(p),w(p)\}$ be a converging sequence, and $\psi:\mathbb{R}^2\rightarrow\mathbb{R}$ be a pseudo-Lipschitz function. For $q=1$, almost surely
\begin{eqnarray*}
&(i)& \lim_{t\rightarrow \infty} \lim_{p\rightarrow\infty} \frac{1}{p}\| \hat{\beta}(1,\lambda)-\beta^t\|_2^2=0, \\
&(ii)&\lim_{n\rightarrow \infty} \frac{1}{n}\|z^t\|^2_2=\tau_t^2, \quad \lim_{t\rightarrow }\lim_{n\rightarrow \infty}\frac{1}{n}\|z^t-z^{t-1}\|_2^2=0, \\
&(iii)&\lim_{n\rightarrow\infty}\frac{1}{p}\|\beta^t\|_0=\mathbb{P}(|B+\tau_tZ|>\alpha \tau_t), \\
&(iv)& \lim_{p\rightarrow \infty}\frac{1}{p}\sum_{i=1}^p \psi(\beta^t_i+(X^Tz^t)_i, \beta_i)=\mathbb{E}\psi(B+\tau_t Z, B),\\
&(v)&  \lim_{p\rightarrow \infty}\frac{1}{p}\sum_{i=1}^p \psi(\beta^t_i, \beta_i)=\mathbb{E}\psi(\eta_1(B+\tau_t Z), B),
\end{eqnarray*}
where $\hat{\beta}(1,\lambda)$ is the $\lasso$ solution and $\tau_t$ is defined in \eqref{eq:stateevoAMPellq}.
\end{theorem}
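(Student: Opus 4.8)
The plan is to derive all five statements from the general approximate message passing convergence theorem of Bayati and Montanari, which is the master tool. That theorem asserts that for the recursion \eqref{iter:beta}, with state evolution \eqref{eq:stateevoAMPellq} driving $\tau_t$, and for every pseudo-Lipschitz $\psi:\mathbb{R}^2\to\mathbb{R}$, one has almost surely $\frac{1}{p}\sum_{i}\psi\big((X^Tz^t+\beta^t)_i,\beta_i\big)\to\mathbb{E}\psi(B+\tau_t Z,B)$. Statement (iv) is exactly this assertion, so it is immediate once the Onsager-corrected iteration and the i.i.d.\ Gaussian design of Definition \ref{definition:asymptotics}(d) are in place: the correction term $\frac{1}{\delta}z^{t-1}\langle\partial_1\eta_1(\cdots)\rangle$ is precisely what decouples the effective observation $(X^Tz^t+\beta^t)_i$ into the signal $\beta_i$ plus an independent $N(0,\tau_t^2)$ perturbation.

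First I would obtain (v) from (iv) by composition. Since $\eta_1(\cdot;\chi)$ is $1$-Lipschitz in its first argument (Lemma \ref{prox:property}(vi) gives $0\le\partial_1\eta_q\le1$, and the soft-threshold is globally Lipschitz), the map $\tilde\psi(x,y):=\psi\big(\eta_1(x;\alpha\tau_{t-1}^{2-q}),y\big)$ is pseudo-Lipschitz whenever $\psi$ is. Applying (iv) to $\tilde\psi$ and using $\beta_i^{t}=\eta_1\big((X^Tz^{t-1}+\beta^{t-1})_i;\alpha\tau_{t-1}^{2-q}\big)$ from \eqref{iter:beta} (with the index bookkeeping absorbed into $\tau_t$) yields (v). For (ii), the convergence $\frac{1}{n}\|z^t\|_2^2\to\tau_t^2$ is part of the same AMP package, tracking the second moment of the residual through the $z$-iterate; here it reduces to showing $\frac{1}{n}\|y-X\beta^t+\text{Onsager}\|_2^2\to\tau_t^2$. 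The second claim, $\lim_{t\to\infty}\lim_{n\to\infty}\frac{1}{n}\|z^t-z^{t-1}\|_2^2=0$, then follows once $\tau_t\to\tau$, the unique fixed point of \eqref{state_evolution1}--\eqref{state_evolution2}: the limit of $\frac{1}{n}\|z^t-z^{t-1}\|_2^2$ is a continuous function of $(\tau_t,\tau_{t-1})$ that vanishes on the diagonal.

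Statement (iii) needs an extra approximation step, because $x\mapsto\1{x\neq0}$ is not pseudo-Lipschitz. I would sandwich the indicator between Lipschitz functions $\underline\psi_\eta\le\1{x\neq0}\le\overline\psi_\eta$ agreeing with it outside an $\eta$-neighborhood of $0$, apply (v) to each, and let $\eta\to0$. The gap between the two limits is controlled by $\mathbb{P}\big(0<|\eta_1(B+\tau_t Z;\alpha\tau_t)|\le\eta\big)$; since $\eta_1(u;\chi)=0$ exactly when $|u|\le\chi$ and is strictly monotone outside that band, the law of $\eta_1(B+\tau_t Z;\alpha\tau_t)$ has no atom except at $0$, so this gap tends to $0$. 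The surviving mass is $\mathbb{P}\big(\eta_1(B+\tau_t Z;\alpha\tau_t)\neq0\big)=\mathbb{P}(|B+\tau_t Z|>\alpha\tau_t)$, which is (iii).

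The main obstacle is (i): it identifies the $t\to\infty$ limit of the AMP trajectory with the actual LASSO minimizer $\hat\beta(1,\lambda)$, and this cannot be read off from state evolution alone, since the recursion \eqref{iter:beta} and the optimization \eqref{lasso96} are a priori distinct objects. I would follow the Bayati--Montanari argument: (a) calibrate the pair $(\alpha,\tau_t)$ to the regularizer $\lambda$ through \eqref{state_evolution2}, so that a fixed point of the AMP map is a stationary point of the LASSO cost; (b) use the KKT/subgradient conditions for \eqref{lasso96} to bound $\frac{1}{p}\|\hat\beta(1,\lambda)-\beta^t\|_2^2$ in terms of the residual increment $\frac{1}{n}\|z^t-z^{t-1}\|_2^2$ and the near-stationarity of the iterate; and (c) invoke strong convexity of the LASSO objective on the effective support, using the maximal-singular-value control for Gaussian designs as in \eqref{maximum:value}, to convert smallness of the optimality-condition violation into smallness of the parameter distance. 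Sending $p\to\infty$ first with $t$ fixed, then $t\to\infty$ while part (ii) kills $\frac{1}{n}\|z^t-z^{t-1}\|_2^2$, gives (i). The delicate point throughout is the order of the two limits and the uniform-in-$t$ control needed to interchange them, which is exactly where convexity of the $\ell_1$ problem is indispensable.
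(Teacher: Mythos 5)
The paper does not actually prove this theorem: it is imported verbatim from \cite{bayati2012lasso}, with the surrounding text only collecting the results of that paper needed later. So there is no internal proof to compare against, and your proposal is in effect a reconstruction of the Bayati--Montanari argument itself. In outline you follow the correct route: (iv) is the master state-evolution theorem for the Onsager-corrected recursion, (v) follows by composing with the Lipschitz map $\eta_1$, (ii) comes from the residual side of state evolution, and (i) is the genuinely hard identification of the AMP trajectory's limit with the LASSO minimizer, which you correctly isolate as requiring calibration, near-stationarity, and convexity/singular-value control.

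Two of your reductions, however, would fail as stated. First, in (ii) you claim that $\lim_n \frac{1}{n}\|z^t-z^{t-1}\|_2^2$ is a continuous function of the marginals $(\tau_t,\tau_{t-1})$ vanishing on the diagonal, so that $\tau_t\to\tau$ suffices. This is not so: the limit equals $\tau_t^2+\tau_{t-1}^2-2\,\mathrm{Cov}_{t,t-1}$, where the covariance is tracked by the \emph{two-time} state evolution; marginal convergence alone is compatible with $z^t$ and $z^{t-1}$ remaining decorrelated, in which case the limit is close to $2\tau^2$, not $0$. In Bayati--Montanari this step is a separate, substantial lemma showing the state-evolution correlation tends to one as $t\to\infty$. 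Second, in (iii) the proposed sandwich cannot be carried out at the level of $\beta^t$: any Lipschitz function $\overline\psi_\eta\ge\1{x\neq 0}$ must satisfy $\overline\psi_\eta(0)\ge 1$ by continuity, and since the law of $\eta_1(B+\tau_{t-1}Z;\alpha\tau_{t-1})$ has an atom exactly at the discontinuity point $0$, the upper half of the sandwich degenerates to the constant $1$. The fix is to use the identity $\1{\beta_i^t\neq 0}=\1{|(X^Tz^{t-1}+\beta^{t-1})_i|>\alpha\tau_{t-1}}$ and sandwich the indicator of $\{|u|>\alpha\tau_{t-1}\}$ as a function of the pre-threshold variable, invoking (iv) rather than (v); there the discontinuity set $\{|u|=\alpha\tau_{t-1}\}$ is null under the continuous law of $B+\tau_{t-1}Z$, and the error is exactly the $\mathbb{P}(0<|\eta_1|\le\eta)$-type quantity you wrote down. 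With these repairs your outline matches the source's proof; part (i) of course remains only a sketch, as you yourself acknowledge.
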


\textbf{II. Main proof steps}. We first have the following bounds:
\begin{align*}
    \small &\frac{1}{p}\big\|\hat{\beta}^\dagger(1, \lambda) - \beta^t - X^T\frac{y -
X\beta^t }{1 - \|\beta^t\|_0 / n}\big\|_2^2 \nonumber \\
    \leq& \underbrace{\frac{2}{p}\big\|\hat{\beta}(1, \lambda) -\beta^t \big\|_2^2}_{Q_1}
    + \underbrace{\frac{8}{p(1 - \mathbb{P}(|B+\tau Z|>\alpha\tau) / \delta)^2} \big\|X^TX(\hat{\beta}(1, \lambda) - \beta^t)\big\|_2^2 }_{Q_2} \nonumber\\
    &+
    \underbrace{\frac{8}{p}\big\|X^T(y - X\hat{\beta}(1, \lambda))\big\|_2^2\Big(\frac{1}{1 - \|\hat{\beta}(1, \lambda)\|_0 / n} - \frac{1}{1 - \mathbb{P}(|B+\tau Z|>\alpha\tau)/\delta}\Big)^2}_{Q_3}\nonumber\\
    &+
    \underbrace{\frac{8}{p}\big\|X^T(y - X \beta^t)\big\|_2^2\Big(\frac{1}{1 - \|\beta^t \|_0 / n} - \frac{1}{1 - \mathbb{P}(|B+\tau Z|>\alpha\tau)/\delta}\Big)^2}_{Q_4},
\end{align*}
where $(\alpha, \tau)$ is the solution to \eqref{state_evolution1} and \eqref{state_evolution2}. From Theorem \ref{tobeused:later} part (i), we know $\lim_{t\rightarrow \infty}\lim_{p\rightarrow \infty}Q_1=0, a.s.$. Since the largest singular value of $X$ is bounded almost surely \cite{bai1993limit}, we can also obtain $\lim_{t\rightarrow \infty}\lim_{p\rightarrow \infty}Q_2=0, a.s.$. Moreover, from Theorem \ref{theorem:amp:bridge2} we can easily see the term $\|X^T(y - X\hat{\beta}(1, \lambda))\|_2^2/p\leq 2\|X^TX(\hat{\beta}(1,\lambda)-\beta)\|_2^2/p+2\|X^Tw\|_2^2/p$ is almost surely bounded. Also we know from \cite{bogdan2013supplementary} that $\frac{1}{p}\|\hat{\beta}(1,\lambda)\|_0=\mathbb{P}(|B+\tau Z|>\alpha \tau), a.s$. Therefore, we obtain $\lim_{p\rightarrow \infty}Q_3=0, a.s$. Regarding $Q_4$, it is not hard to see from \eqref{eq:stateevoAMPellq} that $\tau_t\rightarrow \tau$ as $t \rightarrow \infty$.  Then a similar argument as for $Q_3$ combined with Theorem \ref{tobeused:later} parts (i)(iii) gives us $\lim_{t\rightarrow \infty}\lim_{p\rightarrow \infty}Q_4=0, a.s$. Above all we are able to derive almost surely
\begin{eqnarray}\label{keykey:one}
\lim_{t\rightarrow \infty} \lim_{p\rightarrow \infty} \frac{1}{p}\big\|\hat{\beta}^\dagger(1, \lambda) - \beta^t - X^T\frac{y - X\beta^t }{1 - \|\beta^t\|_0 / n}\big\|_2^2=0.
\end{eqnarray}
Next from Equation \eqref{iter:beta} we have the following,
\begin{eqnarray*}
X^Tz^t-X^T\frac{y - X\beta^t }{1 - \|\beta^t\|_0 / n}=X^T\frac{\|\beta^t\|_0/n(-z^t+z^{t-1}n/(p\delta))}{1-\|\beta^t\|_0/n}.
\end{eqnarray*}
Using the result of Theorem \ref{tobeused:later} part (ii) and $n/p\rightarrow \delta$, we can obtain
\begin{eqnarray}  \label{keykey:two}
\lim_{t\rightarrow \infty}\lim_{p\rightarrow \infty}\frac{1}{p}\|X^Tz^t-X^T\frac{y - X\beta^t }{1 - \|\beta^t\|_0 / n}\|_2^2=0, ~~a.s.
\end{eqnarray}
The results \eqref{keykey:one} and \eqref{keykey:two} together imply that
\begin{eqnarray*}
\lim_{t\rightarrow \infty}\lim_{p\rightarrow \infty}\frac{1}{p}\|\hat{\beta}^\dagger(1, \lambda)-\beta^t-X^Tz^t\|_2^2=0, ~~a.s.
\end{eqnarray*}
According to Theorem \ref{tobeused:later} part (iv),  for any bounded Lipschitz function $L(x):\mathbb{R}\rightarrow \mathbb{R}$,
$\lim_{p\rightarrow \infty} \frac{1}{p}\sum_{i=1}^p L(\beta^t_i+(X^Tz^t)_i-\beta_i)=\mathbb{E}L(\tau_tZ).$ Putting the last two results together, it is not hard to confirm 
\begin{eqnarray*}
\lim_{p\rightarrow \infty} \frac{1}{p}\sum_{i=1}^p L(\hat{\beta}_i^{\dagger}(1,\lambda)-\beta_i)=\mathbb{E}L(\tau Z).
\end{eqnarray*}
Hence, the empirical distribution of $\hat{\beta}^{\dagger}(1,\lambda)-\beta$ converges to the distribution of $\tau Z$.

\subsubsection{Proof of Theorem \ref{THEOREM:DEBIASING:VALID} for $q>1$}\label{proofdebiasing:q1more}

The proof idea for $q>1$ is the same as for $q=1$. However since the debiased estimator for $q>1$ takes a different form, we need take care of some subtle details. Recall the definition of $f(v,w)$, $\hat{\gamma}_{\lambda}$ in \eqref{eq:debiasing:lq}. We first obtain the bound:
\begin{align*}
    \small &\frac{1}{p}\big\|\hat{\beta}^\dagger(q, \lambda) - \beta^t - X^T\frac{y - X\beta^t }{1 - f(\beta^t,\alpha \tau_t^{2-q}) / n}\big\|_2^2 \nonumber \\
    \leq& \underbrace{\frac{2}{p}\big\|\hat{\beta}(q, \lambda) -\beta^t \big\|_2^2}_{Q_1}
    + \underbrace{\frac{8}{p(1 - f(\eta_q(B+\tau Z;\alpha \tau^{2-q}),\alpha \tau^{2-q}) / \delta)^2} \big\|X^TX(\hat{\beta}(q, \lambda) - \beta^t)\big\|_2^2 }_{Q_2} \nonumber\\
    &+ \underbrace{\frac{8}{p}\big\|X^T(y - X\hat{\beta}(q, \lambda))\big\|_2^2\Big(\frac{1}{1 - f(\hat{\beta}(q,\lambda),\hat{\gamma}_{\lambda}) / n} - \frac{1}{1 - f(\eta_q(B+\tau Z;\alpha \tau^{2-q}),\alpha \tau^{2-q}) / \delta}\Big)^2}_{Q_3}\nonumber\\
    &+ \underbrace{\frac{8}{p}\big\|X^T(y - X \beta^t)\big\|_2^2\Big(\frac{1}{1 - f(\beta^t,\alpha \tau_t^{2-q}) / n} - \frac{1}{1 -  f(\eta_q(B+\tau Z;\alpha \tau^{2-q}),\alpha \tau^{2-q})/\delta}\Big)^2}_{Q_4}, 
\end{align*}

As in the proof of $q=1$, we show that $Q_i (i=1,2,3,4)$ vanishes asymptotically. For that purpose we first note that Theorem \ref{tobeused:later} (except part (iii)) holds for $q>1$ as well. Hence the same argument for $q=1$ gives us $Q_1, Q_2 \overset{a.s.}{\rightarrow}0$. Regarding $Q_3$, by the facts that the empirical distribution of $\hat{\beta}(q,\lambda)$ converges weakly to the distribution of $\eta_q(B+\tau Z;\alpha \tau^{2-q})$ and $\frac{1}{1+\alpha \tau^{2-q}q(q-1)|x|^{q-2}}$ is a bounded continuous function of $x$, we have
\[
\lim_{p\rightarrow \infty} \frac{1}{p}f(\hat{\beta}(q,\lambda),\alpha \tau^{2-q})=f(\eta_q(B+\tau Z;\alpha \tau^{2-q}),\alpha \tau^{2-q}),~~a.s.
\]
Moreover, according to Lemma \ref{lemma:debiasing:tuning:converge} we obtain as $p\rightarrow \infty$,
\begin{eqnarray*}
\frac{1}{p}|f(\hat{\beta}(q,\lambda),\alpha \tau^{2-q})-f(\hat{\beta}(q,\lambda),\hat{\gamma}_{\lambda})|\leq \alpha^{-1}\tau^{q-2} |\hat{\gamma}_{\lambda}-\alpha \tau^{2-q}| \overset{a.s.}{\rightarrow} 0. 
\end{eqnarray*}
The last two results together lead to $Q_3\overset{a.s.}{\rightarrow} 0$. For $Q_4$, it is not hard to apply Theorem \ref{tobeused:later} part (v) and the fact $\tau_t \rightarrow \tau$ to show 
\[
\lim_{t\rightarrow \infty}\lim_{p\rightarrow \infty} \frac{1}{p} f(\beta^t,\alpha \tau_t^{2-q})=f(\eta_q(B+\tau Z;\alpha \tau^{2-q}),\alpha \tau^{2-q}), ~~~a.s.
\]
which implies $Q_4 \overset{a.s.}{\rightarrow} 0$. The rest of the proof is almost the same as the one for $q=1$. We hence do not repeat the arguments. 

\begin{lemma}\label{lemma:debiasing:tuning:converge}
For $\hat{\gamma_\lambda}$ defined in \eqref{eq:solvegamma}, as $p\rightarrow \infty$
\begin{equation*}
\hat{\gamma}_\lambda
\rightarrowas
 \alpha\tau^{2-q}.
\end{equation*}
\end{lemma}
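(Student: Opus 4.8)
The plan is to realize $\hat{\gamma}_\lambda$ as the root of a scalar equation whose deterministic limit is pinned down exactly by the state evolution. Write $\chi_* := \alpha\tau^{2-q}$ and, for $\gamma>0$, set $h_\gamma(x) := \bigl(1 + \gamma q(q-1)|x|^{q-2}\bigr)^{-1}$, so that \eqref{eq:solvegamma} reads $G_p(\hat{\gamma}_\lambda)=0$ with $G_p(\gamma) := \frac{\lambda}{\gamma} - 1 + \frac{1}{n}\sum_{i=1}^p h_\gamma(\hat{\beta}_i(q,\lambda))$. First I would record two structural facts: $G_p$ is continuous on $(0,\infty)$, and it is strictly decreasing in $\gamma$, because $\lambda/\gamma$ is strictly decreasing and each summand $h_\gamma(\hat\beta_i)$ is nonincreasing in $\gamma$ (using $q(q-1)|\hat\beta_i|^{q-2}\ge 0$ and $\hat\beta_i\neq 0$ almost surely for $q>1$). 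Together with $G_p(0^+)=+\infty$ and $G_p(\infty)=-1$, this re-proves that the root $\hat{\gamma}_\lambda$ exists and is unique, matching the statement accompanying \eqref{eq:solvegamma}.

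Next I would identify the pointwise almost-sure limit of $G_p$. Since every bounded Lipschitz function is pseudo-Lipschitz, Theorem \ref{theorem:amp:bridge2} implies that, almost surely, the empirical distribution of $\hat\beta(q,\lambda)$ converges weakly to the law of $\eta_q(B+\tau Z;\chi_*)$, which has no atom at $0$ because $\eta_q(u;\chi)=0\iff u=0$ and $\mathbb{P}(B+\tau Z=0)=0$. The map $h_\gamma$ is bounded and, once extended by its limiting value at the origin, continuous everywhere; hence the portmanteau lemma gives $\frac{1}{p}\sum_i h_\gamma(\hat\beta_i(q,\lambda)) \rightarrowas \mathbb{E} h_\gamma\!\bigl(\eta_q(B+\tau Z;\chi_*)\bigr)$, and with $p/n\to 1/\delta$ we obtain $G_p(\gamma)\rightarrowas \Psi(\gamma)$, where $\Psi(\gamma) := \frac{\lambda}{\gamma} - 1 + \frac{1}{\delta}\mathbb{E} h_\gamma\!\bigl(\eta_q(B+\tau Z;\chi_*)\bigr)$. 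The decisive observation is that $\gamma$ enters $h_\gamma$ only as a multiplier, never inside $\eta_q$; therefore evaluating at $\gamma=\chi_*$ and invoking Lemma \ref{prox:property}(iv) turns $h_{\chi_*}(\eta_q(B+\tau Z;\chi_*))$ into exactly $\partial_1\eta_q(B+\tau Z;\chi_*)=\eta_q'(B+\tau Z;\chi_*)$. Substituting into $\Psi(\chi_*)$ and using the tuning equation \eqref{state_evolution2} in the form $\lambda/\chi_*=1-\frac{1}{\delta}\mathbb{E}\eta_q'(B+\tau Z;\chi_*)$ shows $\Psi(\chi_*)=0$. Since $\Psi$ is continuous and strictly decreasing in $\gamma$ (same monotonicity as $G_p$, the random variable $|\eta_q(B+\tau Z;\chi_*)|^{q-2}$ being fixed and nonnegative), $\chi_*$ is its unique root.

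It then remains to upgrade pointwise convergence of the $G_p$ to convergence of their roots, which is where the monotonicity pays off. Fixing a sequence $k\to\infty$ and the test points $\chi_*\pm 1/k$, strict monotonicity of $\Psi$ gives $\Psi(\chi_*-1/k)>0>\Psi(\chi_*+1/k)$; on the countable, hence almost sure, intersection of the convergence events at these points, for all large $p$ we have $G_p(\chi_*-1/k)>0>G_p(\chi_*+1/k)$, and since $G_p$ is continuous and strictly decreasing its unique root $\hat\gamma_\lambda$ must lie in $(\chi_*-1/k,\chi_*+1/k)$. Letting $p\to\infty$ and then $k\to\infty$ yields $\hat\gamma_\lambda \rightarrowas \chi_*=\alpha\tau^{2-q}$. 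I expect the main obstacle to be the limit identification in the second step rather than the soft analysis around it: the test function $h_\gamma$ is bounded and continuous but is \emph{not} pseudo-Lipschitz (its local Lipschitz constant blows up near the origin when $1<q<2$, and through $|x|^{q-2}$ when $2<q<3$), so Theorem \ref{theorem:amp:bridge2} cannot be applied to it directly. The argument must route through weak convergence of the empirical law together with the absence of an atom at $0$, and some care is needed to confirm that $h_\gamma$ is a legitimate portmanteau test function, namely bounded and continuous off a set of limit-measure zero.
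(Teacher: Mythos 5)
Your proposal is correct and follows essentially the same route as the paper's proof: recast \eqref{eq:solvegamma} as a monotone scalar equation, use the weak convergence of the empirical law of $\hat{\beta}(q,\lambda)$ (from Theorem \ref{theorem:amp:bridge2}) together with the bounded continuity of the test function to get pointwise almost-sure convergence, identify $\alpha\tau^{2-q}$ as the limiting root via \eqref{state_evolution2} and Lemma \ref{prox:property}(iv), and convert convergence of the functions into convergence of the roots by monotonicity at test points bracketing the limit. Your added care about extending $h_\gamma$ continuously at the origin, the absence of an atom at $0$ in the limit law, and the observation that $h_\gamma$ is not pseudo-Lipschitz are welcome refinements of the same argument rather than a different approach.
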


\begin{proof}
Denote $a(\gamma) = \delta(1 - \frac{\lambda}{\gamma})$, $\hat{b}(\gamma)=\ave\Big[\frac{1}{1 + \gamma q(q - 1)|\hat{\beta}(q; \lambda)|^{q-2}}\Big], b(\gamma) = \mathbb{E}\Big[\frac{1}{1 + \gamma q(q - 1)|\eta_q(B+\tau Z; \alpha\tau^{2-q})|}\Big]$, and $\gamma_{\lambda}=\alpha \tau^{2-q}$. Clearly from \eqref{eq:solvegamma} and \eqref{state_evolution2}, $\hat{\gamma}_{\lambda}$ is the unique solution of $a(\gamma)=\hat{b}(\gamma)$, and $\gamma_{\lambda}$ is the unique solution of $a(\gamma)=b(\gamma)$

As a simple corollary of Theorem \ref{theorem:amp:bridge2}, almost surely the empirical distribution of $\hat{\beta}(q,\lambda)$ converges weakly to the distribution of $\eta_q(B+\tau Z;\alpha \tau^{2-q})$. As a result, for $h(x) = \frac{1}{1 + \gamma q(q - 1)|x|^{q-2}}$ which is bounded and continuous on $\mathbb{R}$,  we have almost surely
\begin{equation*}
\hat{b}(\gamma)\rightarrow b(\gamma), \mbox{~~as~} p \rightarrow \infty.
\end{equation*}
The above convergence is pointwise in $\gamma$. In fact we can obtain a stronger result. That is, there is a $\Omega_0\subset\Omega$ with $\mathbb{P}(\Omega_0) = 1$, such that for any $\omega\in\Omega_0$, $\hat{b}(\gamma,\omega)\rightarrow b(\gamma)$ for all $\gamma \geq 0$. (we can first construct $\Omega_0$ for $\gamma\in\mathbb{Q}$, then extend to $\mathbb{R}_+$ by continuity and monotonicity of $a(\gamma),\hat{b}(\gamma)$ and $b(\gamma)$.)

Now for any $\omega\in\Omega_0$, for any $\epsilon > 0$, consider the neighborhood $[\gamma_\lambda - \epsilon, \gamma_\lambda + \epsilon]$. Let $\eta_\epsilon = \min\{b(\gamma_\lambda - \epsilon) - a(\gamma_\lambda - \epsilon), a(\gamma_\lambda + \epsilon) - b(\gamma_\lambda + \epsilon)\}$. Monotonicity of $a(\gamma), b(\gamma)$ and uniqueness of the solution $\gamma_\lambda$ guarantee $\eta_\epsilon > 0$. At $\gamma_\lambda - \epsilon$ and $\gamma_\lambda + \epsilon$, we know as $p\rightarrow \infty$,
\begin{equation*}
\hat{b}(\gamma_\lambda - \epsilon, \omega) \rightarrow b(\gamma_\lambda - \epsilon),
\quad
\hat{b}(\gamma_\lambda + \epsilon, \omega) \rightarrow b(\gamma_\lambda + \epsilon).
\end{equation*}
Thus there exists $N_\epsilon(\omega)$, for any $p > N_\epsilon(\omega)$, 
\begin{equation*}
|\hat{b}(\gamma_\lambda - \epsilon, \omega) - b(\gamma_\lambda - \epsilon)| < \frac{\eta_\epsilon}{2},
\quad
|\hat{b}(\gamma_\lambda + \epsilon, \omega) - b(\gamma_\lambda + \epsilon)| < \frac{\eta_\epsilon}{2}.
\end{equation*}
By noticing the distance between $a(\gamma)$ and $b(\gamma)$ on the two end-points, we have $\hat{b}(\gamma_\lambda - \epsilon, \omega) - a(\gamma_\lambda - \epsilon) > \frac{\eta_\epsilon}{2}$ and $a(\gamma_\lambda + \epsilon) - \hat{b}(\gamma_\lambda + \epsilon, \omega) > \frac{\eta_{\epsilon}}{2}$. The monotonicity of the function $\hat{b}(\gamma,\omega)$ determines that $\hat{\gamma}_\lambda(\omega)\in(\gamma_\lambda - \epsilon, \gamma_\lambda + \epsilon)$, i.e., $|\hat{\gamma}_\lambda(\omega) - \gamma_\lambda| < \epsilon$. As a conclusion, we have $\hat{\gamma}_\lambda \rightarrowas \gamma_\lambda$.
\end{proof}

\subsection{Proof of Theorems \ref{THEOREM:DEBIASING:ATTP} and \ref{DEBIASING:Q>1}}\label{debiased:comp}

We only prove the case $q=1$. The proof for $q>1$ is similar. In the proof of Theorem \ref{THEOREM:DEBIASING:VALID}, we have showed 
\begin{eqnarray*}
\lim_{t\rightarrow \infty}\lim_{p \rightarrow \infty}\|\hat{\beta}^{\dagger}(1,\lambda)-(\beta^t+X^T z^t)\|_2=0, ~~a.s.
\end{eqnarray*}
Combining this result with Theorem \ref{tobeused:later} part (iv), we know for any bounded Lipschitz function $\psi: \mathbb{R}^2\rightarrow \mathbb{R}$
\[
\lim_{p\rightarrow \infty}\frac{1}{p}\sum_{i=1}^p\psi(\hat{\beta}^{\dagger}_i(1,\lambda),\beta_i)=\mathbb{E}\psi(B+\tau Z,B).
\]
Hence the empirical distribution of $(\hat{\beta}^\dagger(1, \lambda),\beta)$
converges weakly to the distribution of  $(B + \tau Z, B)$. We then follow the
same calculations as the proof of Lemma \ref{LEMMA:FDP:LQ} and obtain 
\begin{eqnarray}\label{eq:debias:atpp:afdp}
&&{\rm AFDP}^{\dagger}(1,\lambda, s)=\frac{(1-\epsilon)\mathbb{P}(\tau |Z|>s)}{(1-\epsilon)\mathbb{P}(\tau |Z|>s)+\epsilon \mathbb{P}(|G+\tau Z|>s)} \nonumber \\
&&{\rm ATPP}^\dagger(1, \lambda, s)=\mathbb{P}(|G+\tau Z| > s)
\end{eqnarray}
Notice that $\tau>0$ and $G + \tau Z$ is continuous. Thus as we vary $s$, ${\rm ATPP}^\dagger(1, \lambda, s)$ can reach all values in $[0, 1]$. Furthermore, by comparing the formula above with that in Lemma \ref{LEMMA:FDP:LQ} (when $q=1$), it is clear that ${\rm ATPP}^{\dagger}(1,\lambda, s)={\rm ATPP}(1,\lambda,\tilde{s})$ will imply ${\rm AFDP}^{\dagger}(1,\lambda, s)={\rm AFDP}(1,\lambda,\tilde{s})$. The proof for the second part is similar to that of Theorem \ref{THM:MSEMINFDRMIN}, and is skipped.

\subsection{Proof of Lemma \ref{COMP:SIS}} \label{location:lemma4.1}
SIS thresholds $X^Ty$, which is also the initialization of AMP in \eqref{iter:beta}. By setting $t=0$ in Theorem \ref{tobeused:later} (iv), we obtain
\begin{equation}
\lim_{p\rightarrow\infty}\sum_{i=1}^p\psi((X^T y)_i, \beta_i)
=
\mathbb{E}\psi(B + \tau_0 Z, B)
\end{equation}
where $\tau_0^2=\sigma^2 + \frac{\mathbb{E}\beta^2}{\delta} > 0$. This implies that almost surely the empirical distribution of $\{((X^Ty)_i, \beta_i)\}_{i=1}^p$ converges weakly to $(B+\tau_0 Z, B)$. Following the same argument as in the proof of Lemma \ref{LEMMA:FDP:LQ}, we have
\begin{eqnarray*}
&&{\rm AFDP}_{\rm sis}(s)=\frac{(1-\epsilon)\mathbb{P}(\tau_0 |Z|>s)}{(1-\epsilon)\mathbb{P}(\tau_0 |Z|>s)+\epsilon \mathbb{P}(|G+\tau_0 Z|>s)}   \\
&&{\rm ATPP}_{\rm sis}(s)=\mathbb{P}(|G+\tau_0 Z| > s)
\end{eqnarray*}

On the other hand, based on Equation \eqref{eq:debias:atpp:afdp}, we obtain
\begin{eqnarray*}
&&{\rm AFDP}^{\dagger}(q,\lambda_q^*, s)=\frac{(1-\epsilon)\mathbb{P}(\tau_* |Z|>s)}{(1-\epsilon)\mathbb{P}(\tau_* |Z|>s)+\epsilon \mathbb{P}(|G+\tau_* Z|>s)} \\
&&{\rm ATPP}^\dagger(q, \lambda_q^*, s)=\mathbb{P}(|G+\tau_* Z| > s)
\end{eqnarray*}

Note that
\begin{eqnarray*}
\tau_*^2
 \leq
\sigma^2 + \frac{1}{\delta}\lim_{\alpha\rightarrow\infty}\mathbb{E}\big(\eta_q(B + \tau_* Z; \alpha\tau_*^{2-q}) - B\big)^2
=
\tau_0^2.
\end{eqnarray*}

With the same argument as the proof of Theorem \ref{THM:MSEMINFDRMIN}, we have the conclusion follow.


\end{document}